\newtheorem{thm}{Theorem}[section]
\newtheorem{remark}[thm]{Remark}
\newtheorem{defn}[thm]{Definition}
\newtheorem{ppn}[thm]{Proposition}
\newtheorem{lem}[thm]{Lemma}
\newcommand{\vvert}[1]{{\left\vert\kern-0.25ex\left\vert\kern-0.25ex\left\vert #1 
    \right\vert\kern-0.25ex\right\vert\kern-0.25ex\right\vert}}
\renewcommand{\P}{\mathds{P}}
\newcommand{\E}{\mathds{E}}
\newcommand{\R}{\mathds{R}}
\newcommand{\Z}{\mathds{Z}}
\newcommand{\e}{\mathrm{e}}
\DeclareMathOperator{\sech}{sech}
\newcommand\cC{\mathcal{C}}
\newtheorem{stat}{Statement}[section]
\newtheorem{theorem}[stat]{Theorem}
\theoremstyle{definition}
\numberwithin{equation}{section}
\begin{document}

\begin{frontmatter}

\title{Universality of Persistence of Random Polynomials}
\runtitle{Universality of Persistence}

	\thankstext{m1}{Department of Statistics, University of Chicago. PG's research is Partially supported  by NSF Grant DMS-2346685.}
	\thankstext{m2}{Department of Statistics, University of Columbia University. SM's research is partially supported by NSF Grant DMS-1712037.}

\begin{aug}

\author{\fnms{Promit} \snm{Ghosal}\thanksref{m1}\ead[label=e1]{promit@uchicago.edu}}
		\and
		\author{\fnms{Sumit} \snm{Mukherjee}\thanksref{m2}\ead[label=e2]{sm3949@columbia.edu}}
		
\runauthor{Ghosal \& Mukherjee}		
			
%
%

			

\end{aug}



\begin{abstract}
We investigate the probability that a random polynomial with independent, mean-zero and finite variance coefficients  has no real zeros. Specifically, we consider a random polynomial of degree $2n$ with coefficients given by an i.i.d. sequence of mean-zero, variance-1 random variables, multiplied by an $\frac{\alpha}{2}$-regularly varying sequence for $\alpha>-1$. We show that the probability of no real zeros is asymptotically $n^{-2(b_{\alpha}+b_0)}$
 , where $b_{\alpha}$ is
 the persistence exponents of a mean-zero, one-dimensional stationary Gaussian processes with covariance function 
 $\sech((t-s)/2)^{\alpha+1}$. Our work generalizes the previous results of Dembo et al. \cite{DPSZ02} and Dembo \& Mukherjee \cite{DemboMukherjee2015} by removing the requirement of finite moments of all order or Gaussianity. In particular, in the special case $\alpha = 0$, our findings confirm a conjecture by Poonen and Stoll \cite[Section~9.1]{poonen1999cassels} concerning random polynomials with i.i.d. coefficients.
\end{abstract}



\begin{keyword}[class=MSC]
\kwd[Primary ]{62F12}
\kwd[; secondary ]{60F10}
\end{keyword}

\begin{keyword}
\kwd{Random Polynomials}
\kwd{Persistence}
\kwd{Universality}
\kwd{Slepian's Lemma}
\end{keyword}

\end{frontmatter}

\section{Introduction}

In this paper, we consider the random polynomials $$Q_n(x): = \sum_{i=0}^{n}a_i x^i,$$ where $a_i=\sqrt{R(i)}\xi_i$, with $\{\xi_i\}_{0\le i\le n}$ i.i.d.~random variables $\mathbb{E}[\xi_i]=0$ and $\mathrm{Var}(\xi_i)=1$. Hhere $R(\cdot):\mathbb{Z}_+\mapsto (0,\infty)$ is a regularly varying function of order $\alpha>-1$, with $R(0)=1$. The regularly varying assumption implies we can write $R(i)=i^{\alpha}L(i)$ for some $\alpha\in \mathbb{R}$, and $\{L(i)\}_{i\geq 0}$ is slowly varying  at infinity, i.e., $$\lim_{i\to\infty}L(\mu i)/L(i)=1,\text{ for all } \mu>0.$$ We seek to study the decay of the persistence probability, i.e., the probability of $Q_n$ having no real zero. To this goals, we introduce the following:
\begin{align}
p^{(\alpha)}_{n}:=\mathbb{P}\big(Q_n(\cdot) \text{ has no real zero}\big).
\end{align} 

Since an odd degree polynomial necessarily has a real root, we have $p_n^{(\alpha)}=0$ for $n$ odd. For $n$ even, the quantity $\log p^{(\alpha)}_{n}/\log n$ is anticipated to converge to a constant as $n\to\infty$. This constant is known as the persistence exponent of the random polynomial $Q_n$. In the scenario when $R(i)=1$, i.e.~$a_i$ are i.i.d.~random variables,—Poonen and Stoll \cite[Section~9.1]{poonen1999cassels} conjectured that this constant remains invariant across different distributions of $a_i$, provided that $a_i$
  satisfy the following conditions: (1) mean zero, (2) $P(a_i\neq 0)>0$, and (3) $a_i$ belongs to the domain of attraction of the normal distribution. Essentially, this conjecture suggests that the persistence exponent of $Q_n$ is universal within the class of i.i.d. random variables with zero mean and finite variance.

Our main result, outlined below, resolves this conjecture and extends it by demonstrating the universality of persistence exponents for $Q_n$ across a significantly broader range of random polynomial classes. 

 \begin{theorem}\label{thm:Main}
 Fix $\alpha>-1$, and assume $a_i=\sqrt{R(i)}\xi_i $ with $\{\xi_i\}_{0\le i\le n}$~i.i.d. random variables with $\mathbb{E}[\xi_i]=0$ and $\mathrm{Var}(\xi_i) =1$. Then we have $$\lim_{n\to\infty}-\frac{\log p^{(\alpha)}_{2n}}{\log n}=2b_{\alpha}+ 2b_0,$$ where $b_{\alpha}$ is defined via no-zero crossing probability of a stationary Gaussian process $\{Y^{(\alpha)}_t\}_{t\geq 0}$, i.e.,  
 \begin{align}\label{eq:b_alpha}
 b_{\alpha}:=-\lim_{T\to \infty} \frac{1}{T}\log \mathbb{P}\big(Y^{(\alpha)}_t\leq 0,\text{ for all }t\in [0,T]\big),
\end{align} 
where $(Y^{(\alpha)}_t)_{t\geq 0}$ is a mean zero stationary Gaussian process with the following covariance function $\mathrm{Cov}(Y^{(\alpha)}_s, Y^{(\alpha)}_t) = \mathrm{sech}((s-t)/2)^{\alpha+1}$. 
 \end{theorem}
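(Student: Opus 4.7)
My plan is to build on the Gaussian result of \cite{DemboMukherjee2015} and add a universality step that requires only finite variance. The proof splits into three parts.

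\textbf{Decomposition into four regions.} Every real zero of $Q_{2n}$ lies in a neighborhood of $\pm 1$, either inside or outside the unit disk, so I would decompose
\begin{equation}
\mathbb{P}(Q_{2n}\text{ has no real zero}) \asymp \prod_{k=1}^{4}\mathbb{P}(Q_{2n}\text{ has no real zero in }I_k),
\end{equation}
where $I_1,\dots,I_4$ are four intervals neighboring $\pm 1$, with the factorization holding on the log-probability scale by an asymptotic independence argument. The two inside-regions will each contribute $b_\alpha$, and the two outside-regions each contribute $b_0$. The $b_0$ appears for the outside regions because the substitution $x \mapsto 1/x$ on $|x|>1$ transforms $Q_{2n}$ into a polynomial whose relevant coefficients $a_{2n-j}$ are governed by $R(2n-j) \sim R(2n)$ uniformly in the dominant range of $j$, which is effectively the $\alpha = 0$ case.

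\textbf{Gaussian limit.} On each region I would rescale. Near $x=1$ from inside, set $x = e^{-t/n}$ with $t>0$ and $\widetilde Q_n(t) := Q_{2n}(e^{-t/n})/\sqrt{\Var(Q_{2n}(e^{-t/n}))}$. A regular-variation computation gives
\begin{equation}
\Corr\bigl(\widetilde Q_n(t),\widetilde Q_n(s)\bigr) \longrightarrow \bigl(2\sqrt{st}/(s+t)\bigr)^{\alpha+1} = \sech\!\bigl(\tfrac{1}{2}\log(t/s)\bigr)^{\alpha+1}.
\end{equation}
After the logarithmic time change $\tau := \log t$, this becomes the stationary covariance $\sech((\tau - \sigma)/2)^{\alpha+1}$ of $Y^{(\alpha)}$. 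In the Gaussian case, since $\tau$ ranges over a window of length $\Theta(\log n)$, this yields the factor $n^{-b_\alpha}$ via \eqref{eq:b_alpha}. The outside and $x=-1$ regions are treated analogously.

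\textbf{Universality.} For the upper bound, I would choose a fine log-spaced grid $\{\tau_j\}$ in each region, couple the vector $(\widetilde Q_n(e^{\tau_j}))_j$ with a Gaussian vector sharing its covariance via a multivariate CLT (Berry--Esseen-type bounds need only finite second moments), and apply Slepian's lemma to compare the no-crossing event on the grid with that of $Y^{(\alpha)}$. For the lower bound, I would construct a ``no-real-zero'' event as an intersection of mesoscopic-block events---each built on an approximately Gaussian partial sum---and combine them via FKG with the Gaussian lower bound of \cite{DemboMukherjee2015}. I expect the main obstacle to be controlling the continuous no-zero event in the gaps between grid points under only finite variance, which requires a small-ball / anti-concentration estimate. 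This will likely be handled via a truncation $\xi_i = \xi_i\mathds{1}_{|\xi_i|\le K} + \xi_i\mathds{1}_{|\xi_i|>K}$, treating the tail as a negligible perturbation, combined with Lindeberg-style bookkeeping of the error terms to preserve the precise exponents $b_\alpha$ and $b_0$.
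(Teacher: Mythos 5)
Your high-level picture is right: four regions near $\pm1$ carrying exponents $b_\alpha,b_\alpha,b_0,b_0$, the logarithmic time change producing $\sech((\tau-\sigma)/2)^{\alpha+1}$, and Slepian's lemma to compare to $Y^{(\alpha)}$ and $Y^{(0)}$. This much matches the paper. However, two of your key steps have genuine problems.

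\textbf{The universality mechanism.} You state that Berry--Esseen-type bounds ``need only finite second moments.'' This is false: the classical Berry--Esseen theorem and its multivariate versions require finite third moments (or at least $2+\varepsilon$ moments), and without a quantitative rate you cannot control a half-space event over a grid of $\Theta(\log n)$ points uniformly. Since removing the higher-moment hypothesis is precisely the content of the theorem, this is not a fixable technicality but a gap in the main mechanism. The paper avoids the need for any rate of convergence by splitting each of the four regions into $O(\log n)$ blocks whose coefficients are \emph{disjoint}, hence exactly independent of one another; on each single block the Lindeberg--Feller CLT (which genuinely needs only finite second moments and uniform integrability) gives weak convergence to a short piece of $Y^{(\alpha)}$, and the independence of blocks, not any coupling, produces the $n^{-b_\alpha}$ factor. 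Cross-contamination from far-away blocks is controlled by a second-moment maximal inequality (see Proposition~\ref{ppn:gen}) together with a combinatorial bookkeeping scheme, not by truncation. Your proposed truncation $\xi_i\mathds{1}_{|\xi_i|\le K}+\xi_i\mathds{1}_{|\xi_i|>K}$ is dangerous here: with only finite variance, the tail term's contribution to the supremum of a block over an interval need not be negligible, and the paper never truncates.

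\textbf{The lower bound and the tails.} For $\mathfrak{LimInf}$ one must lower-bound $\P(Q_{2n}(x)<0,\ \forall x\in\R)$, which requires control on \emph{all} of $\R$, not just the four $\pm1$-neighborhoods. Your decomposition silently drops the regions near $0$ (equivalently $|u|\lesssim K/n$) and, more importantly, near $\pm\infty$ (where the leading coefficients dominate). The paper splits $\R$ into five regions $A_r$, $r\in\{0,\pm1,\pm2\}$, and the $r=\pm2$ pieces are where the assumption that the degree $2n$ is even enters in an essential way: there one shows that a deterministic quadratic pairing of consecutive coefficients $(a_{2i},a_{2i+1},a_{2i+2})$ forces negativity, and this fails for odd degree. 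Your proposal never uses evenness and never confronts the behavior at $|x|\to\infty$, so the lower bound as written does not close. You would also need a separate argument for the $r=0$ region (Proposition~\ref{prop:r=0}) where the normalization is different. Finally, your suggestion to ``combine via FKG'' the mesoscopic block events is not justified for non-Gaussian coefficients; the paper instead arranges the blocks to be literally independent so that probabilities multiply, and only invokes Slepian at the Gaussian-limit level.
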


\begin{remark}
The limit in \eqref{eq:b_alpha} exists by Slepian's Lemma, on using stationarity and non-negativity of the covariance function (see \cite[Lem 1.1]{DemboMukherjee2015}). Theorem~\ref{thm:Main} can be extended to the case where $\xi_i$
  are independent but not identically distributed random variables. To establish this generalization, an appropriate uniform integrability condition on $\xi_i$
  is required. However, our proof techniques remain largely unchanged in this scenario. To avoid unnecessary complications, we present and prove the theorem for the case where $\xi_i$
  are identically and independently distributed.
\end{remark}

Significant strides have been made in the investigation of persistence in random polynomials, starting with \cite{DPSZ02}, where it was shown that when the coefficients $a_i$ of $Q_n$ are i.i.d. and 
  possess finite moments of all orders, the persistence exponents of $Q_n$
  are independent of the distribution of $a_i$
  and are expressed as $-4b_0$. This seminal result laid the foundation for subsequent research in this field.
In \cite{DemboMukherjee2015}, the persistence probability was examined under the assumption that the coefficients of random polynomials are independent Gaussian variables with mean zero and variances that vary regularly. It was demonstrated therein that the persistence exponent in this context is given by $-2b_{\alpha} - 2b_0$, as stated in Theorem~\ref{thm:Main}. 

The persistence probabilities of random polynomials and other stochastic systems have also been closely scrutinized in the physics literature; see, for instance, \cite{majumdar1999persistence, schehr2007statistics, schehr2008real} for further exploration.
In a recent contribution \cite{poplavskyi2018exact}, the authors computed $b_0 = \frac{3}{16}$ precisely by evaluating the probability of non-real eigenvalues within the random orthogonal matrix ensemble \cite{kanzieper2016probability}. In fact, \cite{poplavskyi2018exact} found also an alternative way to arrive at the same exponent by matching the persistence probabilities between a 2-dimensional heat equation started from random initial data (studied in \cite{DemboMukherjee2015}) and the Glauber dynamics of a semi-infinite Ising spin chain (studied in \cite{derrida1995exact,derrida1996exact}).

The exploration of the zero-count of random polynomials has a rich historical backdrop. A particularly well-studied scenario involves the coefficients ${a_i}_{i\geq 1}$ being independent and identically distributed as standard Gaussian random variables. Early investigations, dating back to the work of Littlewood and Offord \cite{LO1938, LO1938b, LO1938c}, yielded both upper and lower bounds for the expected total number of real roots, denoted as $N_n$. Their contributions also provided insights into the tails of this distribution, offering an initial estimate that $\mathbb{P}(N_n = 0)$ scales as $O\big(\frac{1}{\log n}\big)$ when the coefficients $a_i$ follow the distribution $\mathbb{P}(a_i = -\frac{1}{3}) = \mathbb{P}(a_i = 0) = \mathbb{P}(a_i = 1) = \frac{1}{3}$.

Kac \cite{Kac1939} made a significant stride by deriving the exact formula for the expected value of $N_n$ when $a_i$'s are independent and identically distributed as standard normal random variables. Subsequently, over several decades, researchers honed in on the precise asymptotic behavior of $\mathbb{E}[N_n]$, refining and extending Kac's findings. In a more recent development, as highlighted by \cite{NNV2016}, it was demonstrated that the asymptotic expectation of $\mathbb{E}[N_n]$ equals $\frac{2}{\pi} \log n + O(1)$ for the case where $a_i$ follows a distribution with $\mathbb{E}[a_i] = 0$ and $\mathbb{E}[a_i^{2+\varepsilon}] < \infty$.

 The challenge of determining the probability of a random polynomial having no real roots remained a formidable task until the seminal work of \cite{DPSZ02}. Their work established that for even values of $n$, $\mathbb{P}(N_n =0)$ exhibits polynomial decay, specifically as $n^{-4b_0+ o(1)}$. They also showed that the probability of $N_{n+k}$ to have exact $k$ (simple) zeros also behave in the same way.  
The study of non-identically distributed, but independent coefficients has been considered in \cite{DemboMukherjee2015}, where the authors verify Theorem~\ref{thm:Main}, under the assumption that all coefficients are Gaussian. 
Theorem~\ref{thm:Main} represents a significant generalization of the findings in \cite{DemboMukherjee2015} and \cite{DPSZ02}. It achieves this by removing the requirement that $a_i$ possess finite moments of all orders or follow Gaussian distributions. In other word, it shows universality of the persistence of random polynomials. This pivotal development resolves Poonen and Stole's conjecture on persistence of random polynomials. 

\section{Proof Ideas}

The study of random polynomials has been a dynamic field, continually yielding fresh insights and relevant methodologies. A significant portion of this research has been devoted to investigating the presence of real roots in random polynomials. In particular, the problem of a random polynomial having no real zeros has attracted widespread attention within the mathematics and physics communities.

Many prior approaches to establish the persistence of such random polynomials have leaned heavily on comparisons with Gaussian distributions, often employing techniques like Komlos-Major-Tusnady embedding and tools such as Slepian's inequality \cite{DPSZ02, DemboMukherjee2015}. However, a notable challenge in establishing the universality of this persistence lies in the absence of KMT-type results for random variables with only finite second moments. Moreover, the challenge becomes even more complex when dealing with non-identically distributed random variables. We seek to resolve this by innovating a new approach for showing the persistence. We divide the proofs into two parts which are given as follows: 
\begin{align}
\underbrace{\limsup_{n\to \infty} \frac{\log p^{(\alpha)}_{2n}}{\log n} \leq - 2b_{\alpha} - 2b_{0}}_{\mathfrak{LimSup}}, \qquad \underbrace{\liminf_{n\to \infty} \frac{\log p^{(\alpha)}_{2n}}{\log n} \geq - 2b_{\alpha} - 2b_{0}}_{\mathfrak{LimInf}}.
\end{align}
We show $\mathfrak{LimSup}$ and $\mathfrak{LimInf}$ in Sections~\ref{sec:upper_bd} and~\ref{sec:lower_bd} respectively.  
The proof ideas for showing these two claims are different and in what follows, we discuss those proof ideas in details. We divide the proofs of $\mathfrak{LimSup}$ and $\mathfrak{LimInf}$ into two steps. We begin with the steps of the proof of $\mathfrak{LimSup}$ and then proceed to the steps in the proof of $\mathfrak{LimInf}$. From this point forward, we assume that $n$ is an even integer. However, it turns out that the proofs concerning $\mathfrak{LimSup}$ and $\mathfrak{LimInf}$ do not make use of this assumption, with the exception of Propositions~\ref{ppn:r=-2} and \ref{ppn:r=2}.

\noindent\textbf{Step I of $\mathfrak{LimSup}$:} 
To establish $\mathfrak{LimSup}$, we initially bound $p^{(\alpha)}_{2n}$ by considering the probability of having no real zeros for $Q_n(x)$ within four small intervals. These intervals are located around two points: around $+1$ which is associated to the intervals $\mathfrak{D}_1:= [e^{-1/n^{\delta}}, e^{-1/n^{1-\delta}}]$ and $\mathfrak{D}_2: =[e^{1/n^{\delta}}, e^{1/n^{1-\delta}}]$, and $-1$ associated to intervals $\mathfrak{D}_3: = [-e^{-1/n^{1-\delta}}, -e^{-1/n^{\delta}}]$ and $\mathfrak{D}_4: =[-e^{1/n^{1-\delta}}, -e^{1/n^{\delta}}]$.

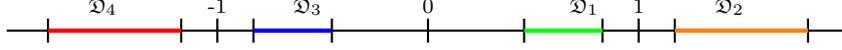
\begin{figure}
\begin{tikzpicture}[scale=0.8]

\draw[thick] (-7, 0) -- (7, 0);  

\draw[thick] (-6.3147, 0.2) -- (-6.3147, -0.2);  
\draw[thick] (-4.1005, 0.2) -- (-4.1005, -0.2);  
\draw[thick] (-3.5005, 0.2) -- (-3.5005, -0.2);
\draw[thick] (-2.9005, 0.2) -- (-2.9005, -0.2);  
\draw[thick] (-1.5971, 0.2) -- (-1.5971, -0.2);  
\draw[thick] (0, 0.2) -- (0, -0.2);  

\draw[thick] (1.5971, 0.2) -- (1.5971, -0.2);    
\draw[thick] (2.9005, 0.2) -- (2.9005, -0.2);    
\draw[thick] (3.5005, 0.2) -- (3.5005, -0.2);
\draw[thick] (4.1005, 0.2) -- (4.1005, -0.2);    
\draw[thick] (6.3147, 0.2) -- (6.3147, -0.2);    

\node at (-5.1005 - 0.3, 0.4) {$\mathfrak{D}_4$};
\node at (-1.6971 - 0.3, 0.4) {$\mathfrak{D}_3$};
\node at (0, 0.4) {0};
\node at (3.5005, 0.4) {1};
\node at (-3.5005, 0.4) {-1};
\node at (2.9005 - 0.3, 0.4) {$\mathfrak{D}_1$};
\node at (5.3147 - 0.3, 0.4) {$\mathfrak{D}_2$};

\draw[red, ultra thick] (-6.3147, 0) -- (-4.1005, 0);   
\draw[blue, ultra thick] (-2.9005, 0) -- (-1.5971, 0);  
\draw[green, ultra thick] (1.5971, 0) -- (2.9005, 0);   
\draw[orange, ultra thick] (4.1005, 0) -- (6.3147, 0);  

\end{tikzpicture}
\caption{Illustration of contributing intervals in $\mathfrak{LimSup}$ Case. The probability of no real zero of $Q_n(x)$ on the intervals $\mathfrak{D}_1$ or $\mathfrak{D}_3$ is $O(n^{-b_{\alpha}})$. Similarly, the probability of no real zero of $Q_n(x)$ on the intervals $\mathfrak{D}_2$ or $\mathfrak{D}_4$ is $O(n^{-b_{0}})$. Combining these bounds with `near' independence of these four events shows $p^{(\alpha)}_{2n}= O(n^{-2(b_{\alpha}+b_0)})$.}
\label{fig:1}
\end{figure}

Now, we need to analyze the probability of no real roots of $Q_n$ in each of these four intervals. However, it is important to note that the events of having no zeros in these intervals are not independent. To address this challenge, we demonstrate that the probability of having no real zeros of $Q_n$ in disjoint intervals $\mathfrak{D}_1, \mathfrak{D}_2, \mathfrak{D}_3, \mathfrak{D}_4$ are dictated by the disjoint components of $Q_n$. For instance, the probability of no zero of $Q_n$ in the interval $\mathfrak{D}_1$ is mainly governed by $\sum_{i\in [n^{\delta}, n^{1-\delta}]} a_i x^i$ and this holds roughly because the variance of $\sum_{i \in [n^{\delta}, n^{1-\delta}]} a_i x^i$ is greater than any other portions of $Q_n$ when $x\in \mathfrak{D}_1$ (see Theorem~).
Consequently, we can express the probability of having no real zeros in any of these four intervals as approximately the product of the probabilities of having no real zeros in each of these intervals.     
\begin{align}
     \P\big(Q_n(x) \text{ has no real zeros in } \mathbb{R}\big) &\leq \P\big(Q_n(x) \text{ has no real zeros in } \mathbb{R}\big) \nonumber \\
      \lesssim &\prod_{i=1}^{4}  \P\Big(Q_n(x) \text{ has no real zero in } \mathfrak{D}_i\Big) \label{eq:UpperBd_Split}
\end{align}

\noindent \textbf{Step II of $\mathfrak{LimSup}$:}
Now we need to upper bound the probability of no real zero in the each of the intervals $\mathfrak{D}_1$ and $\mathfrak{D}_2$. We show that probability of no zeros of $Q_n$ in the interval $\mathfrak{D}_1$ behaves as $n^{-b_{\alpha}}$ and the probability of no real zeros in the interval $\mathfrak{D}_2$ behaves as $n^{-b_0}$ (see \eqref{eq:b_alpha} for the definition of $b_{\alpha}$ for any $\alpha>-1$). By symmetry, the probabilities of no real zero in the interval $\mathfrak{D}_3$ and $\mathfrak{D}_4$ are respectively $n^{-b_\alpha}$ and $n^{-b_0}$. More concretely, we show the following: 
$$\limsup_{n\to \infty} \frac{1}{\log n}\log \P\Big(Q_n(x) \text{ has no real zero in }\mathfrak{D}_1 \text{ or, } \mathfrak{D}_3\Big)\leq- b_{\alpha}$$
and, 
$$\limsup_{n\to \infty} \frac{1}{\log n}\log \P\Big(Q_n(x) \text{ has no real zero in }\mathfrak{D}_2 \text{ or, } \mathfrak{D}_4\Big)\leq - b_{0}$$
Sum of these yields the desired upper bound following \eqref{eq:UpperBd_Split}. 

We  now explain why the probability of no zeros for the intervals $\mathfrak{D}_1$ and $\mathfrak{D}_2$ are different and how we bound those probabilities. Recall that the probability of no real zero of $Q_n$ in the interval $\mathfrak{D}_1$ depends on $\sum_{i\in [n^{\delta}, n^{1-\delta}]\cap \mathbb{Z}} a_i x^i$ whereas the probability of no real zero of $Q_n$ in $\mathfrak{D}_2$ is governed by $\sum_{i\in [n-n^{1-\delta}, n-n^{\delta}]\cap \mathbb{Z}} a_i x^i$. 
We further divide the polynomial $\sum_{i\in [n^{\delta}, n^{1-\delta}]\cap \mathbb{Z}} a_i x^i$ into almost $\log n$ number of sub-polynomials $\sum_{i \in \mathcal{I}^{(-)}_r} a_i x^i$ (see \eqref{eq:I_r-} for $\mathcal{I}^{(-)}_r$) of varying sizes. Each of these sub-polynomials (for instance, $\sum_{i \in \mathcal{I}^{(-)}_r} a_i x^i$) when restricted to a particular sub-interval ($\mathcal{J}^{(-)}_r$ of \eqref{eq:J_rDef} in the case of $\mathcal{I}^{(-)}_r$) of $\mathfrak{D}_1$, converges to a Gaussian process which is directly linked to $\{Y^{(\alpha)}_t\}_{t\geq 0}$. These sub-polynomials are constructed in Section~\ref{sec:Notations} (see \eqref{eq:sub_poly}). Such construction also ensures that those sub-polynomials when restricted to some other sub-intervals of $\mathfrak{D}_1$, the covariance function decays (see Lemma~\ref{lem:weak*}). As a result, the probability of no real zeros of $\sum_{i\in [n^{\delta}, n^{1-\delta}]\cap \mathbb{Z}} a_i x^i$ in any of those sub-intervals of $\mathfrak{D}_1$ can be bounded by the probability of the respective sub-polynomials not exceeding an arbitrary small negative number (Lemma~\ref{lem:clm}). Since all these sub-polynomials are independent of each other and they weakly converges (when restricted to the respective sub-intervals) to the same Gaussian process, we do have 
\begin{align*}
\P\Big(Q_n(x) \text{ has no real zeros in } \mathfrak{D}_1\Big) &\lesssim \prod_{r=1}^{K} \P\Big(\sum_{i \in \mathcal{I}^{(-)}_r} a_i x^i\leq -\delta, \text{ for all }x\in \mathcal{J}^{(-)}_r \Big) \\
&\lesssim  \P \big(Y^{(\alpha)}_{t} \leq 0, t\in [0, \log M]\big)^K
\end{align*}
where $K = O(\log n/\log M)$ for any fixed $M>0$. The constant of proportionality in the above equation is $o(\log n)$. Taking logarithm in both sides in the above display, dividing by $\log n$ and letting $n\to \infty$ shows that probability of no real zeros in $\mathfrak{D}_1$ is bounded by $n^{-b_{\alpha}+o(1)}$. On the other hand, to bound the similar probability for $\mathfrak{D}_2$, we divide $\sum_{i\in [n-n^{1-\delta}, n-n^{\delta}]\cap \mathbb{Z}} a_i x^i$ into almost $\log n$ many sub-polynomials corresponding to index sets $\mathcal{I}^{(+)}_{r}$ (see \eqref{eq:I_r}). However, unlike the $\mathfrak{D}_1$ case, each of these sub-polynomials (for instance, $\sum_{i \in \mathcal{I}^{(+)}_r} a_i x^i$) when restricted to a particular sub-interval ($\mathcal{J}^{(+)}_r$ of \eqref{eq:J_rDef} in the case of $\mathcal{I}^{(+)}_r$) of $\mathfrak{D}_2$, converges to a Gaussian process which is directly linked to $\{Y^{(0)}_t\}_{t\geq 0}$. Now, by a similar argument, we show 
\begin{align*}
\P\Big(Q_n(x) \text{ has no real zeros in } \mathfrak{D}_2\Big) &\lesssim \prod_{r=1}^{K} \P\Big(\sum_{i \in \mathcal{I}^{(+)}_r} a_i x^i\leq -\delta, \text{ for all }x\in \mathcal{J}^{(+)}_r \Big) \\
&\lesssim  \P \big(Y^{(0)}_{t} \leq 0, t\in [0, \log M]\big)^K
\end{align*}
where the right hand side of the above display decays as $n^{-b_0+o(1)}$. Combining these probabilities for $\mathfrak{D}_1,\mathfrak{D}_2, \mathfrak{D}_3$ and $\mathfrak{D}_4$ yields the proof of
$\mathfrak{LimSup}$. Although the steps to find the exact asymptotics are straightforward, the assumption of only finite second moments enforces careful accounting of probability bounds which is executed mainly in the proof of Lemma~\ref{lem:clm} through employing a combinatorial selection criterion `\emph{move, flush and repeat}' and the required probability bounds are assimilated from several important lemmas stated and proved in  Section~\ref{sec:Lemma_4.4_convergence}.  

\vspace{0.2cm}


\noindent\textbf{Step I of $\mathfrak{LimInf}$:} The proof of $\mathfrak{LimInf}$ is carried out by investigating the lower bound of the probability of $Q_n(x)<0$ for all $x\in \mathbb{R}$. This latter event can be rewritten as $\cap_{r}\{Q_{n}(\pm e^{u})<0, u \in A_r\}$ where the intervals $\{A_r:r\in \mathcal{R}\}$ (for $\mathcal{R}:=\{-2,-1,0,1,2\}$) are shown in \eqref{eq:A_s} (see Figure~\ref{fig:2}). 

\begin{figure}
\begin{tikzpicture}[scale=0.8]

\draw[thick] (-7, 0) -- (7, 0);  

\draw[thick] (-6.3147, 0.2) -- (-6.3147, -0.2);  
\draw[thick] (-4.1005, 0.2) -- (-4.1005, -0.2);  
\draw[thick] (-2.9005, 0.2) -- (-2.9005, -0.2);  
\draw[thick] (-1.5971, 0.2) -- (-1.5971, -0.2);  
\draw[thick] (0, 0.2) -- (0, -0.2);              
\draw[thick] (1.5971, 0.2) -- (1.5971, -0.2);    
\draw[thick] (2.9005, 0.2) -- (2.9005, -0.2);    
\draw[thick] (4.1005, 0.2) -- (4.1005, -0.2);    
\draw[thick] (6.3147, 0.2) -- (6.3147, -0.2);    

\node at (-7.5, 0) {$-\infty$};
\node at (-6.3147 - 0.3, 0.4) {$-\e^{\frac{h}{\log n}}$};
\node at (-4.1005 - 0.3, 0.4) {$-\e^{\frac{K}{n}}$};
\node at (-2.9005 - 0.3, 0.4) {$-\e^{\frac{-K}{n}}$};
\node at (-1.5971 - 0.3, 0.4) {$-\e^{-\frac{h}{\log n}}$};
\node at (0, 0.4) {0};
\node at (1.5971 - 0.3, 0.4) {$\e^{-\frac{h}{\log n}}$};
\node at (2.9005 - 0.3, 0.4) {$\e^{\frac{-K}{n}}$};
\node at (4.1005 - 0.3, 0.4) {$\e^{\frac{K}{n}}$};
\node at (6.3147 - 0.3, 0.4) {$\e^{\frac{h}{\log n}}$};
\node at (7.5, 0) {$\infty$};

\draw[red, ultra thick] (-7, 0) -- (-6.3147, 0);
\draw[blue, ultra thick] (-6.3147, 0) -- (-4.1005, 0);
\draw[green, ultra thick] (-4.1005, 0) -- (-2.9005, 0);
\draw[yellow, ultra thick] (-2.9005, 0) -- (-1.5971, 0);
\draw[purple, ultra thick] (-1.5971, 0) -- (0, 0);
\draw[purple, ultra thick] (0, 0) -- (1.5971, 0);
\draw[yellow, ultra thick] (1.5971, 0) -- (2.9005, 0);
\draw[green, ultra thick] (2.9005, 0) -- (4.1005, 0);
\draw[blue, ultra thick] (4.1005, 0) -- (6.3147, 0);
\draw[red, ultra thick] (6.3147, 0) -- (7, 0);

\end{tikzpicture}
\caption{Illustration of how $\mathbb{R}$ is divided into intervals in the $\mathfrak{LimInf}$ Case. Indeed, $[-\e^{\frac{h}{\log n}}, -\e^{\frac{K}{n}}]$, $[-\e^{\frac{-K}{n}}, -\e^{-\frac{h}{\log n}}]$, $[\e^{-\frac{h}{\log n}}, \e^{\frac{-K}{n}}]$ and $[\e^{\frac{-K}{n}}, \e^{\frac{h}{\log n}}]$ are the intervals which mainly contribute to the lower bound of the probability of no real zeros of $Q_{2n}(x)$ and they play same role as in the intervals $\mathfrak{D}_4, \mathfrak{D}_3, \mathfrak{D}_1$ and $\mathfrak{D}_2$ respectively as in the proof of $\mathfrak{LimSup}$.}
\label{fig:2}
\end{figure}
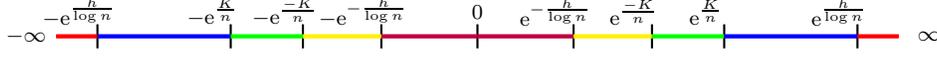

Like as in the proof of $\mathfrak{LimSup}$, we notice that $\P(Q_{n}(\pm e^{u})<0, u \in A_r)$ is dictated by the sub-polynomials $  Q^{(r)}_n(x):=\sum_{i \in B_r} a_i x^i$ (see \eqref{eq:B_s} for the definitions of the sets $B_r$). From the definition of $Q^{(r)}_n(\cdot)$, it follows that $Q_n(x) = \sum_{r\in \mathcal{R}}Q^{(r)}_n(x)$. Such decomposition of $Q_n(x)$ implies  
\begin{align*}
    \Big\{Q_n(x)<0,x\in \R\Big\}\supset \bigcap_{r\in \mathcal{R}}\Big\{\frac{Q_n^{(r)}(\pm e^u)}{\sigma_n(u)}<-\delta , u\in A_r, \frac{Q_n^{(r)}(\pm e^u)}{\sigma_n(u)}<\delta/4, u\notin A_r \Big\}
\end{align*}
where $\sigma_n(u)$ is the standard deviation of the random variable $Q_n(e^u)$. Since the sub-polynomials $Q^{(r)}_n$ are independent, the probability of the event of the right hand side is equal to the product of the the following probabilities;
\begin{align}\label{eq:lowerbd_components}
\P\Big(\frac{Q_n^{(r)}(\pm e^u)}{\sigma_n(u)}<-\delta , u\in A_r, \frac{Q_n^{(r)}(\pm e^u)}{\sigma_n(u)}<\delta/4, u\notin A_r \Big), \qquad r \in \mathcal{R}.
\end{align}
The main thrust of the proof of $\mathfrak{LimInf}$ lies in the lower bound of the above probabilities. Indeed, we derive these lower bounds in Proposition~\ref{prop:r=-1},~\ref{prop:r=1},~\ref{prop:r=0},~\ref{ppn:r=-2} and~\ref{ppn:r=2} respectively. We show in Proposition~\ref{prop:r=-1} and ~\ref{prop:r=1}, that the above probabilities for $r=-1$ and $r=1$ are bounded below by the probabilities of no-zero crossing of two Gaussian processes which later is bounded by the non-zero crossing probabilities of the Gaussian processes $Y^{(\alpha)}_t$ and $Y^{(0)}_t$ respectively (through Lemma~\ref{lem:tilde_Y}). Thus the probabilities of the above display for $r=-1$ and $r=1$ roughly contributes $n^{-2b_{\alpha}}$ and $n^{-2b_0}$ in the lower bound to the probability of $Q_n(x)<0$ for all $x\in \mathbb{R}$. Furthermore, Propositions~\ref{prop:r=0},~\ref{ppn:r=-2} and~\ref{ppn:r=2} shows that the probabilities of the above display corresponding to $r=0,-2,2$ are bound below by $n^{-o(1)}$. Assembling these bounds together yields the proof of $\mathfrak{LimInf}$. In the next step, we discuss the ideas that we use in lower bounding the probabilities in the above display for $r=-1$ and $r=1$. It is important to note that, while the probabilities in \eqref{eq:lowerbd_components} for $r=-2,2$ do not directly contribute to the exact value of the persistence exponent, the proof of their lower bound (which is of the order $n^{-o(1)}$) in Propositions~\ref{ppn:r=-2} and \ref{ppn:r=2} relies heavily on the fact that $Q_n$
  is a polynomial of even degree.

\vspace{0.2cm}

 \noindent \textbf{Step II of $\mathfrak{LimInf}$:} 
  
 We discuss here strategies to bound \eqref{eq:lowerbd_components} for $r= -1,1$. As in the proof of $\mathfrak{LimSup}$, we divide the intervals $A_{-1}$ and $A_{+1}$ into $T_n=O(\log n)$ many intervals. These are now denoted as $\{\tilde{\mathcal{J}}^{(-)}_p:1\leq p\leq T_n\}$ and $\{\tilde{\mathcal{J}}^{(+)}_p:1\leq p\leq T_n\}$ respectively. On the other hand, the index sets $B_{-1} $ and $B_{+1}$ are also divided into $\{\tilde{\mathcal{I}}^{(-)}_p:1\leq r\leq T_n\}$ and $\{\tilde{\mathcal{I}}^{(+)}_p:1\leq r\leq T_n\}$ (see \eqref{eq:tildeI-} and \eqref{eq:tildeI+}). We show that the probability in \eqref{eq:lowerbd_components} could be lower bounded by product of certain probabilities of the polynomials $\tilde{Q}^{(-1),p}_n(x) = \sum_{i \in \tilde{\mathcal{I}}^{(-)}_p} a_i x^i$ when $r=-1$ and $\tilde{Q}^{(-1),p}_n(x) = \sum_{i \in \tilde{\mathcal{I}}^{(-)}_p} a_i x^i$ when $r=+1$. We show below how this bound follows for $r=-1$ case:
 \begin{align*}
\bigcap_{p=1}^{T_n} (B_{1p}\cap B_{2p}\cap B_{3p}\cap B_{4p})\subset \Big\{\frac{Q_n^{(r)}(\pm e^u)}{\sigma_n(u)}<-\delta , u\in A_r, \frac{Q_n^{(r)}(\pm e^u)}{\sigma_n(u)}<\delta/4, u\notin A_r \Big\}.   
 \end{align*}
where $B_{1\cdot}, B_{2\cdot}, B_{3\cdot}$ and $B_{4\cdot}$ are defined in \eqref{eq:b1p}, \eqref{eq:b2p}, \eqref{eq:b3p} and \eqref{eq:b4p} respectively. In words, $B_{1p}$ signifies the event that $\tilde{Q}^{(-1),p}_n(e^u)$ (when scaled by $\sigma_n(u)$) to be less then a small negative constant for all $u$ inside and around a neighborhood of $\mathcal{J}^{(-)}_p$ whereas $B_{2\cdot}, B_{3\cdot}$ and $B_{4\cdot}$ signifies $\tilde{Q}^{(-1),p}_n(e^u)/\sigma_n(e^u)$ to be less than positive numbers which decays as $u$ moves away $\mathcal{J}^{(-)}_p$. We show that $\P(B_{1p}\cap B_{2p}\cap B_{3p}\cap B_{4p})\approx \P(B_{1p})$ and $\P(B_{1p})$ can be lower bounded by probability of the Gaussian process $Y^{(\alpha)}_t$ being smaller a small negative number everywhere inside an interval whose length grows as $ne^{-T_n}$. This last probability 
is indeed same as the no-zeros crossing probability of the Gaussian process $Y^{(\alpha)}_t$ when the small constant is taken to $0$. 
We show the above mentioned lower bound by combining Lemma~\ref{lem:B1p}, Lemma~\ref{lem:B2p}, Lemma~\ref{lem:B34p} with 
Lemma~\ref{lem:tilde_Y}. The first three lemma required careful derivation of the decay of probabilities of $\tilde{Q}^{(-1),p}_n(e^u)/\sigma_n(e^u)$ exceeding a positive constants as $u$ moves away from $\mathcal{J}^{(-)}_p$. Since we have assumed only second moments of $a_i$'s finite, the derivation of these probabilities poses several challenges which are overcome in Lemma~\ref{lem:defer}.
Taking the product of the aforementioned lower bound over all $p \in [1,T_n]\cap \mathbb{Z}$ shows that the probability in \eqref{eq:lowerbd_components} for $r=-1$ can be bounded from below by the product of $T_n$ many no-zero crossing probability of $Y^{(\alpha)}_t$. Similarly, the probability in \eqref{eq:lowerbd_components} for $r=+1$ can be bounded from below by the product of $T_n$ many no-zero crossing probability of $Y^{(0)}_t$. These explain the bounds obtained in Proposition~\ref{prop:r=-1} and~\ref{prop:r=1} which are the most import components in the proof of $\mathfrak{LimInf}$.      

  \section{Upper bound}\label{sec:upper_bd}
 
  \subsection{Fixation of notations}\label{sec:Notations} 
Fix $\delta>0$, and let $$A_+:=\Big[\frac{1}{n^{1-\delta}}, \frac{1}{n^\delta}\Big], \quad A_-:=-A_+.$$

Define the positive weight function $\sigma_n(u)$ as 
 \begin{eqnarray*}
 \sigma_n^2(u)
 :=&\frac{e^{nu} L(n)}{|u|}&\text{ if }u\in A_+\\
 =&\frac{L(1/|u|)}{|u|^{\alpha+1}}&\text{ if }u\in A_{-1}.
 \end{eqnarray*}

Also, define
\begin{align*}
  B_1:=\big(n-n^{1-\delta},n-n^\delta\big]\cap\mathbb{Z},\quad B_{-1}:=\big[n^\delta, n^{1-\delta}\big)\cap\mathbb{Z}.
\end{align*}

Fixing $M>0$, let $K:=\big\lfloor \frac{(1-2\delta)\log n }{\log M}\big\rfloor$. Define the sets $\{\mathcal{I}^{(-)}_r\}_{1\le r\le K}$, by setting
  \begin{align}\label{eq:I_r-}
   \mathcal{I}^{(-)}_{r} :=& \cap \big[n^\delta M^{r-1}, n^\delta  M^{r}\big),\quad \forall 1\le r\leq K,
   \end{align}
   and use the bound $M^K\le n^{1-2\delta}$ to note that $$ \cup_{r=1}^K \mathcal{I}^{(-)}_{r}=[n^\delta, n^\delta M^K)\cap \mathbb{Z}\subseteq \big[n^\delta,n^{1-\delta}\big)=B_{-1}.$$
   Similarly, define the sets $\{\mathcal{I}^{(+)}_r\}_{1\le r\le K}$, where
   \begin{align}\label{eq:I_r}
   \begin{split}
    \mathcal{I}^{(+)}_{r} :=& \mathbb{Z}\cap \big(n-n^\delta  M^{r},n-n^\delta  M^{r-1} \big], \quad \forall 1\le  r\leq  K,\\
   \end{split}
   \end{align}
   and again use $M^K\le n^{1-2\delta}$ to note that $$ \cup_{r=1}^K \mathcal{I}^{(+)}_{r}=\mathbb{Z}\cap (n-n^\delta M^K, n-n^\delta]\subseteq \Big(n-n^{1-\delta}, n-n^\delta\Big]=B_+.$$
We will now partition a proper subset $A_a$, for $a\in \{+,-\}$. 
For $1\le r\le K$, define
\begin{align}\label{eq:J_rDef}
\mathcal{J}_{r}^{(+)}:=\Big(\frac{1}{n^\delta M^{r-\delta}}, \frac{1}{n^\delta M^{r-1+\delta}}\Big],\quad \text{ and }\quad 
\mathcal{J}_{r}^{(-)}:=-\mathcal{J}_{r}^{(+)}.
\end{align}
Then we have $\cup_{r=1}^{K}\mathcal{J}_{r}^{(a)}\subset 
A_{a}$, for $a\in \{+,-\}.$
\\

For $a\in \{+,-\}$ set
\begin{align}\label{eq:sub_poly}
Q^{(a),r}_{n}(x) := \sum_{i \in \mathcal{I}^{(a)}_{r}} a_ix^i. 
\end{align}
%
%
%
%
We will create a back-log of objects. For $a\in \{+,-\}$, define
\begin{align}\label{eq:backlog}
\Xi^{(a)}_{n,r}(\delta) :=& \Big\{\max_{b\in\{+,-\}}\sup_{ u \in \mathcal{J}^{(a)}_{r}}\frac{Q^{(a),r}_n( b e^{u})}{\sigma_n(u)}<\delta\Big\}, \nonumber\\
\mathcal{K}^{(a)}_{n,\delta}(r_1,r_2):=&\Big\{\min_{b\in \{+,-\}}\inf_{  u \in \mathcal{J}^{(a)}_{r_2}}\frac{Q^{(a),r_1}_n(be^{u})}{\sigma_n(u)}<-\delta\Big\}. 
\end{align} 
We further define the index set 
\begin{align*}
\mathcal{I}^{(a)}_{\mathrm{ext}} := [n]_0/\cup_{r=1}^K \mathcal{I}^{(a)}_r,\qquad [n]_0:=\{0,1,2,\cdots,n\}.
\end{align*} 
Let us also define $Q_n^{(a,\mathrm{ext})}(x):=\sum_{i\in \mathcal{I}^{(a)}_{\mathrm{ext}}} a_i x^i$, and set
\begin{align*}
\mathcal{K}^{(a)}_{n,\delta}(\mathrm{ext} , r):=\Big\{\min_{b\in \{+,-\}}\inf_{u \in \mathcal{J}^{(a)}_r}\frac{Q_n^{(a),\mathrm{ext}}(b e^{u})}{\sigma_n(u)}< -\delta\Big\}.
\end{align*}

\subsection{Proof of $\mathfrak{LimSup}$}
We claim that it suffices to show that 
\begin{align}\label{eq:lowerbd_simple}
\limsup_{n\to \infty}\frac{1}{\log n}\log  \P\big(\sup_{x\in \R}Q_{2n}(x)<0\big)\leq -2b_0 - 2b_{\alpha}. 
\end{align}
Indeed, using the above display for the polynomial $\widetilde{Q}_{2n}(x) = \sum_{i=0}^n(-a_i) x^i$, we get
\[\limsup_{n\to \infty}\frac{1}{\log n}\log  \P\big(\inf_{x\in \R}Q_{2n}(x)>0\big)\leq -2b_0 - 2b_{\alpha}.\]
The upper bound $\limsup_{n\to \infty} \log p^{(\alpha)}_{2n}/\log n\leq -2b_{0} - 2b_{\alpha}$ follows by combining the above two bounds. We now proceed now to show \eqref{eq:lowerbd_simple}. We assume $n$ is an even integer. 
To this effect, first note the  trivial upper bound
\begin{align}\label{eq:lower_bound1}
\P\big(\sup_{x\in \R}Q_n(x)<0\big)\le &\P\Big(\cap_{a,b\in \{+,-\}}\sup_{u\in A_a}\frac{Q_n(b e^{u})}{\sigma_n(u)}<0\Big).
\end{align}
Recalling that
$\bigcup_{r=1}^{K}\mathcal{J}^{(a)}_{r}\subset A_{a}$
for $a\in \{-,+\}$
(see~\eqref{eq:J_rDef}), 
 a union bound gives, for any $\omega\in (0,1/2)$,
\begin{align}\label{eq:upp1}
\notag&\P\left(\max_{a,b\in \{+,-\}}\sup_{u\in A_a}\frac{Q_n(b e^u)}{\sigma_n(u)}<0\right)\\
\notag\le & \P\left( \max_{a,b\in \{-,+\}} \max_{\omega K\le r\le (1-\omega)K}\sup_{u\in \mathcal{J}^{(a)}_{r}}\frac{Q_n(b e^u)}{\sigma_n(u)}<0\right)\\
\le &\P\left(\bigcap_{a\in \{-,+\} }\bigcap_{r=\omega K}^{(1-\omega)K} \Big\{{\Xi}^{(a)}_{n,r}(\delta) \cup\Big( \bigcup_{\ell\in [K], \ell\ne r}\mathcal{K}^{(a)}_{n,|\ell-r|^{-2} C\delta}(\ell,r)\Big)\cup \mathcal{K}^{(a)}_{n,\delta/2}(\mathrm{ext},r)\Big\}\right),
 \end{align}
 where $C>0$ is chosen such that $2C\sum_{r=1}^\infty \frac{1}{r^2}= \frac{1}{2}$,
  and the last line follows on noting that for $u\in \mathcal{J}_r^{(a)}$ we can write
 \begin{align*}
 Q_n(b e^u)=&\sum_{i=0}^n a_i (b e^{u})^i=\sum_{i\in {\mathcal{I}}^{(a)}_r}  a_i (b e^{u})^i+\sum_{\ell \in [K]/\{r\} }\sum_{i\in {{\mathcal{I}}}^{(a)}_\ell} a_i (b e^u)^i+\sum_{i\in \mathcal{I}_{\mathrm{ext}}^{(a)}}a_i (b e^{u})^i\\
 =&Q_n^{(a),r}(be^u)+\sum_{\ell\in [K]/\{r\}} Q_n^{(a),\ell}(be^u)+Q_n^{(a), \mathrm{ext}}(be^u)
 \end{align*}
 The r.h.s. of \eqref{eq:upp1} can be further bounded above by 
 \begin{align}\label{eq:upp1.5}
 \P &\left(\bigcap_{a\in \{-,+\} }\bigcap_{r=\omega K}^{(1-\omega)K} \Big\{{\Xi}^{(a)}_{n,r}(\delta) \cup\Big( \bigcup_{\ell\in [K], \ell\ne r}\mathcal{K}^{(a)}_{n,|\ell-r|^{-2} C\delta}(\ell,r)\Big)\right) \\ &+\max_{a\in \{+,-\}}\sum_{\omega K\le r\le (1-\omega)K}\P\left(\mathcal{K}^{(a)}_{n,\delta/2}(\mathrm{ext},r)\right).
 \end{align}

We now state a lemma to bound the second term in the RHS of \eqref{eq:upp1.5}, deferring the proof of the Lemma to the end of the section.

\begin{lem}\label{lem:KextBound}
There exist $\theta= \theta(\omega, \delta, M), C= C(\omega,\delta, M)>0$ such that  
\begin{align}
\max_{\omega K\leq r\leq (1-\omega)K, a\in \{-,+\}} \left\{ \P\left( \mathcal{K}^{(a)_{n,\delta/2}(\mathrm{ext},r)}\right)\right\}\le e^{- Cn^{\theta}}.
\end{align}
\end{lem}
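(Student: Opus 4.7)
The plan is to bound $\P(\mathcal{K}^{(a)}_{n,\delta/2}(\mathrm{ext},r))$ in three stages: first a polynomial variance comparison $\Var(Q_n^{(a),\mathrm{ext}}(be^u)) \ll \sigma_n^2(u)$ uniformly on $\mathcal{J}^{(a)}_r$ for $\omega K \le r \le (1-\omega)K$, then an upgrade of this pointwise variance ratio to a stretched-exponential tail via truncation plus Bernstein's inequality, and finally an extension to the uniform statement via a discretization argument.

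\textbf{Step 1 (variance comparison).} For $a = -$ the external set splits as $\mathcal{I}^{(-)}_{\mathrm{ext}} \subseteq [0, n^\delta) \cup [n^\delta M^K, n]$. For $u \in \mathcal{J}^{(-)}_r$ we have $|u| \asymp (n^\delta M^r)^{-1}$, so on the low-index block $|iu| = O(M^{-r}) = O(n^{-\omega(1-2\delta)})$ is bounded and Karamata's theorem (using $\alpha > -1$) gives a variance contribution of order $n^{\delta(\alpha+1)} L(n^\delta)$, while on the high-index block $|iu| \ge M^{K-r} \ge n^{\omega(1-2\delta)}$ is a power of $n$, so $e^{2iu}$ is super-polynomially small and the contribution is negligible. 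A symmetric decomposition $\mathcal{I}^{(+)}_{\mathrm{ext}} \subseteq [0, n - n^\delta M^K] \cup (n - n^\delta, n]$ handles $a = +$. In either case,
\[
\frac{\Var(Q_n^{(a),\mathrm{ext}}(be^u))}{\sigma_n^2(u)} \le C\, n^{-\eta}, \qquad \eta = \eta(\omega,\delta,M,\alpha) > 0,
\]
uniformly in $u \in \mathcal{J}^{(a)}_r$ and $b \in \{+,-\}$.

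\textbf{Step 2 (pointwise tail and uniformization).} Since only $\mathbb{E}\xi_i^2 = 1$ is available, I introduce a polynomial truncation level $T_n$ and split $\xi_i = \tilde\xi_i + \bar\xi_i$ with $\tilde\xi_i := \xi_i \mathds{1}_{|\xi_i| \le T_n} - \mathbb{E}[\xi_i \mathds{1}_{|\xi_i| \le T_n}]$, decomposing $Q_n^{(a),\mathrm{ext}} = \tilde{Q} + \bar{Q}$. Bernstein's inequality applied to $\tilde Q(be^u)$ (a sum of independent, bounded, mean-zero variables) combined with Step~1 gives a pointwise bound of the form $2\exp(-c\, n^{\eta}/(1 + T_n \cdot n^{-\kappa}))$ for some $\kappa > 0$. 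The untruncated remainder is controlled by $\P(\bar Q \not\equiv 0) \le N_{\mathrm{eff}} \cdot T_n^{-2}$, where $N_{\mathrm{eff}} = O(n^\delta)$ is the number of indices on which $R(i) e^{2iu}$ is not super-polynomially small (the ``effective support''). A choice such as $T_n = n^{\delta/2 + \rho}$ with $\rho$ small balances both terms to $\le e^{-Cn^{\theta}}$. To pass to $\sup_{u \in \mathcal{J}^{(a)}_r}$, I cover $\mathcal{J}^{(a)}_r$ by a polynomially-fine net, apply the pointwise bound with a union bound, and control oscillation between net points via a Lipschitz estimate derived from an analogous variance bound on $\partial_u Q_n^{(a),\mathrm{ext}}(be^u)$ (with an extra $n^2$ factor absorbed by the net spacing). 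A final union bound over $b \in \{+,-\}$ costs only a factor of $2$.

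\textbf{Main obstacle.} The principal difficulty is the calibration of $T_n$ in Step~2: the finite-variance hypothesis yields only the tail $\P(|\xi_i| > T_n) \le T_n^{-2}$, so $T_n$ cannot be taken too small; conversely, $T_n$ large inflates the denominator of the Bernstein exponent and degrades the rate. The way out exploits that the \emph{effective} support of $Q_n^{(a),\mathrm{ext}}$ on the range $\mathcal{J}^{(a)}_r$ is polynomial in $n^\delta$ rather than $n$, which reduces the truncation union bound to $N_{\mathrm{eff}}/T_n^2$ and permits a moderate polynomial $T_n$ that simultaneously delivers stretched-exponential Bernstein decay on $\tilde Q$ and a negligible truncation remainder on $\bar Q$.
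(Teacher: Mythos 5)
Your Step~1 variance estimate is correct and in fact more careful than the paper's own terse sketch: the paper only records the super-exponentially small variance coming from the high-index portion of $\mathcal{I}^{(a)}_{\mathrm{ext}}$ (the block far \emph{ahead} of $\mathcal{I}^{(a)}_r$, with $\ell-r\ge\omega K$) and then appeals to a Borel--TIS-type maximal inequality, but it never addresses the low-index block $[0,n^\delta)$, which you correctly identify as giving only a polynomial variance ratio of order $n^{-\eta}$ with $\eta\asymp\omega(1-2\delta)(\alpha+1)$.

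The genuine gap is in your Step~2. Your truncation remainder is $N_{\mathrm{eff}}T_n^{-2}$; with $N_{\mathrm{eff}}=O(n^\delta)$ and $T_n=n^{\delta/2+\rho}$ this evaluates to $n^{-2\rho}$, which is polynomially small, not $\le e^{-Cn^\theta}$ --- the claimed balance simply does not happen. No choice of $T_n$ repairs it: pushing $T_n$ to stretched-exponential size does suppress the remainder, but it simultaneously inflates the bounded-summand parameter in Bernstein's inequality and collapses the exponent. The obstruction is intrinsic rather than technical. With only $\mathbb{E}\xi_i^2<\infty$, a single heavy-tailed coefficient $\xi_0$ (index $0$ lies in the low block of $\mathcal{I}^{(-)}_{\mathrm{ext}}$, and $\sqrt{R(0)}/\sigma_n(u)$ is a negative but \emph{finite} power of $n$ on $\mathcal{J}_r^{(-)}$) can drive $Q_n^{(-),\mathrm{ext}}(e^u)/\sigma_n(u)$ below $-\delta/2$ on the event $\{\xi_0<-c\,\sigma_n(u)\}$, whose probability is only polynomially small when $\xi_0$ has a tail like $t^{-2-\varepsilon}$. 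Hence a rate $e^{-Cn^\theta}$ with constants depending only on $(\omega,\delta,M)$ is not achievable by any argument under a pure second-moment hypothesis; the low block caps the decay at a polynomial rate whose exponent degenerates as $\omega\to 0$, and a correct treatment would have to state the lemma at that polynomial rate and recheck that the downstream sum over $\omega K\le r\le(1-\omega)K$ still stays below $n^{-2(b_\alpha+b_0)}$ after the limits in $\omega,\delta,M$.
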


%
Lemma~\ref{lem:KextBound} is proved in Section~\ref{sec:proof_of_lemma_3.1}. As a result of Lemma~\ref{lem:KextBound}, the second term in the RHS of \eqref{eq:upp1.5} is bounded above by $Ke^{-Cn^{\theta}}$. Now we proceed to bound the first term. Notice that the events $\{{\Xi}^{(+)}_{n,r}(\delta) \}_{r\in [K]}$ and $\{\mathcal{K}^{(+)}_{n,|\ell-r|^{-2}} \}_{\ell, r\in [K]}$ are jointly independent of  $\{{\Xi}^{(+)}_{n,r}(\delta) \}_{r\in [K]}$ and $\{\mathcal{K}^{(+)}_{n,|\ell-r|^{-2}} \}_{\ell, r\in [K]}$. As a result, we can write the first term in the RHS of \eqref{eq:upp1.5} 
\begin{align*}
\prod_{a\in \{+,-\}} &\P\left(\bigcap_{r=\omega K}^{(1-\omega)K} \Big\{{\Xi}^{(a)}_{n,r}(\delta) \cup\Big( \bigcup_{\ell\in [K], \ell\ne r}\mathcal{K}^{(a)}_{n,|\ell-r|^{-2} C\delta}(\ell,r)\Big)\Big\}\right)
\\ &= \prod_{a\in \{+,-\}}\P\left(\bigcap_{r=\omega K}^{(1-\omega)K} \Big\{{\Xi}^{(a)}_{n,r}(\delta) \cup \mathcal{K}^{(a)}_{n,\delta}(r)\Big\}\right),
\end{align*}
where $\mathcal{K}^{(a)}_{n, \delta}(r):=\bigcup_{\ell\in [K], \ell\ne r}\mathcal{K}^{(a)}_{n,C\delta|\ell-r|^{-2}}(\ell,r)$. 
To bound the RHS of the above display, by union bound, we write 
\begin{align}
\P\left(\bigcap_{r=\omega K}^{(1-\omega)K} \Big\{{\Xi}^{(a)}_{n,r}(\delta) \cup \mathcal{K}^{(a)}_{n, \delta}(r)\Big\}\right)
&\leq  \sum_{\vec{\mathbf{r}}: \mathrm{length}(\vec{\mathbf{r}})\geq (1-3\omega)K}\mathbb{P}\Big(\bigcap_{r\in \vec{\mathbf{r}}}{\Xi}^{(a)}_{n,r}(\delta)\Big) \nonumber\\&+ \sum_{\vec{\mathbf{r}}:\mathrm{length}(\vec{\mathbf{r}})\geq \omega K} \mathbb{P}\Big(\bigcap_{r\in \mathfrak{J}_{\vec{\mathbf{r}}}}{\Xi}^{(a)}_{n,r}(\delta)\cap \bigcap_{r\in \vec{\mathbf{r}}} \mathcal{K}^{(a)}_{n,\delta}(r)\Big),
\label{eq:LastSplit}
\end{align}
where  $\mathfrak{J}_{\vec{\mathbf{r}}}:=[\omega K, (1-\omega)K]/\{\vec{\mathbf{r}}\}$. Notice that $\mathrm{length}(\vec{\mathbf{r}})$ is upper bounded by $K$. So the number of terms in the above sums are finite. 
We now proceed to bound the two terms in the right hand side of \eqref{eq:LastSplit}.

\begin{lem}\label{lem:clm}
(i) For any $h>0$, there exists a positive integer $n_0$ (depending on $M,\omega, h,\delta,\kappa,\alpha, $) such that for all $n\geq n_0$ we have
\begin{align}\label{eq:clm1}
\mathbb{P}\Big(\bigcap_{r\in \vec{\mathbf{r}}}{\Xi}^{(a)}_{n,r}(\delta)\Big) \le &(1+h)^{\mathrm{length}(\vec{\mathbf{r}})}  \P(\max_{b\in \{+,-\}}\sup_{t\in [0,  M^{1-2\kappa}]}Y^{(a)}_{0,M}(b,t)<\delta)^{|\vec{\mathbf{r}}|}.
\end{align}
Here $\{Y_{0,M}^{(a)}(b,.),b\in \{-,+\}\}$ are i.i.d.~centered Gaussian process with covariance Definition~\ref{def:Y}.
\\

(ii) For any $h>0$, there exists a positive integer $n_0$ (depending on $M,\omega, h,\delta,\kappa,\alpha$) and an absolute constant $C$ such that for all $n\geq n_0$ and $\vec{\mathbf{r}}\in [K]$ we have
\begin{align}\label{eq:clm2}
 \mathbb{P}\Big(\bigcap_{r\in \mathfrak{J}_{\vec{\mathbf{r}}}}{\Xi}^{(a)}_{n,r}(\delta)\cap \bigcap_{r\in \vec{\mathbf{r}}} \mathcal{K}^{(a)}_{n,\delta}(r)\Big)\le &  2[C(1+h)]^{|\tilde{\mathfrak{J}}_{\vec{\mathbf{r}},\lessgtr}|} \P(\sup_{t\in [1,M^{1-2\kappa}]}Y_{0,M}^{(a)}(b,t)<\delta)^{2 |\tilde{\mathfrak{J}}_{\vec{\mathbf{r}},\lessgtr}|}
\nonumber\\ & \times M^{-c_1\sum_{i\in[N]} (s_{\tilde{r}_i}-\tilde{r}_i)+c_2\omega K},
\end{align}
where $c_1=\alpha+1-3\delta, c_2=(\alpha+3)\delta$ if $a=-$, and $c_1=1, c_2=\delta$ if $a=+$. Here $|\tilde{\mathfrak{J}}_{\vec{\mathbf{r}},\lessgtr}|$ denotes $\max\{|\tilde{\mathfrak{J}}_{\vec{\mathbf{r}},>}|, |\tilde{\mathfrak{J}}_{\vec{\mathbf{r}},<}|\}$ where  $\tilde{\mathfrak{J}}_{\vec{\mathbf{r}},>}$ and $\tilde{\mathfrak{J}}_{\vec{\mathbf{r}},<}$ are defined as follows 
\begin{align*}
\tilde{\mathfrak{J}}_{\vec{\mathbf{r}},\lessgtr} := 
    \{r \in \vec{\mathbf{r}}: s_r\lessgtr r \text{ where } \mathcal{K}^{(a)}_{n, C\delta |s_r -r|^{-2}}(s_r,r) \text{ occurs}\}
\end{align*}
and they satisfy the relation 
\begin{align}\label{eq:mathfrakJ-s-r}
|\tilde{\mathfrak{J}}_{\vec{\mathbf{r}},>}|+\sum_{i\in  [N]}(s_{\tilde{r}_i}-\tilde{r}_i)+N=(1-\omega)K.
\end{align}
\end{lem}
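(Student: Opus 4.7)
The key structural input is that the sub-polynomials $\{Q_n^{(a),r}\}_{r \in [K]}$ are independent across $r$, because they depend on the disjoint coefficient blocks $\mathcal{I}^{(a)}_r$. Moreover, by the construction of $\mathcal{I}^{(a)}_r$ and $\mathcal{J}^{(a)}_r$, for each fixed $r$ the rescaled process $u \mapsto Q_n^{(a),r}(be^u)/\sigma_n(u)$, after reparametrizing $\mathcal{J}^{(a)}_r$ into the fixed interval $[0, M^{1-2\kappa}]$, converges weakly by a multivariate CLT to the Gaussian process $Y^{(a)}_{0,M}(b,\cdot)$ of Definition~\ref{def:Y}. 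This weak convergence, together with the covariance-decay statement of Lemma~\ref{lem:weak*}, are the main inputs I would import from Section~\ref{sec:Lemma_4.4_convergence}.

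For part~(i), independence across $r$ gives
\[
\mathbb{P}\Big(\bigcap_{r \in \vec{\mathbf{r}}} \Xi^{(a)}_{n,r}(\delta)\Big) = \prod_{r \in \vec{\mathbf{r}}} \mathbb{P}(\Xi^{(a)}_{n,r}(\delta)).
\]
Each factor is the probability that a continuous process on a compact interval, jointly in $b \in \{+,-\}$, stays below $\delta$. Applying the Portmanteau theorem to a slightly thickened open set $\{\max_b \sup_t \cdot < \delta + \varepsilon\}$ and then sending $\varepsilon \downarrow 0$ yields, for any $h > 0$ and all $n \ge n_0(M,\omega,h,\delta,\kappa,\alpha)$,
\[
\mathbb{P}(\Xi^{(a)}_{n,r}(\delta)) \le (1+h)\,\mathbb{P}\Big(\max_{b \in \{+,-\}}\sup_{t \in [0, M^{1-2\kappa}]} Y^{(a)}_{0,M}(b, t) < \delta\Big),
\]
and the product yields \eqref{eq:clm1}.

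For part~(ii) the event $\bigcap_{r \in \vec{\mathbf{r}}} \mathcal{K}^{(a)}_{n,\delta}(r) = \bigcap_{r \in \vec{\mathbf{r}}} \bigcup_{\ell \ne r} \mathcal{K}^{(a)}_{n,C\delta|\ell-r|^{-2}}(\ell,r)$ is not a product, because the same sub-polynomial index may be invoked as $s_r$ for several $r$'s and may moreover coincide with an index in $\mathfrak{J}_{\vec{\mathbf{r}}}$. I would proceed with the ``move, flush and repeat'' combinatorial scheme. First, distribute the intersection over the unions by choosing a witness $s_r \ne r$ for each $r \in \vec{\mathbf{r}}$, and bound $\mathbb{P}(\bigcap_{r \in \vec{\mathbf{r}}} \mathcal{K}^{(a)}_{n,\delta}(r) \cap \bigcap_{r \in \mathfrak{J}_{\vec{\mathbf{r}}}} \Xi^{(a)}_{n,r})$ by a sum over assignments $r \mapsto s_r$. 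For each assignment I would prune the list of pairs $(r, s_r)$, separately within the families $\tilde{\mathfrak{J}}_{\vec{\mathbf{r}}, >}$ (those with $s_r > r$) and $\tilde{\mathfrak{J}}_{\vec{\mathbf{r}}, <}$, so that the surviving indices $\{\tilde{r}_i\}_{i\in [N]}$ involve distinct sub-polynomials and are independent of the $\Xi$ events of the survivors. Identity \eqref{eq:mathfrakJ-s-r} is then the chain-length bookkeeping of this pruning, expressing the total length $(1-\omega) K$ as the count of survivors plus the accumulated jumps $\sum_{i} (s_{\tilde{r}_i} - \tilde{r}_i)$. On the pruned independent family, each $\mathcal{K}$-event is bounded by Chebyshev's inequality using the variance decay $\mathrm{Var}(Q_n^{(a),\ell}(be^u)/\sigma_n(u)) \lesssim M^{-c_1 |\ell - r|}$ for $u \in \mathcal{J}^{(a)}_r$, which is where the factor $M^{-c_1 \sum (s_{\tilde{r}_i} - \tilde{r}_i)}$ arises; the $M^{c_2 \omega K}$ factor absorbs the constants accumulated across the $\omega K$ levels. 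The remaining $\Xi$ events on the survivors are controlled as in part~(i), producing the $[C(1+h)]^{|\tilde{\mathfrak{J}}_{\vec{\mathbf{r}},\lessgtr}|}$ factor, with the final factor of $2$ accounting for the two choices of the family $\tilde{\mathfrak{J}}_{\vec{\mathbf{r}},>}$ versus $\tilde{\mathfrak{J}}_{\vec{\mathbf{r}},<}$.

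The main obstacle is the finite-second-moment assumption on $\xi_i$. Under Gaussianity or sub-Gaussianity one would bound the $\mathcal{K}$-events by exponential tail estimates, but here the only general tool is Chebyshev's inequality, which is sharp only up to second moments. Hence the entire scheme must match exactly: the variance decay $M^{-c_1|\ell-r|}$ divided by the squared tolerance $|\ell-r|^{-4}$ must still yield a summable geometric series in $|\ell - r|$, and the pruning must extract an identity of the form \eqref{eq:mathfrakJ-s-r} while keeping the multiplicative overhead per pruned pair bounded by an absolute constant $C$. Establishing the uniform-in-$n$ covariance decay from the regular-variation assumption on $R$, and controlling the union-bound cost so that it assembles cleanly into the Gaussian no-zero-crossing asymptotics for $Y^{(a)}_{0,M}$, is the technical core that the auxiliary lemmas of Section~\ref{sec:Lemma_4.4_convergence} are designed to supply.
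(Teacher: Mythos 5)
Your proposal tracks the paper's argument closely: part~(i) via independence across $r$ plus weak convergence of the rescaled sub-polynomials, and part~(ii) via the ``move, flush and repeat'' pruning together with the chain-length identity \eqref{eq:mathfrakJ-s-r} and a count of at most $2^K$ for the choices of $\mathfrak{K}_{\vec{\mathbf{r}},>}$. One step, as you describe it, would not quite go through. Each event $\mathcal{K}^{(a)}_{n,C\delta|s_r-r|^{-2}}(s_r,r)$ involves an \emph{infimum} of $Q_n^{(a),s_r}(be^u)/\sigma_n(u)$ over the continuous range $u \in \mathcal{J}^{(a)}_r$, so a pointwise variance bound followed by ordinary Chebyshev cannot control it. What is actually needed (and what the paper develops) is a maximal inequality for the process on the whole interval: Proposition~\ref{ppn:gen} is a chained Chebyshev bound requiring both a pointwise variance estimate $\gamma_1$ and an increment estimate $\gamma_2 |t_1-t_2|^\beta$, the latter supplied by Lemma~\ref{lem:tightness}. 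These feed into Lemma~\ref{lem:ex_sup} and then Lemma~\ref{lem:tail_bound}, and it is those lemmas --- not pointwise Chebyshev --- that produce the factors $M^{-(c_1(s_{\tilde r_i}-\tilde r_i)\pm c_2)}$ that appear in \eqref{eq:clm4}. Since you explicitly say you are deferring to the auxiliary lemmas of Section~\ref{sec:Lemma_4.4_convergence} for the technical core, this is more an imprecision of phrasing than a structural error, but a reader implementing your outline with pointwise Chebyshev would get stuck: the $\sup$/$\inf$ over $\mathcal{J}^{(a)}_r$ must be handled by chaining, and that is where the extra increment bound (and, for the nearby pairs $|\ell-r|$ bounded, the weak convergence to a Gaussian plus Borel--TIS route of Lemma~\ref{lem:tail_bound}) becomes essential. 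With that correction, your route is the paper's route.
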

Before proceeding to the proof of Lemma \ref{lem:clm} we first complete the proof of the upper bound. Owing to \eqref{eq:upp1}, \eqref{eq:upp1.5}, \eqref{eq:LastSplit}, \eqref{eq:clm1} and \eqref{eq:clm2} and Lemma~\ref{lem:KextBound}, we obtain 
\begin{align*}
    \mathbb{P}\Big(Q_n(x)<0, x\in \mathbb{R}\Big) &\leq \prod_{a\in \{-,+\}}(\mathfrak{C}^{(a)} +\mathfrak{D}^{(a)})
\end{align*}
where 
\begin{align*}
    \mathfrak{C}^{(a)}: &= (1+h)^K  |\vec{\mathbf{r}}: \mathrm{length}(\vec{\mathbf{r}})\geq K(1-3\omega)| \\ &\times\P\Big(\max_{b\in \{+,-\}}\sup_{t\in [1,  M^{1-2\kappa}]}Y^{(a)}_{0,M}(b,t)<\delta\Big)^{2(1-3\omega)K}  \\
    \mathfrak{D}^{(a)}: &=  2[C(1+h)]^{K}\\ &\times\prod_{a\in \{+,-\}}\sum_{\vec{\mathbf{r}}; |\vec{\mathbf{r}}|\geq \omega K} \P\Big(\sup_{t\in [1,M^{1-2\kappa}]}Y_{0,M}^{(a)}(b,t)<\delta\Big)^{2 |\tilde{\mathfrak{J}}_{\vec{\mathbf{r}},\lessgtr}|}
M^{-c_1\sum_{i\in[N]} (s_{\tilde{r}_i}-\tilde{r}_i)+c_2\omega K} 
\end{align*}
Taking logarithm on both sides and using the inequality $\log(\mathfrak{C}^{(a)}+\mathfrak{D}^{(a)})\leq  \log 2(\max\{\mathfrak{C}^{(a)},\mathfrak{D}^{(a)}\})$ to bound the logarithm of the right hand sides of the above display yields, 
\begin{align}\label{eq:FinalIneqForUpperBound}
    \log \mathbb{P}(Q_n(x)<0, x\in \mathbb{R}) \leq \sum_{a\in \{-,+\}}\Big(\log \max\Big\{\mathfrak{C}^{(a)},\mathfrak{D}^{(a)}\Big\} + \log 2\Big)
\end{align}
Now we divide both sides of \eqref{eq:FinalIneqForUpperBound} by $\log n$. Notice that 
\begin{align}\label{eq:mathfrakC}
\varlimsup_{n\to \infty}\frac{1}{n}\log \mathfrak{C}^{(a)} &= \varlimsup_{n\to \infty}\frac{K}{\log n}\log (1+h)\\ & + \varlimsup_{n\to \infty}\frac{1}{\log n} \log |\vec{\mathbf{r}}: \mathrm{length}(\vec{\mathbf{r}})\geq K(1-3\omega)| \\ &+ \varlimsup_{n\to \infty} \frac{(1-3\omega) K}{\log n} \log \P\Big(\max_{b\in \{+,-\}}\sup_{t\in [1,  M^{1-2\kappa}]}Y^{(a)}_{0,M}(b,t)<\delta\Big) \nonumber 
\end{align}
Since $K/\log n$ is arbitrarily small when $M$ gets large and furthermore, $|\vec{\mathbf{r}}: \mathrm{length}(\vec{\mathbf{r}})\geq K(1-3\omega)|\leq 2^K$, we have 
\begin{align*}
\varlimsup_{M\to \infty}&\varlimsup_{\delta \to 0}\varlimsup_{\kappa\to 0}\varlimsup_{n\to \infty}\frac{1}{\log n}\log \mathfrak{C}^{(a)}\\  & \leq \varlimsup_{M\to \infty}\varlimsup_{\delta \to 0}\varlimsup_{\kappa\to 0} \varlimsup_{n\to \infty} \frac{(1-3\omega)K}{\log n} \log \P\Big(\max_{b\in \{+,-\}}\sup_{t\in [1,  M^{1-2\kappa}]}Y^{(a)}_{0,M}(b,t)<\delta\Big) \\ & \leq (1-3\omega)\varlimsup_{M\to \infty}\frac{1}{\log M} \log  \P\Big(\max_{b\in \{+,-\}}\sup_{t\in [1,  M]}Y^{(a)}_{0,M}(b,t)<0\Big)\\  & \leq (1-3\omega)\begin{cases}\varlimsup_{M\to \infty}\frac{2}{\log M} \log  \P\Big(\sup_{t\in [1,  M]}Y^{(\alpha)}_{\log t}<0\Big) & \text{ if }a = - \\  \varlimsup_{M\to \infty}\frac{2}{\log M}\log  \P\Big(\sup_{t\in [1,  M]}Y^{(0)}_{\log t}<0\Big) \Big) & \text{ if }a = +
\end{cases}
\\ &= (1-3\omega) \begin{cases}
2b_{\alpha} & \text{ if }a = -\\
2b_0 & \text{ if } a= +
\end{cases}.
\end{align*}
The second to the last inequality follows since $\{Y^{(a)}_{0,M}(+,t):t \in \mathbb{R}\}$ and $\{Y^{(a)}_{0,M}(-,t): t\in \mathbb{R}\}$ are independent Gaussian processes, and, furthermore, as we show below, the covariance functions of $Y^{(a)}_{0,M}(-,t)$ and $Y^{(a)}_{0,M}(+,t)$ are bounded respectively by the covariance functions of $t^{-(\alpha+1)/2} Y^{(\alpha)}_{\log t}$ and $t^{-1/2}Y^{(0)}_{\log t}$. Here, $Y^{(\alpha)}_{t}$ and $Y^{(0)}_{ t}$ are the centered Gaussian processes same as in Theorem~\ref{thm:Main}. To see this, recall from Definition~\ref{def:Y} that 
\begin{align*}
\mathrm{Cov}(Y_{0,M}^{(a)}(b,t_1), Y_{0,M}^{(a)}(b,t_2))=h_{0,M}^{(a)}(t_1+t_2) = \begin{cases}
h_{0, M, \alpha} & \text{ if }a=-\\ 
h_{0, M, 0} & \text{ if }a=+
\end{cases}
\end{align*}
where 
$h_{0, M, \alpha} = \int^{M^{\delta}}_{M^{\delta -1}} x^{\alpha} e^{- xt} dt$. As $M\to \infty$, $\mathrm{Cov}(Y_{0,M}^{(a)}(-,t_1), Y_{0,M}^{(a)}(-,t_2))$ increases up to $(t_1+t_2)^{-(\alpha+1)}$ which is equal to $\mathrm{Cov}(t^{-(\alpha+1)/2}_1Y^{(\alpha)}_{\log t_1}, t^{-(\alpha+1)/2}_2Y^{(\alpha)}_{\log t_2})$. Similarly, $\mathrm{Cov}(Y_{0,M}^{(a)}(+,t_1), Y_{0,M}^{(a)}(+,t_2))$ increases up to $(t_1+t_2)^{-(\alpha+1)}$ which is equal to the covariance function  $\mathrm{Cov}(t^{-1/2}_1Y^{(0)}_{\log t_1}, t^{-1/2}_2Y^{(0)}_{\log t_2})$. Due to this, the second to the last inequality follows by the Slepian's inequality.

Since $\omega$ is arbitrary, letting $\omega$ to $0$ yields the right hand side of the above display to be $2b_{\alpha}$ when $a=-$ and $2b_{0}$ when $a=+$. On the other hand, we have 
\begin{align}\label{eq:mathfrakD}
\log \mathfrak{D}^{(a)} &\leq 2K \log C+ \log|\vec{\mathbf{r}}: |\vec{\mathbf{r}}|\geq \omega K|  \\ +\max_{\vec{\mathbf{r}}: |\vec{\mathbf{r}}|\geq \omega K} &\log \Big(\P\Big(\sup_{t\in [1,M^{1-2\kappa}]}Y_{0,M}^{(a)}(b,t)<\delta\Big)^{2 |\tilde{\mathfrak{J}}_{\vec{\mathbf{r}},\lessgtr}|}
M^{-c_1\sum_{i\in[N]} (s_{\tilde{r}_i}-\tilde{r}_i)+c_2\omega K} \Big)\nonumber
\end{align}
By using the same argument as in above (using Slepian's lemma), we can bound the probability $\P(\sup_{t\in [1,M^{1-2\kappa}]}Y_{0,M}^{(a)}(b,t)<0)^{2 |\tilde{\mathfrak{J}}_{\vec{\mathbf{r}},\lessgtr}|}$ by $\P(\sup_{t\in [1, M^{1-2\kappa}]} Y^{(\alpha)}_{\log t}<0)^{^{2 |\tilde{\mathfrak{J}}_{\vec{\mathbf{r}},\lessgtr}|}}$ when $a=-$ and by $\P(\sup_{t\in [1, M^{1-2\kappa}]} Y^{(0)}_{\log t}<0)^{^{2 |\tilde{\mathfrak{J}}_{\vec{\mathbf{r}},\lessgtr}|}}$ when $a=+$.

Following \cite[Lemma~2.5]{poonen1999cassels} and \cite{DemboMukherjee2015}, we know that 
\begin{align*}
\frac{\P \big(\sup_{t\in [1,M^{1-2\kappa}]}Y^{(0)}_{\log t}<0\big)}{ M^{-(1-\delta)(1-\kappa)/2}} \gtrsim 1,\qquad  \frac{\P\big(\sup_{t\in [1,M^{1-2\kappa}]}Y^{(\alpha)}_{\log t}<0\big)}{M^{-(\alpha+1-\delta)(1-\kappa)/2}} \gtrsim 1
\end{align*}
for $M$ large and $\kappa,\delta>0$ very small. On the other hand, both 
 $\P\big(\sup_{t\in [1,M^{1-2\kappa}]}Y^{(0)}_{\log t}<0\big)$ and $\P\big(\sup_{t\in [1,M^{1-2\kappa}]}Y^{(\alpha)}_{\log t}<0\big)$ is upper bounded by $M^{-\beta(1-2\kappa)/2}$ for all $M>0$ large and $\beta,\kappa>0$ small. Hence we get (first taking $\delta \to 0$),  
\begin{align*}
\varlimsup_{n\to \infty} &\frac{1}{\log n}\max_{\vec{\mathbf{r}}: |\vec{\mathbf{r}}|\geq \omega K}\log \Big(\P\Big(\sup_{t\in [1,M^{1-2\kappa}]}Y_{0,M}^{(a)}(b,t)<\delta\Big)^{2 |\tilde{\mathfrak{J}}_{\vec{\mathbf{r}},\lessgtr}|}
M^{-c_1\sum_{i\in[N]} (s_{\tilde{r}_i}-\tilde{r}_i)+c_2\omega K} \Big)\\
\leq \varlimsup_{n\to \infty} &\frac{1}{\log n} \max_{\vec{\mathbf{r}}: |\vec{\mathbf{r}}|\geq \omega K}\begin{cases}\log \Big( \mathbb{P}\Big(\sup_{t\in [1,M^{1-2\kappa}]}Y^{(\alpha)}_{\log t}<0\Big)^{\psi_{\vec{\mathbf{r}}}}\Big) & a= - \\
\log \Big( \mathbb{P}\Big(\sup_{t\in [1,M^{1-2\kappa}]}Y^{(0)}_{\log t}<0\Big)^{\psi_{\vec{\mathbf{r}}}}\Big) & a=+
\end{cases}
\\
 \leq \qquad &\begin{cases}
     - 2b_{\alpha} & a =-\\
      - 2b_0 & a=+
 \end{cases} 
 \quad \text{as } M\to \infty, \kappa, \omega\to 0
\end{align*}
where $\psi_{\vec{\mathbf{r}}}:=2(|\tilde{\mathfrak{J}}_{\vec{\mathbf{r}},\lessgtr}| +\frac{1}{1-2\kappa}(\sum_{i \in [N]}(s_{\tilde{r}_i} -\tilde{r}_i))-\frac{c_2}{\beta(1-2\kappa) }\omega K )$. The last inequality follows by combining \eqref{eq:mathfrakJ-s-r}  with also letting $\omega \to 0$.
Combining \eqref{eq:FinalIneqForUpperBound}, \eqref{eq:mathfrakC} and \eqref{eq:mathfrakD} yields that 
\[\varlimsup_{n\to \infty}\frac{1}{\log n} \log \mathbb{P}(Q_n(x)<0, x\in \mathbb{R})\leq - 2(b_{\alpha}+b_0).\]


\subsubsection{Proof of Lemma \ref{lem:clm}}
We first state the following definition and a lemma, which we will use to verify Lemma \ref{lem:clm}. We defer the proof of the lemma to Section~\ref{sec:Lemma_4.4_convergence}. 

\begin{defn}\label{def:Y}
Set for $a\in \{-,+\}$, $$h_{n,r,M }^{(a),\ell}(t):=\sum_{i\in \mathcal{I}_\ell^{(a)}} R(n) e^{it\tau},\text{ where }\tau:=\tau_n(r,M)=\frac{1}{n^\delta M^{r-\delta}}.$$
Also, for any integer $s\in \Z$ and $t\ge 0$ set $$h_{s,M,\alpha}(t):=\int_{M^{s+\delta-1}}^{M^{s+\delta}} x^\alpha e^{-xt} dx.$$

Fixing $a\in\{-,+\}, \ell, r \in [K]$, define a process on $\{-,+\}\times [1,M^{1-2\delta}]$ by setting
$$Y_{n,r,M}^{(a),\ell}(b,t):=\frac{Q_n^{(a),\ell}(b e^{at\tau})}{\sigma_n(at\tau)}.$$

Also let $\{Y_{s,M}^{(a)}(b,.)\}_{b\in \{+,-\}}$ be i.i.d.~centered Gaussian processes, with
\begin{align*}
Corr(Y_{s,M}^{(a)}(b,t_1), Y_{s,M}^{(a)}(b,t_2))=\frac{h_{s,M}^{(a)}(t_1+t_2)}{\sqrt{h_{s,M}^{(a)}(2t_1) h_{s,M}^{(a)}(2t_2)}} 
\end{align*}
where 
\begin{align*}
    h_{s,M}^{(a)}(t) := \begin{cases}
        h_{s,M,\alpha} (t) & \text{ if }a=-1,\\
        h_{s,M,0} (t) & \text{ if }a=0.
    \end{cases}
\end{align*}
\end{defn}

 \begin{lem}\label{lem:weak}
For any  $a\in \{+,-\}$ and $(r_n,\ell_n)\in [K]$ with $\ell_n-r_n\to s\in \Z$, we have 
$$\{Y_{n,r_n,M}^{(a),\ell_n}(b,t), b\in \{-,+\}, t\in [1,M^{1-2\delta]}\}\stackrel{d}{\to} \{Y_{s,M}^{(a)}(b,t), b\in \{-,+\}, t\in [1,M^{1-2\delta}]\}.$$
where the convergence is in the weak topology of $\mathcal{C}([1,M^{1-2\delta}]^{\otimes 2})$. 
\end{lem}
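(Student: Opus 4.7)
The plan is the standard two-step approach to weak convergence on $C([1,M^{1-2\delta}])^{\otimes 2}$: (i) convergence of finite-dimensional distributions via a weighted Lindeberg CLT, and (ii) tightness via a second-moment (Kolmogorov) estimate. Writing
$$Y_{n,r_n,M}^{(a),\ell_n}(b,t) = \sum_{i \in \mathcal{I}^{(a)}_{\ell_n}} c_{n,i}(b,t)\,\xi_i, \qquad c_{n,i}(b,t) := \frac{\sqrt{R(i)}\,b^i e^{iat\tau}}{\sigma_n(at\tau)},$$
every finite linear combination of process values is a weighted sum of i.i.d.\ centered variables, so by the Cram\'er--Wold device finite-dimensional convergence reduces to computing the limiting covariance and verifying the Lindeberg ratio $m_n^2 := \max_i d_{n,i}^2 / \sum_j d_{n,j}^2 \to 0$. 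The latter condition is enough under i.i.d.\ finite variance, since the Lindeberg sum is then bounded by $E[\xi_1^2 \mathbf{1}\{|\xi_1| > \varepsilon/m_n\}]$, which vanishes by dominated convergence once $m_n \to 0$.

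To identify the within-sign limit covariance, the substitution $x = i\tau$ in
$$\mathrm{Cov}(Y_n(b,t_1),Y_n(b,t_2)) = \frac{\sum_{i \in \mathcal{I}^{(a)}_{\ell_n}} R(i)\,e^{a(t_1+t_2)i\tau}}{\sigma_n(at_1\tau)\sigma_n(at_2\tau)}$$
converts the numerator into a Riemann sum for the integral defining $h^{(a)}_{\ell_n-r_n,M}(t_1+t_2)$; Potter's bounds handle the slowly varying factor $L$ in $R(i) = i^\alpha L(i)$ uniformly over $\mathcal{I}^{(a)}_{\ell_n}$, and since $\ell_n - r_n \to s$ the limit matches the desired correlation $h_{s,M}^{(a)}(t_1+t_2)/\sqrt{h_{s,M}^{(a)}(2t_1)\,h_{s,M}^{(a)}(2t_2)}$, with the common scaling absorbed into the $\sigma_n$ normalization. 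For the asymptotic independence between $b=+$ and $b=-$, the cross-sign covariance
$$\mathrm{Cov}(Y_n(+,t_1),Y_n(-,t_2)) = \frac{\sum_i R(i)(-1)^i\,e^{a(t_1+t_2)i\tau}}{\sigma_n(at_1\tau)\sigma_n(at_2\tau)}$$
is estimated by Abel summation: pairing consecutive terms, each pair contributes at most $|R(i+1)e^{a(t_1+t_2)(i+1)\tau} - R(i)e^{a(t_1+t_2)i\tau}| \lesssim R(i)e^{a(t_1+t_2)i\tau}(\tau + \alpha/i)$, so the alternating sum is $O(n^{-\delta})$ times the non-alternating one and the cross-covariance vanishes in the limit. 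The Lindeberg ratio is controlled analogously: the denominator is $\Theta(1)$ by the covariance calculation, while each $c_{n,i}^2$ carries an additional factor of $|\mathcal{I}^{(a)}_{\ell_n}|^{-1} = O(n^{-\delta})$.

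For tightness I would apply Kolmogorov's criterion by estimating $E|Y_n(b,t) - Y_n(b,s)|^2 = \sum_i (c_{n,i}(b,t) - c_{n,i}(b,s))^2$. Differentiating, $\partial_t c_{n,i}(b,t) = c_{n,i}(b,t)\,[\,ia\tau - a\tau\,\sigma_n'(at\tau)/\sigma_n(at\tau)\,]$; since $i\tau \in [M^{\ell_n - r_n - 1 + \delta}, M^{\ell_n - r_n + \delta})$ is uniformly bounded on $\mathcal{I}^{(a)}_{\ell_n}$ and the logarithmic derivative of $\sigma_n$ contributes only $O(\tau) \to 0$, we get $|\partial_t c_{n,i}(b,t)| \leq C|c_{n,i}(b,t)|$ uniformly in $n,i,t$, which integrated along $[s,t]$ and summed over $i$ yields $E|Y_n(b,t) - Y_n(b,s)|^2 \leq C|t-s|^2$, meeting Kolmogorov's criterion with room to spare. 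The main technical obstacle will be the uniform handling of the slowly-varying factor $L$ in the Riemann sum approximation---for which Potter's bounds are essential---together with the quantitative Abel cancellation giving the cross-sign independence, both relying on the smoothness of $R(i)e^{a(t_1+t_2)i\tau}$ on the scale $1/\tau$.
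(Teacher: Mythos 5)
Your proposal is correct and follows essentially the same two-step route as the paper: Cram\'er--Wold plus a Lindeberg--Feller CLT for the finite-dimensional distributions (with the covariance limit identified through the Riemann-sum/regular-variation comparison that the paper packages into Lemma~\ref{lem:weak*}, and the cross-sign cancellation for $b_1\neq b_2$ from the smoothness of $R(i)e^{ia(t_1+t_2)\tau}$ on scale $1/\tau$), together with a Kolmogorov second-moment estimate for tightness (the paper's Lemma~\ref{lem:tightness}). One small slip: the logarithmic derivative $\partial_t\log\sigma_n(at\tau)$ is $O(1/t)=O(1)$, not $O(\tau)$, since $\sigma_n^2(at\tau)$ scales polynomially in $t$; this does not affect your conclusion $|\partial_t c_{n,i}(b,t)|\le C|c_{n,i}(b,t)|$ (with $C$ depending on $M$ and the fixed $s$), so the tightness argument stands.
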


\begin{proof}[Proof of Lemma \ref{lem:clm}]
\begin{enumerate}
\item[(a)]

{\bf Proof of \eqref{eq:clm1}}

Recall the definition of the sets $\Xi^{(a)}_{n,r}(\delta)$ from \eqref{eq:backlog}. Using the independence of the sets $\{{\Xi}^{(a)}_{n,r}(\delta), r\in \vec{r}\}$  we have
$$\mathbb{P}\Big(\bigcap_{r\in \vec{\mathbf{r}}}{\Xi}^{(a)}_{n,r}(\delta)\Big)= \prod_{i=1}^{|\vec{\mathbf{r}}|}\mathbb{P}\Big(\bigcap_{r\in \vec{\mathbf{r}}}{\Xi}^{(a)}_{n,r}(\delta)\Big).$$
 Lemma \ref{lem:weak} implies that for any given  $a\in \{-,+\}$,  $$\max_{b\in \{-,+\}}\sup_{t\in [1,M^{1-2\kappa}]}Y_{n,r,M}^{(a),r}(b,t)\stackrel{d}{\to}\max_{b\in \{-,+\}}\sup_{t\in [1,M^{1-2\kappa}]}Y_{0,M}^{(a)}(b,t).$$ As a result, given any $h>0$ there exists $n_0$ depending on $(M, \omega, h)$, such that for all $n\ge n_0$ 
\begin{align*}
\mathbb{P}\Big(\Xi^{(a)}_{n,r}(\delta)\Big) &=\P\Big(\max_{b\in \{-,+\}}\sup_{t\in [1,M^{1-2\kappa}]}Y_{n,r,M}^{(a),r}(b,t)<\delta\Big)
\\ &\le (1+h) \P\Big(\max_{b\in \{+,-\}}\sup_{t\in [1,M^{1-2\kappa}]}Y_{0,M}^{(a)}(b,t)<\delta\Big).
\end{align*}
Combining the last two displays, the desired conclusion follows.
\\

\item[(b)]

{\bf Proof of \eqref{eq:clm2}}

We prove the lemma for $a=-$, noting that the proof of $a=+$ follows by a similar argument.
\\

For any set finite set $A$, we denote the number of elements in $A$ by $|A|$. If the event $\Big\{\bigcap_{r\in \mathfrak{J}_{\vec{\mathbf{r}}}}{\Xi}^{(a)}_{n,r}(\delta)\cap \bigcap_{r\in \vec{\mathbf{r}}} \mathcal{K}^{(a)}_{n,\delta}(r)\Big\}$ occurs, for every $r\in  \vec{\mathbf{r}}$ there exists $s_r\in [K]$ such that $\mathcal{K}^{(a)}_{n,\delta}(s_r,r)$ occurs. Also, since $\mathrm{length}(\vec{\mathbf{r}})\ge \omega K$, we either have $|r\in \vec{\mathbf{r}}:s_r>r|\ge \frac{\omega K}{2}$ or  $|r\in \vec{\mathbf{r}}:s_r<r|\ge \frac{\omega K}{2}$. Without loss of generality assume that we are in the first case. In this case we construct a subset $\widetilde{\mathcal{L}}_{\vec{\mathbf{r}},>}$ of the set $\mathcal{L}_{\vec{\mathbf{r}},>}:=\{r\in\vec{\mathbf{r}}:s_r>r\}$, using the following algorithm which we call as `\emph{move, flush and repeat}':

\begin{itemize}
\item 
\textbf{Move:} Pick the smallest element in the set $\mathcal{L}_{\vec{\mathbf{r}},>}$, say $r$, and put it in the subset $\widetilde{\mathcal{L}}_{\vec{\mathbf{r}},>}$.  By definition of $\mathcal{L}_{\vec{\mathbf{r}},>}$ we have $s_r>r$. 

\item
\textbf{Flush:} Remove all elements of $\mathcal{L}_{\vec{\mathbf{r}},>}$ in the interval $[r,s_r]$.

\item
\textbf{Repeat:} If $\mathcal{L}_{\vec{\mathbf{r}},>}$ is empty, stop and output the subset $\widetilde{\mathcal{L}}_{\vec{\mathbf{r}},>}$. If not, go back to step 1.

\end{itemize}

With this construction, we have $\widetilde{\mathcal{L}}_{\vec{\mathbf{r}},>}\subset {\mathcal{L}}_{\vec{\mathbf{r}},>}$. Let $N:=|\widetilde{\mathcal{L}}_{\vec{\mathbf{r}},>}|$, and let $\{\tilde{r}_1,\ldots,\tilde{r}_N\}$ denote the elements of $\widetilde{\mathcal{L}}_{\vec{\mathbf{r}},>}$. Also, set 
$\mathfrak{K}_{\vec{\mathbf{r}},>}:=\{s_{\tilde{r}_i}, 1\le i\le N\},$ and note that the set $\mathfrak{K}_{\vec{\mathbf{r}},>}$ necessarily consists of distinct elements in $[K]$. Thus the number of choices for the set $\mathfrak{K}_{\vec{\mathbf{r}},>}$ is at most $2^K$. Set
\begin{align}\label{eq:J-definition}
\tilde{\mathfrak{J}}_{\vec{\mathbf{r}},>}:=\bigcup_{i=0}^{N} [s_{\tilde{r}_i}+1,\tilde{r}_{i+1}],
\end{align}
where  $s_{\tilde{r}_0}:=\omega K/2$, and $\tilde{r}_{N+1}:=(1-\omega/2)K$.
Setting $c_1:=1+\alpha-\delta$ and $c_2:= (\alpha+3)\delta$ we have 
\begin{align}\label{eq:clm4}
\notag&\P\left(\bigcap_{r\in \mathfrak{J}_{\vec{\mathbf{r}}}}{\Xi}^{(a)}_{n,r}(\delta)\cap \bigcap_{r\in \vec{\mathbf{r}}} \mathcal{K}^{(a)}_{n,\delta}(r)\right)\\
\notag\le &2\sum_{ \mathfrak{K}_{\vec{\mathbf{r}},>}} \P\left( \bigcap_{r\in {\mathfrak{J}}_{\vec{\mathbf{r}},>}}{\Xi}^{(a)}_{n,r} \cap  \bigcap_{i=1}^N \mathcal{K}_{n,\delta}^{(a)}(s_{\tilde{r}_i},\tilde{r}_i)\right)\\
\notag= &2\sum_{ \mathfrak{K}_{\vec{\mathbf{r}},>}} \prod_{r\in   {\mathfrak{J}}_{\vec{\mathbf{r}},>}}\P({\Xi}^{(a)}_{n,r} )  \prod_{i=1}^N \P( \mathcal{K}_{n,\delta}^{(a)}(s_{\tilde{r}_i},\tilde{r}_i))\\
\notag\le & 2[(1+h)C]^{ |\tilde{\mathfrak{J}}_{\vec{\mathbf{r}},>}|} \P(\sup_{t\in [1,M^{1-2\kappa}]}Y_{M,0}^{(a)}(b,t)<\delta)^{2| |\tilde{\mathfrak{J}}_{\vec{\mathbf{r}},>}|}\\
\notag& \times \prod_{i\in  [N]:s_{\tilde{r}_i}-\tilde{r}_i\le \omega^{-1}}\frac{1}{M^{  c_1(s_{\tilde{r}_i}-\tilde{r}_i)+c_2}} 
\prod_{i\in [N]: s_{\tilde{r}_i}-\tilde{r}_i>\omega^{-1}}\frac{1}{M^{  c_1(s_{\tilde{r}_i}-\tilde{r}_i)-c_2}}
\end{align}
The right hand side of the last inequality could be further written as 
\begin{align*}
 & 2[(1+h)C]^{ |\tilde{\mathfrak{J}}_{\vec{\mathbf{r}},>}|} \P(\sup_{t\in [1, M^{1-2\kappa}]}Y_{0,M}^{(a)}(b,t)<\delta)^{2| |\tilde{\mathfrak{J}}_{\vec{\mathbf{r}},>}|} \\
 &\exp\Big(-\log M\Big[\sum_{i\in[N]:  s_{\tilde{r}_i}-\tilde{r}_i\le \omega^{-1}}[c_1( s_{\tilde{r}_i}-\tilde{r}_i)+c_2]\\ &+\sum_{i\in [N]:  s_{\tilde{r}_i}-\tilde{r}_i> \omega^{-1}}[c_1(s_{\tilde{r}_i}-\tilde{r}_i)-c_2]\Big]\Big).   
\end{align*}
The fits inequality in the above display follows from the union bound. The following equality holds true $\{{\Xi}^{(a)}_{n,r}(\delta): r\in \mathfrak{J}_{\vec{\mathbf{r}}}\}$ and $\{\mathcal{K}_{n,\delta}^{(a)}(s_{\tilde{r}_i},\tilde{r}_i): r\in \mathfrak{J}_{\vec{\mathbf{r}}}\}$ are mutually independent by the construction of set $\widetilde{\mathcal{L}}_{\vec{\mathbf{r}},>}$. The second inequality follows by noticing that $$\P(\Xi^{(a)}_{n,r}(\delta)) \to \P(\sup_{t\in [1,M^{1-2\kappa}]}Y_{M,0}^{(a)}(b,t)<\delta)$$ as $n\to \infty$. Here there exists $n_0\in \mathbb{N}$ depending on $(M,\omega, h)$ such that $\P(\Xi^{(a)}_{n,r}(\delta))$ is less than $(1+h)\P(\sup_{t\in [1,M^{1-2\kappa}]}Y_{M,0}^{(a)}(b,t)<\delta)$ for all $n\geq n_0$. Furthermore, $\P( \mathcal{K}_{n,\delta}^{(a)}(s_{\tilde{r}_i},\tilde{r}_i))$ are bounded by $M^{-( c_1(s_{\tilde{r}_i}-\tilde{r}_i)+c_2)}$ for $s_{\tilde{r}_i}-\tilde{r}_i\le \omega^{-1}$ by Lemma~\ref{lem:tail_bound} and bounded by $M^{ -(c_1(s_{\tilde{r}_i}-\tilde{r}_i)-c_2)}$ for $s_{\tilde{r}_i}-\tilde{r}_i\ge \omega^{-1}$ by Lemma~\ref{lem:ex_sup}.   
To bound the exponent in the RHS of \eqref{eq:clm4}, using $s_{\tilde{r}_0}=\omega K/2$, and $\tilde{r}_{N+1}=(1-\omega/2)K$ we note 
\begin{align}
|\tilde{\mathfrak{J}}_{\vec{\mathbf{r}},>}|+\sum_{i\in  [N]}(s_{\tilde{r}_i}-\tilde{r}_i)+N=(1-\omega)K.
\end{align}
As a result, we get
$$\sum_{i\in [N]:  s_{\tilde{r}_i}-\tilde{r}_i> \omega^{-1}}(  s_{\tilde{r}_i}-\tilde{r}_i)\le K,\text{ which implies }\Big| i\in [N]:  s_{\tilde{r}_i}-\tilde{r}_i> \omega^{-1}\Big|\le \omega K.$$
Combining the above two displays, the exponent of $\log M$ in the second term in the RHS of \eqref{eq:clm4} equals
\begin{align*}
&c_1\sum_{i\in [N]}(  s_{\tilde{r}_i}-\tilde{r}_i)-c_2\Big|i\in [N]:s_{\tilde{r}_i}-\tilde{r}_i\le   \omega^{-1}\Big|-c_2\Big|i\in [N]:  s_{\tilde{r}_i}-\tilde{r}_i>  \omega^{-1}\Big|\\
=&c_1\sum_{i\in [N]}( s_{\tilde{r}_i}-\tilde{r}_i)+ c_2N-2c_2\Big|i\in [N]:s_{\tilde{r}_i}-\tilde{r}_i> \omega^{-1}\Big|\\
\ge &c_1\sum_{i\in [N]}( s_{\tilde{r}_i}-\tilde{r}_i)-2c_2\omega K.
\end{align*}
Along with \eqref{eq:clm4}, this gives
\begin{align*}
&\P\left(\bigcap_{r\in \mathfrak{J}_{\vec{\mathbf{r}}}}{\Xi}^{(a)}_{n,r}(\delta)\cap \bigcap_{r\in \vec{\mathbf{r}}} \mathcal{K}^{(a)}_{n,\delta}(r)\right)\\
\le& 2[C(1+h)]^{ |\tilde{\mathfrak{J}}_{\vec{\mathbf{r}},>}|} \P\Big(\sup_{t\in [1,M^{1-2\kappa}]}Y_{0,M}^{(a)}(b,t)<\delta\Big)^{2| |\tilde{\mathfrak{J}}_{\vec{\mathbf{r}},>}|}\\ & \times 
M^{-c_1\sum_{i\in[N]} (s_{\tilde{r}_i}-\tilde{r}_i)+c_2\omega K},
\end{align*}
which completes the proof of part (ii).
\end{enumerate}
\end{proof}

\subsubsection{Proof of Lemma \ref{lem:weak}}\label{sec:Lemma_4.4_convergence}
Before proceeding to the proof Lemma~\ref{lem:weak}, we derive few facts about the process $Y^{(a),\ell}_{n,r, M}(b,t)$ (see Definition~\ref{def:Y}) in Lemma~\ref{lem:weak*}. We use these facts in our proof of Lemma~\ref{lem:weak}. We first state Lemma~\ref{lem:weak*} and use it prove Lemma~\ref{lem:weak} and thereafter, complete the proof of Lemma~\ref{lem:weak*}. In the proof of Lemma~\ref{lem:weak}, one needs tightness of the processes $Y^{(a),\ell}_{n,r, M}(b,t)$ which we prove in Lemma~\ref{lem:tightness}.

\begin{lem}\label{lem:weak*}

Set for $a\in \{-,+\}$. Recall $h_{n,r,M }^{(a),\ell}(t)$, $h_{s,M,\alpha}(t)$ and $\tau=\tau_n(r,M)$ from Definition~\ref{def:Y}.


\begin{enumerate}
\item[(1)] Consider the case $a=-$.

(i) For all $n$ large enough (depending on $\delta$ and $L(.)$) we have
 $$M^{-(|\ell-r|+1)\delta}\frac{L(n^\delta M^\ell)}{\tau^{\alpha+1} }h_{n,r,M}(t) \lesssim h_{n,r,M}^{(-),\ell}(t)\lesssim M^{(|\ell-r|+1)\delta}\frac{L(n^\delta M^\ell)}{\tau^{\alpha+1} }h_{\ell-r,M,\alpha}(t).$$

(ii)  For any positive integer $\kappa$  we have 
\begin{align}
\lim_{n\to\infty}\max_{r,\ell\in [K]: |r-\ell|\le \kappa}\left|\frac{\tau^{\alpha+1} h_{n,r,M}^{(-),\ell}(t)}{L(n^\delta M^r)h_{\ell-r,M,\alpha}(t)}-1\right|=0.
\end{align}

(iii) If $b_1=b_2$, for any positive integer $\kappa$ and $t_1,t_2\in[1,M^{1-2\delta}]$ we have
$$\lim_{n\to\infty}\max_{r,\ell\in [K], |r-\ell|\le \kappa}\left|\frac{Cov\Big( \frac{Q_n^{(-),\ell}(b_1 e^{at_1 \tau})}{\sigma_n(at_1 \tau)},  \frac{Q_n^{(-),\ell}(b_2 e^{at_2\tau})}{\sigma_n(at_2\tau)}\Big)}{C_{s,M,\alpha}(t_1,t_2)}-1\right|=0,$$
{ where } $C_{s,M,\alpha}(t_1,t_2):= \frac{h_{s,M,\alpha}(t_1+t_2)}{\sqrt{h_{0,M,\alpha }(2t_1) h_{0,M,\alpha }(2t_2)}}$ for $t_1,t_2\in [1,M^{1-2\delta}]$.

\item[(2)] Consider the case $a=+$.

 (i) For all $n$ large enough (depending on $\delta$ and $L(.)$) we have
 $$ h_{n,r,M}^{(+),\ell}(t)\asymp  \frac{R(n) e^{nt\tau}}{\tau} h_{\ell-r,M,0}(t).$$

(ii) For any $M>0$,  $t\in [1,M^{1-2\delta}]$ and positive integer $\kappa$  we have 
$$\lim_{n\to\infty}\max_{r,\ell\in [K]: |r-\ell|\le \kappa}\left|\frac{\tau h_{n,r,M}^{(+),\ell}(t)}{R(n)e^{nt\tau}h_{\ell-r,M,0}(t)}-1\right|=0.$$

(iii) If $b_1=b_2$, for any positive integer $\kappa$ and $t_1,t_2\in [1,M^{1-2\delta}]$ we have
$$\lim_{n\to\infty}\max_{r,\ell\in [K]:|r-\ell|\le \kappa}\left|\frac{Cov\Big( \frac{Q_n^{(+),\ell}(b_1 e^{at_1 \tau})}{\sigma_n(at_1 \tau)},  \frac{Q_n^{(+),\ell}(b_2 e^{at_2\tau})}{\sigma_n(at_2\tau)}\Big)}{C_{\ell-r,M,0}(t_1,t_2)}-1\right|=0. $$

\item[(3)] If $b_1\ne b_2$ then for any $t_1,t_2\in [1,M^{1-2\delta}]$ and positive integer $\kappa$ we have
$$\lim_{n\to \infty}\max_{r,\ell\in [K]:|r-\ell|\le \kappa}\Big|Cov\Big( \frac{Q_n^{(a),\ell}(b_1 e^{at_1 \tau})}{\sigma_n(at_1 \tau)},  \frac{Q_n^{(a),\ell}(b_2 e^{at_2\tau})}{\sigma_n(at_2\tau)}\Big)\Big|= 0.$$

\end{enumerate} 

\end{lem}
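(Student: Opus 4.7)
The proof of Lemma~\ref{lem:weak*} is in essence a careful asymptotic analysis of the discrete sums $h_{n,r,M}^{(a),\ell}(t) = \sum_{i\in \mathcal{I}_\ell^{(a)}} R(i) e^{ait\tau}$ (and of the corresponding covariances obtained after normalization by $\sigma_n$). The plan is to approximate these sums by Riemann integrals after a scaling substitution $x=i\tau$, and to control the resulting regularly-varying factors via Potter's bounds and the uniform convergence theorem for slowly varying functions. Parts (1), (2) and (3) are each handled by this same general scheme, with the delicate bookkeeping changing between the low-degree regime $a=-$, the high-degree regime $a=+$, and the alternating-sign regime $b_1\neq b_2$.

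For Part (1), where $a=-$ and $i\in\mathcal{I}_\ell^{(-)}=[n^\delta M^{\ell-1},n^\delta M^\ell)$, I would substitute $x=i\tau$ so that $x\in[M^{\ell-r+\delta-1},M^{\ell-r+\delta})$, write $R(i)=(x/\tau)^\alpha L(x/\tau)$, and note that $L(x/\tau)=L(n^\delta M^{r-\delta}x)$ is within a factor $M^{\pm|\ell-r|\delta}$ of $L(n^\delta M^\ell)$ for large $n$ by Potter's bounds. A Riemann sum comparison then yields (1)(i) with the $M^{\pm(|\ell-r|+1)\delta}$ slack absorbing both the regular-variation error and the boundary effect of the Riemann sum. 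When $|\ell-r|\le \kappa$ one has $L(n^\delta M^\ell)/L(n^\delta M^r)\to 1$ by slow variation, which sharpens (i) to the exact asymptotic in (ii). Part (iii) then follows by dividing the asymptotic of $h_{n,r,M}^{(-),\ell}(t_1+t_2)$ by the product $\sigma_n(-t_1\tau)\sigma_n(-t_2\tau)$, which is obtained by applying the identical Riemann-sum procedure to the full-variance sum $\sum_i R(i)e^{-2it\tau}$; the slow-variation factors $L(n^\delta M^r)$ cancel, leaving the ratio stated with $C_{s,M,\alpha}(t_1,t_2)$.

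Part (2) is essentially dual to Part (1). Writing $i=n-j$ with $j\in[n^\delta M^{\ell-1},n^\delta M^\ell)$, one has $R(n-j)/R(n)\to 1$ uniformly in $j$ because $j/n\to 0$ and $R$ is regularly varying, and $e^{it\tau}=e^{nt\tau}e^{-jt\tau}$. Substituting $x=j\tau$ and applying the same Riemann-sum plus slow-variation argument produces the approximation $h_{n,r,M}^{(+),\ell}(t)\sim R(n)e^{nt\tau}h_{\ell-r,M,0}(t)/\tau$, which gives (i) and (ii). For (iii) one divides by $\sigma_n(t_1\tau)\sigma_n(t_2\tau)$, whose asymptotic $\sigma_n(t\tau)^2\sim R(n)e^{2nt\tau}/(2t\tau)$ is obtained in the same way by applying the substitution $j=n-i$ to the full variance; the $R(n)$ and exponential factors cancel and the ratio converges to $C_{\ell-r,M,0}(t_1,t_2)$.

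Part (3), the case $b_1\neq b_2$, is the main technical obstacle because now $(b_1b_2)^i=(-1)^i$ introduces an alternating sign that requires one to extract cancellation between consecutive terms. The plan is to use Abel summation: writing $f_i:=R(i)e^{ai(t_1+t_2)\tau}$ and $S_k:=\sum_{i\le k}(-1)^i\in\{0,-1\}$, one rewrites $\sum_{i\in \mathcal{I}_\ell^{(a)}}(-1)^i f_i$ as $\sum_i S_i(f_i-f_{i+1})+\text{boundary}$ and estimates the increments $f_i-f_{i+1}$. Using $R(i+1)/R(i)=1+O(1/i)$ (regular variation) and $e^{a\tau}=1+O(\tau)$, each increment is smaller than $f_i$ by a factor of order $\tau+1/i=O(M^{-r+\delta}n^{-\delta})$, which is $o(1)$ uniformly in the range $|\ell-r|\le\kappa$. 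Hence the alternating analogue of $h_{n,r,M}^{(a),\ell}(t_1+t_2)$ is of strictly smaller order than the non-alternating analogue, and after normalization by $\sigma_n(at_1\tau)\sigma_n(at_2\tau)$ the covariance tends to zero. The bookkeeping that needs the most care is showing that the Abel summation error remains negligible uniformly as $r,\ell$ range over $[K]$ with $|r-\ell|\le \kappa$; this is handled exactly as in parts (1)--(2), again by Potter's bounds, since after Abel summation the extracted differences are still regularly varying with an explicit power of $\tau$ in front.
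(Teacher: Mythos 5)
Your proposal matches the paper's proof in all essential respects: for parts (1) and (2) you use the same scale substitution $x = i\tau$, Riemann-sum approximation of the discrete sums, and slow-variation (Potter) bounds to obtain the asymptotics of $h_{n,r,M}^{(a),\ell}(t)$, from which (ii) and (iii) follow by cancellation of the $L(n^\delta M^r)$ and $R(n)e^{nt\tau}$ factors in the ratio. For part (3) you make explicit, via Abel summation, the cancellation mechanism that the paper invokes tersely through the uniform bound $\max_{\ell,i}|R(i)/R(i-1)-1|\to 0$: the alternating numerator in the factorization \eqref{eq:corr_pn} is $o$(non-alternating sum) while the second factor stays bounded by parts (1)--(2), so the correct conclusion follows.
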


\begin{proof}[Proof of Lemma \ref{lem:weak}]
We show the desired convergence of the stochastic process by checking convergence of finite dimensional distributions and tightness.

 \noindent{\bf Step 1 - Convergence of finite dimensional distributions:}
For showing convergence of finite dimensional distributions, 
we will show that for any real vector $(t_1,\cdots,t_{k})\in [1,M^{1-2\delta}]^{k}$ we have
$$\{Y_{n,r_n,b,M}^{(a),\ell_n}(t_1),\cdots,{Y}_{n,r_n,b,M}^{(a),\ell_n}(t_k), b\in \{+,-\}\}\stackrel{D}{\rightarrow}\{Y_{s,b,M}^{(a)}(t_1),\cdots,Y_{s,b,M}^{(a)}(t_k), b\in \{+,-\}\}.$$
For this, fixing  $\gamma:=(\gamma_{1,b},\cdots,\gamma_{k,b}, b\in \{+,-\})\in\R^{2k}$  it suffices to show that
\begin{align*}
A_n(\gamma) &:=\sum_{b\in \{+,-\}}\sum_{d=1}^k \gamma_{d,b} Y_{n,r_n,b,M}^{(a),\ell_n}(t_d)\\ &\stackrel{D}{\rightarrow} N\Big(0,\sum_{b\in \{+,-\}} \sum_{d,d'=1}^k \gamma_{d,b} \gamma_{d',b} Cov(Y_{s,b,M}^{(a)}(t_d), Y_{s,b,M}^{(a)}(t_{d'}))\Big).
\end{align*}
To this effect, note that
\begin{align*}
A_{n}(\gamma)=&\sum_{b\in \{+,-\}}\sum_{d=1}^k \gamma_{d,b}\sum_{i\in {\mathcal{I}}_{\ell_n}^{(a)} }\frac{\xi_i b^i e^{iat_d \tau}}{\sigma_n(at_d \tau(r_n,M))}\\
=&\sum_{i\in {\mathcal{I}}_{\ell_n}^{(a)} } \xi_i \left[\sum_{b\in \{+,-\}}\sum_{d=1}^k \gamma_{d,b}\frac{b^i e^{iat_d\tau_n(r_n,M)}}{\sigma_n(at_d \tau_n(r_n,M))}\right]\\
=&\sum_{i\in {\mathcal{I}}_{\ell_n}^{(a)} } \xi_iV_n(i), \quad V_n(i):= \left[\sum_{b\in \{+,-\}}\sum_{d=1}^k \gamma_{d,b}\frac{b^i e^{iat_d\tau_n(r_n,M)}}{\sigma_n(at_d \tau_n(r_n,M))}\right]
\end{align*}
 is a sum of independent components with mean $0$. To verify the convergence in distribution of $A_{n}(\gamma)$ we will use the Lindeberg Feller CLT.  Since
\begin{align*}
Var(A_{n}(\gamma))
=& \sum_{b,b'\in \{+,-\}} \sum_{d,d'=1}^k \gamma_{d,b}\gamma_{d',b'} Cov\Big( \frac{Q_n^{(a),r_n}(b e^{t_d \tau})}{\sigma_n(at_d \tau_n(r_n,M))},  \frac{Q_n^{(a),r}(b' e^{t_{d'}\tau_n(r_n,M)})}{\sigma_n(at_{d'}\tau)}\Big),
\end{align*}
on taking $n\to\infty$ and using parts (a), (b), (c) of Lemma \ref{lem:weak*} we get
\begin{align}\label{eq:var}
\lim_{n\rightarrow\infty}Var(A_n(\gamma))=\sum_{b\in \{+,-\}} \sum_{d,d'=1}^k \gamma_{d,b} \gamma_{d',b} Cov\Big( Y^{(a)}_{s,b,M}(t_d), Y^{(a)}_{s,b,M}(t_{d'})\Big).
\end{align} 
To complete the proof, it suffices to verify the Lindeberg Feller condition, which in this case is equivalent to verifying that for every $\psi>0$ we have
\begin{align*}
\lim_{n\rightarrow\infty}\sum_{i \in {\mathcal{I}}_{\ell_n}^{(a)}}R(i)V_n(i)^2\E \tilde{\xi_i}^2 1\{|\tilde{\xi}_i|\sqrt{R(i)}V_n(i)>\psi\}=0,
\end{align*}
where $\tilde{\xi}_i:=\frac{\xi_i}{\sqrt{R(i)}}$ has mean $0$ and variance $1$.
We now claim that 
\begin{align}\label{eq:psi}
\psi_n:=\max_{ i\in {\mathcal{I}}_{\ell_n}^{(a)}}|V_n(i)|\sqrt{R(i)}\rightarrow 0.
\end{align}

Before proceeding to the proof of \eqref{eq:psi}, let us show how this this claim proves the Lindeberg Feller condition. Notice that 
\begin{align*}
\sum_{i\in {\mathcal{I}}_{\ell_n}^{(a)}} & V_n(i)^2R(i)\E \tilde{\xi}_i^2 1\{|\tilde{\xi}_iV_n(i) \sqrt{R(i)}|>\psi\} \\ &\le \Big( \max_{i\in {\mathcal{I}}_{\ell_n}^{(a)}}\E \tilde{\xi_i}^2 1\Big\{|\tilde{\xi}_i|>\frac{\psi}{\psi_n}\Big\} \Big)\Big(\sum_{i\in {\mathcal{I}}_{\ell_n}^{(a)}} V_n(i)^2R(i)\Big).
 \end{align*}
 By our assumption, $\{\tilde{\xi}^2_i\}_{i\geq 0}$ are uniformly integrable which implies that $$\max_{i\in {\mathcal{I}}_{\ell_n}^{(a)}}\E \tilde{\xi_i}^2 1\Big\{|\tilde{\xi}_i|>\frac{\psi}{\psi_n}\Big\} \to 0$$ as $n\to \infty$. To show the Lindeberg Feller condition holds, it suffices now to prove that $\sum_{i\in {\mathcal{I}}_{\ell_n}^{(a)}} V_n(i)^2R(i)$ has a finite limit as $n\to \infty$. Notice that (a), (b), (c) of Lemma \ref{lem:weak*} implies $\mathrm{Var}(A_n(\gamma)) = \sum_{i\in {\mathcal{I}}_{\ell_n}^{(a)}} V_n(i)^2R(i)$ converges to a finite limit. This proves the Lindeberg-Feller condition. It remains to show \eqref{eq:psi}.


For verifying \eqref{eq:psi}, we split the proof into two cases, depending on the value of $a$. If $a=-$, using the regular variation of $R(.)$ along with part (b) of Lemma \ref{lem:weak*}, for any $i\in {\mathcal{I}}_{\ell_n}^{(-)}$ we have 
\begin{align*}
|V_n(i)\sqrt{R(i)}|\lesssim &  \tau_n(r_n,M)^{\frac{\alpha+1}{2}}i^{\frac{\alpha}{2}}\sqrt{\frac{L(i)}{L(1/\tau)}}\sum_{d=1}^k e^{-i\tau t_d }\lesssim \tau_n(r_n,M)^{1/2},
\end{align*}
where we use the fact that $i\tau_n(r_n,M) \asymp 1$.
Similarly, if $a=+$,  using the regular variation of $R(.)$ along with part (c) of Lemma \ref{lem:weak*}, for any $i\in {\mathcal{I}}_{\ell_n}^{(+)}$ we have %
\begin{align*}
|V_n(i)\sqrt{R(i)}|\lesssim &  \tau_n(r_n,M)^{1/2} \sqrt{\frac{R(i)}{R(n)}}\sum_{d=1}^k e^{(i-n)\tau_n(r_n,M) t_d }\lesssim \tau_n(r_n,M)^{1/2}.
\end{align*}
Since $\tau_n(r_n,M)\le n^{-\delta}$, on combining the above two displays \eqref{eq:psi} follows.
\\

%

{\bf Step 2 - Tightness:}
Proceeding to show tightness in $\cC[0,M^{1-2\delta}]^{\otimes 2}$ we will invoke the Kolmogorov-Chentsov criterion, for which it suffices to show that for any $t_1,t_2$ in this interval we have
$$\E[Y_{n,r_n,b,M}^{(a),\ell_n}(t_1)-Y_{n,r_n,b,M}^{(a),\ell_n}(t_2]^2\lesssim_M (t_1-t_2)^2.$$
But this follows from Lemma \ref{lem:tightness} for any $\ell_n,r_n$.

\end{proof}

\begin{proof}[Proof of Lemma \ref{lem:weak*}]

 To begin, for any $t_1,t_2\in [1, M^{1-2\delta}]$, we have 
\begin{align}\label{eq:corr_pn}
\notag Cov\Big( \frac{Q_n^{(a),\ell}(b_1 e^{at_1 \tau})}{\sigma_n(at_1 \tau)},  \frac{Q_n^{(a),\ell}(b_2 e^{at_2\tau})}{\sigma_n(at_2\tau)}\Big)=&\frac{\sum_{i\in \mathcal{I}_\ell^{(a)}} R(i) (b_1 b_2)^i e^{ia(t_1+t_2)\tau}}{\sqrt{\sum_{i\in \mathcal{I}_r^{(a)}} R(i)e^{2iat_1\tau}}\sqrt{\sum_{i\in \mathcal{I}_r^{(a)}} R(i) e^{2iat_2\tau}}}\\
= \frac{\sum_{i\in \mathcal{I}_\ell^{(a)}} R(i) (b_1b_2)^i e^{ia(t_1+t_2)\tau}}{\sum_{i\in \mathcal{I}_\ell^{(a)}} R(i)  e^{ia(t_1+t_2)\tau}}& \frac{h_{n,r,M}^{(a),\ell}(t_1+t_2)}{\sqrt{h_{n,M,r,\alpha}^{(a),r}(2t_1) h_{n,M,r,\alpha}^{(a),r}(2t_2)}}.
\end{align}

Now we proceed to prove the claim made in the parts (1), (2), (3) of Lemma~\ref{lem:weak*}.

\begin{enumerate}
\item[(1)] 
We start by recalling that $$\mathcal{I}_\ell^{(-)}=\mathbb{Z}\cap [n^\delta M^{\ell-1}, n^\delta M^{\ell})\subset [n^\delta M^{\ell-1}, n^\delta M^{\ell}), \qquad \tau=\frac{1}{n^\delta M^{r-\delta}}.$$ Writing $R(i)=L(i)i^\alpha$ with $L(.)$ slowly varying, fixing $\varepsilon>0$ for all $n$ large (depending on $\delta$, and the function $L(.)$) we have
\begin{align}\label{eq:lslow}
M^{-(|\ell-r|+1)\omega}\le \frac{L(i)}{L(n^\delta M^r)}\le M^{(|\ell-r|+1)\omega}, \quad\quad \forall i \in \mathcal{I}_\ell^{(-)}.
\end{align}
for some $\omega>0$ which is close to $0$ as $n$ approaches to $\infty$. 
Also, noting that 
\begin{align}
\lim_{n\to\infty}\sup_{M\ge 1}\sup_{ t\in [1,M^{1-2\delta}]}\sup_{x\in [i-1,i+1]} \max_{r,\ell\in [K]}\max_{ i\in [n^\delta M^{\ell-1}, n^\delta M^\ell)}\Big| \frac{x^\alpha e^{-xt\tau}}{i^\alpha e^{-it\tau}}-1\Big|=0
\end{align}
shows us 
\begin{align}\label{eq:calculus}
\lim_{n\to\infty}\sup_{M\ge 1}\sup_{ t\in [1,M^{1-2\delta}]} \max_{r,\ell\in [K]}\left|\frac{\sum_{i\in [n^\delta M^{\ell-1}, n^\delta M^\ell)} i^\alpha e^{-it\tau}}{\int_{n^\delta M^{\ell-1}}^{n^\delta M^\ell} x^\alpha e^{-xt\tau}dx}-1\right|=0.
\end{align}
Finally, a change of variable gives
\begin{align}\label{eq:change}
\int_{n^\delta M^{\ell-1}}^{n^\delta M^\ell} x^\alpha e^{-xt\tau}dx=\frac{1}{\tau^{\alpha+1}}  \int_{ M^{\ell-r+\delta-1}}^{M^{\ell-r+\delta}} x^{\alpha} e^{-xt } dx=\frac{1}{\tau_{\alpha+1}} h_{\ell-r,M,\alpha}(t).
\end{align}

Combining \eqref{eq:lslow}, \eqref{eq:calculus} and \eqref{eq:change}, we get that for all $n$ large (depending only on $\varepsilon,\delta $ and $L(.)$) , any $M\ge 1, t\in [1,M^{1-2\delta}], r,\ell\in [K]$ we have
\begin{align*}
(1-\varepsilon) M^{-(|\ell-r|+1)\omega} \le \frac{\tau^{\alpha+1}h_{n,r,M}^{(-),\ell}(t)}{L(n^\delta M^r) h_{\ell-r,M,\alpha}(t)}
\le (1+\varepsilon) M^{(|\ell-r|+1)\omega}.
\end{align*}
The conclusion of part (i) now follows from the above inequalities since $\omega$ approaches to $0$ as $n$ goes to $\infty$. 

The conclusion of part (ii) follows on noting that $|r-\ell|\le \kappa$ stays bounded, and $M$ stays fixed. The conclusion of part (iii) follows on noting that if $b_1=b_2$ the first term in the RHS of \eqref{eq:corr_pn} equals 1, and taking ratios using part (ii).

%
%

\item[(2)] 
Recall that $$\mathcal{I}_r^{(+)}=\mathbb{Z}\cap  (n-n^\delta M^\ell, n-n^\delta M^{\ell-1}]\subset(n-n^{1-\delta}, n-n^\delta],$$ and use the fact that $R(.)$ is regularly varying to conclude that
\begin{align}\label{eq:r_reg}
\lim_{n\to\infty}\max_{r,\ell\in [K]}\max_{i\in (n-n^\delta M^\ell, n-n^\delta M^{\ell-1}]}\Big|\frac{R(i)}{R(n)}-1\Big|=0.
\end{align}
Also, note that
\begin{align}\label{eq:calculus2}
 \sum_{i \in (n-n^\delta M^\ell, n-n^\delta M^{\ell-1}]} e^{it\tau}=e^{nt\tau}\sum_{i\in [n^\delta M^{\ell-1},n^\delta M^\ell)} e^{-it\tau}.
\end{align}
Combining \eqref{eq:r_reg}, \eqref{eq:calculus2}, and \eqref{eq:calculus} with $\alpha=0$, for all $n$ large (depending only on $\varepsilon,\delta $ and $L(.)$), any $M\ge 1, t\in [1,M^{1-2\delta}], r,\ell\in [K]$ we have
\begin{align*}
(1-\varepsilon)  \le \frac{\tau h_{n,r,M}^{(+),\ell}(t)}{R(n) e^{nt\tau} h_{\ell-r,M,\alpha}(t)}
\le (1+\varepsilon).
\end{align*}
As before, all the three conclusions (i), (ii) and (iii) follow from the above display.


\item[(3)] If $b_1\ne b_2$, the first term in the RHS of \eqref{eq:corr_pn} converges to $0$, on using the fact that $$\lim_{n\to\infty}\max_{\ell \in [K]}\max_{i\in \mathcal{I}_\ell^{(a)}} \Big|\frac{R(i)}{R(i-1)}-1\Big|= 0.$$
The desired conclusion follows, on noting that the second term converges to a finite number as $n\to\infty$, on invoking parts (a) and (b).

\end{enumerate}

\end{proof}

The next two lemmas aims to show the tightness (Lemma~\ref{lem:tightness}) and the tail probabilities (Lemma~\ref{lem:ex_sup}) of the processes $Y^{(a), \ell_n}_{n,r,M}(b,t)$ for $a,b\in \{+,-\}$. The latter is shown using Proposition~\ref{ppn:gen} which derives a maximal inequality of a stochastic process based on the pointwise bound on the second moments of the stochastic process.  Finally the tail probabilities of $Y^{(a), \ell_n}_{n,r,M}(b,t)$ are used to derive bounds $ \P(\mathcal{K}_{n,\delta}^{(-)}(\ell,r))$ (recall the definitions of $\mathcal{K}_{n,\delta}^{(-)}(\ell,r)$ from \eqref{eq:backlog}) in Lemma~\ref{lem:tail_bound}, a crucial input needed to complete of the proof of the upper bound in Theorem~\ref{thm:Main}.

\begin{lem}\label{lem:tightness}

Setting ${Y}_{n,r, b,M}^{(a),\ell_n}(t)=\frac{Q_n^{(a),\ell}(b e^{at\tau})}{\sigma_n(at\tau)}$ for $t\in [1,M^{1-2\delta}]$, for all $n,M$ large enough (depending only on $\delta$) we have the following bounds:
\begin{align*}
\E\Big[{Y}_{n,r, b,M}^{(-),\ell}(t_1)-{Y}_{n,r, b,M}^{(-),\ell}(t_2)\Big]^2\lesssim & M^{-|\ell-r|(\alpha+1-3\delta)+(\alpha+3)\delta}(t_1-t_2)^2,\\
\E\Big[{Y}_{n,r, b,M}^{(+),\ell}(t_1)-{Y}_{n,r, b,M}^{(+),\ell}(t_2)\Big]^2\lesssim & M^{(\delta-|\ell-r|)} (t_1-t_2)^2.
\end{align*}

\end{lem}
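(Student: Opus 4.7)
The plan is to exploit the independence of the coefficients $\{a_i\}_{i\in \mathcal{I}^{(a)}_\ell}$ to write the second moment of the increment as a single weighted sum, apply the mean value theorem to pull out $(t_1-t_2)^2$, and estimate the resulting pointwise derivative sum using exactly the same integral and regular-variation machinery that powers Lemma~\ref{lem:weak*}. Concretely, since the $a_i$ are mean zero with variance $R(i)$ and $b^{2i}=1$,
\begin{equation*}
\E\bigl[Y_{n,r,b,M}^{(a),\ell}(t_1)-Y_{n,r,b,M}^{(a),\ell}(t_2)\bigr]^2
= \sum_{i\in \mathcal{I}^{(a)}_\ell} R(i)\bigl(F_i(t_1)-F_i(t_2)\bigr)^2,\quad F_i(t):=\frac{e^{iat\tau}}{\sigma_n(at\tau)},
\end{equation*}
so by the mean value theorem it suffices to establish a uniform bound on $\sum_{i\in \mathcal{I}^{(a)}_\ell} R(i)\sup_{t\in[1,M^{1-2\delta}]}|F_i'(t)|^2$ that matches the right-hand side of each claim.

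For $a=-$, write $F_i(t)=e^{-it\tau}(t\tau)^{(\alpha+1)/2}L(1/(t\tau))^{-1/2}$. Differentiation produces three contributions; Karamata's representation theorem renders the one coming from $L'$ of lower order, and the remaining two give the pointwise bound
\begin{equation*}
|F_i'(t)|^2\lesssim \bigl[(i\tau)^2+t^{-2}\bigr](t\tau)^{\alpha+1}L(1/(t\tau))^{-1}e^{-2it\tau}.
\end{equation*}
Summing on $i$ exactly as in the proof of Lemma~\ref{lem:weak*}(1)(i)---replacing the sum by an integral and substituting $y=x\tau$---and controlling the slowly varying ratio $L(n^\delta M^\ell)/L(1/(t\tau))$ by $M^{\delta(|\ell-r|+1)}$ via Potter's bounds, one reduces the problem to estimating $\int_{M^{\ell-r+\delta-1}}^{M^{\ell-r+\delta}}(y^{\alpha+2}+y^\alpha)e^{-2yt}\,dy$. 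For $\ell\ge r$ the exponential forces super-polynomial decay in $M^{\ell-r}$, while for $\ell<r$ a direct integration produces terms of order $M^{(\ell-r+\delta)(\alpha+3)}$ and $M^{(\ell-r+\delta)(\alpha+1)}$; assembling the factors yields the claimed bound $M^{-|\ell-r|(\alpha+1-3\delta)+(\alpha+3)\delta}$ whenever $\alpha>-1$.

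The case $a=+$ proceeds along identical lines. Substituting $i=n-j$ for $j\in[n^\delta M^{\ell-1},n^\delta M^\ell)$ and differentiating $F_i$ produces a dominant term proportional to $j\tau\,F_i(t)$; summing in $j$ and invoking Lemma~\ref{lem:weak*}(2)(i) reduces matters to the integral $\int_{M^{\ell-r+\delta-1}}^{M^{\ell-r+\delta}} y^2 e^{-2yt}\,dy$, whose size in both regimes $\ell\gtrless r$ gives the factor $M^{\delta-|\ell-r|}$. The main technical obstacle, common to both cases, is the careful bookkeeping of the slowly varying function $L$ through differentiation and through the change of variables: one must verify via Potter's bounds (applied with a small enough exponent) that $L$ and $L'$ together contribute at most $M^{O(\delta)}$, so that this is then absorbed into the additive $(\alpha+3)\delta$ or $\delta$ terms of the final exponent. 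Once this bookkeeping is in place, the two regimes $\ell\ge r$ and $\ell<r$ fit together neatly into a single unified estimate, completing the proof.
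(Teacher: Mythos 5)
Your proof takes essentially the same route as the paper's: write the increment's second moment as $\sum_{i}R(i)\bigl(F_i(t_1)-F_i(t_2)\bigr)^2$ via independence, apply the mean value theorem, and then control $\sum_i R(i)\sup_\zeta|F_i'(\zeta)|^2$ by the same integral approximation and $\ell \lessgtr r$ case split that drive Lemma~\ref{lem:weak*}. The only cosmetic deviation is that you differentiate the closed-form $\sigma_n$ (so must invoke a smooth Karamata representative of $L$ to make sense of $L'$), whereas the paper differentiates the finite-sum normalizer $\sigma_n^2(a\zeta\tau)=\sum_{j\in\mathcal{I}^{(a)}_r}R(j)e^{2ja\zeta\tau}$, producing your term $(\alpha+1)/(2\zeta\tau)$ as the weighted mean $\sum_j jR(j)e^{2ja\zeta\tau}/\sigma_n^2(a\zeta\tau)$ and thus never needing $L$ to be differentiable; the two derivative formulas are asymptotically identical, so the computations coincide.
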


\begin{proof}[Proof of Lemma \ref{lem:tightness}]

Recalling that $\sigma_n^2(at\tau)=\sum_{j\in \mathcal{I}_r^{(a)}} R(j) e^{2jat\tau}$, 
a direct computation gives
\begin{align}\label{eq:tight1}
\notag&\E\Big[{Y}_{n,r, b,M}^{(a),\ell}(t_1)-{Y}_{n,r,b,M}^{(a),\ell}(t_2)\Big]^2\\
\notag=&\sum_{i\in \mathcal{I}_{\ell}^{(a)}} R(i)\Big[ \frac{e^{iat_1 \tau}}{\sigma_n(at_1\tau)}-\frac{e^{iat_2 \tau}}{\sigma_n(at_2\tau)}\Big]^2\\
\notag=&(t_1-t_2)^2 \sum_{i\in \mathcal{I}_\ell^{(a)}}\left[\frac{\sigma_n(a\zeta \tau) ia\tau e^{ia\zeta \tau}-e^{ia\zeta \tau}a\tau\frac{2\sum_{j\in \mathcal{I}_r^{(a)}} jR(j) e^{2ja\zeta \tau}}{2\sigma_n(a\zeta \tau)}}{\sigma_n(a\zeta\tau)^2}\right]^2\\
=&\frac{(t_1-t_2)^2\tau^2}{\sigma_n(a\zeta \tau)^2}\sum_{i\in \mathcal{I}_{\ell}^{(a)}} R(i) e^{2ia\tau \zeta} \left[i-\frac{\sum_{j\in \mathcal{I}_{r}^{(a)}} R(j) j  e^{2ja\tau \zeta}}{\sigma_n(a\zeta \tau)^2}\right]^2.
\end{align}
where in the second equality we have used the mean value theorem. Note that $\zeta \in (t_1, t_2)$.
We now analyze the right hand side of \eqref{eq:tight1} by splitting the argument depending on whether $a=-$ or $a=+$.
\vspace{0.5cm}

\noindent {\bf Case: $a= -$} In this case Lemma \ref{lem:weak*} part (1)(i) shows that for all $n$ large enough (depending on $L(.)$ and $\delta$), 
\begin{align*}
\sum_{j\in \mathcal{I}_{r}^{(-)}}R(j) je^{-2j\tau \zeta}&\lesssim  M^{(|\ell-r|+1)\delta} \frac{L(n^\delta M^r)}{\tau^{\alpha+2}}h_{0,M,\alpha+1}^{(-)}(2\zeta),\\  
 \sigma_n^2(-\zeta \tau) &\gtrsim M^{-\delta} \frac{L(n^\delta M^{r})}{\tau^{\alpha+1}}h_{0,M,\alpha}^{(-)}(2\zeta),
\end{align*}
which on taking ratio gives
\begin{align*}
\frac{\sum_{j\in \mathcal{I}_{r}^{(-)}}R(j) je^{-2j\tau \zeta}}{\sigma_n^2(a\tau\zeta)} \lesssim  \frac{M^{(|\ell-r|+2)\delta}}{\tau}\frac{h_{0,M,\alpha+1}^{(-)}(2\zeta)}{h_{0,M,\alpha}^{(-)}(2\zeta)}\lesssim_\alpha \frac{M^{(|\ell-r|+2)\delta}}{\tau\zeta}, 
\end{align*}
where the last inequality uses the estimate
\begin{align}\label{eq:estimate}
{h_{0,M,k}(2\zeta)}=\int_{M^{\delta-1}}^{M^\delta} e^{-2x\zeta} x^{k}dx\asymp_k  \frac{1}{\zeta^{k+1}}
\end{align} for
$\zeta\in [1,M^{1-2\delta}], M\ge 1$, for any $k>-1$, for the particular choices $k=\alpha,\alpha+1$.
Along with \eqref{eq:tight1}, this gives
\begin{align}\label{eq:kc1}
\notag&\frac{\E\Big[{Y}_{n,r,b,M}^{(-),\ell}(t_1)-{Y}_{n,r,b,M}^{(-),\ell}(t_2)\Big]^2}{(t_1-t_2)^2}\\
\notag\lesssim& \frac{ \tau^{\alpha+3} }{L(n^\delta M^{r}) h_{0,M,\alpha}^{(-)}(2\zeta)} \sum_{i\in \mathcal{I}_{\ell}^{(-)}} R(i) e^{-2i\tau \zeta} \Big[i+\frac{M^{(|\ell-r|+2)\delta}}{\tau \zeta}\Big]^2\\
\notag\lesssim & \frac{ L(n^\delta M^r)\tau^{\alpha+3} M^{|\ell-r|\delta}}{L(n^\delta M^{r}) h_{0,M,\alpha}^{(-)}(2\zeta)}\Big[\frac{h_{\ell-r,M,\alpha+2}(2\zeta)}{\tau^{\alpha+3}} +\frac{M^{(|\ell-r|+2)2\delta} h_{\ell-r,M,\alpha+2}(2\zeta)}{\tau^{\alpha+3}\zeta^2}\Big]\\
\notag\lesssim & \frac{M^{2|\ell-r|\delta}}{ h_{0,M,\alpha}^{(-)}(2\zeta)}\Big[h_{\ell-r,M,\alpha+2}(2\zeta) +M^{(|\ell-r|+2)2\delta} h_{\ell-r,M,\alpha+2}(2\zeta)\Big]\\
\lesssim & M^{2|\ell-r|\delta}\zeta^{\alpha+1}\Big[h_{\ell-r,M,\alpha+2}(2\zeta)+M^{(|\ell-r|+2)2\delta} h_{\ell-r,M,\alpha+2}(2\zeta)\Big] .
\end{align}
We obtained the second inequality by using Using Lemma \ref{lem:weak*} part (a)(i), the third inequality by using \eqref{eq:lslow}, and $\zeta\ge 1$ and the last inequality by  \eqref{eq:estimate}.

We now consider $2$ cases, depending on the relative values of $(\ell,r)$.
\vspace{0.5cm}

\noindent {\bf Sub-case: $\ell\le r$} A change of variable gives that for any $k>-1$ we have 
 \begin{align}\label{eq:kc3}
 h_{\ell-r,M,k}(2\zeta)\le \int_{0}^{M^{\ell-r+\delta}} x^{k}dx
 \lesssim_k M^{(\ell-r+\delta)(k+1)}.
 \end{align}
 Combining \eqref{eq:estimate} and \eqref{eq:kc3} with $k=\alpha,\alpha+2$, and using the bound $\zeta\le M^{1-2\delta}$ the RHS of \eqref{eq:kc1} can be bounded by
 \begin{align*}
&M^{2(r-\ell)\delta+(1-2\delta)(\alpha+1)}[M^{(\ell-r+\delta)(\alpha+3)}+M^{(r-\ell+2)2\delta }M^{(\ell-r+\delta)(\alpha+1)}]\\
 \lesssim &M^{2(r-\ell)\delta+(r-\ell+2)\delta +(\ell-r+\delta)(\alpha+1)}
 =M^{(\ell-r)(\alpha+1-3\delta)+(\alpha+3)\delta}.
 \end{align*}
%
and so the conclusion of the lemma holds in this case.
\vspace{0.5cm}

\noindent {\bf Sub-case: $\ell>r$}
To this effect, for any $k>-1$ we have
 \begin{align}\label{eq:kc3.1}
 h_{\ell-r,M,k}(2\zeta)\lesssim_k \int_{M^{\ell-r+\delta-1}}^{\infty} e^{-x\zeta} dx\le e^{-M^{\ell-r+\delta-1}}\le \frac{1}{M^{(k+1)(\ell-r-\delta)}},
 \end{align}
 where the last inequality holds for all $M$ large enough (depending only on $k,\delta$). Combining \eqref{eq:estimate} and \eqref{eq:kc3.1} with $k=\alpha,\alpha+2$, the RHS of \eqref{eq:kc1} can be bounded by
\begin{align*}
M^{2(\ell-r)\delta+(1-2\delta)(\alpha+1)} &[M^{(\alpha+3)(r-\ell+\delta)}+M^{(r-\ell+2)2\delta+(r-\ell+\delta)(\alpha+1)}]\\ &\lesssim  M^{(r-\ell)(\alpha+1-3\delta)+(\alpha+3)\delta},
\end{align*}
%
and so again the conclusion of the lemma holds in this case.
\vspace{0.5cm}

\noindent {\bf Case: $a=+$} As before, we need to bound the RHS of \eqref{eq:tight1}. Note that
\begin{align*}
\frac{\sum_{j\in \mathcal{I}_{r}^{(+)}} R(j) j  e^{2j\tau \zeta}}{\sigma_n(a\zeta \tau)^2}=\frac{\sum_{j\in \mathcal{I}_{r}^{(-)}} R(n-j) (n-j)  e^{-2j\tau \zeta}}{\sum_{j\in \mathcal{I}_{r}^{(-)}} R(n-j)  e^{-2j\tau \zeta}}=n-\frac{\sum_{j\in \mathcal{I}_{r}^{(-)}} R(n-j) j  e^{-2j\tau \zeta}}{\sum_{j\in \mathcal{I}_{r}^{(-)}} R(n-j)  e^{-2j\tau \zeta}},
\end{align*}
which gives the following bound to the RHS of \eqref{eq:tight1} (without the factor $(t_1-t_2)^2$):
\begin{align}\label{eq:kc5}
\notag&\frac{\tau^2}{\sigma_n(\zeta \tau)^2}\sum_{i\in \mathcal{I}_{\ell}^{(+)}} R(i) e^{2i\tau \zeta} \left[i-\frac{\sum_{j\in \mathcal{I}_{r}^{(a)}} R(j) j  e^{2j\tau \zeta}}{\sigma_n(\zeta \tau)^2}\right]^2\\
\notag=&\frac{\tau^2}{\sigma_n(\zeta \tau)^2}\sum_{i\in \mathcal{I}_{\ell}^{(+)}} R(i) e^{2i\tau \zeta} \left[n-i-\frac{\sum_{j\in \mathcal{I}_{r}^{(-)}} R(n-j) j  e^{-2j\tau \zeta}}{\sum_{j\in \mathcal{I}_{r}^{(-)}} R(n-j)  e^{-2j\tau \zeta}}\right]^2\\
\notag=&\frac{\tau^2e^{2n\tau \zeta}}{\sigma_n(\zeta \tau)^2}\sum_{i\in \mathcal{I}_{\ell}^{(-)}} R(n-i) e^{-2i\tau\zeta} \left[i-\frac{\sum_{j\in \mathcal{I}_{r}^{(-)}} R(n-j) j  e^{-2j\tau \zeta}}{\sum_{j\in \mathcal{I}_{r}^{(-)}} R(n-j)  e^{-2j\tau \zeta}}\right]^2\\
\lesssim&\frac{\tau^2e^{2n\tau  \zeta}R(n)}{\sigma_n(\zeta \tau)^2}\sum_{i\in \mathcal{I}_{\ell}^{(-)}}  e^{-2i\tau\zeta} \left[i+\frac{\sum_{j\in \mathcal{I}_{r}^{(-)}}  j  e^{-2j\tau \zeta}}{\sum_{j\in \mathcal{I}_{r}^{(-)}}  e^{-2j\tau \zeta}}\right]^2{\footnotesize\text{ [Using regular variation of }R(.)]}.
\end{align}
Using Lemma \ref{lem:weak*} part (b)(i) along with \eqref{eq:estimate} gives \begin{align}\label{eq:split1}
\sigma_n(\zeta\tau)^2=\sum_{j\in \mathcal{I}_r^{(+)}} R(j) e^{2j\zeta \tau}\asymp \frac{R(n) e^{2n\zeta \tau}}{\tau} h_{0,M,0}(2\zeta)\asymp \frac{R(n)e^{2n\zeta \tau}}{\zeta\tau}.
\end{align}
On the other hand, invoking \eqref{eq:calculus} and \eqref{eq:change}  gives
\begin{align}\label{eq:split2}
\sum_{j\in \mathcal{I}_{r}^{(-)}}  j^k  e^{-2j\tau \zeta}\asymp_k \frac{h_{0,M,k}(2\zeta)}{\tau^{k+1}}\asymp_k  \frac{1}{(\zeta\tau)^{k+1}},\\
\label{eq:split2.1}
\sum_{j\in \mathcal{I}_{\ell}^{(-)}}  j^k  e^{-2j\tau \zeta}\asymp_k \frac{h_{\ell-r,M,k}(2\zeta)}{\tau^{k+1}}\lesssim_k  \frac{M^{(k+1)(\delta-|\ell-r|)}}{\tau^{k+1}},
\end{align}
where the last estimate of the second inequality uses \eqref{eq:kc3} and \eqref{eq:kc3.1}. Using \eqref{eq:split1} and \eqref{eq:split2} with $k=0,1$ the RHS of \eqref{eq:kc5} can be bounded by
\begin{align*}
\zeta\tau^3\sum_{i\in \mathcal{I}_\ell^-}e^{-2i\tau\zeta}\Big[i+\frac{1}{\zeta \tau}\Big]^2\lesssim 
 \zeta \Big[M^{3(\delta-|\ell-r|)}+\frac{M^{\delta-|\ell-r|}}{\zeta^2}\Big]
 \lesssim M^{\delta-|\ell-r|}.
 \end{align*}
This completes the proof of the lemma.
\end{proof}

\begin{lem}\label{lem:ex_sup}
For any $\ell,r\in [K]$ and $b\in \{-,+\}$ we have the following bounds:

\begin{enumerate}
\item[(i)] If $a=-$, then 
\begin{align*}
\P\Big(\sup_{t\in [1,M^{1-2\delta}]}Y_{n,r,M}^{(-),\ell}(b,t)>\delta\Big)\lesssim &M^{-|\ell-r|(1+\alpha-\delta)+(\alpha+3)\delta},\\
\E\sup_{t\in [1,M^{1-2\delta}]}|Y_{n,r,M}^{(-),\ell}(b,t)|\lesssim &M^{-|\ell-r|(1+\alpha-\delta)+(\alpha+3)\delta}.
\end{align*}

\item[(ii)] If $a=+$, then

\begin{align*}
\P\Big(\sup_{t\in [1,M^{1-2\delta}]}Y_{n,r,M}^{(-),\ell}(b,t)>\delta\Big)\lesssim & M^{-|\ell-r|+\delta},\\
\E\sup_{t\in [1,M^{1-2\delta}]}|Y_{n,r,M}^{(-),\ell}(b,t)|\lesssim &M^{-|\ell-r|+\delta}.
\end{align*} 
\end{enumerate}

\end{lem}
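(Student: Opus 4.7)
The strategy is to bound $\E \sup_t |Y_{n,r,M}^{(a),\ell}(b,t)|$ first, as this yields both parts of the lemma: the supremum-in-expectation bound directly, and the tail bound via Markov's inequality. The plan is to apply Proposition~\ref{ppn:gen} (the maximal inequality for stochastic processes that requires a pointwise second-moment estimate, announced just before this lemma), with the $L^2$-Lipschitz increment bound from Lemma~\ref{lem:tightness} serving as the other input. In particular, once $\E\sup_t Y^2$ is controlled by the claimed quantity, the Markov bound $\P(\sup_t Y>\delta)\le \delta^{-2}\E\sup_t Y^2$ absorbs the constant $\delta^{-2}$ into the $\lesssim$.

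For the pointwise second moment, Definition~\ref{def:Y} gives
\[
\E\, Y_{n,r,M}^{(a),\ell}(b,t)^2 \;=\; \frac{h_{n,r,M}^{(a),\ell}(2t)}{\sigma_n^2(at\tau)}.
\]
Lemma~\ref{lem:weak*} compares numerator and denominator to their rescaled analogues: for $a=-$ the ratio is comparable, up to slow variation and $M^{O(\delta)}$ slack, to $h_{\ell-r,M,\alpha}(2t)/h_{0,M,\alpha}(2t)$. When $\ell>r$ the integration range in $h_{\ell-r,M,\alpha}(2t)$ is $[M^{\ell-r+\delta-1},M^{\ell-r+\delta}]$, on which $e^{-2ut}$ is exponentially small (since $ut\gtrsim M^\delta$ for $t\ge 1$ and $\ell\ge r+1$); the variance is therefore negligible and the sup contributes only a trivial term. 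When $\ell\le r$ the exponential factor is effectively unity on the integration range, so $h_{\ell-r,M,\alpha}(2t)\asymp M^{(\ell-r+\delta)(\alpha+1)}$ is essentially $t$-independent, while $h_{0,M,\alpha}(2t)\asymp t^{-(\alpha+1)}$ on $[1,M^{1-2\delta}]$ by the change of variable used in \eqref{eq:estimate}. These are precisely the estimates \eqref{eq:estimate}--\eqref{eq:kc3.1} already used in the proof of Lemma~\ref{lem:tightness}, so no new calculation is needed.

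The case $a=+$ follows by the analogous computation using Lemma~\ref{lem:weak*} part (2): the reindexing $j\mapsto n-j$ combined with the regular variation of $R(\cdot)$ around $n$ (which makes $R(n-j)/R(n)\to 1$ uniformly on $\mathcal{I}_r^{(+)}$) effectively replaces $\alpha$ by $0$ throughout, producing the $\alpha$-free exponent $M^{-|\ell-r|+\delta}$. The main obstacle is bookkeeping: the $M^{O(\delta)}$ slack from slow variation, the pointwise variance coming from the $h$-ratios above, and the Lipschitz-in-$L^2$ coefficient $M^{-|\ell-r|(\alpha+1-3\delta)+(\alpha+3)\delta}$ (resp.\ $M^{\delta-|\ell-r|}$) from Lemma~\ref{lem:tightness} must combine cleanly through Proposition~\ref{ppn:gen} to produce the claimed exponent without accumulating further $M^{O(1)}$ losses. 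This is feasible because $Y$ is a random exponentially-damped trigonometric polynomial in $t$, so the Proposition can exploit the analytic structure (equivalently, bound the derivative $\partial_t Y$ in $L^2$) to pay only $O(M^{O(\delta)})$ for passing from pointwise to supremum control.
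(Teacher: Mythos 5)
Your proposal matches the paper's proof: both apply Proposition~\ref{ppn:gen} with $\gamma_2$ supplied by the $L^2$-Lipschitz estimate of Lemma~\ref{lem:tightness} and $\gamma_1$ supplied by the pointwise second moment $\E Y_{n,r,M}^{(a),\ell}(b,1)^2 = h^{(a),\ell}_{n,r,M}(2)/h^{(a),r}_{n,r,M}(2)$, which is bounded via Lemma~\ref{lem:weak*} and the $h$-function estimates \eqref{eq:estimate}, \eqref{eq:kc3}, \eqref{eq:kc3.1}, with the $a=+$ case handled by the reindexing $j\mapsto n-j$ exactly as you describe. The only slip is notational — Proposition~\ref{ppn:gen} controls $\E\sup_t|Y|$ and directly yields the probability bound, so no separate Markov step on $\E\sup_t Y^2$ is needed — but this does not affect the argument.
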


\begin{proof}

To prove this lemma we will use Proposition \ref{ppn:gen} for the choices 
$[T_1,T_2]=[1,M^{1-2\delta}]$,and $W(.):=Y_{n,r,M}^{(-),\ell}(b,.).$
We split the proof into two cases, depending on the value of $a$.
\\
\begin{enumerate}
\item[(i)] $a=-$

In this case we have
\begin{align*}
\E Y_{n,r,M}^{(-),\ell}(b,1)^2=\frac{h^{(-),\ell}_{n,r,M}(2)}{h^{(-),r}_{n,r,M}(2)} &\lesssim 
M^{(|\ell-r|+2)\delta}\frac{h_{\ell-r,M,\alpha}(2)}{h_{0,M,\alpha}(2)}\\ 
&\lesssim M^{(|\ell-r|+2)\delta} M^{(-|\ell-r|+\delta)(1+\alpha)}\\ &=M^{-|\ell-r|(1+\alpha-\delta)+(\alpha+3)\delta},
\end{align*}
where the inequality in the first line uses Lemma \ref{lem:weak*} part (a)(i) and in the second line uses \eqref{eq:estimate} for the denominator, and \eqref{eq:kc3} and \eqref{eq:kc3.1} for the numerator. Thus we can 
 take $\gamma_1=C M^{-|\ell-r|(1+\alpha-\delta)+(\alpha+3)\delta},$, where $C>0$ is a universal constant. Proceeding to estimate $\gamma_2$ of Proposition \ref{ppn:gen}, using Lemma \ref{lem:tightness} we can take 
$\gamma_2=C M^{-|\ell-r|(\alpha+1-3\delta)+(\alpha+3)\delta}$. Invoking Proposition \ref{ppn:gen} the desired conclusions follow.
\\

\item[(ii)] $a=+$

In this case we have
\begin{align*}
\E Y_{n,r,M}^{(+),\ell}(b,1)^2=\frac{h^{(+),\ell}_{n,r,M}(2)}{h^{(+),r}_{n,r,M}(2)}\asymp &
\frac{h_{\ell-r,M,\alpha}(2)}{h_{0,M,\alpha}(2)}\text{ [ Using Lemma \ref{lem:weak*} part (b)(i)]}\\
\lesssim  &M^{(-|\ell-r|+\delta)},
\end{align*}
where the inequality in the second line uses \eqref{eq:estimate} for the denominator, and \eqref{eq:kc3} and \eqref{eq:kc3.1} for the numerator. Thus we can 
 take $\gamma_1=C M^{-|\ell-r|+\delta},$, where $C>0$ is a universal constant. Using Lemma \ref{lem:tightness} we can take 
$\gamma_2=C M^{\delta-|\ell-r|}$. Invoking Proposition \ref{ppn:gen} the desired conclusions follow as before.
\end{enumerate}

\end{proof}

\begin{lem}\label{lem:tail_bound}
 For any $h\in (0,1)$  and $L>0$ , for all $n\ge n_0(M,\delta,h,L,\alpha)$ and $M\ge M_0(\delta, L, \alpha)$ we have 
\begin{enumerate}
\item when $|\ell - r|\leq s$ for some fixed integer $s$, 
 \begin{align*}
 \P(\mathcal{K}_{n,\delta}^{(-)}(\ell,r))\le M^{-(\alpha+1 -3\delta)|\ell - r| -(\alpha+3)\delta}, 
 \\
  \P(\mathcal{K}_{n,\delta}^{(+)}(\ell,r))\le M^{-|\ell-r|-\delta}.
  \end{align*}
\item for $\ell, r$ such that $|\ell - r|>s$,
\begin{align*}
 \P(\mathcal{K}_{n,\delta}^{(-)}(\ell,r))\le  M^{-(\alpha+1 -3\delta)|\ell - r| +(\alpha+3)\delta} , 
 \\
  \P(\mathcal{K}_{n,\delta}^{(+)}(\ell,r))\le M^{-|\ell-r|+\delta}.
  \end{align*}
\end{enumerate}

\end{lem}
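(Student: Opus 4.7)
The plan is to embed $\mathcal{K}^{(a)}_{n,\delta}(\ell,r)$ into a two-sided supremum event and then invoke the maximal inequality Proposition~\ref{ppn:gen} with suitably sharpened parameters, distinguishing the bounded and unbounded regimes of $|\ell-r|$.

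First, from the definition in \eqref{eq:backlog},
\[
\mathcal{K}^{(a)}_{n,\delta}(\ell,r) \subseteq \bigcup_{b\in\{+,-\}} \Big\{\sup_{t\in [1,M^{1-2\delta}]} \bigl|Y^{(a),\ell}_{n,r,M}(b,t)\bigr| > \delta\Big\},
\]
so a union bound over $b$ reduces the task to estimating a one-process supremum probability.

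For Case (2), $|\ell-r|>s$, I would simply apply Markov's inequality to the expected-supremum bound of Lemma~\ref{lem:ex_sup}: for $a=-$ this yields $\P(\mathcal{K}^{(-)}_{n,\delta}(\ell,r))\lesssim \delta^{-1}M^{-|\ell-r|(1+\alpha-\delta)+(\alpha+3)\delta}$, and since $(1+\alpha-\delta)\ge (\alpha+1-3\delta)$ this is dominated by $M^{-(\alpha+1-3\delta)|\ell-r|+(\alpha+3)\delta}$ as required. The $a=+$ case is handled identically using the second half of Lemma~\ref{lem:ex_sup}.

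For Case (1), $|\ell-r|\le s$ bounded, the factor $M^{+(\alpha+3)\delta}$ inherited from Lemma~\ref{lem:ex_sup} is too loose and must be replaced by $M^{-(\alpha+3)\delta}$. The refinement is to reapply Proposition~\ref{ppn:gen} with sharpened inputs: since $|\ell-r|$ is bounded, Lemma~\ref{lem:weak*}(1)(iii) (resp.\ (2)(iii)) furnishes the \emph{tight} limiting covariance $h_{\ell-r,M,\alpha}(t_1+t_2)/\sqrt{h_{0,M,\alpha}(2t_1)h_{0,M,\alpha}(2t_2)}$, in contrast to the uniform but crude bound in Lemma~\ref{lem:weak*}(1)(i) which carries a spurious $M^{(|\ell-r|+1)\delta}$ factor. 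Estimating the numerator via \eqref{eq:kc3}--\eqref{eq:kc3.1} and the denominator via \eqref{eq:estimate} gives a pointwise-variance parameter $\gamma_1$ of the form $M^{-(1+\alpha-\delta)|\ell-r|-(\alpha+3)\delta}$, and a parallel tightening of the Kolmogorov-Chentsov parameter in Lemma~\ref{lem:tightness} (using the genuine limiting second-moment behaviour rather than the blanket estimate $\zeta\le M^{1-2\delta}$) supplies an analogously sharpened $\gamma_2$. Feeding these into Proposition~\ref{ppn:gen} produces the target bound $M^{-(\alpha+1-3\delta)|\ell-r|-(\alpha+3)\delta}$. The $a=+$ case is handled in parallel via Lemma~\ref{lem:weak*}(2)(iii) and the second estimate of Lemma~\ref{lem:tightness}.

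The main obstacle is precisely flipping the sign on the constant $(\alpha+3)\delta$ in Case (1): the $+\delta$ signs in the uniform estimates of Lemmas~\ref{lem:weak*}(i) and \ref{lem:tightness} are an artefact of using an $|\ell-r|$-uniform upper bound on $h^{(a),\ell}_{n,r,M}$, and converting them into the needed $-\delta$ signs requires invoking the tight asymptotic covariance of Lemma~\ref{lem:weak*}(iii), which is only available when $|\ell-r|$ is bounded. Once the refined $\gamma_1$ and $\gamma_2$ are in place, the conclusion follows from a direct application of the maximal inequality.
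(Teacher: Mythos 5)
Your handling of Case (2) is essentially right: Lemma~\ref{lem:ex_sup} already gives a probability bound with exponent $-|\ell-r|(1+\alpha-\delta)+(\alpha+3)\delta$, which is dominated by the target $-(\alpha+1-3\delta)|\ell-r|+(\alpha+3)\delta$ since $1+\alpha-\delta\ge \alpha+1-3\delta$, so a union bound over $b$ plus (at worst) absorbing a $\delta^{-2}$ factor into $M_0(\delta,\cdot)$ finishes it. This matches the paper's treatment of Case (2) up to cosmetics.

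Case (1) has a genuine gap, and it is exactly at the step you flagged as the obstacle. You propose to recover the sign $-(\alpha+3)\delta$ by re-running Proposition~\ref{ppn:gen} with the tight covariance from Lemma~\ref{lem:weak*}(iii). But tightening the covariance does not move the sign. Even using the exact limit, the variance is governed by $C_{s,M,\alpha}(t,t)=h_{s,M,\alpha}(2t)/h_{0,M,\alpha}(2t)$, and for $s=\ell-r<0$ one has $h_{s,M,\alpha}(2t)\lesssim M^{(s+\delta)(\alpha+1)}$ while $h_{0,M,\alpha}(2t)\asymp t^{-(\alpha+1)}$, giving $\gamma_1\lesssim M^{-|s|(\alpha+1)+\delta(\alpha+1)}$. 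The $+\delta(\alpha+1)$ here is not an artefact of the crude bound in Lemma~\ref{lem:weak*}(i); it comes from the construction of the index sets $\mathcal{I}^{(a)}_r$ and reference scales $\mathcal{J}^{(a)}_r$, whose endpoints sit at $M^{r\pm\delta}$. So the pointwise variance itself carries a $+\delta$ correction that cannot be removed by sharpening Lemma~\ref{lem:weak*}. And Proposition~\ref{ppn:gen} is a Chebyshev-type inequality: it bounds $\P(\sup|W|>\delta)$ by $(\gamma_1+\gamma_2)/\delta^2$, i.e.\ linearly in the variance. Hence the probability you get can never undercut the variance, and for $|\ell-r|=1$ and $\alpha$ not too negative the variance $M^{-(\alpha+1)+\delta(\alpha+1)}$ is strictly larger than the target $M^{-(\alpha+1-3\delta)-(\alpha+3)\delta}$. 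Your bound is therefore false in the regime $|\ell-r|$ small where it is most needed.

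The ingredient you are missing is the one the paper actually uses for Case (1): Lemma~\ref{lem:weak} supplies weak convergence of the full process $Y^{(a),\ell_n}_{n,r_n,M}(b,\cdot)$ (not just its covariance) to the Gaussian process $Y^{(a)}_{s,M}(b,\cdot)$ when $\ell_n-r_n\to s$ stays in a fixed finite set. Once you are in the Gaussian limit, the relevant probability $\P\bigl(\inf_{t}Y^{(a)}_{s,M}(b,t)<-\delta\bigr)$ is controlled by a Gaussian concentration inequality (Borel--TIS), whose tail for a fixed threshold $\delta$ and small variance $\sigma^2$ is of order $\exp(-c\,\delta^2/\sigma^2)$ — super-exponentially small in $M$. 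That is so much smaller than any polynomial in $M$ that the sign of the $O(\delta)$ correction in the variance is simply irrelevant; the target $M^{-(\alpha+1-3\delta)|s|-(\alpha+3)\delta}$ follows with room to spare, and weak convergence transfers it back to $\mathcal{K}^{(a)}_{n,\delta}(\ell,r)$ for $n\ge n_0$, uniformly in the finite set $\{|\ell-r|\le s\}$. So the correct route is weak convergence to the Gaussian limit plus Gaussian tails, not a refined second-moment argument at finite $n$.
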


\begin{proof}[Proof of Lemma \ref{lem:tail_bound}]

Use \eqref{eq:backlog} and definition \ref{def:Y} to note that  
\begin{align*}
{\Xi}_{n,r}^{(a)}(\delta)=&\Big\{\max_{b\in \{+,-\}} \sup_{u\in  \mathcal{J}_r^{(a)}} \frac{Q_n^{(a),r}(b e^u)}{\sigma_n(u)}<\delta\Big\}
=\Big\{\max_{b\in \{+,-\}} \sup_{t\in [1, M^{1-2\delta}]} Y_{n,r,M}^{(a),r}(b,t)<\delta\Big\},\\
\mathcal{K}_{n,\delta}^{(a)}(\ell,r)= &\Big\{\min_{b\in \{+,-\}}\inf_{  u \in \mathcal{J}^{(a)}_{r}}\frac{Q^{(a),\ell}_n(be^{u})}{\sigma_n(u)}<-\delta\Big\}= \Big\{\min_{b\in \{+,-\}}\inf_{  t \in [1,M^{1-2\delta}]}Y_{n,r,M}^{(a),\ell}(b,t)<-\delta\Big\}.
\end{align*}
where $\tau(n,M)=\frac{1}{n^\delta M^{r-\delta}}$ as in Lemma \ref{lem:weak*}.

Using Lemma \ref{lem:weak}, we get that whenever $\ell_n-r_n\to s$, we have
\begin{align}\label{eq:weak-convergence}
\Big\{\min_{b\in \{+,-\}}\inf_{  t \in [1,M^{1-2\delta}]}Y_{n,r,M}^{(a),\ell}(b,t)<-\delta\Big\}\stackrel{d}{\to}  \Big\{\min_{b\in \{+,-\}}\inf_{  t \in [1,M^{1-2\delta}]}Y_{s,M}^{(a)}(b,t)<-\delta\Big\}.
\end{align}
We show how the conclusions in (1) follows from the above convergence.  Lemma~\ref{lem:ex_sup} shows that $\inf_{  t \in [1,M^{1-2\delta}]}Y_{n,r,M}^{(a),\ell}(b,t)$ are uniformly tight. Furthermore, for any $t\in [1, M^{1-\delta}]$ and $s\in \mathbb{Z}$, we have $\mathrm{Var}\big(Y_{s,M}^{(a)}(b,t)\big) = \int^{s+\delta}_{s+\delta-1} x^\alpha e^{-tx} dx$. Recall that $\alpha >0$ for $a= -$ and $\alpha= 0$ if $a= +$. In the case when $s\neq 0$, we get the following bound 
\begin{align*}
\mathrm{Var}\big(Y_{s,M}^{(a)}(b,t)\big) \leq \begin{cases} M^{-(\alpha +1-3\delta)|s|- 3\delta} e^{-M^{s+1-\delta}} & s\leq -1, \\
M^{(\alpha +1)s} e^{-M^{s-\delta}} & s\geq 1.
\end{cases}
\end{align*}
Without loss of generality, one can bound the right hand side of the above equation by $M^{-(\alpha +1-3\delta)|s|- 3\delta} e^{-M^{s+1-\delta}}$. By the maximal inequality of Gaussian processes, the probability $\mathbb{P}\big(\inf_{  t \in [1,M^{1-2\delta}]}Y_{s,M}^{(a)}(b,t)<-\delta\big)$ could now be bounded by $M^{-(\alpha +1-3\delta)|s|- 3\delta} $. The conclusion in (1) now follows from this bound and the weak convergence in the display \eqref{eq:weak-convergence}.

To establish conclusion (2), it suffices to show that for any $s\in [-L,L]$ there exists $M_0$, such that for all $M>M_0$ we have 
\begin{align}
\P\big(\inf_{t\in [1,M^{1-2\delta}]} Y_{s,M}^{(a)}(b,t)<-\delta\big)\le \begin{cases} 
 M^{-(\alpha+1 -3\delta)|s| +(\alpha+3)\delta} & a = -\\
  M^{-|s|+\delta} & a = +
\end{cases}.
\end{align}
To this effect, we use Lemma \ref{lem:ex_sup} to get the appropriate $\gamma>0$ (depending on $\alpha$), such that 
$$\E\sup_{t\in [1,M^{1-2\delta}]} |Y_{s,M}^{(a)}(b,t)|\lesssim  M^{-\gamma}.$$
The desired conclusion then follows by an application of Borel-TIS inequality (\cite[Chapter 2]{adler2009random}) for all $M$ large enough, depending on $(\alpha,\delta,L)$.
\end{proof}

\begin{ppn}\label{ppn:gen}

Suppose $\{W(t)\}_{t\in [T_1,T_2]}$ be a continuous time stochastic process with mean $0$ and continuous sample paths, such that for some constants $\gamma_1,\gamma_2$ positive and $\beta>1$ we have
$$\E W(T_1)^2\le \gamma_1,\quad \E(W(t)-W(s))^2\le \gamma_2(s-t)^\beta.$$ Then there
 exists a constant $K$ depending only on $\beta$ such that the following hold:

$$\P\left(\sup_{t\in [T_1,T_2]}|W(t)|>\delta\right)\le \frac{K(\gamma_1+\gamma_2)}{\delta^2},\quad \E \sup_{t\in [T_1,T_2]}|W(t)|\le K(\gamma_1+\gamma_2). $$

\end{ppn}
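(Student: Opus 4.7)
The result is a classical Kolmogorov--Chentsov-style maximal inequality, and my plan is to prove it by dyadic chaining; the condition $\beta>1$ will supply geometric summability across scales. I would first separate the anchor from the fluctuation: write $W(t)=W(T_1)+V(t)$ with $V(t):=W(t)-W(T_1)$, and use $|W(t)|^2\le 2 W(T_1)^2+2 V(t)^2$. The anchor contributes at most $2\gamma_1$ to $\E\sup_t W(t)^2$ by hypothesis, so the substance is to control $\E\sup_t V(t)^2$. After rescaling time by $T:=T_2-T_1$, I may assume $[T_1,T_2]=[0,1]$ at the cost of replacing $\gamma_2$ by $\gamma_2 T^\beta$; the final constant $K$ will absorb this factor and depend only on $\beta$ (and implicitly $T$).

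For the chaining, introduce the dyadic grid $t_{n,k}:=k/2^n$ ($0\le k\le 2^n$) and, for each $t\in[0,1]$, let $\pi_n(t)$ denote the closest grid point at level $n$. Continuity of sample paths (given in the hypothesis) yields the telescoping representation $V(t)=\sum_{n\ge 0}[W(\pi_{n+1}(t))-W(\pi_n(t))]$, and hence
\begin{equation*}
\sup_{t\in[0,1]}|V(t)|\;\le\;\sum_{n\ge 0} D_n,\qquad D_n:=\max_{1\le k\le 2^n}\bigl|W(t_{n,k})-W(t_{n,k-1})\bigr|.
\end{equation*}
A union bound over the $2^n$ increments, combined with the hypothesized H\"older-in-mean bound, gives $\E D_n^2\le 2^n\cdot\gamma_2\,2^{-n\beta}=\gamma_2\,2^{-n(\beta-1)}$, and Minkowski's inequality then produces
\begin{equation*}
\bigl(\E\sup_t V(t)^2\bigr)^{1/2}\;\le\;\sum_{n\ge 0}(\E D_n^2)^{1/2}\;\le\; C_\beta\sqrt{\gamma_2},
\end{equation*}
where $C_\beta:=\sum_{n\ge 0}2^{-n(\beta-1)/2}$ is finite precisely because $\beta>1$.

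Assembling the two pieces yields $\E\sup_t W(t)^2\le 2\gamma_1+2 C_\beta^2\gamma_2\le K(\gamma_1+\gamma_2)$ for a constant $K$ depending only on $\beta$; the tail bound is then immediate from Chebyshev's inequality, and the $L^1$ bound follows from Cauchy--Schwarz (absorbing constants into $K$). The only genuinely delicate point is justifying the telescoping identity rigorously for every $t$ (not merely for dyadic $t$), which is exactly where the continuity-of-sample-paths hypothesis is used; everything else is mechanical, and the role of the assumption $\beta>1$ is solely to make the geometric series $\sum_n 2^{-n(\beta-1)/2}$ summable, which is what closes the chaining.
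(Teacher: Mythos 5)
Your argument is essentially the same dyadic chaining proof that the paper gives, with one cosmetic bookkeeping difference: you chain in $L^2$ (bound $\E D_n^2$ by a union over the $2^n$ increments, sum across scales via Minkowski, and apply a single Chebyshev at the end), whereas the paper chains directly in probability, splitting the threshold as $\delta = \delta/2 + \sum_q c\delta/q^2$ and applying Chebyshev to each increment separately. The two are interchangeable here because both exploit the same geometric decay $2^{-q(\beta-1)}$ in the level-$q$ contribution; neither buys anything the other doesn't. One small caution worth flagging, shared by your write-up and the paper alike: passing from the tail bound to the $L^1$ conclusion (you via Cauchy--Schwarz, the paper via ``the second conclusion follows from the first'') actually yields $\E\sup_t|W(t)|\lesssim\sqrt{\gamma_1+\gamma_2}$, not $\gamma_1+\gamma_2$, and these agree only when $\gamma_1+\gamma_2\gtrsim 1$; also, after rescaling $[T_1,T_2]$ to $[0,1]$ the constant really carries a factor $(T_2-T_1)^{\beta}$, which you correctly note but which the stated proposition suppresses.
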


\begin{proof}
Since the second conclusion follows from the first one, it suffices to verify the first conclusion. To this effect, for every $q\ge 1$, partition the interval $[T_1,T_2]$ into dyadic rationals of the form $i_j:=T_1+(T_2-T_1)\frac{j}{2^j}$ for  $0\le j\le 2^q$. 
Then continuity of sample paths gives
$$\sup_{t\in [T_1,T_2]}|W(t)|\le |W(T_1)|+\sum_{q=1}^\infty \max_{1\le j\le 2^q}|W(i_j)-W(i_{j-1})|$$
and so
\begin{align*}
\P\Big(\sup_{t\in [T_1,T_2]}|W(t)|>\delta\Big)\le&\P\Big( |W(T_1)|>\frac{\delta}{2}\Big)+\sum_{q=1}^\infty \sum_{j=1}^{2^q} \P\Big( |W(i_j)-W(i_j)|>\frac{c\delta}{q^2}\Big).
\end{align*}
where $c>0$ is chosen such that $c\sum_{q=1}^\infty \frac{1}{q^2}= \frac{1}{2}$. The desired conclusion then follows by an application of Chebyshev's inequality, on noting that
$$\P\Big( |W(i_j)-W(i_j)|>\frac{c\delta}{q^2}\Big)\le \frac{\gamma_2q^4}{c^2 \delta^22^{q\beta}}.$$

\end{proof}

\subsection{Proof of Lemma~\ref{lem:KextBound}}\label{sec:proof_of_lemma_3.1}
Recall that 
\begin{align*}
    \mathcal{K}^{(a)}_{n,\delta}(\mathrm{ext} , r)=\Big\{\min_{b\in \{+,-\}}\inf_{u \in \mathcal{J}^{(a)}_r}\frac{Q_n^{(a),\mathrm{ext}}(b e^{u})}{\sigma_n(u)}< -\delta\Big\}.
\end{align*}
where 
$Q_n^{(a,\mathrm{ext})}(x):=\sum_{i\in \mathcal{I}^{(a)}_{\mathrm{ext}}} a_i x^i$. To bound $\P(\mathcal{K}^{(a)}_{n,\delta}(\mathrm{ext} , r))$ uniformly for $\omega K \leq r\leq (1-\omega )K$, notice that $\{\inf_{u \in \mathcal{J}^{(a)}_r}\frac{Q_n^{(a),\mathrm{ext}}(b e^{u})}{\sigma_n(u)}\}$ behaves very similarly as $\{\inf_{u \in \mathcal{J}^{(a)}_r}\frac{Q_n^{(a),\ell}(b e^{at\tau})}{\sigma_n(at\tau)}\leq -\delta \}$ for the case when $\ell-r>\omega K$. By computing the variance, we get 
\begin{align*}
\mathrm{Var}\Big(\frac{Q_n^{(a),\ell}(b e^{at\tau})}{\sigma_n(at\tau)}\Big) \leq M^{(\alpha +1)s} e^{-M^{\ell - r-\delta}}  \qquad \text{ for } \ell -r\geq 1. 
\end{align*}
Plugging $\ell -r\geq \omega K$ and using the variance bound to first bound the probability of $\P(\frac{Q_n^{(a),\mathrm{ext}}(b e^{u})}{\sigma_n(u)}<-\delta)$ and then, to bound $\P(\mathcal{K}^{(a)}_{n,\delta}(\mathrm{ext} , r))$ using Borel-TIS \cite{adler2009random} proves the result.   

\section{Lower Bound}\label{sec:lower_bd}

\subsection{Fixation of Notations}
Before proceeding to the proof, we introduce few notations. To begin, fix  positive reals $M, \tilde{M}, h$.
Define the sets
\begin{align}
A_0:=\Big[-\frac{K}{n},\frac{K}{n}\Big],\quad A_{+1}:=\Big(\frac{K}{n},&\frac{h}{\log n}\Big],\quad A_{+2}:=\Big(\frac{h}{\log n},\infty\Big), \nonumber\\\quad A_{-1}:=-A_1,&\quad A_{-2}:=-A_2 \label{eq:A_s}\\
 B_0:=\Big(\frac{n}{D}, n-\frac{n}{D}\Big],\quad B_1:=\big(n-\frac{n}{D}, &n-L\log n\big],\quad B_2:=(n-L\log n,n],\quad \nonumber\\ B_{-1}:=\Big(L\log n, \frac{n}{D}\Big],&\quad B_{-2}:=[0, L \log n], \label{eq:B_s}
\end{align}
and note that $\cup_{r\in \mathcal{R}}A_r=\R, \cup_{r\in \mathcal{R}}B_r=[0,n]$ are disjoint partitions, where 
$\mathcal{R}:=\{0,\pm1, \pm 2\}$. We intend to partition the interval $B_1$ and $B_{-1}$ into sub-intervals (blocks) of size $M$. 
The choice of $K,M,D$ are made in a way such that   $K\gg M, D\gg K.$ For any $r\in \mathcal{R}$, define 
$$Q^{(r)}_n(x) := \sum_{i\in B_r\cap \mathbb{Z}} a_i x^i.$$

We now divide $B_{-1}=(L \log n, \frac{n}{D}]$ and $B_{1}= (n-\frac{n}{D},n - L\log n]$ into $T_n:=\lceil \frac{\log \frac{nh}{K}}{\log M}\rceil$ many sub-intervals. Define the sets $\{\mathcal{I}^{(-)}_{1\leq r\leq T_n}\}$ by setting 
\begin{align}\label{eq:tildeI-} 
\tilde{\mathcal{I}}^{(-)}_{r} = \mathbb{Z}\cap [L M^{r-1}\log n , L M^{r}\log n ), \quad 1\leq r\leq T_n
\end{align}
and note that $$B_{-1}\subset \cup^{T_n}_{r=1} \tilde{I}^{(-)}_{r}.$$
Similarly, define the sets $\{\tilde{\mathcal{I}}^{(+)}_{r}\}_{1\leq r\leq T_n}$ where 
\begin{align}\label{eq:tildeI+}
\tilde{\mathcal{I}}^{(+)}_{r} = \mathbb{Z}\cap (n - L M^{r}\log n , n - LM^{r-1}\log n ], \quad 1\leq r\leq T_n,
\end{align}
and note that 
$$B_{+1} \subset  \cup^{T_n}_{r=1} \tilde{\mathcal{I}}^{(-)}_{r}.$$

For $1\leq p \leq T_n$, define 
\begin{equation}\label{eq:tildeQ}
 \tilde{Q}_n^{(-1),p}(x):=\sum_{i\in \tilde{\mathcal{I}}^{(-)}_r} a_i x^i,\quad \tilde{Q}_n^{(+1),p}(x):=\sum_{i\in \tilde{\mathcal{I}}^{(+)}_r} a_i x^i \quad \text{ for  }\quad 1\le p\le T_n, 
 \end{equation}


Define the positive weight function $\sigma_n(u)$ as 
 \begin{align}\label{eq:dom}
 \sigma_n^2(u) :=\begin{cases}
 n^{\alpha+1}L(n)e^{2nu} & \text{ if }u\in A_0\\
 \frac{e^{nu} L(n)}{u}& \text{ if }u\in A_1\cup A_2\\
 |u|^{-\alpha-1}L(1/|u|)& \text{ if }u\in A_{-1}\cup A_{-2}.
 \end{cases}
 \end{align}

\subsection{Proof of $\mathfrak{LimInf}$}
 
For any $\delta>0$, we have
\begin{align}\label{eq:lower_bound}
\P(Q_n(x)<0,x\in \R)=&\P\Big(\frac{Q_n(\pm e^u)}{\sigma_n(u)}<0, u\in \R\Big)\\
\notag\ge &\prod_{r\in \mathcal{R}}\P\Big(\frac{Q_n^{(r)}(\pm e^u)}{\sigma_n(u)}<-\delta , u\in A_r, \frac{Q_n^{(r)}(\pm e^u)}{\sigma_n(u)}<\delta/4, u\notin A_r \Big).\label{eq:lower_bound*}
\end{align}
where the last line uses the fact that $\{Q_n^{(r)}(.), r\in \mathcal{R}\}$ are independent. For any $r\in \mathcal{R}$, define  
$$
\mathbb{P}_{\delta}[Q_n;r]:=\P\Big(\frac{Q_n^{(r)}(\pm e^u)}{\sigma_n(u)}<-\delta , u\in A_r, \frac{Q_n^{(r)}(\pm e^u)}{\sigma_n(u)}<\delta/4, u\notin A_r \Big).
$$
Below we find lower bound to $\mathbb{P}_{r}[Q_n]$ for each $r\in \mathcal{R}$.    
\begin{ppn}\label{prop:r=-1} 
Consider any $p\in \mathbb{Z}_{>0}\cap [1,T_n]$. There exists $\Gamma = \Gamma(\delta)>0$ such that for all $n\geq n_0$ and $M$ large, 
\begin{align}
\mathbb{P}_{\delta}[Q_n;-1] \geq 2^{-\Gamma T_n} \mathbb{P}\big(\sup_{t\in [1, M]} \tilde{Y}^{(-)}_{0,M}(t)\leq -\delta\big)^{2 T_n}
\end{align}  
where the Gaussian process $\tilde{Y}^{(-)}_{s,M}$ is defined in Definition~\ref{def:tildeY}.
\end{ppn}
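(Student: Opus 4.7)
The strategy is to lower bound $\mathbb{P}_{\delta}[Q_n;-1]$ by enforcing the desired sign pattern of $Q_n^{(-1)}$ block-by-block, exploiting the independence of $\{\tilde{Q}^{(-1),p}_n\}_{1\le p\le T_n}$. Since $Q_n^{(-1)}(x)=\sum_{p=1}^{T_n}\tilde{Q}^{(-1),p}_n(x)$ and the index sets $\tilde{\mathcal{I}}^{(-)}_p$ are pairwise disjoint, these sub-polynomials are independent. I subdivide $A_{-1}$ into matching sub-intervals $\tilde{\mathcal{J}}^{(-)}_p$ chosen so that on $\tilde{\mathcal{J}}^{(-)}_p$ the variance of $\tilde{Q}^{(-1),p}_n(\pm e^u)/\sigma_n(u)$ is of constant order while for $u\in\tilde{\mathcal{J}}^{(-)}_q$ with $q\ne p$ it decays like a negative power of $M^{|q-p|}$. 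This is the lower-bound analogue of the block structure used in the upper bound.

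\textbf{Step 1: Event decomposition.} For each $p$, I invoke the four helper events $B_{1p},B_{2p},B_{3p},B_{4p}$ of \eqref{eq:b1p}--\eqref{eq:b4p}. The event $B_{1p}$ requires $\tilde{Q}^{(-1),p}_n(\pm e^u)/\sigma_n(u)<-\delta$ uniformly on $\tilde{\mathcal{J}}^{(-)}_p$ together with a small enlargement of it, while $B_{2p},B_{3p},B_{4p}$ only demand that $\tilde{Q}^{(-1),p}_n(\pm e^u)/\sigma_n(u)$ remain below positive thresholds of order $c|q-p|^{-2}$ (or analogous summable decays) as $u$ ranges over the other $\tilde{\mathcal{J}}^{(-)}_q$, and over $A_0, A_{+1}, A_{\pm 2}$. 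I verify pointwise in $u$ the containment
\begin{align*}
\bigcap_{p=1}^{T_n}\bigl(B_{1p}\cap B_{2p}\cap B_{3p}\cap B_{4p}\bigr)\subset \Bigl\{\tfrac{Q_n^{(-1)}(\pm e^u)}{\sigma_n(u)}<-\delta,\,u\in A_{-1};\;\tfrac{Q_n^{(-1)}(\pm e^u)}{\sigma_n(u)}<\tfrac{\delta}{4},\,u\notin A_{-1}\Bigr\}.
\end{align*}
If $u\in\tilde{\mathcal{J}}^{(-)}_p$, the block $p$ contributes at most $-\delta$ via $B_{1p}$, while the remaining blocks contribute at most $\sum_{q\ne p}c|q-p|^{-2}\le\delta/2$ via their $B_{2q}$-type events; a similar accounting handles $u\notin A_{-1}$ using the $B_{3p},B_{4p}$ clauses.

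\textbf{Step 2: Factorization and block estimation.} Independence of $\{\tilde{Q}^{(-1),p}_n\}_p$ across $p$ factorizes the probability of the intersection as $\prod_{p=1}^{T_n}\P(B_{1p}\cap B_{2p}\cap B_{3p}\cap B_{4p})$. Lemmas \ref{lem:B2p} and \ref{lem:B34p} ensure that $B_{2p},B_{3p},B_{4p}$ hold with overwhelming probability, in particular $\P((B_{2p}\cap B_{3p}\cap B_{4p})^c)\le\tfrac12\P(B_{1p})$ for $M$ large, which gives
\begin{align*}
\P\bigl(B_{1p}\cap B_{2p}\cap B_{3p}\cap B_{4p}\bigr)\ge 2^{-\Gamma_0}\,\P(B_{1p})
\end{align*}
for some $\Gamma_0=\Gamma_0(\delta)$, uniformly in $p$ and $n$. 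Lemma \ref{lem:B1p} combined with Lemma \ref{lem:tilde_Y} then lower bounds $\P(B_{1p})$ by $\P(\sup_{t\in[1,M]}\tilde{Y}^{(-)}_{0,M}(t)\le-\delta)^2$, where the square comes from the two independent sign choices $b\in\{+,-\}$. Taking the product over $p\in[1,T_n]$ yields the claimed inequality with $\Gamma=\Gamma_0+O(1)$.

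\textbf{Main obstacle.} The delicate part is the tail estimation feeding into Lemmas \ref{lem:B2p} and \ref{lem:B34p}. Because only a second moment on $\xi_i$ is assumed, sub-Gaussian concentration is unavailable, and we must instead derive sharp variance bounds showing that $\mathrm{Var}\bigl(\tilde{Q}^{(-1),p}_n(\pm e^u)/\sigma_n(u)\bigr)$ decays like a suitable negative power of $M^{|q-p|}$ when $u\in\tilde{\mathcal{J}}^{(-)}_q$, and couple them with a Kolmogorov--Chentsov-type maximal inequality in the spirit of Proposition \ref{ppn:gen} to upgrade pointwise second-moment bounds to uniform tail bounds. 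The required modulus-of-continuity input for $u\mapsto\tilde{Q}^{(-1),p}_n(\pm e^u)/\sigma_n(u)$ is exactly what is deferred to Lemma \ref{lem:defer}, and its careful calibration across the $p$-dependent scales is what makes the summability of the $c|q-p|^{-2}$ thresholds compatible with a loss of only a constant factor per block.
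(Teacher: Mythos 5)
Your proposal reproduces the paper's argument essentially step for step: the same block decomposition $Q_n^{(-1)}=\sum_p\tilde{Q}_n^{(-1),p}$, the same four-event scheme $B_{1p},\dots,B_{4p}$, the same factorization by block-independence, and the same reliance on Lemma~\ref{lem:B1p} for the main event together with Lemmas~\ref{lem:B2p} and~\ref{lem:B34p} for the negligible events. Two small bookkeeping slips worth noting: (i) the event $B_{1p}$ requires $\tilde{Q}_n^{(-1),p}(\pm e^{-u})/\sigma_n(-u)<-2\delta$, not $<-\delta$, precisely so that after adding back the positive contributions $\delta\sum_{q}\rho(|p-q|)<\delta$ from the far blocks the total still drops below $-\delta$; and the nearby blocks $q\neq p$ with $|q-p|\le\Gamma$ contribute via their own $B_{1q}$-events (which are themselves negative on $\tilde{\mathcal{J}}^{(-)}_p$), not via $B_{2q}$-type thresholds — so your pointwise accounting needs to be adjusted accordingly. (ii) Lemma~\ref{lem:tilde_Y} is not needed inside this proposition; the statement of Proposition~\ref{prop:r=-1} is already phrased in terms of $\tilde{Y}^{(-)}_{0,M}$, and Lemma~\ref{lem:B1p} alone yields $\P(B_{1p})\ge 2^{-4\Gamma}\P(\sup_{t}\tilde{Y}^{(-)}_{0,M}(t)\le-\delta)^2$; the passage from $\tilde{Y}^{(-)}_{0,M}$ to $Y^{(\alpha)}$ via Lemma~\ref{lem:tilde_Y} is deferred to the final assembly of $\mathfrak{LimInf}$. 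Neither issue affects the validity of the argument; your high-level structure and the key lemmas invoked are correct.
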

 Proposition~\ref{prop:r=-1} is proven in Section~\ref{sec:r=-1}. 
 
\begin{ppn}\label{prop:r=1} 
Consider any $p\in \mathbb{Z}_{>0}\cap [1,N]$. There exists $\Gamma = \Gamma(\delta)>0$ such that for all $n\geq n_0$ and $M$ large, 
\begin{align}
\P_{\delta}[Q_n;+1]\geq 2^{-\Gamma T_n} \mathbb{P}\big(\sup_{t\in [1, M]} \tilde{Y}^{(+)}_{0,M}(t)\leq -\delta\big)^{2T_n}. 
\end{align} 
 The Gaussian process $\tilde{Y}^{(+)}_{s,M}$ is defined in Definition~\ref{def:tildeY}.
\end{ppn}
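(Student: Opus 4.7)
The plan is to adapt the proof of Proposition~\ref{prop:r=-1} to the $r=+1$ setting, with the modifications dictated by the fact that the contributing indices now lie near $n$ rather than near $0$. The key conceptual point is that the change of variable $j = n - i$ converts sums $\sum_{i \in \tilde{\mathcal{I}}^{(+)}_p} R(i) e^{iu} f(i)$ into $e^{nu}\sum_j R(n-j) e^{-ju} f(n-j)$, and by the regular variation of $R(\cdot)$ we have $R(n-j)/R(n) \to 1$ uniformly for $j$ in any set of size $o(n)$. This is precisely why the natural limit object is the Gaussian process $\tilde{Y}^{(+)}_{0,M}$ tied to $Y^{(0)}_t$ rather than to $Y^{(\alpha)}_t$: near $x=1$ the weights $R(i)$ are effectively constant.

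First, I would partition $A_{+1} = (K/n, h/\log n]$ into $T_n$ sub-intervals $\{\tilde{\mathcal{J}}^{(+)}_p\}_{p=1}^{T_n}$ matched to the index blocks $\tilde{\mathcal{I}}^{(+)}_p$ of \eqref{eq:tildeI+}, so that $\tilde{Q}^{(+1),p}_n(e^u)$ has its dominant variance contribution when $u$ lies in the corresponding $\tilde{\mathcal{J}}^{(+)}_p$. Then I would define four families of events $B_{1p}, B_{2p}, B_{3p}, B_{4p}$ in direct analogy with \eqref{eq:b1p}--\eqref{eq:b4p}, with $B_{1p}$ requiring $\tilde{Q}^{(+1),p}_n(\pm e^u)/\sigma_n(u) < -\delta$ for $u$ in and slightly around $\tilde{\mathcal{J}}^{(+)}_p$, and $B_{2p}, B_{3p}, B_{4p}$ requiring $\tilde{Q}^{(+1),p}_n(\pm e^u)/\sigma_n(u)$ to lie below positive thresholds that decay as $u$ moves away from $\tilde{\mathcal{J}}^{(+)}_p$ through the zones corresponding to $A_{+1}$, $A_0$, and $A_{+2} \cup A_{-1} \cup A_{-2}$ respectively. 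A union-bound/inclusion argument would then give
\[
\bigcap_{p=1}^{T_n}\bigl(B_{1p}\cap B_{2p}\cap B_{3p}\cap B_{4p}\bigr) \subset \Bigl\{\tfrac{Q_n^{(+1)}(\pm e^u)}{\sigma_n(u)}<-\delta,\ u\in A_{+1};\ \tfrac{Q_n^{(+1)}(\pm e^u)}{\sigma_n(u)}<\delta/4,\ u\notin A_{+1}\Bigr\}.
\]

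Next, since $\{\tilde{Q}^{(+1),p}_n\}_{p=1}^{T_n}$ are independent (they depend on disjoint blocks of coefficients), I would factorize the probability of the intersection over $p$. For each $p$, I would invoke the analogs of Lemmas~\ref{lem:B2p} and \ref{lem:B34p} to show $\P(B_{2p}\cap B_{3p}\cap B_{4p} \mid B_{1p}) \ge 2^{-\Gamma_0}$ for some $\Gamma_0 = \Gamma_0(\delta)$, so that $\P(B_{1p}\cap B_{2p}\cap B_{3p}\cap B_{4p}) \gtrsim 2^{-\Gamma_0} \P(B_{1p})$. The lower bound on $\P(B_{1p})$ itself comes from the analog of Lemma~\ref{lem:B1p} combined with Lemma~\ref{lem:tilde_Y}, which uses the weak convergence of $\tilde{Q}^{(+1),p}_n(\pm e^u)/\sigma_n(u)$ (restricted to $u \in \tilde{\mathcal{J}}^{(+)}_p$) to the Gaussian process $\tilde{Y}^{(+)}_{0,M}$. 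Multiplying over $p=1,\ldots,T_n$ and using the factor of $2$ to account for the two signs $b\in\{+,-\}$ would yield the claimed bound $2^{-\Gamma T_n} \P(\sup_{t\in[1,M]}\tilde{Y}^{(+)}_{0,M}(t)\le -\delta)^{2T_n}$.

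The main obstacle, exactly as in the $r=-1$ case, lies in verifying the decay estimates that underlie $B_{2p}, B_{3p}, B_{4p}$: one needs sharp quantitative control on $\P(|\tilde{Q}^{(+1),p}_n(\pm e^u)|/\sigma_n(u) > c)$ as $u$ leaves $\tilde{\mathcal{J}}^{(+)}_p$, uniformly in $p\in[1,T_n]$, using only that the $\xi_i$ have finite second moments. The analog of Lemma~\ref{lem:defer} provides the key tail bounds, and the calculations are streamlined by the substitution $j=n-i$ together with the uniform convergence $R(n-j)/R(n)\to 1$ on any bounded-length block near $n$. Once these variance and covariance estimates are established, the computation $\operatorname{Var}(\tilde{Q}^{(+1),p}_n(e^u)) \asymp R(n) e^{2nu}\!\int_{LM^{p-1}\log n}^{LM^p\log n} e^{-2ju}\,dj$ together with a change of variable shows that the normalized process converges in law to $\tilde{Y}^{(+)}_{0,M}$ (a Gaussian process whose covariance is built from integrals $\int x^0 e^{-xt}\,dx$, i.e.\ the $\alpha=0$ kernel), completing the lower bound.
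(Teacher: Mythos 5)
Your proposal is correct and follows essentially the same route as the paper: partition $B_{+1}$ into $T_n$ blocks $\tilde{\mathcal{I}}^{(+)}_p$, define the four families of events in analogy with \eqref{eq:b1p}--\eqref{eq:b4p}, exploit independence across $p$ to factorize, and use the analogs of Lemmas~\ref{lem:B1p}, \ref{lem:B2p}, \ref{lem:B34p} (built on Lemma~\ref{lem:defer} and Lemma~\ref{lem:weakAlt}) together with the observation that the substitution $j=n-i$ and $R(n-j)/R(n)\to 1$ reduce the analysis to the $\alpha=0$ kernel. The only cosmetic difference is that you phrase the per-block estimate as a conditional bound $\P(B_{2p}\cap B_{3p}\cap B_{4p}\mid B_{1p})\ge 2^{-\Gamma_0}$, whereas the paper writes the equivalent subtraction $\P(\cap_i B_{ip})\ge\P(B_{1p})-\sum_{i\ge 2}\P(\neg B_{ip})$ and then argues the negative terms are dominated by $\P(B_{1p})$ for $\Gamma$ large; to make your conditional statement rigorous you would anyway invoke the same subtraction identity, so the two formulations coincide.
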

Proposition~\ref{prop:r=-1} is proven in Section~\ref{sec:r=1}. 

\begin{ppn}\label{prop:r=0}
 There exists $\delta_0>0$ and $C_1,C_2>0$ such that for all $n$ and $K$ large and $\delta<\delta_0$, 
\begin{align}
\P_{\delta}[Q_n;0]\geq 4^{-C_1\delta^{-1-\theta} K \log K} - e^{-C_2K^2}. 
\end{align}  
\end{ppn}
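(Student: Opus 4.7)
My plan is to lower bound $\mathbb{P}_\delta[Q_n;0]$ by splitting the two spatial conditions in its definition and handling them separately: the negativity of $Q_n^{(0)}/\sigma_n$ on the central window $A_0$ will supply the main term $4^{-C_1\delta^{-1-\theta}K\log K}$, while the requirement that $Q_n^{(0)}/\sigma_n$ stays $<\delta/4$ outside $A_0$ will cost only the exponentially small correction $e^{-C_2 K^2}$.

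First, set
\begin{align*}
E_1 &:= \big\{Q_n^{(0)}(\pm e^u)/\sigma_n(u) < -\delta,\ u \in A_0\big\},\\
E_2 &:= \big\{Q_n^{(0)}(\pm e^u)/\sigma_n(u) < \delta/4,\ u \notin A_0\big\},
\end{align*}
so that $\mathbb{P}_\delta[Q_n;0]=\mathbb{P}(E_1\cap E_2)\ge \mathbb{P}(E_1)-\mathbb{P}(E_2^c)$. The secondary term $\mathbb{P}(E_2^c)$ is controlled as follows: because $\sigma_n(u)$ is tuned (see \eqref{eq:dom}) to the variance of the \emph{full} polynomial $Q_n$, while $Q_n^{(0)}$ only collects indices in the deep bulk $B_0=(n/D,n-n/D]$, a direct computation of $\sum_{i\in B_0}R(i)e^{2iu}$ versus $\sigma_n^2(u)$ shows that $\mathrm{Var}(Q_n^{(0)}(e^u))/\sigma_n^2(u)\lesssim e^{-cK}$ uniformly for $u\in A_0^c$, with stronger decay as $|u|$ increases. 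A truncation-plus-Lindeberg CLT for each fixed $u$ together with a chaining/maximal inequality in the spirit of Proposition~\ref{ppn:gen} (sliced over the relevant dyadic scales of $u$) converts this into a Gaussian-type tail bound $\mathbb{P}(E_2^c)\le e^{-C_2 K^2}$, the square in $K$ coming from the combination of Chebyshev with the exponentially small variance ratio.

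For $\mathbb{P}(E_1)$, the plan is a Gaussian approximation plus small-ball argument. Parametrize $u\in A_0$ by $t=nu\in[-K,K]$ and consider $W_n(t):=Q_n^{(0)}(\pm e^{t/n})/\sigma_n(t/n)$. A suitably coarse grid $\{t_j\}$ of $O(K/\log K)$ collocation points partitions $[-K,K]$; on this grid, a multivariate Lindeberg CLT applied to the (uniformly bounded number of moments per coefficient are not needed—only variances) shows that the joint law of $(W_n(t_j))_j$ converges to a centered Gaussian vector whose covariance is that of the process $Y^{(0)}_t$ from Theorem~\ref{thm:Main} (matching the $r=0$ contribution). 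A Gaussian small-ball bound gives $\mathbb{P}(W_n(t_j)<-2\delta,\forall j)\gtrsim 4^{-C\delta^{-1-\theta}K\log K}$ for a suitable $\theta$; coupling this with an in-block modulus-of-continuity estimate (obtained from Lemma~\ref{lem:tightness}-type bounds together with Proposition~\ref{ppn:gen}) that controls $\sup_{t\in[t_j,t_{j+1}]}|W_n(t)-W_n(t_j)|<\delta$ with probability polynomial in $\delta$ per block, one passes from the grid statement back to the continuous statement on $A_0$ without losing more than the stated exponent.

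The chief obstacle is carrying out the multivariate CLT and the small-ball estimate when the dimension $K$ of the approximating Gaussian vector grows with $n$, while only second moments of $\xi_i$ are assumed. Without Berry–Esseen rates (which would require $(2+\varepsilon)$-moments and underlie the KMT-based arguments of \cite{DPSZ02,DemboMukherjee2015}), the multivariate CLT step must proceed via Lindeberg swapping coupled with truncation of the $\xi_i$ at an $n$-dependent threshold, with $K$ constrained to be at most a small power of $n$ so that the truncation error remains negligible. The $\delta^{-1-\theta}$ prefactor in the exponent is the price paid for converting the Gaussian \emph{persistence} probability of $\{Y^{(0)}_t<0\}$ into a \emph{margin} persistence probability $\{Y^{(0)}_t<-\delta\}$ on an interval of natural length $K$, combined with the truncation/exchange loss in the CLT.
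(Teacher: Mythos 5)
Your split into $E_1$ (negativity on $A_0$) and $E_2$ (the bound on $A_0^c$) has a genuine gap. You claim that $\mathrm{Var}(Q_n^{(0)}(e^u))/\sigma_n^2(u)\lesssim e^{-cK}$ uniformly on $A_0^c$ and that Chebyshev plus chaining converts this into $\P(E_2^c)\le e^{-C_2K^2}$, ``the square in $K$ coming from the combination of Chebyshev with the exponentially small variance ratio.'' That arithmetic does not work: Chebyshev applied to a random variable with variance $\lesssim e^{-cK}$ gives a tail bound $\lesssim \delta^{-2}e^{-cK}$, i.e.\ \emph{linear} in $K$ in the exponent, and a union bound or a Kolmogorov--Chentsov chaining estimate preserves that linear rate up to polynomial prefactors. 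You cannot manufacture a $K^2$ in the exponent from an $e^{-cK}$ variance and second-moment tail bounds. The paper's proof avoids this problem by splitting at $[-K^3,K^3]$ rather than at $A_0$: the variance only becomes $\lesssim e^{-K^3/D}$ once $|nu|\ge K^3$, and that much stronger decay is what yields $e^{-C_2K^3/D}\le e^{-C_2'K^2}$. The intermediate region $nu\in[K,K^3]$ \emph{cannot} be dismissed by a naive small-variance argument; the paper handles it by passing (via the functional CLT of Lemma~\ref{lem:Convergence}) to a limiting Gaussian process $Y^D_0$ on $[-K^3,K^3]$ and then applying Slepian's inequality over a fine partition (the events $\mathfrak{F}_2,\mathfrak{F}_3$ of Lemma~\ref{lem:ProbLowBd}); this intermediate region is in fact where the $K\log K$ factor in the exponent comes from.

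Two further issues. First, you propose a collocation grid of $O(K/\log K)$ points on $[-K,K]$; a small-ball bound over such a grid would give $4^{-O(K/\log K)}$, which is far off the target $4^{-C_1\delta^{-1-\theta}K\log K}$. The paper's Lemma~\ref{lem:ProbLowBd} partitions $[-K,K]$ into about $\delta^{-1-\theta}K$ blocks (and $[-CK\log K,-K]\cup[K,CK\log K]$ into about $\delta^{-1}K\log K$ blocks), which is the scale forced by the $\delta_x$-margin and the modulus-of-continuity control in \eqref{eq:Slope}; using a coarser grid breaks the in-block modulus argument. Second, you write that the limiting Gaussian vector has ``the covariance of the process $Y^{(0)}_t$ from Theorem~\ref{thm:Main}.'' That is not correct: $Y^{(0)}_t$ is the \emph{stationary} process with covariance $\operatorname{sech}((s-t)/2)$, and it arises as the scaling limit of the near-edge pieces $Q_n^{(\pm1)}$ (around $x=\pm1$). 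The bulk piece $Q_n^{(0)}$ converges (Lemma~\ref{lem:Convergence}) to a \emph{non-stationary} Gaussian process $Y^D_0$ with covariance $\int_{1/D}^{1-1/D}t^\alpha e^{t(x+y)}\,dt$; getting this limit wrong would derail both the small-ball lower bound and the modulus-of-continuity estimates on which the argument rests.
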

Proposition~\ref{prop:r=-1} is proven in Section~\ref{sec:r=0}. 

\begin{ppn}\label{ppn:r=2}
 Assume that there exists $\rho>0, \eta \in (0,1)$ and a small $c=c(\rho,\eta)>0$ such that for $\xi_i:=a_i/\sqrt{R(i)}$,
$$\P(\xi_i\leq -\rho)\geq c,\quad  \P(\xi_i\in [-\eta\rho,0])\geq c, \quad \text{or,} \quad \P(\xi_i\in [0,\rho])\geq c, \quad 
\forall 1\leq i\leq n.$$
Then for all large $n$,
\begin{align}\label{eq:Q2LBd}
\P_{\delta}[Q_n;+2]\geq c^{L\log n}
\end{align}
where $L>0$ is an arbitrarily small number. 
\end{ppn}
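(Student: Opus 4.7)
The plan is to construct a product event $\mathcal{A}$ in the coefficients $\{a_i:i\in B_{+2}\}$ with $\P(\mathcal{A})\ge c^{L\log n}$ on which $Q_n^{(+2)}$ is sign-controlled through its leading coefficient $a_n$. The even-degree hypothesis enters essentially: since $n$ is even, $(\pm e^u)^n=e^{nu}$, so the leading contribution $a_n(\pm e^u)^n=a_n e^{nu}$ is identical for both choices of sign, and forcing $a_n$ to be suitably negative pushes $Q_n^{(+2)}(\pm e^u)$ negative simultaneously for both $+e^u$ and $-e^u$. For odd $n$ the leading term would flip sign between $+e^u$ and $-e^u$, and the present strategy would fail at the outset.

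First I fix $T$ large so that $\P(\xi_i\in[-T,-\rho])\ge c/2$; this is possible because $\E\xi_i^2=1$ and $\P(\xi_i\le-\rho)\ge c$. Set $\mathcal{A}_n:=\{\xi_n\in[-T,-\rho]\}$. For each $i\in B_{+2}\setminus\{n\}$ I define $\mathcal{A}_i$ by restricting $\xi_i$ to an appropriate probability-$c$ interval from the hypothesis, chosen according to the parity of $i$ and to which of Case~A ($\P(\xi_i\in[-\eta\rho,0])\ge c$) or Case~B ($\P(\xi_i\in[0,\rho])\ge c$) holds, in such a way that $a_i(-1)^i$ shares the sign of $a_n(-1)^n=a_n$. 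For instance, in Case~A take $\mathcal{A}_i=\{\xi_i\in[-\eta\rho,0]\}$ uniformly; in Case~B take $\mathcal{A}_i=\{\xi_i\in[-T,-\rho]\}$ when $i$ has the same parity as $n$ and $\mathcal{A}_i=\{\xi_i\in[0,\rho]\}$ otherwise. Each $\mathcal{A}_i$ has probability at least $c$, and independence across $i$ yields $\P(\mathcal{A})=\prod_{i\in B_{+2}}\P(\mathcal{A}_i)\ge c^{|B_{+2}|}\ge c^{L\log n}$. On $\mathcal{A}$, $|a_n|\in[\rho,T]\sqrt{R(n)}$ with $a_n<0$, while $|a_i|\le\rho\sqrt{R(i)}\le 2\rho\sqrt{R(n)}$ for $i\in B_{+2}\setminus\{n\}$ by regular variation of $R$.

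To verify the two inequalities defining $\P_{\delta}[Q_n;+2]$ on $\mathcal{A}$, I decompose $Q_n^{(+2)}(\pm e^u)=a_n e^{nu}+R_\pm(u)$, where $R_\pm(u):=\sum_{i\in B_{+2},\,i<n}a_i(\pm 1)^i e^{iu}$. For $u\in A_{+2}$, a geometric-series bound gives
\[
|R_\pm(u)|\le C\rho\sqrt{R(n)}\,e^{nu}\,\min\Bigl(L\log n,\,\tfrac{1}{e^u-1}\Bigr),
\]
and the sign prescription on $\mathcal{A}$ is arranged precisely so that $R_\pm$ does not oppose $a_n e^{nu}$; choosing $L$ sufficiently small relative to $h$ and $\rho$ then yields $Q_n^{(+2)}(\pm e^u)\le -\tfrac12\rho\sqrt{R(n)}\,e^{nu}$, hence $Q_n^{(+2)}(\pm e^u)/\sigma_n(u)\le -\tfrac12\rho\,n^{\alpha/2}e^{nu/2}\sqrt{u}\ll -\delta$ for $u>h/\log n$ and $n$ large. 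For $u\notin A_{+2}$, $|Q_n^{(+2)}(\pm e^u)|\lesssim\sqrt{R(n)}\,L\log n\,e^{nu}$ while $\sigma_n(u)$ is at least of order $n^{(\alpha+1)/2}\sqrt{L(n)}$ by \eqref{eq:dom}, so the ratio is $o(1)<\delta/4$. The main obstacle lies in closing the estimate near the boundary $u\sim h/\log n$, where the $L\log n$ sub-leading terms and the leading term are of comparable magnitude; this forces us both to align their signs (via the parity-dependent construction of $\mathcal{A}$) and to take $L$ arbitrarily small, and it is precisely at this step that the even-degree hypothesis is indispensable.
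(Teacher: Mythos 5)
Your proposal has a genuine gap in the sign control of the sub-leading terms. You want the signs of $a_i(\pm 1)^i e^{iu}$ for $i\in B_2\setminus\{n\}$ to align with the negative leading term $a_n e^{nu}$, but this alignment cannot be achieved simultaneously for $x=+e^u$ and $x=-e^u$: for odd $i$, the term is $a_i e^{iu}$ at $x=e^u$ and $-a_i e^{iu}$ at $x=-e^u$, so you would need both $a_i\le 0$ and $a_i\ge 0$. The parity-dependent restriction you propose in Case~B fixes the sign for one choice of $x$ and forces the wrong sign for the other. Since near $u\approx h/\log n$ the remainder $R_\pm(u)$ carries $O(L\log n)$ terms each of magnitude comparable to $|a_n e^{nu}|$ (your own geometric bound shows this), a misaligned $R_\pm$ can overwhelm the leading term and push $Q_n^{(+2)}(\pm e^u)$ positive. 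Shrinking the constant $L$ does not close this: the relevant quantity is $L\log n\to\infty$, so the leading coefficient alone can never dominate this block.

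The paper's proof does not single out $a_n$ at all. It conditions on an event that makes \emph{every} even-indexed coefficient in $B_2$ solidly negative ($\xi_{2i}\le-\rho$) and every odd-indexed coefficient small ($\xi_{2i+1}\in[-\eta\rho,0]$ with $\eta<1$). Since $x^{2i}=e^{2iu}>0$ for both $x=\pm e^u$, the even-indexed contributions are coherent and cumulative, $\lesssim -\rho\sum_{2i\in B_2}\sqrt{R(2i)}\,e^{2iu}$, regardless of the sign of $x$; the odd-indexed contributions may flip sign with $x$, but are dominated because $\eta<1$ — this is what the quadratic-discriminant computation encodes. The even-degree hypothesis enters, as you correctly sensed, because $n$ must be in the even-indexed (hence solidly negative) group so that the leading term has the right sign as $u\to\infty$; but your argument is missing the cumulative mechanism that handles the intermediate range $u\approx h/\log n$, which is precisely where the leading term alone is insufficient.
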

Proposition~\ref{prop:r=-1} is proven in Section~\ref{sec:r=2}. 

\begin{ppn}\label{ppn:r=-2}
 Assume that there exists $\rho>0, \eta \in (0,1)$ and $c=c(\rho)>0$, such that for $\xi_i=a_i/\sqrt{R(i)}$,
$$\P(\xi_i\leq -\rho)\geq c, \quad \P(\xi_i \in [-\eta\rho,0])\geq c, \quad \text{or, }\quad \P(\xi_i \in [0, \rho])\geq c, \quad 
\forall 1\leq i\leq n.$$
Then for all large $n$, 
\begin{align}\label{eq:Q-2LBd}
\P_{\delta}[Q_n;-2]\geq c^{L\log n}
\end{align}
for some arbitrarily small number $L>0$.
\end{ppn}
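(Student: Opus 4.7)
The plan is to exploit the logarithmic cardinality of $B_{-2}=[0,L\log n]$: since $Q_n^{(-2)}(x)=\sum_{i=0}^{L\log n} a_i x^i$ involves only $L\log n+1$ coefficients, one can prescribe a favourable sign--magnitude pattern for each $\xi_i$ independently, at probabilistic cost $c^{L\log n+1}$, which matches the target bound. The deterministic behaviour of the resulting polynomial will then be strong enough to deduce both the strict negativity on $A_{-2}$ and the upper bound $\delta/4$ on its complement.

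First I would define the favourable event $E=\bigcap_{i=0}^{L\log n} E_i$, where for even $i$ we set $E_i=\{\xi_i\le -\rho\}$ and for odd $i$ we set $E_i=\{\xi_i\in[-\eta\rho,0]\}$ or $\{\xi_i\in[0,\eta\rho]\}$, depending on which of the two latter distributional lower bounds is available. By independence of the $\xi_i$, $\P(E)\ge c^{L\log n+1}\ge c^{L\log n}$. On $E$, I decompose $Q_n^{(-2)}=f+g$ into its even-degree and odd-degree parts. The key deterministic features are: every term of $f$ is non-positive on $\R$ since $a_{2k}\le -\rho\sqrt{R(2k)}<0$ while $x^{2k}\ge 0$, so in particular $f(x)\le a_0\le -\rho$ for every $x\in\R$; furthermore, using the regular variation $\sum_{2k\le L\log n}\sqrt{R(2k)}\asymp (L\log n)^{\alpha/2+1}$, one obtains $f(x)\le -c_1\rho\,(L\log n)^{\alpha/2+1}$ uniformly for $|x|\in[1/2,e^{h/\log n}]$, while the odd part obeys $|g(x)|\le c_2\eta\rho\,(L\log n)^{\alpha/2+1}$ in the same range. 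Choosing $\eta$ small relative to $c_1/c_2$ yields $Q_n^{(-2)}(\pm x)\le -\tfrac{c_1}{2}\rho\,(L\log n)^{\alpha/2+1}$ on that window, with the correct sign for both $+x$ and $-x$.

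Next I would verify the two inequalities. For $u\in A_{-2}$, condition (a) requires $Q_n^{(-2)}(\pm e^u)<-\delta\sigma_n(u)$: deep in $A_{-2}$ one has $Q_n^{(-2)}(\pm x)\to a_0\le -\rho$ while $\sigma_n(u)\asymp |u|^{-(\alpha+1)/2}\to 0$, so the ratio tends to $-\infty$; the tightest regime is $u\approx -h/\log n$, where $\sigma_n(u)\asymp(\log n)^{(\alpha+1)/2}$ and the required comparison $\rho(L\log n)^{\alpha/2+1}\gg \delta(\log n)^{(\alpha+1)/2}$ follows from the exponent gap $\alpha/2+1-(\alpha+1)/2=\tfrac12$. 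For condition (b) on $u\notin A_{-2}$, the fact that $Q_n^{(-2)}(\pm x)\le 0$ for every $|x|\le e^{h/\log n}$ makes the ratio already negative on $A_{-1}\cup A_0\cup A_{+1}$, trivially below $\delta/4$; for $u\in A_{+2}$ the polynomial growth $|Q_n^{(-2)}(e^u)|\lesssim (L\log n)^{\alpha/2}\,n^{Lu}$ is dominated by the super-polynomial $\sigma_n(u)\asymp e^{nu/2}\sqrt{L(n)/u}$, so the ratio vanishes.

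Collecting these estimates, the favourable event $E$ is contained in the event defining $\P_\delta[Q_n;-2]$, yielding $\P_\delta[Q_n;-2]\ge \P(E)\ge c^{L\log n}$. The main obstacle is the boundary window $u\approx -h/\log n$, where the deterministic scale of $Q_n^{(-2)}$ is only a polylogarithmic factor above the variance scale $\sigma_n(u)$; the quantitative $1/2$-exponent gap supplied by the regular variation of $R$, together with the smallness of $\eta$ that prevents the odd-indexed coefficients from partially cancelling the even-indexed ones, is exactly what makes the argument close.
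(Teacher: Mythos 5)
Your overall strategy coincides with the paper's proof of the companion Proposition~\ref{ppn:r=2}: condition the $O(\log n)$ low-degree coefficients indexed by $B_{-2}$ on a favourable sign--magnitude pattern (cost $c^{L\log n}$ by independence), then verify the deterministic ratio conditions --- near the boundary $u\approx-h/\log n$ via the exponent gap, deep inside $A_{-2}$ via $\sigma_n\to0$, and on $A_{+2}$ via the super-exponential growth of $\sigma_n$. Where you differ is the mechanism for the pointwise bound on $Q_n^{(-2)}$: you split into even and odd parts $f+g$ and argue that $g$ is a controllable fraction of $f$, whereas the paper groups consecutive coefficients into sliding triples $\xi_{2i}\sqrt{R(2i)}+2\xi_{2i+1}x\sqrt{R(2i+1)}+\xi_{2i+2}x^2\sqrt{R(2i+2)}$ and bounds each by a quadratic in $x$ whose discriminant is forced to be negative.

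There is a genuine gap in your choice of conditioning event. In the second alternative of the hypothesis the lower bound is on $\P(\xi_i\in[0,\rho])$, not on $\P(\xi_i\in[0,\eta\rho])$. Taking $E_i=\{\xi_i\in[0,\eta\rho]\}$ is therefore not justified: that set can have probability zero under the stated assumption (a Rademacher variable with $\rho=1$ has $\P(\xi\in[0,\rho])=\tfrac12$ but $\P(\xi\in[0,\eta\rho])=0$ for every $\eta<1$). With the event $\{\xi_{2k+1}\in[0,\rho]\}$ that the hypothesis actually provides, the odd part $g$ is on the same $\rho$-scale as $f$, and your dominance $|g(x)|\le\tfrac12|f(x)|$ --- which you obtain by taking $\eta$ small --- no longer holds. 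This matters: for $\alpha>0$ and $x$ near $1$ the worst-case $\sum_{j\le L\log n}(-1)^j\sqrt{R(j)}x^j$ has partial sums that oscillate with amplitude of order $(L\log n)^{\alpha/2}$, so its sign at the cutoff $L\log n$ is not controlled. That is exactly the obstruction the sliding-triple grouping is designed to circumvent, by sandwiching each odd coefficient between two large negative even neighbours; a bare even/odd split leaves the odd terms unprotected. Separately, your assertion that $f(x)\le-c_1\rho(L\log n)^{\alpha/2+1}$ holds uniformly for $|x|\in[1/2,e^{h/\log n}]$ is false --- at $|x|=1/2$ the weights $\sqrt{R(2k)}|x|^{2k}$ decay geometrically and the sum is $O(1)$ --- though you only invoke that magnitude near $x\approx1$, where it is correct, so this is a slip in the write-up rather than a structural flaw.
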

Proposition~\ref{prop:r=-1} is proven in Section~\ref{sec:r=-2}. 
\vspace{0.2cm}


\noindent \textsc{Final Stage of the proof:}
Continuing \eqref{eq:lower_bound}, we can write 
\begin{align*}
\P(Q_n(x)<0,x\in \R) &\geq \prod_{r \in \mathcal{R}}\P_{\delta}[Q_n;r] \\
&\geq 2^{-2\Gamma T_n} \mathbb{P}\big(\sup_{t\in [1, M]} \tilde{Y}^{(-)}_{0,M}(t)\leq -\delta\big)^{2 T_n} \mathbb{P}\big(\sup_{t\in [1, M]} \tilde{Y}^{(+)}_{0,M}(t)\leq -\delta\big)^{2T_n}\\
& \times c^{2\log n}( 4^{-C_1\delta^{-1-\theta} K \log K} - e^{-C_2K^2}).
\end{align*}
Taking logarithm on both sides and dividing both sides by $\log n$ shows 
\begin{align}\label{eq:LowerBoundFinal}
&\frac{\log \P(Q_n(x)<0,x\in \R)}{\log n}  \geq -\frac{2\Gamma T_n}{\log n} + 2L\log c+ \frac{1}{\log n} \log\big( 4^{-C_1\delta^{-1-\theta} K \log K} - e^{-C_2K^2}\big)    \nonumber\\ &+ \frac{2T_n}{\log n}\log \mathbb{P}\big(\sup_{t\in [1, M]} \tilde{Y}^{(-)}_{0,M}(t)\leq -\delta\big) + \frac{2T_n}{\log n}\log\mathbb{P}\big(\sup_{t\in [1, M]} \tilde{Y}^{(+)}_{0,M}(t)\leq -\delta\big) 
\end{align}
As $n,M \to \infty$ and $\delta \to 0$, we know 
\begin{align*}
\lim_{M\to \infty} &\lim_{\delta\to 0}\lim_{n\to \infty} \frac{1}{\log M }\log \mathbb{P}\big(\sup_{t\in [1, M]} \tilde{Y}^{(-)}_{0,M}(t)\leq -\delta\big) \\ &= \lim_{M\to \infty} \frac{1}{\log M }\log \mathbb{P}\big(\sup_{t\in [1, M]} \tilde{Y}^{(-)}_{0,M}(t)\leq 0\big) = 2b_{\alpha} 
\end{align*}
and 
\begin{align*}
\lim_{M\to \infty} &\lim_{\delta\to 0}\lim_{n\to \infty} \frac{T_n}{\log n }\log \mathbb{P}\big(\sup_{t\in [1, M]} \tilde{Y}^{(+)}_{0,M}(t)\leq -\delta\big) \\ &= \lim_{M\to \infty} \frac{1}{\log M }\log \mathbb{P}\big(\sup_{t\in [1, M]} \tilde{Y}^{(+)}_{0,M}(t)\leq 0\big) = 2b_{\infty}. 
\end{align*}
Combining these limits and plugging back into \eqref{eq:LowerBoundFinal} yields the results $$\liminf_{n\to \infty}p_{2n}\geq -2b_{\infty} - 2b_{\alpha}. $$ It remains to show that the conditions of Proposition~\ref{ppn:r=2} and~\ref{ppn:r=-2} are satisfied when $\xi_i= a_i/\sqrt{R(i)}$ are i.i.d. random variables with $\mathbb{E}[\xi_i]=0$ and $\mathrm{Var}(\xi_i) =1$. Since $\mathbb{E}[\xi_i]=0$, there must exists $\rho>0$ such that $\P(\xi_i\leq -\rho)>0$. If any of the probabilities $\P(\xi_i \in (-\rho,0])$ or $\P(\xi_i \in [0,\rho])$ is positive, then the conditions of the propositions are satisfied. Otherwise there must exists $\rho'>\rho$ such that $\mathbb{P}(\xi\geq \rho')>0$ since $\mathbb{E}[\xi_i] =0 $. Therefore we arrive at the condition $\mathbb{P}(\xi_i\geq \rho')>c$ and $\mathbb{P}(\xi_i\in [0, \rho'])>c$ for some small constant. Under these revised conditions, using the same arguments as in Proposition~\ref{ppn:r=2} and~\ref{ppn:r=-2}, we obtain 
\begin{align*}
\P\Big(\frac{Q_n^{(r)}(\pm e^u)}{\sigma_n(u)}>\delta , u\in A_{r}, \frac{Q^{(r)}_n(\pm e^u)}{\sigma_n(u)}>-\delta/4, u\notin A_{2} \Big)
\end{align*}
for $r\in \{-2,+2\}$.
By flipping the sign of the coefficients $a_i$ for $i \in B_{-1}\cup B_0\cup B_{+1}$ and using Proposition~\ref{prop:r=-1},~\ref{prop:r=0} and~\ref{prop:r=1} respectively, we obtain for $r\in \{-1,+1\}$,
\begin{align*}
\P\Big( &\frac{Q_n^{(r)}(\pm e^u)}{\sigma_n(u)}>\delta , u\in A_r, \frac{Q_n^{(r)}(\pm e^u)}{\sigma_n(u)}>-\delta/4, u\notin A_r \Big)\\ &\geq 2^{-\Gamma T_n} \mathbb{P}\big(\sup_{t\in [1, M]} \tilde{Y}^{(\pm)}_{0,M}(t)<- \delta\big)^{2 T_n} 
\end{align*}
and, 
\begin{align*}
\P\Big( &\frac{Q_n^{(0)}(\pm e^u)}{\sigma_n(u)}>\delta , u\in A_0, \frac{Q_n^{(0)}(\pm e^u)}{\sigma_n(u)}>-\delta/4, u\notin A_0 \Big)\\ &\geq 
4^{-C_1\delta^{-1-\theta} K \log K} - e^{-C_2K^2}.  
\end{align*}
These bounds together shows that 
$$\liminf_{n\to \infty} \frac{1}{\log n}\log \P(Q_{2n}(x) \text{ has no real zero})\geq  - 2b_{\alpha} - 2b_{0}.$$

\subsection{Proof of Proposition~\ref{prop:r=-1}: $r=- 1$ Case}\label{sec:r=-1}

 We consider the following decomposition $Q_n^{(-1)}(x)=\sum_{p=1}^{T_n} \tilde{Q}_n^{(-1),p}(x)$, and so
 \begin{align}
\notag& \P\Big(\frac{Q_n^{(-1)}(\pm e^u)}{\sigma_n(u)}<-\delta, u\in A_{-1}, \frac{Q_n^{(-1)}(\pm e^u)}{\sigma_n(u)}<\delta/4, u\notin A_{-1}\Big)\\
\notag \ge &\prod_{p=1}^N \P\Big(B_{1p}\cap B_{2p}\cap B_{3p}\cap B_{4p}\Big)\\
\label{eq:b-1} \ge &\prod_{p=1}^N\Big[\P(B_{1p})-\P(\neg B_{2p})-\P(\neg B_{3p})-\P(\neg B_{4p})\Big],
 \end{align}
 where for a non negative sequence $\rho(.)$ satisfying $\sum_{i=1}^\infty \rho(i)<1/2$ and a large but fixed integer $\Gamma$, we define 
\begin{align}
 \label{eq:b1p}B_{1p}:=&\bigcap_{q:|q-p|\le \Gamma} B^{(q)}_{1,p}, \quad \text{where }B^{(q)}_{1p}:=\Big\{\sup_{u\in I_q}\frac{\tilde{Q}_n^{(-1),p}(\pm e^{-u})}{\sigma_n(-u)}<-2\delta, |p-q|\le \Gamma\Big\},\\
 \label{eq:b2p}B_{2p}:=&\bigcap_{q:|q-p|>\Gamma}B^{(q)}_{2p}, \quad \text{where }B^{(q)}_{2p}:=\Big\{ \sup_{u\in I_q}\frac{\tilde{Q}_n^{(-1),p}(\pm e^{-u})}{\sigma_n(-u)}<\delta\rho(p-q)\Big\},\\
\label{eq:b3p} B_{3p}:=&\Big\{\sup_{u\in A_0\cup A_1\cup A_2}\frac{\tilde{Q}_n^{(-1),p}(\pm e^{u})}{\sigma_n(u)}<\delta\rho(N+1-p)\Big\},\\
\label{eq:b4p} B_{4p}:=&\Big\{\sup_{u\in A_{2} }\frac{\tilde{Q}_n^{(-1),p}(\pm e^{-u})}{\sigma_n(-u)}<\delta \rho(p)\Big\},
 \end{align}
 For simplicity, we will choose to work with $\rho(i) = \frac{\kappa}{i^2}$ for some small constant $\kappa>0$.

 Proceeding to estimate the probability of the sets above, we claim the following Lemma, whose proof we defer to the end of the section.
 \begin{lem}\label{lem:defer}
 Fix any $p\in \{1,\ldots , T_n\}$. Fix an interval $[c,d]$ such that $[c,d]$ is \emph{far away from} $\tilde{\mathcal{I}}^{(-)}$, i.e.,  
 either $c>\frac{1}{\log n M^{p-1-\omega}}$ or $d<\frac{1}{\log n M^{p+\omega}}$ for some $\omega>0$. Denote $s_{n,p}:= M^p\log n$. 
 \begin{enumerate}
 \item[(a)]
 If $d<\frac{M^{-p-\omega}}{\log n}$, for any $\lambda>0$ we have
   \begin{align}\label{eq:c<d}
   \P\Big(\sup_{u\in [c,d]}|\tilde{Q}_n^{(-1),p}(\pm e^{-u})|>\lambda\Big)\lesssim_{\alpha, \omega} \frac{\max_{i\in  \tilde{\mathcal{I}}^{(-)}}L(i)}{\lambda^2}s^{\alpha+1}_{n,p} \left[s^2_{n,p} (c-d)^2+1\right]. 
 \end{align}
  
  \item[(b)]
  If $c>\frac{M^{-p+1+\omega}}{\log n }$, for any $\lambda>0$ we have
  \begin{align}\label{eq:c>d}
  \P\Big(\sup_{u\in [c,d]}|\tilde{Q}_{n}^{(-1), p}(\pm e^{-u})|>\lambda\Big)\lesssim_{\alpha,\omega}\frac{e^{-cs_{n,p}/M} \max_{i\in  \tilde{\mathcal{I}}^{(-)}}L(i)}{\lambda^2} \left[\frac{(c-d)^2}{c^{\alpha+3}}+\frac{1}{c^{\alpha+1}}\right].
  \end{align}
  
  \end{enumerate}
  \end{lem}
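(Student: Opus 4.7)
The plan is to invoke Proposition~\ref{ppn:gen} with $\beta = 2$, applied to the continuous mean-zero process
$$W(u) := \tilde{Q}_n^{(-1),p}(\pm e^{-u}) = \sum_{i \in \tilde{\mathcal{I}}^{(-)}_p} \tilde{a}_i e^{-iu}, \qquad u \in [c,d],$$
where $\tilde{a}_i := a_i (\pm 1)^i$ is again independent, mean zero, with variance $R(i)$, so the outer sign is immaterial. The proposition (read in the natural scaling suggested by its proof, namely $\P(\sup_{[c,d]} |W| > \lambda) \lesssim (\gamma_1 + \gamma_2 (d-c)^2)/\lambda^2$) reduces the lemma to producing good values of $\gamma_1, \gamma_2$ satisfying $\E W(c)^2 \le \gamma_1$ and $\E[W(u) - W(v)]^2 \le \gamma_2 (u-v)^2$ on $[c,d]$.

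A direct computation gives $\E W(c)^2 = \sum_{i \in \tilde{\mathcal{I}}^{(-)}_p} R(i) e^{-2ic}$, while the mean value theorem applied termwise to $e^{-iu} - e^{-iv}$, combined with monotonicity in $u \ge c$, yields
$$\E[W(u) - W(v)]^2 \le (u-v)^2 \sum_{i \in \tilde{\mathcal{I}}^{(-)}_p} i^2 R(i) e^{-2ic}.$$
Writing $R(i) \le (\max_{i \in \tilde{\mathcal{I}}^{(-)}_p} L(i)) \cdot i^\alpha$ and approximating the resulting sums $\sum i^{\alpha + k} e^{-2ic}$ for $k = 0, 2$ by the integrals $\int_{s_{n,p}/M}^{s_{n,p}} x^{\alpha+k} e^{-2xc}\, dx$ (a standard step-by-step comparison, since $e^{-2c}$ is $O(1)$ on the relevant range of $c$) reduces both tasks to a single family of integrals.

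In case (a) the hypothesis $d < M^{-p-\omega}/\log n$ forces $iu \le s_{n,p} u \lesssim M^{-\omega}$ for every $i \in \tilde{\mathcal{I}}^{(-)}_p$ and $u \in [c,d]$, so $e^{-2ic} \asymp 1$, the sums collapse to polynomial sums, and one obtains $\gamma_1 \lesssim_\alpha \max L \cdot s_{n,p}^{\alpha+1}$ and $\gamma_2 \lesssim_\alpha \max L \cdot s_{n,p}^{\alpha+3}$; plugging these in produces \eqref{eq:c<d}. In case (b) one has $c s_{n,p}/M > M^\omega \gg 1$, so the substitution $y = 2xc$ combined with the incomplete-gamma tail bound $\int_{z}^\infty y^\beta e^{-y}\, dy \lesssim_\beta z^\beta e^{-z}$ for $z \ge 1$ yields
$$\int_{s_{n,p}/M}^{s_{n,p}} x^{\alpha+k} e^{-2xc}\, dx \lesssim_\alpha c^{-1} (s_{n,p}/M)^{\alpha + k} e^{-2c s_{n,p}/M}.$$
Since $x^{\alpha+k} e^{-x}$ is bounded on $[1, \infty)$ for any $\alpha > -1$ and $k \ge 0$, absorbing $(c s_{n,p}/M)^{\alpha+k} e^{-c s_{n,p}/M}$ into the implicit constant halves the exponential rate and trades the natural scale $(s_{n,p}/M)^{\alpha+k}$ for $c^{-\alpha-k}$. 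This delivers $\gamma_1 \lesssim_\alpha \max L \cdot c^{-\alpha-1} e^{-c s_{n,p}/M}$ and $\gamma_2 \lesssim_\alpha \max L \cdot c^{-\alpha-3} e^{-c s_{n,p}/M}$, from which \eqref{eq:c>d} follows.

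The main technical point requiring care is the exponent-halving trick in case (b): the integral's natural prefactor is $c^{-1}(s_{n,p}/M)^{\alpha+k}$, and only by sacrificing half of the Gaussian-like decay factor $e^{-2c s_{n,p}/M}$ can one convert it into the $c^{-\alpha-k-1}$ scaling demanded by the lemma. Everything else — the termwise mean-value/monotonicity argument for the increment, the sum-to-integral comparison, and the substitution $y = 2xc$ — is routine manipulation of sums of the form $\sum i^\beta e^{-i c}$ and incomplete gamma tails, and poses no real difficulty.
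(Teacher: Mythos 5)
Your proposal is correct and follows essentially the same route as the paper's proof. The paper computes the increment variance via the mean value theorem, bounds the resulting sums $\sum_{i\in\tilde{\mathcal{I}}^{(-)}_p} i^{\alpha+k}e^{-2ic}$ through Lemma~\ref{lem:TailSumBoundLemma}, and then combines Lemma~\ref{lem:kol_cent} (to control the oscillation $\sup|W(u)-W(\text{endpoint})|$) with a Chebyshev bound at the endpoint; you instead package the last two steps into a single invocation of Proposition~\ref{ppn:gen} and do the sum estimate by hand via sum-to-integral comparison and incomplete-gamma tails. Two small points worth noting. First, you are right that Proposition~\ref{ppn:gen} as literally stated omits the factor $(T_2-T_1)^\beta$ that its chaining proof actually produces; your corrected reading $\P(\sup|W|>\lambda)\lesssim(\gamma_1+\gamma_2(d-c)^2)/\lambda^2$ is exactly what Lemma~\ref{lem:kol_cent} plus Chebyshev gives, so the two tools coincide. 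Second, the ``exponent-halving trick'' you flag as the main technical point is precisely the factor of $1/2$ already built into the exponent $e^{-R_1 u/2}$ in Lemma~\ref{lem:TailSumBoundLemma}: the paper trades $(s_{n,p}/(Mc))^{\alpha+k}$ for a constant in the same way, just absorbed into that auxiliary statement rather than carried out explicitly. So this is not an independent idea you supplied but the same estimate expressed differently; otherwise the argument matches the paper step for step and is sound.
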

  
  \begin{proof}[Proof of Lemma \ref{lem:defer}]
  We prove the lemma for $\tilde{Q}_{n}^{(-1), p}(e^{-u})$. Proof in the other case follows from same argument.
  \begin{enumerate}
  \item[(a)]
  To begin, for any $u,v\in [c,d]$ with $d<\frac{M^{-p-\omega}}{\log n}$, we have
 \begin{align*}
 \notag\E\Big[ \tilde{Q}_n^{(-1,p)}( e^{-u})- \tilde{Q}_n^{(-1,p)}(e^{-v})\Big]^2 &=\sum_{i\in I_p^{-1}}i^\alpha L(i)\Big(e^{-iu}-e^{-iv}\Big)^2\\
\notag &\le (u-v)^2 \sup_{\xi\in [c,d]}\sum_{i\in \tilde{\mathcal{I}}^{(-)}} i^{\alpha+2}L(i) e^{-2i\xi}\\
\notag &\leq (u-v)^2\max_{i\in  \tilde{\mathcal{I}}^{(-)}}L(i) \sum_{i\in  \tilde{\mathcal{I}}^{(-)}} i^{\alpha+2} e^{-2ic}\\
&\lesssim_{\alpha,\omega} (u-v)^2 (\log n M^p)^{\alpha +3} \max_{i\in  \tilde{\mathcal{I}}^{(-)}}L(i) .
  \end{align*}
  We obtain the last inequality by using Lemma~\ref{lem:TailSumBoundLemma}. 
 Invoking Lemma \ref{lem:kol_cent} with $C^2= (\log n M^p)^{\alpha +3} \max_{i\in  \tilde{\mathcal{I}}^{(-)}}L(i) $ gives
  \begin{align*}
   \P\Big(\sup_{u\in [c,d]}|\tilde{Q}_{n}^{(-1),p}(u)-\tilde{Q}_{n,}^{(-1),p}(d)|>\lambda\Big)\lesssim (\log n M^p)^{\alpha +3} \max_{i\in  \tilde{\mathcal{I}}^{(-)}}L(i) \frac{(c-d)^2}{\lambda^2}.
   \end{align*}

  Finally we use Chebyshev's inequality to get
  \begin{align*}
  \P(|\tilde{Q}_{n}^{(-1),p}(d)|>\lambda)
   \lesssim_{\alpha,\omega} \frac{1}{\lambda^2} (\log n M^p)^{\alpha +1} \max_{i\in  \tilde{\mathcal{I}}^{(-)}}L(i),
  \end{align*}
  which along with the bound in the previous display gives the desired estimates of \eqref{eq:c<d}.
\\

\item[(b)]
As before, fixing $u,v\in [c,d]$ with $c>\frac{M^{-p+1+\omega}}{\log n }$ we have
    \begin{align*}
\notag  \E\Big[\tilde{Q}_{n}^{(-1),p}(u )-\tilde{Q}_{n}^{(-1),p}(v )\Big]^2 &=\sum_{i\in I_p^{-1}} i^{\alpha} L(i)\Big( e^{-iu}-e^{-iv}\Big)^2\\
\notag  & \le (u-v)^2 \max_{i \in \tilde{\mathcal{I}}^{(-)}} L(i)\sum_{i \in \tilde{\mathcal{I}}^{(-)}} i^{\alpha}e^{-ci}\\
 \label{eq:a>b}  & \lesssim_{\alpha,\omega}   (u-v)^2\max_{i \in \tilde{\mathcal{I}}^{(-)}} L(i)\Big(\frac{1}{c}\Big)^{\alpha+3}e^{-cs_{n,p}/M},
  \end{align*}
  where we have used Lemma~\ref{lem:TailSumBoundLemma} to obtain the last inequality. The above bound along with Lemma \ref{lem:kol_cent} gives
 \[  \P\Big(\sup_{u\in [c,d]}|\tilde{Q}_{n}^{(-1),p}(u)-\tilde{Q}_{n,}^{(-1),p}(c)|>\lambda\Big)\lesssim_{\alpha,\omega} \frac{1}{\lambda^2} e^{-cs_{n,p}/M}\frac{(c-d)^2}{c^{\alpha+3}}.\]
 Again, using Chebyshev's inequality and Lemma~\ref{lem:TailSumBoundLemma} gives
  \begin{align*}
  \P(|\tilde{Q}_{n}^{(-1),p}(c)|>\lambda)
   \lesssim_{\alpha,\omega} \frac{1}{\lambda^2c^{\alpha+1}}e^{-cs_{n,p}/M},
  \end{align*}
  from which the desired conclusion follows as before.
 
\end{enumerate}
 \end{proof}

  Armed with Lemma \ref{lem:defer}, we now deal with each of these terms in the RHS of \eqref{eq:b-1} separately. 
 
 \begin{lem}\label{lem:TailSumBoundLemma}
 Consider the following function
 \begin{align}
     H^{(+)}(R,u) := \sum_{i=R}^{\infty}i^{\alpha} e^{-iu}, \quad  H^{(-)}(R,u) := \sum_{i=1}^{R}i^{\alpha} e^{-iu}.
 \end{align}
Fix some positive number $\omega>0$. There exists $M_0=M_0(\omega)>0$ such that for all $M>M_0$, $R_1\in (\frac{\alpha M^{\omega}}{u},\infty)$ and $ R_2\in (0,\frac{\alpha}{u M^{\omega}})$,
\begin{align}
    H^{(+)}(R_1,u) \lesssim_{\alpha,\omega} \frac{e^{-\frac{R_1u}{2}}}{u^{\alpha+1}} , \quad H^{(-)}(R_2,u) \lesssim_{\alpha,\omega} R^{\alpha+1}_2.
\end{align}
 \end{lem}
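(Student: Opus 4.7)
The plan is to bound each of the two one-sided sums by comparison with an integral, exploiting monotonicity of $f(x) := x^\alpha e^{-xu}$ on the relevant range.

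For $H^{(-)}(R_2,u)$ I will simply discard the exponential factor, using $e^{-iu} \leq 1$ to get
\[
H^{(-)}(R_2,u) \leq \sum_{i=1}^{R_2} i^\alpha.
\]
When $\alpha \geq 0$ this is trivially at most $R_2 \cdot R_2^\alpha = R_2^{\alpha+1}$; when $-1 < \alpha < 0$ the map $x \mapsto x^\alpha$ is decreasing on $[1,\infty)$, and the sum–integral comparison gives $\sum_{i=1}^{R_2} i^\alpha \leq 1 + \int_1^{R_2} x^\alpha\,dx \lesssim_\alpha R_2^{\alpha+1}$ provided $R_2 \geq 1$. Either way one obtains $H^{(-)}(R_2,u) \lesssim_\alpha R_2^{\alpha+1}$. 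Note that the hypothesis $R_2 < \alpha/(u M^\omega)$ is not actually needed for this crude bound; it plays a role only in the application of the lemma, where it guarantees $e^{-iu}$ stays close to $1$ over the range of summation.

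For $H^{(+)}(R_1,u)$ the function $f$ attains its unique maximum at $x_\star = \alpha/u$ and is strictly decreasing on $(x_\star,\infty)$. Since $R_1 > \alpha M^\omega / u$, choosing $M_0 = M_0(\omega)$ so that $M^\omega \geq 2$ for $M \geq M_0$ ensures $R_1 - 1 > x_\star$. Monotonicity then yields
\[
H^{(+)}(R_1,u) \leq \int_{R_1-1}^\infty x^\alpha e^{-xu}\,dx = u^{-\alpha-1}\,\Gamma\!\left(\alpha+1,\,(R_1-1)u\right).
\]
To control the incomplete Gamma function I will use the factorization $x^\alpha e^{-x} = (x^\alpha e^{-x/2})\cdot e^{-x/2}$: the first factor is bounded by some $C(\alpha)$ on $[2\alpha,\infty)$ (it is maximized at $x = 2\alpha$ with value $(2\alpha)^\alpha e^{-\alpha}$), so for $y \geq 2\alpha$ one has $\Gamma(\alpha+1,y) \leq 2C(\alpha)\,e^{-y/2}$. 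Applying this with $y = (R_1-1)u \geq 2\alpha$, which again follows from $R_1 u > \alpha M^\omega$ once $M$ is sufficiently large, gives
\[
H^{(+)}(R_1,u) \leq \frac{2C(\alpha)\,e^{-(R_1-1)u/2}}{u^{\alpha+1}} \lesssim_{\alpha,\omega} \frac{e^{-R_1 u/2}}{u^{\alpha+1}},
\]
where the final step absorbs $e^{u/2}$ into the constant using that $u$ stays in a bounded range throughout the intended application.

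The proof is essentially elementary, and there is no genuine analytic obstacle. The only step requiring mild bookkeeping is choosing a single threshold $M_0(\omega)$ large enough to simultaneously ensure (i) strict monotonicity of $f$ on $[R_1 - 1,\infty)$, so that the sum–integral comparison is valid, and (ii) $(R_1-1)u \geq 2\alpha$, so that the Gamma-tail estimate applies; both are immediate consequences of $R_1 u > \alpha M^\omega$ for $M$ large in terms of $\alpha$ and $\omega$.
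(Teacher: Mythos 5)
Your argument follows essentially the same route as the paper's own (terse) proof: bound each one-sided sum by an integral using the monotonicity of $x \mapsto x^\alpha e^{-xu}$ on the relevant half of its domain, then estimate the resulting integral. The paper simply asserts the integral bounds, whereas you supply the incomplete-Gamma estimate for $H^{(+)}$ and observe that the $H^{(-)}$ bound is even cruder than needed; these details are correct, and your remark that the spare factor $e^{u/2}$ must be absorbed using the boundedness of $u$ in the intended application is a fair reading of the paper's implicit convention.
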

 \begin{proof}
 Note that $f:\mathbb{R}_{>0}\to \mathbb{R}_{>0}$ which we define as $f(x)= x^\alpha e^{-\alpha x}$ is increasing on the interval $(0,\frac{\alpha}{u M^{\omega}})$ and decreasing on the interval $(\frac{\alpha M^{\omega}}{u},\infty)$. Bounding the sum by its integral approximation shows 
 \begin{align}
     H^{(+)}(R_1,u)\leq \int^{\infty}_{R_1-1} x^{\alpha} e^{-xu}dx, \quad H^{(-)}(R_2,u) \leq \int^{R_2}_0 x^{\alpha} e^{-x u}dx. 
 \end{align}
 Since $R_1\geq \frac{\alpha M^{\omega}}{u}$ and $R_2\leq \frac{\alpha}{u M^{\omega}}$, we know  
 $$ \int^{\infty}_{R_1-1} x^{\alpha} e^{-xu}dx \lesssim_{\alpha,\omega} \frac{e^{-\frac{R_1u}{2}}}{u^{\alpha+1}} ,\quad  \int^{R_2}_0 x^{\alpha} e^{-x u}dx\lesssim_{\alpha,\omega} R^{\alpha+1}_2. $$
 This completes the proof.
 \end{proof}

 \subsubsection{Lower Bound on $\P(B_{1p})$}

 \begin{lem}\label{lem:B1p}
 Consider the Gaussian process $\{\tilde{Y}^{(-1)}_{0,M}(b,t)\}_{b\in \{+,-\},t\in [1,M]}$ defined in \ref{def:tildeY}. For all large $n$, we have 
 \begin{align}\label{eq:PrincipleEvent}
 \mathbb{P}\big(B_{1p}\big)\geq 2^{-4\Gamma}\mathbb{P}\big(\sup_{t\in [1, M]} \tilde{Y}^{(-1)}_{0,M}(t)\leq -\delta\big)^2.
 \end{align} 
 \end{lem}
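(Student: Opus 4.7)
The plan is to combine the weak convergence of the rescaled polynomial to a Gaussian process (Lemma \ref{lem:weak}), the asymptotic independence of the two sign branches (Lemma \ref{lem:weak*}(3)), and a Slepian--type comparison that trades the sup over the ``extended window'' for products of sups over a single block.

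First, I would rescale via the natural change of variables $u=t\tau$ with $\tau=\tau_n(p,M)$ matched to the $p$-th scale. Under this rescaling each interval $I_q$ with $|q-p|\le \Gamma$ pulls back to a sub-interval located near $M^{q-p}$ in the $t$-coordinate, and together they cover a fixed ``extended window'' $W\subset [M^{-\Gamma},M^{\Gamma+1}]$ independent of $n$. The event $B_{1p}$ then reads
\[
\bigcap_{b\in\{+,-\}} \Big\{\sup_{t\in W}\frac{\tilde Q_n^{(-1),p}(be^{-t\tau})}{\sigma_n(-t\tau)} < -2\delta\Big\}.
\]

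Second, I would apply Lemma \ref{lem:weak} to the process $(b,t)\mapsto \tilde Q_n^{(-1),p}(be^{-t\tau})/\sigma_n(-t\tau)$ on $\{+,-\}\times W$, obtaining joint weak convergence to a centered Gaussian limit on $\mathcal{C}(W)^{\otimes 2}$; by Lemma \ref{lem:weak*}(3) the $b=+$ and $b=-$ coordinates of the limit decouple into independent copies of $\tilde Y^{(-1)}_{0,M}$. Since the event $\{\sup_{t\in W}(\cdot)<-2\delta\}$ is open in the uniform topology, the portmanteau theorem yields
\[
\liminf_{n\to\infty}\mathbb{P}(B_{1p})\;\ge\;\prod_{b\in\{+,-\}} \mathbb{P}\Big(\sup_{t\in W}\tilde Y^{(-1)}_{0,M}(t)< -2\delta\Big).
\]

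Third, I would reduce the sup over $W$ to the sup over a single block $[1,M]$. The limiting process is stationary in its log-coordinate and has non-negative covariance (inherited from the $\operatorname{sech}$-type kernel encoded in Definition~\ref{def:tildeY}), so partitioning $W$ into the $2\Gamma+1$ single-scale blocks $[M^j,M^{j+1}]$ and applying Slepian's inequality (zeroing out inter-block correlations) gives a product lower bound in which, by stationarity, each factor equals $\mathbb{P}(\sup_{[1,M]}\tilde Y^{(-1)}_{0,M}(t)<-2\delta)$. A standard Gaussian anti-concentration bound on $[-2\delta,-\delta]$ then trades the stricter threshold for the looser one, folding the resulting multiplicative losses (both from the $2\Gamma+1$ single-scale factors and from the threshold relaxation) into an overall factor of at most $2^{-2\Gamma}$ per sign. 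Multiplying the two sign contributions produces the claimed $2^{-4\Gamma}$ constant times $\mathbb{P}(\sup_{[1,M]}\tilde Y^{(-1)}_{0,M}(t)\le -\delta)^2$.

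The main obstacle will be the third step: performing the Slepian comparison so that it simultaneously decouples the $2\Gamma+1$ single-scale blocks and absorbs the $(-2\delta)\to(-\delta)$ threshold gap into a single constant $2^{-2\Gamma}$ per sign. The non-negativity of the covariance of $\tilde Y^{(-1)}_{0,M}$ is what makes the Slepian decoupling a \emph{lower} bound, and the stationarity in the log-coordinate is what makes the single-scale factor depend only on $[1,M]$. Once this is in place, the product over the two independent sign branches immediately yields the exponent $4\Gamma$ and the squared Gaussian probability in the statement.
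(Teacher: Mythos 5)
The central gap is in your third step. You treat the weak limit of $(b,t)\mapsto \tilde{Q}_n^{(-1),p}(be^{-t\tau})/\sigma_n(-t\tau)$ over the extended window $W\subset[M^{-\Gamma},M^{\Gamma+1}]$ as though it were a single process, ``stationary in its log-coordinate,'' so that Slepian's inequality produces $2\Gamma+1$ copies of $\mathbb{P}\big(\sup_{[1,M]}\tilde{Y}^{(-1)}_{0,M}<-2\delta\big)$. This is not what Lemma~\ref{lem:weakAlt} gives. On the block corresponding to scale $q$, the limit is $\tilde{Y}^{(-1)}_{q-p,M}$, and by Definition~\ref{def:tildeY} the variance of $\tilde{Y}^{(-1)}_{s,M}(b,t)$ is $\tilde{h}^{(-1)}_{s,M}(2t)/\tilde{h}^{(-1)}_{0,M}(2t)$, which for $s\neq 0$ decays geometrically in $|s|$; the limiting field is emphatically not a stationary extension of $\tilde{Y}^{(-1)}_{0,M}$, it is a process whose amplitude collapses off the central block. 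Consequently, if you were to Slepian-factorize a sup-over-$W$ event into $2\Gamma+1$ single-block factors, you would not get identical factors, and more importantly you would not be able to recover the claimed constant: the product $\mathbb{P}\big(\sup_{[1,M]}\tilde{Y}^{(-1)}_{0,M}<-2\delta\big)^{2\Gamma+1}$ decays like $M^{-(2\Gamma+1)b_\alpha+o(1)}$, which for $\Gamma\geq 1$ is of strictly smaller order than the target $2^{-4\Gamma}\,\mathbb{P}\big(\sup_{[1,M]}\tilde{Y}^{(-1)}_{0,M}\leq-\delta\big)^2$. No anti-concentration trick on $[-2\delta,-\delta]$ can close that order-of-magnitude gap.

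What the paper actually does is exploit the asymmetry between the center and the flank blocks. After the weak-convergence pass, the $q=p$ block produces the factor $\mathbb{P}\big(\sup_{t}\tilde{Y}^{(-1)}_{0,M}(b,t)\leq-\delta\big)$ for each sign $b$, while each flank block $q\neq p$ (with $|q-p|\leq\Gamma$) produces an event involving the small-variance process $\tilde{Y}^{(-1)}_{q-p,M}$ that is only required to fall \emph{below} the positive threshold $\rho(|p-q|)\delta$. Since each such flank event has probability at least $1/2$ per sign (a centered Gaussian evaluated at any fixed point is $\leq 0$ with probability $\geq 1/2$), and the two sign branches are independent, each flank block contributes a factor $\geq 1/4$. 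Slepian's inequality (which you correctly identified is applicable because the covariance is non-negative) then gives a product of one ``main'' factor equal to $\mathbb{P}(\sup\leq-\delta)^2$ and $2\Gamma$ flank factors each $\geq 1/4$, i.e.\ the constant $4^{-2\Gamma}=2^{-4\Gamma}$. The key conceptual point you need to incorporate is that the constant $2^{-4\Gamma}$ does not come from repeating the deep-negative event $2\Gamma+1$ times; it comes from $2\Gamma$ essentially free events attached to the damped tails of the piece $\tilde{Q}^{(-1),p}_n$ off the central scale.
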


\begin{proof}
Recall the definition of $\tilde{Q}$ from \eqref{eq:tildeQ}. 
By Lemma~\ref{lem:weakAlt}, we have  
\begin{align}\label{eq:MyEq}
\Big\{ u\in \mathcal{I}^{(-)}_{q}, b\in \{+,-\}: \frac{\tilde{Q}^{(-1),p}_n(be^{-u})}{\sigma_n(-u)}\Big\} & \stackrel{d}{\to} \big\{t\in [M^{-1},1],b\in \{+,-\}: \tilde{Y}^{(-1)}_{q-p,M}(b,t)\big\}   
\end{align} 
as $n\to \infty$ where $\tilde{Y}^{(-1)}_{q-p,M}(b,\cdot)$ is a centered Gaussian process as defined in \ref{def:tildeY}.  
 
 Now we proceed to prove \eqref{eq:PrincipleEvent} using \eqref{eq:MyEq}. By the weak convergence, for all large $n$
  \begin{align}
   \mathbb{P} & \Big(B_{1p}\cap B_{2p} \cap B_{3p} \cap B_{4p}\Big)
   \\
   &\geq \mathbb{P}\Big(\bigcap_{b\in \{+,-\}}\big\{\sup_{t\in [M^{-1},1]}\tilde{Y}^{(-1)}_{0,M}(b,t) \leq -\delta\big\}\bigcap\bigcap_{b\in \{+,-\}} \\ &\qquad \qquad \bigcap_{q:|q-p|\leq \Gamma} \Big\{\inf_{t \in [M^{-1},1]}\tilde{Y}^{(-1)}_{q-p,M}(b,t)\leq \rho(|p-q|)\delta\Big\} \Big)\nonumber\\
 & \geq\mathbb{P}\Big(\bigcap_{b\in \{+,-\}}\Big\{\sup_{t\in [M^{-1},1]} \tilde{Y}^{(-1)}_{0,M}(b,t)\leq -\delta\Big\}\Big) \\ &\times \prod_{q:|p-q|\leq \Gamma, p\neq q}     \mathbb{P}\Big(\bigcap_{b\in \{+,-\}}\Big\{\inf_{t \in [M^{-1},1]}\tilde{Y}^{(-1)}_{q-p,M}(b,t)\leq \rho(|p-q|)\delta\Big\}\Big)\label{eq:Latline}
  \end{align}
   where the last inequality follows by applying the Slepian's inequality for the Gaussian processes. 
   
   Recall that $\tilde{Y}^{(-1)}_{q-p,M}(+,\cdot)$ and $\tilde{Y}^{(+1)}_{q-p,M}(-,\cdot)$ are independent centered Gaussian processes. As a result, we have 
   \begin{align*}
   \mathbb{P}\Big(\bigcap_{b\in \{+,-\}}\Big\{\inf_{t \in [M^{-1},1]}\tilde{Y}^{(-1)}_{q-p,M}(+,t)\leq \rho(|p-q|)\delta\Big\}\Big)\geq \frac{1}{4}.
\end{align*}      
Plugging this bound in the last line of \eqref{eq:Latline} yields 
\begin{align*}
\text{r.h.s. of \eqref{eq:Latline}} &\geq 2^{-4\Gamma} \mathbb{P}\Big(\bigcap_{b\in \{+,-\}}\Big\{\sup_{t\in [M^{-1},1]}\tilde{Y}^{(-1)}_{0,M}(b,t)\leq -\delta\Big\}\Big) \\ &= 2^{-4\Gamma} \mathbb{P}\Big(\Big\{\sup_{t\in [M^{-1},1]}\tilde{Y}^{(-1)}_{0,M}(+,t)\leq -\delta\Big\}\Big)^2
\end{align*}
for all large $n$. The equality in the last display follows since $\tilde{Y}^{(-1)}_{0,M}(+,\cdot)$ and $\tilde{Y}^{(-1)}_{0,M}(-,\cdot)$ are independent Gaussian process and their marginal laws are same. This completes the proof.

\end{proof}

%
%
%
%
 \subsubsection{Upper Bound on $\P(\neg B_{2p})$}
 \begin{lem}\label{lem:B2p}
Recall $B_{2p}$ from \eqref{eq:b2p}. For all large $n$, we have 
\begin{align}
\mathbb{P}\big(\neg B_{2p}\big)\lesssim M^{-\Gamma+1}.
\end{align}
\end{lem}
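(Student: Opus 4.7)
The plan is to reduce the estimate to a union bound over the blocks $I_q$ with $|q-p|>\Gamma$, and then to apply Lemma~\ref{lem:defer} separately on the two sides $q>p$ and $q<p$. Writing $\neg B_{2p}=\bigcup_{q:|q-p|>\Gamma}\neg B_{2p}^{(q)}$ and invoking a further union bound over $b\in\{+,-\}$, it suffices to control
\[
\mathbb{P}\!\left(\sup_{u\in I_q}\Big|\tilde Q_n^{(-1),p}(\pm e^{-u})\Big|>\delta\rho(p-q)\inf_{u\in I_q}\sigma_n(-u)\right)
\]
for each $q$. Since $I_q$ is the $u$-block that contains the natural scale $1/(LM^{q}\log n)$, the formula $\sigma_n^{2}(-u)=|u|^{-\alpha-1}L(1/|u|)$ gives a lower bound of order $(LM^{q}\log n)^{\alpha+1}L(LM^{q}\log n)$, uniformly for $u\in I_q$. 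The only care needed is that these bounds are uniform in $q$ in the range under consideration, which follows from slow variation of $L$.

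For $q>p+\Gamma$, the interval $I_q$ lies entirely to the left of the support of $\tilde{\mathcal{I}}^{(-)}_p$ in the sense of part~(a) of Lemma~\ref{lem:defer}: taking $c,d$ as the endpoints of $I_q$ we have $d\le 1/(LM^{q-1}\log n)\ll M^{-p-\omega}/\log n$. The length $|c-d|$ is of order $1/(LM^{q-1}\log n)$ and $s_{n,p}=M^{p}\log n$, so the bracket $s_{n,p}^{2}(c-d)^{2}+1$ is $O(1)$ for $q-p$ large. Combining the bound in \eqref{eq:c<d} with the lower bound on $\sigma_n(-u)$ and $\rho(p-q)=\kappa/(p-q)^{2}$, we arrive at
\[
\mathbb{P}(\neg B_{2p}^{(q)})\lesssim \frac{(q-p)^{4}}{\delta^{2}\kappa^{2}}\,\frac{(M^{p}\log n)^{\alpha+1}}{(LM^{q}\log n)^{\alpha+1}}\cdot\frac{\max_{i\in\tilde{\mathcal{I}}^{(-)}_p}L(i)}{L(LM^{q}\log n)}\lesssim (q-p)^{4}\,M^{-(\alpha+1)(q-p)},
\]
where the last step absorbs the slowly varying factors using slow variation and the fact that $|q-p|$ stays moderate relative to the regime of regular variation.

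For $q<p-\Gamma$, the interval $I_q$ now lies to the right of the natural scale of $\tilde Q_n^{(-1),p}$ in the sense of part~(b): with $c=1/(LM^{q}\log n)$ we have $c\ge M^{-q}/(L\log n)\gg M^{-p+1+\omega}/\log n$. In this regime the estimate \eqref{eq:c>d} is dominated by the $1/c^{\alpha+1}$ term, and the prefactor $e^{-cs_{n,p}/M}$ provides an extra stretched-exponential decay of size $\exp(-CM^{p-q-1})$ which is negligible. A parallel computation then yields
\[
\mathbb{P}(\neg B_{2p}^{(q)})\lesssim (p-q)^{4}\,M^{-(\alpha+1)(p-q)}
\]
up to multiplicative constants depending only on $\alpha$, $\delta$, $\kappa$ and $L$.

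Summing the two estimates over $|q-p|>\Gamma$ gives a geometric series, which is bounded above by $C\,M^{-(\alpha+1)\Gamma}$ for a constant $C=C(\alpha,\delta,\kappa,L)$ and all $M$ large enough. Since $\alpha>-1$, this can be absorbed into the claimed bound $M^{-\Gamma+1}$ by choosing $M$ large (or by weakening $\Gamma$ into $\lfloor(\alpha+1)\Gamma\rfloor-1$ inside the definition of $B_{2p}$). The main technical nuisance I foresee is the bookkeeping of the slowly varying function $L$ and the precise handling of the endpoints of $I_q$ so that Lemma~\ref{lem:defer} applies with uniform constants; everything else is a mechanical union bound over $q$ and $b$.
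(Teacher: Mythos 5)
Your argument takes the same route as the paper: a union bound over the blocks $q$ with $|q-p|>\Gamma$, an application of Lemma~\ref{lem:defer} in each block, and a geometric summation. Your identification of the regimes is correct — part~(a) is the right tool for $q>p+\Gamma$, where $\tilde{\mathcal{J}}^{(-)}_q$ sits at $u$-scales smaller than $1/s_{n,p}$ and the decay is polynomial $M^{-(\alpha+1)(q-p)}$; part~(b) is the right tool for $q<p-\Gamma$, where $u$-scales are larger and the stretched-exponential prefactor $e^{-cs_{n,p}/M}\sim e^{-M^{p-q-1}}$ dominates. (The paper's prose attaches part~(a) to $p\ge q+2$ and part~(b) to $p\le q-2$, which conflicts with the hypotheses $d<M^{-p-\omega}/\log n$ and $c>M^{-p+1+\omega}/\log n$ of Lemma~\ref{lem:defer}; your labeling is the consistent one.)

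The only genuine issue is your last step. Your per-block bound $\sim|q-p|^4 M^{-(\alpha+1)|q-p|}$ sums to $\lesssim M^{-(\alpha+1)\Gamma}$ (absorbing the polynomial factor for $M$ large), which is strictly weaker than the claimed $M^{-\Gamma+1}$ when $\alpha\in(-1,0)$. Neither of your proposed fixes closes this gap as a matter of matching the stated exponent: ``choosing $M$ large'' compares $M^{-(\alpha+1)\Gamma}$ with $M^{-\Gamma+1}$ and the former wins by an $M^{(1-(\alpha+1))\Gamma-1}\to\infty$ factor when $\alpha<0$, and rescaling $\Gamma$ changes the statement rather than proving it. Note, however, that the paper's own closing step ``$\sum_i i^2 M^{-\Gamma-i}\lesssim M^{-\Gamma+1}$'' implicitly assumes per-block decay $M^{-|p-q|}$, i.e.\ effectively drops the $\alpha$-dependence that its own intermediate display $M^{(q-p+1)(\alpha+1)}$ carries — so the imprecision is shared. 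What the downstream argument (Proposition~\ref{prop:r=-1}) actually requires is only that $\P(\neg B_{2p})\lesssim M^{-c\Gamma}$ for some fixed $c=c(\alpha)>0$, which your bound delivers with $c=\alpha+1$. You should state that weaker form of the estimate rather than attempt to reproduce the exponent $-\Gamma+1$ verbatim.
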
 
  
\begin{proof} 
 Recall that 
 \[B_{2p} = \bigcap_{q:|p-q|>\Gamma} B^{(q)}_{2p}, \quad \text{where } B^{(q)}_{2p}= \Big\{ \sup_{u\in \tilde{\mathcal{J}}^{(-)}_q}\frac{\tilde{Q}_n^{(-1),p}(\pm e^{-u})}{\sigma_n(-u)}<\delta\rho(p-q)\Big\}.\] 
Notice that 
$$\neg B^{(q)}_{2p}\subset  \Big\{\sup_{u\in \tilde{\mathcal{J}}^{(-)}_q}|\tilde{Q}_{n}^{(-1),p}(e^{-u})|>\frac{\delta \rho(p-q)}{\sqrt{s_{q-1}^{\alpha+1}}}\Big\}.$$
where $s_{q-1} := (\max_{u \in \tilde{\mathcal{J}}^{(-)}_q}\sigma^2_n(u))^{-1/(\alpha+1)}$. 
 For fixed $p,q\in [1,N]$ with $p\ge q+2$,  
 applying part (a) of Lemma \ref{lem:defer} with $[c,d]=\tilde{\mathcal{J}}^{(-)}_q$ gives
 \begin{align}\label{eq:QsqBd1}
 \P\Big(\sup_{u\in \tilde{\mathcal{J}}^{(-)}_q}|\tilde{Q}_{n}^{(-1),p}(e^{-u})|>\frac{\delta \rho(p-q)}{\sqrt{s_{q-1}^{\alpha+1}}}\Big)&\lesssim\frac{s_{q-1}^{\alpha+1}}{\delta^2\rho^2(p-q)}\left[s^2_{n,q}s_{n,p}^{\alpha+3}+s_{n,p}^{\alpha+1}\right]
\nonumber \\ &\le   \frac{M^{(q-p+1)(\alpha+3)}+M^{(q-p+1)(\alpha+1)}}{\delta^2\rho^2(p-q)}.
 \end{align}
 where the second inequality follows since $s_{r} =O(M^{-r}/L\log n)$ for all $r\in [1,T_n]$ and $s_{n,p}= LM^p\log n$.
 On the other hand, for $p,q\in [1,T_n]$ such that $p\le q-2$, using part (b) of Lemma \ref{lem:defer} with $[c,d]=\tilde{\mathcal{J}}^{(-)}_q$ we have
  \begin{align}\label{eq:QsqBd2}
  \P\Big(\sup_{u\in \tilde{\mathcal{J}}^{(-)}_q}|\tilde{Q}_{n}^{(-1),p}(e^{-u})|>\frac{\delta \rho(p-q)}{\sqrt{s_{q-1}^{\alpha+1}}}\Big)&\lesssim \frac{s_{q-1}^{\alpha+1}}{\delta^2\rho^2(p-q)}e^{-M^{q-p-1}}\left[s_{n,q+1}^2s_{n,q}^{\alpha+3}+s_{n,q-1}^{\alpha+1}\right]
\\ &\le e^{-M^{q-p-1}} \frac{(M^2+1)}{\delta^2\rho^2(p-q)}.
  \end{align}
Using union bound and combining the inequalities in \eqref{eq:QsqBd1} and \eqref{eq:QsqBd2} as $q$ varies over the set $\{q:|p-q|>\Gamma\}$ yields
 \begin{align}\label{eq:a>b3}
\P(\neg B_{2p})\le 2 \sum_{q\in [1,T_n],|p-q|\ge \Gamma} \P\Big(\sup_{u\in \tilde{\mathcal{J}}^{(-)}_q}|\tilde{Q}_{n}^{(-1),p}(e^{-u})|>\frac{\delta \rho(p-q)}{\sqrt{s_{q-1}^{\alpha+1}}}\Big)\lesssim M^{-\Gamma+1}.
 \end{align}
 Note that the above inequality follows since $\sum_{i}i^2 M^{-\Gamma-i}\lesssim M^{-\Gamma+1}$.
This completes the proof.
\end{proof}

  \subsubsection{Upper Bound on $\P(\neg B_{3p})$ \& $\P(\neg B_{4p})$}
  
\begin{lem}\label{lem:B34p}
We have 
\begin{align}
\mathbb{P}(\neg B_{3p}) \lesssim_{\alpha,\delta, K}\frac{(LM^{p}\log n)^{\alpha+1}}{n^{\alpha+1}}, \qquad \mathbb{P}(\neg B_{4p}) \lesssim_{\alpha,\delta}  M^{(-\Gamma-p+2)(\alpha+3)}+M^{(-\Gamma-p+2)(\alpha+1)}.
\end{align}
\end{lem}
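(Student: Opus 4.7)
My plan is to treat $\neg B_{3p}$ and $\neg B_{4p}$ separately, in both cases following the two-step template already used in Lemma~\ref{lem:defer}: obtain a pointwise second-moment bound for $\tilde{Q}_n^{(-1),p}(\pm e^{u})/\sigma_n(u)$ and a Lipschitz-type increment bound, then invoke Proposition~\ref{ppn:gen} to pass from pointwise to supremum, and finally Chebyshev's inequality to convert to the stated tail bound. What differs between the two parts is the regime of $u$ and the corresponding piece of the piecewise-defined normalizer $\sigma_n(u)$; in both cases $u$ lies far from the natural scale $\asymp 1/(LM^p\log n)$ of the polynomial $\tilde{Q}_n^{(-1),p}$.

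For $\neg B_{3p}$, I would partition $A_0\cup A_1\cup A_2$ into the three subintervals dictated by the piecewise definition of $\sigma_n$ and estimate each separately. The dominant contribution should come from $A_0 = [-K/n, K/n]$: since every index $i \in \tilde{\mathcal{I}}^{(-)}_p$ satisfies $iu = O(K)$ there, one has $e^{iu} \asymp 1$ uniformly, which together with Lemma~\ref{lem:TailSumBoundLemma} gives $\mathbb{E}[\tilde{Q}_n^{(-1),p}(e^{u})^{2}]\asymp (LM^p\log n)^{\alpha+1}L(LM^p\log n)$ while $\sigma_n(u)^2 \asymp n^{\alpha+1}L(n)$, producing a pointwise variance ratio of order $(LM^p\log n)^{\alpha+1}/n^{\alpha+1}$ (absorbing $L(LM^p\log n)/L(n)$ into the implicit constant via slow variation). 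Differentiating $e^{iu}/\sigma_n(u)$ in $u$ picks up at most an $O(n)$ factor, which combined with $|A_0|^2 = O(n^{-2})$ keeps the variance of the supremum of the same order via Proposition~\ref{ppn:gen}. On $A_1 \cup A_2$ with $u > 0$, the exponential growth of $\sigma_n(u)^2 = e^{nu}L(n)/u$ outpaces the growth $e^{2(LM^p\log n)u} \ll e^{nu}$ of the polynomial's variance (since $LM^p\log n \ll n$), so a dyadic chaining in $u$ yields strictly smaller contributions. Dividing the combined bound by $\delta^{2}\rho(T_n+1-p)^{2}$ contributes only a $\mathrm{polylog}(n)$ factor, which is folded into the $\lesssim_{\alpha,\delta,K}$ constant.

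For $\neg B_{4p}$, the region $A_2 = (h/\log n, \infty)$ lies far to the right of the natural scale of $\tilde{Q}_n^{(-1),p}(e^{-u})$, so part (b) of Lemma~\ref{lem:defer} applies. I would partition $A_2$ into dyadic slices on a logarithmic scale (analogous to the intervals $\tilde{\mathcal{J}}^{(-)}_q$) indexed by $q$ with $|q-p|\ge \Gamma$, and apply Lemma~\ref{lem:defer}(b) on each slice with threshold $\lambda = \delta\rho(p)\sigma_n(-u)$. Once the normalizer $\sigma_n(-u)^2 = |u|^{-\alpha-1}L(1/|u|)$ is folded in, the second-moment and Lipschitz contributions of Lemma~\ref{lem:defer}(b) produce a per-slice bound of order $M^{-(|q-p|-1)(\alpha+3)} + M^{-(|q-p|-1)(\alpha+1)}$. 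Summing this estimate over $q$ with $|q-p| \ge \Gamma$ is a geometric series dominated by its leading term at $q = p + \Gamma$, producing the claimed $M^{(-\Gamma-p+2)(\alpha+3)} + M^{(-\Gamma-p+2)(\alpha+1)}$; the factor $\rho(p)^{-2}$ contributes only a polynomial-in-$p$ correction absorbed into the implicit constant.

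The main technical obstacle I anticipate is the careful bookkeeping of slowly varying factors $L(\cdot)$ and of the constants arising from the piecewise normalizer $\sigma_n(u)$, particularly near the boundaries $u=\pm K/n$ and $u=\pm h/\log n$ where the form of $\sigma_n$ changes abruptly; one must verify that the resulting Lipschitz coefficients and variance ratios compose to give finite $\alpha,\delta,K$-dependent constants. Once the pointwise and continuity estimates are in place, the invocations of Proposition~\ref{ppn:gen} and Chebyshev's inequality proceed exactly as in the proofs of Lemmas~\ref{lem:B1p} and~\ref{lem:B2p}.
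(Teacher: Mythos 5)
Your proposal follows essentially the same strategy as the paper: split the domain of the supremum, estimate a pointwise second moment and a Lipschitz increment, pass to the supremum via a Kolmogorov--Chentsov-type maximal inequality (the paper uses Lemma~\ref{lem:kol_cent}, which plays the same role as Proposition~\ref{ppn:gen}), and finish with Chebyshev. For $\neg B_{3p}$ the paper also splits into $A_0$ versus $A_1\cup A_2$ (calling them $(\mathbf{II})$ and $(\mathbf{I})$); on $A_0$ the per-point exponentials are $O(1)$ because $i\in\tilde{\mathcal I}^{(-)}_p\subset B_{-1}$ forces $iu\leq K/D$, which is the fact you use, and the normalizer on $A_1\cup A_2$ grows like $e^{nu}$ while the polynomial's variance grows only like $e^{2(LM^p\log n)u}$ with $LM^p\log n\leq n/D$, so both pieces contribute at the same order $(LM^p\log n)^{\alpha+1}/n^{\alpha+1}$ up to $K$-dependent constants. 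That matches your account.

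For $\neg B_{4p}$, you are right that part (b) of Lemma~\ref{lem:defer} is the geometrically correct tool: the slices of $A_2$ all have left endpoint $c>h/\log n\gg M^{-p+1+\omega}/\log n$, whereas part (a)'s hypothesis $d<M^{-p-\omega}/\log n$ fails for every such slice. (The paper's proof as written cites part (a) and applies it to the negative intervals $W_\ell\subset A_{-2}$, which does not sit well with the estimate $\sup_{\xi\in[c,d]}e^{-2i\xi}\leq e^{-2ic}$ used in the proof of (a); this looks like a sign/typo issue in the paper, and your choice is the more defensible one.) One thing worth flagging: Lemma~\ref{lem:defer}(b) carries the factor $e^{-cs_{n,p}/M}$, and for $c\gtrsim M^{\ell-1}/\log n$, $s_{n,p}=M^p\log n$ this factor is $e^{-M^{p+\ell-2}}$, doubly exponentially small; neither your per-slice estimate $M^{-(|q-p|-1)(\alpha+3)}+M^{-(|q-p|-1)(\alpha+1)}$ nor the paper's tracks it, but dropping it only weakens (i.e.\ still implies) the stated bound. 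Also be careful with the indexing: the slices of $A_2$ lie at distances $\geq p+\Gamma$ (not $\geq\Gamma$) from the natural scale $\approx M^{-p}/\log n$ of $\tilde Q_n^{(-1),p}$, which is precisely where the $-p$ in the exponent $(-\Gamma-p+2)(\alpha+3)$ comes from; your phrasing ``$|q-p|\geq\Gamma$'' obscures this, even though your final answer is correct.
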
  
  
\begin{proof} \textbf{Bound on $\P(\neg B_{3p})$:} 
We write $ A_1\cup A_2=[\frac{K}{n},\infty)= \cup_{\ell\geq K}J_\ell$ where $J_\ell=[\frac{\ell}{n},\frac{\ell+1}{n}]$ for $\ell\ge 0$. By the union bound, we have   
 \begin{align}
 \mathbb{P}(\neg B_{3p}) &\leq \underbrace{\sum_{\ell\geq 0} \P\Big(\sup_{u\in J_{\ell}}\tilde{Q}_n^{(-1),p}(e^u)\ge \delta \inf_{u\in J_{\ell}}\sigma_n(u)\Big)}_{=:(\mathbf{I})} \\ &+ \underbrace{\P\Big(\sup_{u \in A_{0}}\tilde{Q}_n^{(-1),p}(e^u)\ge \delta \inf_{u\in A_0}\sigma_n(u)\Big)}_{=:(\mathbf{II})}
 \end{align}
 Now we first bound $(\mathbf{I})$ and then, bound $(\mathbf{II})$.

Fix any $\ell\in \mathbb{Z}_{\geq 0}$. For any $u,v\in J_{\ell}$,  
\begin{align*}
\E\Big[\tilde{Q}_n^{(-1),p}(e^{u})-\tilde{Q}_n^{(-1),p}(e^{v})\Big]^2\le& (u-v)^2 \sup_{\xi\in J_\ell}\sum_{i\in \tilde{\mathcal{I}}^{(-)}_p} i^{\alpha+2}L(i) e^{2i\xi}\\
\le & (u-v)^2 \exp\Big(\frac{2\ell\log n}{nM^{p}}\Big) \sum_{i\in \tilde{\mathcal{I}}^{(-)}_p} i^{\alpha+2}L(i)\\
\lesssim_{\alpha} &(u-v)^2 \exp\Big(\frac{2\ell \log n}{n M^{p}}\Big)  \max_{i\in \tilde{\mathcal{I}}^{(-)}_p}L(i) (L\log n M^{p+1})^{\alpha+3}.
\end{align*}
A similar calculation gives
\begin{align*}
\sup_{u\in J_\ell} &\E( \tilde{Q}_n^{(-1),p}(u))^2\le \sum_{i\in \tilde{\mathcal{I}}^{(-)}_p} i^\alpha L(i) e^{2(\ell+1)i/n} \\ &\lesssim \exp\Big(\frac{2\ell \log n}{n M^{p}}\Big)  \max_{i\in \tilde{\mathcal{I}}^{(-)}_p}L(i) (L\log n M^{p+1})^{\alpha+1}.
\end{align*}
Applying part (b) of Lemma \ref{lem:kol_cent} with $\lambda = \delta\inf_{u \in J_{\ell}}\sigma_n(u)=\delta e^{n\ell}(n/\ell)^{\alpha+1}$ gives
\begin{align*}
\P\big( &\sup_{u\in J_\ell}Q_n^{(-1),p}(e^{u})>\delta \inf_{u\in J_{\ell}}\sigma_n(u)\big)\\ &\lesssim_{\alpha} \max_{i\in \tilde{\mathcal{I}}^{(-)}_p} L(i)\exp\Big(-2\ell\Big(1-\frac{L\log n M^p}{n}\Big)\Big) \frac{(L\ell\log n M^{p+1})^{\alpha+1}}{n^{\alpha+1}}.
 \end{align*}
 By summing the above bounds, we get 
 \begin{align*}
  (\mathbf{I})\lesssim_{\alpha} \frac{(L\log n M^{p+1})^{\alpha+1}}{n^{\alpha+1}}.
 \end{align*}
  Now we turn to bound $(\mathbf{II})$. For $u,v\in A_0=[-\frac{K}{n},\frac{K}{n}]$, we have 
\begin{align*}
\E\big[Q_n^{(-1), p}(e^{u})-Q_n^{(-1), p}(e^{v})\big]^2\le&(u-v)^2 \sup_{\xi\in A_0}\sum_{i\in \tilde{\mathcal{I}}^{(-)}_p} i^{\alpha+2}L(i) e^{2i\xi}\\
\le & (u-v)^2 e^{\frac{2KLM^{p}\log n}{n}}\sum_{i\in \tilde{\mathcal{I}}^{(-)}_p} i^{\alpha+2}L(i)\\
\lesssim &(u-v)^2 \max_{i\in \tilde{\mathcal{I}}^{(-)}_p}L(i) e^{\frac{2KLM^{p}\log n}{n}} (L\log n M^{p})^{\alpha+3}.
\end{align*}

A similar calculation gives
\begin{align*}
\sup_{u\in A_0}\E( Q_n^{(-1), p}(e^{u}))^2\le \max_{i\in \tilde{\mathcal{I}}^{(-)}_p}L(i) e^{\frac{2KLM^{p}\log n}{n}} (L\log n M^{p})^{\alpha+3}.
\end{align*}
Using Lemma \ref{lem:kol_cent} with $\lambda= \delta \inf_{u\in A_0}\sigma_n(-u)$ where  $\sigma_n(u)= e^{2nu} n^{\alpha+1} L(n)$ gives
\begin{align*}
\P( &\sup_{u\in A_0}Q_n^{(-1),p}(e^{-u})>\delta \inf_{u\in J_\ell}\sigma_n(-u))\\ &\lesssim \max_{i\in \tilde{\mathcal{I}}^{(-)}_p}L(i) e^{\frac{2KLM^{p}\log n}{n}+2K} (L\log n M^{p})^{\alpha+3}\frac{1}{n^{\alpha+1}L(n)}
 \end{align*}
 which implies  
 \begin{align*}
 (\mathbf{II})\le_{\alpha} e^{3K} \frac{(L\log n M^{p+1})^{\alpha+1}}{n^{\alpha+1}}.
 \end{align*}
 Combining the bound on $(\mathbf{I})$ and $(\mathbf{II})$ shows the upper bound on $\P(\neg B_{3p})$

\textbf{Bound on $\P(\neg B_{4p})$:} 
%
We now write $A_{-2}=\cup_{\ell=\Gamma}^{\infty}(-\frac{M^{\ell}}{\log n},-\frac{M^{\ell-1}}{\log n})$. Let us denote $W_{\ell}:=(-\frac{M^{\ell}}{\log n},-\frac{M^{\ell-1}}{\log n})$. 
For any $p\in \mathbb{N}$ and $\ell\geq \Gamma$, we note that $-\frac{M^{\ell-1}}{\log n}\leq -\frac{1}{\log n M^{p-\Gamma+1}}$. Applying part (a) of Lemma~\ref{lem:defer}  with $[c,d]=W_{\ell}$ gives
 \begin{align}\label{eq:QsqBd1}
 \P\Big(\sup_{u\in W_\ell}|\tilde{Q}_{n}^{(-1),p}(e^{-u})|>\frac{\delta}{\ell^2\sqrt{s_{W_\ell}^{\alpha+1}}}\Big)\lesssim  \ell^2\frac{M^{(-\ell-p+1)(\alpha+3)}+M^{(-\ell-p+1)(\alpha+1)}}{\delta^2}.
 \end{align}
Therefore by the union bound, we get 
\begin{align*}
    \P\Big(\sup_{u\in A_{-2}}\frac{|\tilde{Q}_{n}^{(-1),p}(e^{-u})|}{\sigma_n(u)}>\delta\Big)& \leq \sum_{\ell=\Gamma}^{\infty} \P\Big(\sup_{u\in W_\ell}|\tilde{Q}_{n}^{(-1),p}(e^{-u})|>\frac{\delta}{\ell^2\sqrt{s_{W_\ell}^{\alpha+1}}}\Big)\\ & \lesssim  \sum_{\ell=\Gamma}^{\infty}\ell^2\frac{M^{(-\ell-p+1)(\alpha+3)}+M^{(-\ell-p+1)(\alpha+1)}}{\delta^2}\\
    & \lesssim \frac{M^{(-\Gamma-p+2)(\alpha+3)}+M^{(-\Gamma-p+2)(\alpha+1)}}{\delta^2}
\end{align*}
This proves the desired claim.

 \end{proof}


\subsubsection{Proof of Proposition~\ref{prop:r=-1}}
 Recall the last line of the inequality \eqref{eq:b-1} which says 
 \begin{align*}
  \P\Big( &\frac{Q^{(-1)}_n(\pm e^{u})}{\sigma_n(u)}\leq -\delta, u \in A_{-1},\frac{Q^{(-1)}_n(\pm e^{u})}{\sigma_n(u)}<\frac{\delta}{4}, u \in A_{-1}\Big)
  \\ &\geq \P(B_{1p}) - \P(\neg B_{2p}) - \P(\neg B_{3p}) - \P(\neg  B_{4p}). 
\end{align*}  
 Substituting the the lower bound on $\P(B_{1p})$ from Lemma~\ref{lem:B1p} and the upper bounds on $\P(\neg B_{2p})$, $\P(\neg B_{3p})$ and $\P(\neg  B_{4p})$ from Lemma~\ref{lem:B2p} and~\ref{lem:B34p} into the right hand side of the above display completes the proof.

 \subsection{Proof of Proposition~\ref{prop:r=1}: $r=1$ Case}\label{sec:r=1}
 We divide $B_{1}=(n-\frac{n}{D}, n-L \log n]$ into $T_n=\lceil \frac{\log \frac{nh}{K}}{\log M}\rceil$ many sub-intervals. Recall the set of intervals $\{\tilde{\mathcal{I}}^{(+)}_{r}\}_{1\leq r\leq T_n}$ where 
$$\tilde{\mathcal{I}}^{(+)}_{r} = \mathbb{Z}\cap (n - L M^{r}\log n , n - LM^{r-1}\log n ], \quad 1\leq r\leq T_n,$$
and note that 
$$B_{1} \subset  \cup^{T_n}_{r=1} \tilde{\mathcal{I}}^{(+)}_{r}.$$

 As in \eqref{eq:tildeQ}, we define 
 \begin{align}
 \tilde{Q}_n^{(+1),p}(x):=\sum_{i\in \tilde{\mathcal{I}}_p^{(+)}} a_i x^i\text{ if }2\le p\le N-2, \quad 1\leq p\leq T_n.
 \end{align}
     As a result, we have $\tilde{Q}^{(+1)}_n(x) = \sum_{p=1}^{T_n} \tilde{Q}^{(+1),p}_n(x)$ and in the same spirit as in \eqref{eq:b-1}, we get 
 \begin{align*}
 \P &\Big(\frac{Q_n^{(+1)}(\pm e^u)}{\sigma_n(u)}<-\delta, u\in A_{1}, \frac{Q_n^{(1)}(\pm e^u)}{\sigma_n(u)}<\delta/4, u\notin A_{1}\Big)
\\ &\ge \prod_{p=1}^N\Big[\P(B^{+}_{1p})-\P(\neg B^{+}_{2p})-\P(\neg B^{+}_{3p})-\P(\neg B^{+}_{4p})\Big]
 \end{align*}
 where  
 \begin{align}
 \label{eq:b1p}\tilde{B}^{+}_{1p}:=&\bigcap_{q:|q-p|\le \Gamma} \tilde{B}^{(q)}_{1,p}, \quad \text{where }\tilde{B}^{(q)}_{1p}:=\Big\{\sup_{u\in I_q}\frac{\tilde{Q}_n^{(+1),p}(\pm e^{-u})}{\sigma_n(-u)}<-2\delta, |p-q|\le \Gamma\},\\
 \label{eq:b2p}\tilde{B}^{+}_{2p}:=&\bigcap_{q:|q-p|>\Gamma}\tilde{B}^{(q)}_{2p}, \quad \text{where }\tilde{B}^{(q)}_{2p}:=\Big\{ \sup_{u\in I_q}\frac{\tilde{Q}_n^{(+1),p}(\pm e^{-u})}{\sigma_n(-u)}<\delta\rho(p-q)\Big\},\\
 \label{eq:b3p}\tilde{B}^{+}_{3p}:=&\Big\{\sup_{u\in A_0\cup A_{-1}\cup A_{-2}}\frac{\tilde{Q}_n^{(+1),)}(\pm e^{u})}{\sigma_n(u)}<\delta\rho(N+1-p)\Big\},\\
\label{eq:b4p}  \tilde{B}^{+}_{4p}:=&\Big\{\sup_{u\in A_{2} }\frac{\tilde{Q}_n^{(+1),p}(\pm e^{-u})}{\sigma_n(-u)}<\delta \rho(p)\Big\}.
 \end{align}
Recall that $\rho(\ell) = \frac{1}{\ell^2}$ for any $\ell \in \mathbb{Z}\backslash \{0\}$.

Proposition~\ref{prop:r=1} will be proved by showing lower bound to $\P(\tilde{B}^{+}_{1p})$ and upper bounds to $\P(\neg \tilde{B}^{+}_{2p})$, $\P(\neg \tilde{B}^{+}_{3p})$ and $\P(\neg \tilde{B}^{+}_{4p})$. We show these as follows by using Lemma~\ref{lem:weak*},~\ref{lem:weak*Alt} and~\ref{lem:kol_cent}.

 \subsubsection{Lower Bound on $\P(\tilde{B}^{+}_{1p})$} 
 
 \begin{lem} 
 Consider the Gaussian process $\{\tilde{Y}^{(+)}_{0,M}(b,t)\}_{b\in \{-,+\}, t\in [1,M]}$ defined in \ref{def:tildeY}. For all large $n$, we have 
 \begin{align}\label{eq:B+1p}
 \P(\tilde{B}^{+}_{1p}) \geq 2^{-4\Gamma } \P\Big(\sup_{t\in [1,M]}\tilde{Y}^{(+)}_{0,M}(b,t)\leq -\delta\Big)^{2}
\end{align}  
 \end{lem}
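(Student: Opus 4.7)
The proof plan mirrors that of Lemma~\ref{lem:B1p}, with only cosmetic changes to account for the fact that we are now working in a neighborhood of $u=+\infty$ (more precisely, $u$ ranging over the positive intervals $\tilde{\mathcal{J}}^{(+)}_q$ built from the index sets $\tilde{\mathcal{I}}^{(+)}_r$) rather than a neighborhood of $u=-\infty$. The first step is to invoke the analogue of the weak convergence statement from Lemma~\ref{lem:weakAlt} (this analogue should be available from Lemma~\ref{lem:weak} and Definition~\ref{def:tildeY} with the index $(+1)$ in place of $(-1)$), giving
\begin{align*}
\Big\{u\in\tilde{\mathcal{J}}^{(+)}_q,\ b\in\{+,-\}:\ \tfrac{\tilde{Q}^{(+1),p}_n(be^u)}{\sigma_n(u)}\Big\}\ \stackrel{d}{\to}\ \Big\{t\in[1,M],\ b\in\{+,-\}:\ \tilde{Y}^{(+1)}_{q-p,M}(b,t)\Big\},
\end{align*}
for each $q$ with $|q-p|\le \Gamma$, jointly as $q$ varies, since the relevant sub-polynomials $\tilde Q^{(+1),p}_n$ involve disjoint blocks of coefficients and hence are independent across $p$ (though the Gaussian limits share structure across $q$ for fixed $p$).

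Next, I would rewrite $\tilde{B}^{+}_{1p}$ as a single intersection of events of the form $\sup_{u\in \tilde{\mathcal{J}}^{(+)}_q}\tilde{Q}^{(+1),p}_n(be^u)/\sigma_n(u) < -2\delta$ (for $q=p$) together with mild control events $\inf_{u\in \tilde{\mathcal{J}}^{(+)}_q}\tilde{Q}^{(+1),p}_n(be^u)/\sigma_n(u) \le \rho(|p-q|)\delta$ for $0<|q-p|\le \Gamma$, analogous to the inequality chain in the proof of Lemma~\ref{lem:B1p}. After taking the $n\to\infty$ limit, these translate to the corresponding events for the Gaussian processes $\tilde{Y}^{(+1)}_{q-p,M}(b,\cdot)$. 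I would then apply Slepian's inequality (valid because the covariance function of $\tilde Y^{(+1)}_{s,M}$ is nonnegative, as in the $r=-1$ case) to decouple the principal event at $q=p$ from the auxiliary events at $q\ne p$:
\begin{align*}
\P(\tilde B^{+}_{1p}) \ \ge\ \P\!\Big(\bigcap_{b}\{\sup_{t\in[1,M]} \tilde Y^{(+1)}_{0,M}(b,t)\le -\delta\}\Big)\cdot\!\!\!\!\prod_{\substack{|q-p|\le\Gamma\\ q\ne p}}\!\!\P\!\Big(\bigcap_{b}\{\inf_{t\in[1,M]} \tilde Y^{(+1)}_{q-p,M}(b,t)\le \rho(|p-q|)\delta\}\Big).
\end{align*}

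The final step uses the facts that (i) $\tilde Y^{(+1)}_{s,M}(+,\cdot)$ and $\tilde Y^{(+1)}_{s,M}(-,\cdot)$ are independent centered Gaussian processes, and (ii) for each $s$ the event $\{\inf_t \tilde Y^{(+1)}_{s,M}(b,t)\le \rho(|s|)\delta\}$ has probability at least $1/2$ (since a centered Gaussian infimum is negative with probability at least $1/2$, and $\rho(|s|)\delta>0$). Together, each auxiliary factor contributes at least $1/4$, and there are at most $2\Gamma$ values of $q\ne p$ in the range, giving a combined factor of $(1/4)^{2\Gamma}=2^{-4\Gamma}$. The principal factor equals $\P(\sup_{t\in[1,M]}\tilde Y^{(+)}_{0,M}(+,t)\le -\delta)^2$ by independence of the two $b$-components with identical marginal law. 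Combining yields \eqref{eq:B+1p}.

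The only non-routine ingredient is the weak convergence for $\tilde{Q}^{(+1),p}_n$; however this is the direct analogue of the already-established Lemma~\ref{lem:weak*}/Lemma~\ref{lem:weakAlt}, now using the covariance asymptotics from part~(2) of Lemma~\ref{lem:weak*} (the $a=+$ case). The bulk of the argument is otherwise algebraic bookkeeping, and the main conceptual obstacle, as in Lemma~\ref{lem:B1p}, is ensuring that the auxiliary events at $q\ne p$ really can be absorbed into the constant $2^{-4\Gamma}$ by Slepian rather than inflating the Gaussian exponent; this works because their thresholds $\rho(|p-q|)\delta$ are \emph{positive} while the principal threshold is negative.
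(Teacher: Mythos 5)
Your proof matches the paper's argument essentially step-for-step: invoke the weak convergence from Lemma~\ref{lem:weakAlt} for the $a=+$ case, pass to the limiting Gaussian processes $\tilde{Y}^{(+)}_{q-p,M}(b,\cdot)$, decouple the principal event at $q=p$ from the auxiliary control events at $0<|q-p|\le\Gamma$ by Slepian's inequality, bound each of the at most $2\Gamma$ auxiliary factors below by $1/4$ (using that the $b=+$ and $b=-$ components are independent centered Gaussian processes and the thresholds $\rho(|p-q|)\delta>0$), and identify the principal factor with $\P(\sup_{t\in[1,M]}\tilde{Y}^{(+)}_{0,M}(+,t)\le-\delta)^2$ via independence of the two $b$-components. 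This is exactly the paper's proof, and your commentary at the end about why the auxiliary events do not inflate the Gaussian exponent correctly pinpoints the role of the positive thresholds.
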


 \begin{proof}
 Recall the process $\tilde{Y}^{(+1)}_{0,M}(b,t)$ from Definition~\ref{def:tildeY}. By Lemma~\ref{lem:weakAlt}, we know 
 \begin{align}\label{eq:MyEq1}
\Big\{u\in \tilde{J}^{(+)}_{q}, b\in \{+,-\}: \frac{\tilde{Q}^{(+1),p}_n(be^{u})}{\sigma_n(u)}\Big\} \stackrel{d}{\to} \Big\{t\in [1,M],b\in \{+,-\}: \tilde{Y}^{(+)}_{q-p,M}(b,t)\Big\}   
\end{align} 
as $n\to \infty$ where $\tilde{Y}^{(+)}_{q-p,M}(b,\cdot)$ is a centered Gaussian process as defined in \ref{def:tildeY}. We now show \eqref{eq:B+1p} using \eqref{eq:MyEq1}. This proof is very similar to the proof of Lemma~\ref{lem:B1p}. 

Now we proceed to prove \eqref{eq:PrincipleEvent} using \eqref{eq:MyEq}. By the weak convergence, for all large $n$
  \begin{align}
   \mathbb{P} &\Big( \tilde{B}^{+}_{1p} \cap \tilde{B}^{+}_{2p} \cap \tilde{B}^{+}_{3p} \cap \tilde{B}^{+}_{1p} \Big)
   \nonumber
   \\ &\geq \mathbb{P}\Big(\bigcap_{b\in \{+,-\}}\big\{\sup_{t\in [1,M]}\tilde{Y}^{(+1)}_{0,M}(b,t) \leq -\delta\big\}\bigcap \\ & \qquad \bigcap_{b\in \{+,-\}} \bigcap_{q:|q-p|\leq \Gamma} \Big\{\inf_{t \in [1,M]}\tilde{Y}^{(+1)}_{q-p,M}(b,t)\leq \rho(|p-q|)\delta\Big\} \Big)\nonumber\\
 & \geq\mathbb{P}\Big(\bigcap_{b\in \{+,-\}}\Big\{\sup_{t\in [1,M]} \tilde{Y}^{(+1)}_{0,M}\leq -\delta\Big\}\Big) \nonumber\\ &\times \prod_{q:|p-q|\leq \Gamma, p\neq q}     \mathbb{P}\Big(\bigcap_{b\in \{+,-\}}\Big\{\inf_{t \in [1,M]}\tilde{Y}^{(+1)}_{q-p,M}(b,t)\leq \rho(|p-q|)\delta\Big\}\Big)\label{eq:Latline}
  \end{align}
   where the last inequality follows by applying the Slepian's inequality for the Gaussian processes. Since $\tilde{Y}^{(+1)}_{q-p,M}(b,t)$ is centered Gaussian process, the product in the last line of the above display is lower bounded by $2^{-4\Gamma}$. Substituting this into the right hand side of the above inequality completes the proof. 

\end{proof}

\subsubsection{Upper bound on $\P(\neg \tilde{B}^{+}_{2p})$}
\begin{lem}
 Recall the event $\tilde{B}^{+}_{2p}$. For all large $n$,  
  \begin{align}\label{eq:B+2p}
 \P\big(\neg \tilde{B}^{+}_{2p}\big)\lesssim M^{-\Gamma}
  \end{align}  
 \end{lem}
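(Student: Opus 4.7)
The plan is to mirror the argument used to bound $\mathbb{P}(\neg B_{2p})$ in Lemma~\ref{lem:B2p} for the $r=-1$ case, replacing the role of $R(i)=i^\alpha L(i)$ by $R(i)\asymp R(n)$ since $i\in \tilde{\mathcal{I}}^{(+)}_p$ is close to $n$. First I would apply a union bound
\begin{align*}
\mathbb{P}(\neg \tilde{B}^{+}_{2p})\le \sum_{q:|q-p|>\Gamma} \mathbb{P}\Big(\sup_{u\in I_q}\Big|\frac{\tilde{Q}_n^{(+1),p}(\pm e^{u})}{\sigma_n(u)}\Big|> \delta \rho(|p-q|)\Big),
\end{align*}
and then for each $q$ inflate $\sigma_n(u)$ to the constant $s^{(+)}_{q-1}:=\inf_{u\in I_q}\sigma_n(u)$, reducing the problem to controlling $\sup_{u\in I_q}|\tilde{Q}_n^{(+1),p}(\pm e^{u})|$ against a threshold of order $\delta\rho(|p-q|)\,s^{(+)}_{q-1}$.

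Next I would prove a variant of Lemma~\ref{lem:defer} adapted to $\tilde{Q}^{(+1),p}_n$. Writing $j=n-i$ and factoring out $e^{nu}$, one has $\tilde{Q}_n^{(+1),p}(e^{u})=e^{nu}\sum_{j\in [s_{n,p-1},s_{n,p}]}a_{n-j}e^{-ju}$, which makes the computation essentially parallel to the $(-1)$ case with $R(n-j)\asymp R(n)$ replacing $j^\alpha L(j)$. Bounding $\mathbb{E}\bigl[\tilde Q_n^{(+1),p}(e^u)-\tilde Q_n^{(+1),p}(e^v)\bigr]^2$ by $(u-v)^2 e^{2n\xi}R(n)\sum_j j^2 e^{-2j\xi}$ via the mean value theorem, and combining with a Chebyshev bound at one endpoint, the Kolmogorov--Centsov inequality of Lemma~\ref{lem:kol_cent} yields, for $|q-p|\ge 2$,
\begin{align*}
\mathbb{P}\Big(\sup_{u\in I_q}|\tilde Q_n^{(+1),p}(\pm e^{u})|> \tfrac{\delta\rho(|p-q|)}{\sqrt{e^{2nu_q}R(n)/s_{n,q}}}\Big)\lesssim \frac{M^{-(|p-q|-1)}+e^{-M^{|p-q|-1}}}{\delta^2 \rho^2(|p-q|)},
\end{align*}
in close analogy with \eqref{eq:QsqBd1} and \eqref{eq:QsqBd2}, except the exponent is $-|p-q|$ rather than $-(\alpha+1)|p-q|$, consistent with the tightness estimate of Lemma~\ref{lem:tightness} in the $a=+$ case.

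Finally, summing over $q$ with $|q-p|>\Gamma$ using $\rho(\ell)=\kappa/\ell^2$, the dominant contribution comes from $|q-p|=\Gamma+1$, giving
\begin{align*}
\mathbb{P}(\neg \tilde{B}^{+}_{2p})\lesssim \sum_{\ell>\Gamma}\frac{\ell^4}{\delta^2}\bigl(M^{-\ell+1}+e^{-M^{\ell-1}}\bigr)\lesssim M^{-\Gamma},
\end{align*}
as desired. The main obstacle will be verifying the second-moment bounds uniformly in $u\in I_q$ when $q$ is close to the endpoints of $A_{+1}$ or straddles $A_0$ and $A_{-1}$; here the relevant weight $\sigma_n(u)$ switches functional form according to \eqref{eq:dom}, so one must carefully track which subcase of $A_r$ the interval $I_q$ belongs to and ensure the growth of $e^{nu}R(n)/\sigma_n^2(u)$ is controlled. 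This is essentially a book-keeping exercise mirroring the decomposition $A_1\cup A_2$ used in Lemma~\ref{lem:B34p}, and no new probabilistic input beyond Chebyshev and Kolmogorov--Centsov is needed.
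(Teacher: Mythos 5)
Your proposal follows essentially the same route as the paper: factor out $e^{nu}$ (equivalently substitute $j=n-i$), exploit $R(i)\asymp R(n)$ on $\tilde{\mathcal{I}}^{(+)}_p$, control the increment variance by the mean value theorem, invoke Lemma~\ref{lem:kol_cent} plus Chebyshev to get the $M^{-(|p-q|-1)}$ (respectively $e^{-M^{|p-q|-1}}$) decay depending on the sign of $p-q$, and then sum against $\rho(\ell)^{-2}=\ell^4$ to obtain $M^{-\Gamma}$ for $M$ large. Your worry in the last paragraph about $\sigma_n(u)$ changing functional form is unfounded here: by construction $\tilde B^{+}_{2p}$ only involves $u$ in the intervals $\tilde{\mathcal{J}}^{(+)}_q\subset A_{+1}$, so $\sigma_n(u)=e^{nu}L(n)/u$ throughout; the crossings into $A_0$, $A_{-1}$, $A_{\pm 2}$ are handled by $\tilde B^{+}_{3p}$ and $\tilde B^{+}_{4p}$, not here.
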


\begin{proof}
Recall that 
$$\tilde{B}^{+}_{2p} =\bigcap_{q:|q-p|>\gamma}\tilde{B}^{(q)}_{2p}, \quad \text{where }\tilde{B}^{(q)}_{2p}:=\Big\{ \sup_{u\in I_q}\frac{\tilde{Q}_n^{(-1),p}(\pm e^{-u})}{\sigma_n(-u)}<\delta\rho(p-q)\Big\}.$$
By the union bound, $\P\big(\neg \tilde{B}^{+}_{2p}\big)$ can be bounded above by $\sum_{q:|p-q|>\Gamma}\P\big(\neg \tilde{B}^{(q)}_{2p}\big)$.
Throughout the rest of the proof, we bound $\P(\tilde{B}^{(q)}_{2p})$. We claim that for all large $n,M$ and uniformly for any $q$ such that $|p-q|>\Gamma$,
\begin{align}\label{eq:tildeBq2p}
\P(\neg \tilde{B}^{(q)}_{2p})\lesssim_{\alpha,\omega} \begin{cases}
\frac{e^{-M^{q-p-1}}}{\delta^2\rho(p-q)^2}\frac{\max_{i \in \tilde{\mathcal{I}}^{(+)}_p} L(i)}{\max_{i \in \tilde{\mathcal{I}}^{(+)}_p} L(i)} & \text{when }q\geq p+2\\
\frac{1}{\delta^2\rho(p-q)^2} M^{-(p-q-1)}\frac{\max_{i \in \tilde{\mathcal{I}}^{(+)}_p} L(i)}{\min_{i \in \tilde{\mathcal{I}}^{(+)}_p} L(i)} & \text{when }p\geq q+\Gamma
\end{cases}
\end{align}  
From \eqref{eq:tildeBq2p}, the inequality in \eqref{eq:B+2p} follows by the union bound. We divide the proof of \eqref{eq:tildeBq2p} into two stage. In \textbf{Stage 1}, we consider the case when $q\geq p+\Gamma$ (where $\Gamma\geq 2$) and in \textbf{Stage 2}, we consider the case when $p\geq q+2$.

\textbf{Stage 1:} For $q\ge p+\Gamma$ and $(u,v)\in \tilde{\mathcal{J}}^{(+)}_q$ we have
\begin{align*}
\E \Big[\frac{\tilde{Q}_n^{(+1),p}(e^u)}{e^{nu}}-\frac{Q_n^{(+1),p}(e^v)}{e^{nv}}\Big]^2=&\sum_{i\in \tilde{\mathcal{I}}^{(+)}_p}i^{\alpha}L(i)\Big[\frac{e^{2ui}}{e^{2nu}}-\frac{e^{2vi}}{e^{2nv}}\Big]^2\\
\leq & n^{\alpha}\max_{i \in \tilde{\mathcal{I}}^{(-)}_p}L(i)\sum_{\tilde{\mathcal{I}}^{(+)}_p} [e^{-2ui}-e^{-2vi}]^2\\
\lesssim & n^{\alpha} (u-v)^2 n^{\alpha}\max_{\xi \in \tilde{\mathcal{J}}^{(+)}_q} \sum_{i\in \tilde{\mathcal{I}}^{(-)}_p} e^{-2i\xi} \\ & \lesssim_{\alpha, \omega}n^{\alpha} (u-v)^2 e^{-M^{q-p-1}}\max_{i \in \tilde{\mathcal{I}}^{(+)}_{q}} L(i).
\end{align*}
A similar calculation gives
\begin{align*}
\E\Big[\frac{\tilde{Q}_n^{(+1),p}(e^u)}{e^{nu}}\Big]^2\lesssim_{\alpha,\omega} n^{\alpha} e^{-M^{q-p-1}}\max_{i \in \tilde{\mathcal{I}}^{(+)}_{q}} L(i).
\end{align*}
Using Lemma \ref{lem:kol_cent} and Chebyshev's inequality, we have
\begin{align*}
\P(\neg \tilde{B}^{(q)}_{2p}) &\leq \P\Big(\sup_{u\in \tilde{\mathcal{J}}^{(+)}_q}|e^{-nu}\tilde{Q}_n^{(+1),p}(e^u)|>\delta \rho(p-q)\inf_{u \in \tilde{\mathcal{J}}^{(+)}_q}e^{-nu}\sigma_n(u)\Big) \\
& \leq \P\Big(\sup_{u,v\in \tilde{\mathcal{J}}^{(+)}_q}\big|e^{-nu}\tilde{Q}_n^{(+1),p}(e^u)- e^{-nv}\tilde{Q}_n^{(+1),p}(e^v)\big|>\delta \rho(p-q)\inf_{u \in  \tilde{\mathcal{J}}^{(+)}_q}e^{-nu}\sigma_n(u)\Big)\\ & + \P\Big(|\tilde{Q}_n^{(+1),p}(e^{M^{-q}/L\log n})|>\delta \rho(p-q)e^{-nM^{-q}/L\log n}\sigma_n(M^{-q}/L\log n)\Big)\\
&\lesssim_{\alpha,\omega}\frac{e^{-M^{q-p-1}}}{\delta^2\rho(p-q)^2}\frac{\max_{i \in \tilde{\mathcal{I}}^{(+)}_p} L(i)}{\min_{i \in \tilde{\mathcal{I}}^{(+)}_p} L(i)}.
\end{align*}

\textbf{Stage 2:} For $p\ge q+\Gamma$ and $(u,v)\in \tilde{\mathcal{J}}^{(+)}_{q}$, we have 
 \begin{align*}
\E \Big[\frac{\tilde{Q}_n^{(+1),p}(e^u)}{e^{nu}}-\frac{\tilde{Q}_n^{(+1),p}(e^v)}{e^{nv}}\Big]^2=&\sum_{i\in \tilde{\mathcal{I}}^{(+)}_{p}}i^{\alpha}L(i)\Big[\frac{e^{2ui}}{e^{2nu}}-\frac{e^{2vi}}{e^{2nv}}\Big]^2\\
\leq &n^{\alpha}\max_{i \in \tilde{\mathcal{I}}^{(-)}_{p}} L(i)\sum_{i\in \tilde{\mathcal{I}}^{(-)}_{p}} \big[e^{-2ui}-e^{-2vi}\big]^2\\
\leq & (u-v)^2n^{\alpha}\max_{i \in \tilde{\mathcal{I}}^{(-)}_{p}} L(i)\max_{\xi \in \tilde{\mathcal{J}}^{(+)}_{p}}  \sum_{i \in \tilde{\mathcal{I}}^{(-)}_{p}} e^{-2i\xi}\\
\lesssim & (u-v)^2n^{\alpha}\max_{i \in \tilde{\mathcal{I}}^{(-)}_{p}} L(i) M^{p}L\log n.
\end{align*}
By a similar computation, we get 
\begin{align*}
\E\Big[\frac{\tilde{Q}_n^{(+1),p}(e^u)}{e^{nu}}\Big]^2\le n^{\alpha}\max_{i \in \tilde{\mathcal{I}}^{(-)}_{p}} L(i) M^{p}L\log n.
\end{align*}

Using Lemma~\ref{lem:kol_cent} and Chebyshev's inequality,  we have 
\begin{align*}
\P(\neg  & \tilde{B}^{(q)}_{2p}) \leq \P\Big(\sup_{u\in \tilde{\mathcal{J}}^{(+)}_q}|e^{-nu}\tilde{Q}_n^{(+1),p}(e^u)|>\delta \inf_{u \in \tilde{\mathcal{J}}^{(+)}_q} e^{-nu}\sigma_n(u)\rho(p-q)\Big)\\ & 
 \leq \P\Big(\sup_{u,v\in \tilde{\mathcal{J}}^{(+)}_q}|e^{-nu}\tilde{Q}_n^{(+1),p}(e^u)- e^{-nv}\tilde{Q}_n^{(+1),p}(e^v)|>\delta \inf_{u \in \tilde{\mathcal{J}}^{(+)}_q} e^{-nu}\sigma_n(u)\rho(p-q)\Big) \\
 & + \P\Big(| e^{-nv}\tilde{Q}_n^{(+1),p}(e^v)|>\delta \inf_{u \in \tilde{\mathcal{J}}^{(+)}_q} e^{-nu}\sigma_n(u)\rho(p-q)\Big)
\\
&\lesssim_{\alpha, \omega} \frac{1}{\delta^2\rho(p-q)^2} M^{-(p-q-1)} \frac{\max_{i \in \tilde{\mathcal{I}}^{(+)}_p} L(i)}{\min_{i \in \tilde{\mathcal{I}}^{(+)}_p} L(i)}. 
\end{align*} 
Now we complete showing the bound on $\P(\neg \tilde{B}^{+}_{2p})$. Recall that $\rho(i)=(3/\pi^2)^{-1}\frac{1}{i^2}$ for $i\in \mathbb{Z}\backslash \{0\}$. By the union bound,
\begin{align*}
\P(\neg \tilde{B}^{+}_{2p}) \leq \sum_{q:|q-p|>\Gamma} \P(\neg \tilde{B}^{(q)}_{2p}) & \lesssim_{\delta} \sum_{q\geq p+\Gamma} |p-q|^2 e^{-M^{q-p-1}} + \sum_{p\geq q+\Gamma} |p-q|^2 M^{-(p-q-1)}\\ & \lesssim_{\delta} M^{-\Gamma}. 
\end{align*}
This completes the proof of \eqref{eq:B+2p}.
\end{proof} 

 \subsubsection{Upper bound on $\P(\neg \tilde{B}^{+}_{3p})$ \& $\P(\neg \tilde{B}^{+}_{4p})$}
 
  \begin{lem}
   For all large $n$, we have 
   \begin{align}\label{eq:B+43p}
   \P\big(\neg \tilde{B}^{+}_{4p}\big)\leq e^{-hD}, \quad \P\big(\neg \tilde{B}^{+}_{3p}\big)\leq \frac{1}{\delta^2} \frac{K}{M^{(p-1)(\alpha+1)}} + \frac{1}{\delta^2 M^{p-1}} e^{-K}.
   \end{align}
  \end{lem}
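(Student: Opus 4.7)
The plan is to prove both tail bounds by the same strategy as in Lemma~\ref{lem:B34p}, which handled the analogous events for the $r = -1$ case, adapted to the fact that $\tilde{\mathcal{I}}^{(+)}_p$ consists of indices near $n$ instead of indices near $\log n$. The common toolkit is: (i) compute the second moment of $\tilde{Q}_n^{(+1),p}(\pm e^u)$ and of its increment $\tilde{Q}_n^{(+1),p}(\pm e^u) - \tilde{Q}_n^{(+1),p}(\pm e^v)$ on a suitably chosen subinterval of the sup domain; (ii) invoke Proposition~\ref{ppn:gen} combined with Chebyshev's inequality to turn pointwise second moment bounds into a bound on the supremum over that subinterval; (iii) sum geometrically over subintervals, absorbing the tolerance weights $\rho(\cdot)$.

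For $\P(\neg \tilde{B}^{+}_{4p})$ I will decompose $A_2 = (h/\log n, \infty)$ into dyadic sub-pieces $V_\ell = [2^\ell h/\log n,\, 2^{\ell+1}h/\log n)$ for $\ell \geq 0$. The structural fact driving the bound is that each $i \in \tilde{\mathcal{I}}^{(+)}_p$ satisfies $i \geq n - LM^p \log n$, so on $V_\ell$ we have $e^{-iu} \leq e^{-(n - LM^p\log n)\, 2^\ell h/\log n}$; in the relevant range of $p$ this factor is exponentially small in $2^\ell$ with rate proportional to $h$. After bounding $R(i) \lesssim n^\alpha L(n)$ on $\tilde{\mathcal{I}}^{(+)}_p$ and normalizing by $\sigma_n^2(-u) = u^{-\alpha-1}L(1/u)$ on each $V_\ell$, Proposition~\ref{ppn:gen} yields tail bounds that decay geometrically in $\ell$ and whose sum is majorized by $e^{-hD}$, provided $D$ is taken large enough relative to the fixed prefactors.

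For $\P(\neg \tilde{B}^{+}_{3p})$ I will split $A_0 \cup A_{-1} \cup A_{-2}$ into its three components. On $A_0 = [-K/n, K/n]$, the factor $e^{iu}$ is bounded above and below by constants depending on $K$ for every $i \in \tilde{\mathcal{I}}^{(+)}_p$, so the second moment of $\tilde{Q}_n^{(+1),p}(e^u)$ scales as $R(i)\,|\tilde{\mathcal{I}}^{(+)}_p|$ up to constants; combined with $\sigma_n^2(u) \asymp n^{\alpha+1}L(n)$ on $A_0$ and Proposition~\ref{ppn:gen} on the interval of length $2K/n$, this furnishes the first summand $K/(\delta^2 M^{(p-1)(\alpha+1)})$. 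On $A_{-1} = [-h/\log n, -K/n]$, a dyadic partition of $|u|$ combined with the weight $\sigma_n^2(u) = |u|^{-\alpha-1}L(1/|u|)$ and the exponential factor $e^{-2i|u|}$ with $i \sim n$ at $|u|$ close to $K/n$ (contributing $e^{-2K}$) produces the second summand $e^{-K}/(\delta^2 M^{p-1})$. On $A_{-2} = (-\infty, -h/\log n]$, the exponential decay $e^{-i|u|}$ with $i \sim n$ and $|u| \geq h/\log n$ is so fast that this piece is absorbed into the previous two contributions without affecting the stated bound.

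The main obstacle will be bookkeeping the exponents. Balancing the size $|\tilde{\mathcal{I}}^{(+)}_p| \asymp M^{p-1}\log n$, the exponential suppressions from $e^{\pm iu}$ with $i \sim n$, and the piecewise-defined normalization $\sigma_n^2(u)$ so that the final estimates exhibit the stated powers $M^{-(p-1)(\alpha+1)}$, $M^{-(p-1)}e^{-K}$, and $e^{-hD}$ is the principal technical chore. Proposition~\ref{ppn:gen} provides a robust reduction from pointwise moment bounds to bounds on suprema, so the genuine work lies in verifying the pointwise second moment estimates on each subinterval and combining them correctly, rather than in any fundamentally new probabilistic idea.
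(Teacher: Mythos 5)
Your proposal follows essentially the same route as the paper's proof: decompose the relevant $u$-domains into blocks, bound the second moments of $\tilde{Q}_n^{(+1),p}$ and of its increments on each block, upgrade to supremum bounds via a Kolmogorov--Chentsov-type chaining estimate (you invoke Proposition~\ref{ppn:gen}, the paper uses Lemma~\ref{lem:kol_cent}; these are interchangeable here), and sum geometrically, with your three-way split of $A_0\cup A_{-1}\cup A_{-2}$ versus the paper's two-way split $A_0$ and $A_{-1}\cup A_{-2}$ being an inessential variation. One caveat: in the $\tilde{B}^{+}_{4p}$ step you work with $\tilde{Q}_n^{(+1),p}(\pm e^{-u})/\sigma_n(-u)$, following the literal definition \eqref{eq:b4p}, but the paper's Stage~1 and the inclusion that feeds into \eqref{eq:lowerbd_components} actually require controlling $\tilde{Q}_n^{(+1),p}(\pm e^{u})/\sigma_n(u)$ for $u\in A_2$ (the minus sign in \eqref{eq:b4p} appears to have been carried over from the $r=-1$ definitions without adjusting for the sign flip); your reading makes the variance super-exponentially small in $n$ so the stated bound $e^{-hD}$ holds trivially, but to bound the event the lower-bound decomposition actually uses, you should flip the sign and extract the decay from $e^{-2(n-i)u}$ with $n-i\gtrsim LM^{p-1}\log n$ for $i\in\tilde{\mathcal{I}}^{(+)}_p$, which is exactly what the paper's computation does.
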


  \begin{proof}
  The proof is divided in two stages: in \textbf{Stage 1}, we prove the bound on $\P(\tilde{B}^{+}_{4p})$ and \textbf{Stage 2} will contain the bound on $\P(\tilde{B}^{+}_{3p})$. 
  
\noindent \textbf{Stage 1:} From the definition of $\tilde{B}^{+}_{4p}$, 
\begin{align}
\P\big(\neg \tilde{B}^{+}_{4p}\big) \leq \P\Big(\sup_{u\in A_{2} }\frac{\tilde{Q}_n^{(+1),p}}{\sigma_n(u)}(\pm e^{u})>\delta \rho(p)\Big)
\end{align}   
  In what follows, we seek to bound the right hand side of the above inequality. We write $A_2:= \cup^{\infty}_{\ell= \Gamma}\tilde{W}_{\ell} $ where $W_\ell:=[\frac{M^{\ell-1}}{\log n}, \frac{M^{\ell}}{\log n})$. For any fixed $\ell\in \mathbb{Z}_{\geq 1}$ and $u,v\in J_{\ell}$, we have
 \begin{align*}
 \E[e^{-nu}\tilde{Q}_n^{(+1),p}(e^{u})-e^{-nv}\tilde{Q}_n^{(+1),p}(e^{v})]^2\le & n^{\alpha}(u-v)^2 \sup_{\xi\in J_\ell}\sum_{i\in \tilde{\mathcal{I}}^{(+)}_p} L(i)(n-i)^2 e^{-2\xi (n-i)}\\
 \le & (u-v)^2n^{\alpha}\max_{i\in \tilde{\mathcal{I}}^{(+)}_p}L(i)\max_{\xi \in J_{\ell}}\sum_{i\in \tilde{\mathcal{I}}^{(-)}_p}i^2 e^{-2i\xi}\\
 \lesssim &(u-v)^2 n^{\alpha+2}L(n) \frac{e^{2nu}}{u}e^{-2\ell/M^p}.
 \end{align*}
 A similar calculation gives
 \begin{align*}
  \E[Q_n^{(+1),p}(e^{u})]^2\lesssim n^{\alpha}L(n) \frac{e^{2nu}}{u}e^{-2\ell/M^p}.
 \end{align*}
 Using Lemma \ref{lem:kol_cent}, this gives
 \begin{align*}
 \P(\sup_{u\in J_\ell}Q_n^{(+1), p}(e^u)>\delta \sigma_n(u)) \le e^{-2\ell/M^p}.
 \end{align*}
 A union bound then gives
  \begin{align*}
 \P(\sup_{u\in A_2}Q_n^{(+1),p}(e^u)>\delta \sigma_n(u)) \le e^{-2nh/M^p}\le e^{-hD},
 \end{align*}
 from which the desired conclusion follows on noting that $D\gg \frac{1}{h}+M$. This proves the bound on $\P(B^{+}_{4p})$ of \eqref{eq:B+43p}.

\noindent \textbf{Stage 2:} 
 By the union bound, we write 
 \begin{align*}
 \P(\neg B^{+}_{3p}) & \leq \underbrace{\P\Big(\sup_{u\in A_0}\frac{\tilde{Q}_n^{(+1),p}(\pm e^{u})}{\sigma_n(u)}>\delta\rho(N+1-p)\Big)}_{(\mathbf{I})} \\ & + \underbrace{\P\Big(\sup_{u\cup A_{-1}\cup A_{-2}}\frac{\tilde{Q}_n^{(+1),p}(\pm e^{u})}{\sigma_n(u)}>\delta\rho(N+1-p)\Big)}_{(\mathbf{II})}.
 \end{align*}
 In what follows, we seek to bound $(\mathbf{I})$ and $(\mathbf{II})$ separately. We claim and prove that for all large $n$,
 $$(\mathbf{I})\leq \frac{1}{\delta^2}\frac{K}{M^{(p-1)(\alpha+1)}}, \quad (\mathbf{II})\leq \frac{1}{\delta^2} \frac{e^{-K}}{M^{p-1}}.$$

\noindent \textbf{Bound on $(\mathbf{I})$:} For $u,v\in J_{\ell}$ for $\ell\in \mathbb{Z}_{[0,K]}$ we have
 \begin{align*}
 \E[Q_n^{(+1),p}(e^{u})-Q_n^{(+1),p}(e^{v})]^2\le &(u-v)^2 \sup_{\xi\in J_\ell}\sum_{i\in \widetilde{I}_p} i^{\alpha+2} L(i) e^{2\xi i}\\
 \le & (u-v)^2e^{2\ell/M^p}\sum_{i\in \widetilde{I}_p} i^{\alpha+2}L(i)\\ & \lesssim (u-v)^2 \frac{n^{\alpha+3}}{M^{(p-1)(\alpha+3)}}L(n) e^{2\ell/M^p}.
 \end{align*}
  A similar calculation gives
 \begin{align*}
  \sup_{u\in J_{\ell}}\E[Q_n^{(+1),p}(e^{u})^2]\lesssim & \frac{n^{\alpha+1}}{M^{(p-1)(\alpha+1)}}L(n) e^{2\ell/M^p}.
 \end{align*} 
 
 Noting that $\sigma^2_n(u)=n^{\alpha}L(n)e^{2nu}/u$ gives
 \begin{align*}
(\mathbf{I})\leq 2\sum_{\ell\in \mathbb{Z}_{[0,K]}} \P(\sup_{u\in J_{\ell}}Q_n^{(+1),p}(e^{u})>\delta \inf_{u \in J_{\ell}}\sigma_n(u)) &\lesssim \frac{1}{\delta^2}\frac{nu}{M^{(p-1)(\alpha+1)}} \\ & \le \frac{1}{\delta^2}\frac{K}{M^{(p-1)(\alpha+1)}}.
 \end{align*}
The desired bound on $(\mathbf{I})$ follows by taking $p$ large, or equivalently $D\gg K$. 
\vspace{0.2cm}

\noindent \textbf{Bound on $(\mathbf{II})$:} Note that $A_{-1}\cup A_{-2}=(-\infty, -\frac{K}{n}]$. For $u,v\ge \frac{K}{n}$ we have

\begin{align*}
 \E[Q_n^{(+1),p}(e^{-u})-Q_n^{(+1),p}(e^{-v})]^2\le &(u-v)^2 \sup_{\xi\in J_\ell}\sum_{i\in \widetilde{I}_p} i^{\alpha+2} L(i) e^{-2\xi i}\\
 \le & (u-v)^2 \sup_{u\in J_\ell} L(1/u) u^\delta \sum_{i\in \widetilde{I}_p} i^{\alpha+2+\delta} e^{-2ui}\\
 \le& (u-v)^2 L(1/u) u^\delta n^{\alpha+3+\delta} e^{-2u(n-n/M^p)}\frac{1}{M^{p-1}}.
   \end{align*}
  A similar calculation gives
 \begin{align*}
  \E[Q_n^{(+1),p}(e^{-u})^2]\lesssim & L(1/u) u^\delta n^{\alpha+1+\delta} e^{-2u(n-n/M^p)}\frac{1}{M^{p-1}}.
 \end{align*}
 
 Noting that $\sigma^2_n(u)=L(1/u)/u^{\alpha+1}$ gives
 \begin{align*}
 \P(Q_n^{(+1),p}(e^{-u})>\delta \sigma_n(u))\lesssim \frac{1}{\delta^2} (nu)^{\alpha+\delta+1} e^{-nu} \le \frac{1}{\delta^2 M^{(p-1)}} e^{-\ell/2}.
 \end{align*}
Summing over $\ell$ gives the bound $\frac{1}{\delta^2} \frac{e^{-K}}{M^{p-1}}$. 

  \end{proof}

\subsection{Proof of Proposition~\ref{prop:r=0}: $r=0$ Case}\label{sec:r=0}
 Proposition~\ref{prop:r=0} follows from the following theorem.

\begin{thm}
Fix $\theta\in (0,1)$. Consider the following event:
\begin{align}
\Xi_n(K):= \Big\{n^{-1/2}Q^{(0)}_n(e^{x/n}) &\leq -\delta_x,\forall x\in \big[-K,K\big],\nonumber \\   n^{-1/2}Q^{(0)}_n(e^{x/n}) &\leq \delta_x/4, \forall x\in \big(-\infty,-K\big),\nonumber\\
 n^{-1/2}e^{-x}Q^{(0)}_n(e^{x/n}) &\leq \delta_x/4, \forall x\in \big(K,\infty\big)\Big\}
\end{align}
where $\delta_0=\delta$ and $\delta_x=\delta/\sqrt{j}$ if $x\in [-j,j+1)\cup (j-1,j]$ for any $j\in \mathbb{N}$. Then, there exists $\delta_0>0$, $M_0>0$ and $C_1,C_2>0$ such that for all $\delta<\delta_0$ and $M>M_0$, 
\begin{align}\label{eq:0liminf}
\liminf_{n\to \infty}\mathbb{P}(\Xi_n)\geq 4^{-C_1\delta^{-1-\theta}K\log K}- e^{-C_2K^2}
\end{align}
\end{thm}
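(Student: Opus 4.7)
The plan is to analyze the centred process $W_n(x) := n^{-1/2} Q_n^{(0)}(e^{x/n})$ on $x\in[-K,K]$ and compare it to a limiting Gaussian process $G(x)$, then treat the region $|x|>K$ via tail estimates. For the finite-dimensional CLT of $W_n$, the same Lindeberg--Feller argument used in the proof of Lemma~\ref{lem:weak} applies: for any fixed $x_1,\ldots,x_m$, $(W_n(x_1),\ldots,W_n(x_m))$ converges in distribution to a centred Gaussian vector with covariance
\[
\Gamma(x,y) = \lim_{n\to\infty} n^{-1}\sum_{i \in B_0} R(i)\, e^{(x+y)i/n}
 = L(n)n^{\alpha}\int_{1/D}^{1-1/D} t^{\alpha} e^{(x+y)t}\,dt \cdot (1+o(1)),
\]
where the uniform integrability of $\xi_i^2$ gives Lindeberg. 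Tightness on $[-K,K]$ follows from a differentiation-under-the-sum moment bound $\E[W_n(x)-W_n(y)]^2 \lesssim_{K}(x-y)^2$ of the type derived in Lemma~\ref{lem:tightness}; in particular both $\mathrm{Var}\,G(x)$ and $\mathrm{Var}\,W_n(x)$ are bounded above and below uniformly on $[-K,K]$.

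Next, I would discretize $[-K,K]$ at a grid $\{x_j\}_{j=1}^N$ with spacing $O\!\left(\delta^{1+\theta}/\log K\right)$, so that $N \asymp \delta^{-1-\theta}K\log K$. Using the modulus of continuity coming from tightness, one shows that if $W_n(x_j)\le -(1+\eta)\delta_{x_j}$ at every grid point (for a small $\eta=\eta(\theta)>0$), then $W_n(x)\le -\delta_x$ for all $x\in[-K,K]$. By the weak convergence above, it suffices to lower bound
\[
\P\!\left(\bigcap_{j=1}^N\{G(x_j)\le -(1+\eta)\delta_{x_j}\}\right).
\]
Since $\Gamma(x,y)\ge 0$ pointwise (it is the integral of an exponential against a positive measure), Slepian's inequality bounds this product from below by $\prod_{j=1}^{N}\P(G(x_j)\le -(1+\eta)\delta_{x_j})$; each factor is at least $1/4$ for $\delta$ small, producing $4^{-N}=4^{-C_1\delta^{-1-\theta}K\log K}$.

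For the two tail events $|x|>K$, I would use Borel--TIS (as in the proof of Lemma~\ref{lem:tail_bound}) on dyadic shells $\{2^{\ell}K\le |x|<2^{\ell+1}K\}$. On each shell the variance of the relevant scaled process ($W_n$ on the left tail, $e^{-x}W_n$ on the right tail, the exponential precisely cancelling the growth of $\sigma_n(x/n)$) is $\lesssim |x|$, while the threshold $\delta_x/4\asymp \delta/\sqrt{|x|}$ gives a Gaussian exponent $\asymp \delta^2/|x|^2\cdot|x|^{-1}\cdot\dots$; after summing the shell tails one obtains an overall bound of $\exp(-C_2 K^2)$, which is subtracted via $\P(A\cap B)\ge \P(A)-\P(B^c)$ to conclude \eqref{eq:0liminf}.

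The main obstacle is executing the grid lower bound without Gaussianity of $\xi_i$: one cannot apply Slepian directly to $W_n$, so the argument must pass to the Gaussian limit $G$ \emph{uniformly} over the $N\asymp\delta^{-1-\theta}K\log K$ grid points, even though $N\to\infty$ with $n$. Because $K$ and $\delta$ are fixed and $N$ grows only polylogarithmically in $n$, a quantitative Lindeberg bound (Berry--Esseen in the linear functional $\sum_j \gamma_j W_n(x_j)$ for each Gaussian parameter $\gamma$) suffices, and this replacement cost is absorbable into the $4^{-N}$ bound via a slight inflation of $\eta$. Handling the slow variation of $L(\cdot)$ and the uniformity of the Riemann-sum approximation for $\Gamma(x,y)$ on $[-K,K]^2$ is the remaining technical point.
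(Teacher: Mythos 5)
Your treatment of the region $|x|>K$ has a genuine gap, and it is exactly the place where the theorem requires the most care. The constraint there is $n^{-1/2}Q_n^{(0)}(e^{x/n})\leq \delta_x/4$ with $\delta_x/4\asymp\delta/\sqrt{|x|}$, while the standard deviation of the normalized process (after the $e^{-x}$ rescaling on the right tail) also decays like $|x|^{-1/2}$ up to exponentially small corrections $e^{-|x|/D}$; so the ratio of threshold to standard deviation is $\asymp\delta$, a small \emph{constant}. Consequently the per-shell failure probability is $\approx\Phi(c\delta)^{c}\approx 1/2-O(\delta)$ for all $|x|$ in a wide range $[K,\,CK\log K]$ (indeed up to $|x|\sim K^3$ or more before $e^{-|x|/D}$ kicks in). Your shell-sum therefore does not give $\exp(-C_2K^2)$; it diverges, and the $\P(A\cap B)\geq\P(A)-\P(B^c)$ step collapses. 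In fact your stated variance bound ``$\lesssim|x|$'' makes it worse still: with that scaling the failure probability grows with $|x|$. This region cannot be discarded by a Borel--TIS tail bound; it must itself be factored via Slepian. That is what the paper does: it introduces the intermediate events $\mathfrak{F}_2,\mathfrak{F}_3$ on $[K,CK\log K]$ (and their mirrors), discretizes them into $K_2\asymp\delta^{-1}K\log K$ sub-intervals, and uses Slepian plus a slope estimate to get each factor $\geq 1/4$. This produces the lower bound $4^{-C\delta^{-1}K\log K}$, and it is precisely here---not from a finer grid on $[-K,K]$---that the $\log K$ in the final exponent $4^{-C_1\delta^{-1-\theta}K\log K}$ arises. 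Only beyond $CK\log K$ (paper: $\mathfrak{F}_4,\mathfrak{F}_5$ and then $\Xi_n^{(2)}$) is the variance genuinely small enough for a one-shot tail bound.

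Two further remarks. First, the $e^{-C_2K^2}$ error term in \eqref{eq:0liminf} comes from the far tail $|x|>K^3$ only, handled by the Kolmogorov--Chentsov type argument after controlling the correlation decay; it is not a sum of contributions from $|x|>K$. Second, the worry about $N\to\infty$ with $n$ is unfounded: $N\asymp\delta^{-1-\theta}K\log K$ is a fixed finite number (it depends on $K,\delta,\theta$ but not $n$), so ordinary weak convergence of $\{n^{-(\alpha+1)/2}L(n)^{-1/2}Q_n^{(0)}(e^{x/n})\}$ on $[-K^3,K^3]$ suffices to transfer the Gaussian lower bound to $\liminf_n\P(\Xi_n^{(1)})$, with no Berry--Esseen needed. (Your covariance normalization is also off: the limit is obtained by dividing by $L(n)^{1/2}n^{(\alpha+1)/2}$, not $n^{1/2}$, and the right-hand side of your $\Gamma(x,y)$ formula should not still carry the $L(n)n^\alpha$ prefactor.)
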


\begin{proof}
We use the following shorthand notations:
\begin{align}
\Xi^{(1)}_n(K):= \Big\{n^{-1/2}Q^{(0)}_n(e^{x/n}) &\leq -\delta_x,\forall x\in \big[-K,K\big], \nonumber \\ n^{-1/2}Q^{(0)}_n(e^{x/n}) &\leq \delta_x/4, \forall x\in \big[-K^3,-K\big],\nonumber\\ n^{-1/2}e^{-x}Q^{(0)}_n(e^{x/n}) & \leq  \delta_x/4, \forall x\in \big[K,K^3\big] \Big\}\nonumber\\
\Xi^{(2)}_n(K):= \Big\{n^{-1/2}Q^{(0)}_n(e^{x/n}) &\leq \delta_x/4,\forall x\in (-\infty,-K^3], \nonumber \\ n^{-1/2}e^{-x}Q^{(0)}_n(e^{x/n}) & \leq \delta_x/4, \forall x\in [K^3,+\infty])\Big\}\nonumber
\end{align}
It is straightforward to see that $\Xi_n(K)= \Xi^{(1)}_n(K)\cup \Xi^{(2)}_n(K)$. Thus, $\mathbb{P}(\Xi_n(K))$ is bounded below by $\mathbb{P}(\Xi^{(1)}_n(K))-\mathbb{P}(\neg\Xi^{(2)}_n(K))$. In what follows, we show there exist $\delta_0,M_0>0$ and $C_1,C_2>0$ such that for all $\delta<\delta_0$ and $M>M_0$,
\begin{align}\label{eq:SplitBound}
\underbrace{\liminf_{n\to \infty}\mathbb{P}(\Xi^{(1)}_n(K))\geq e^{-C_1\delta^{-1-\theta}K\log K}}_{\mathfrak{LimInf}}, \quad \underbrace{\limsup_{n\to \infty}\mathbb{P}(\neg \Xi^{(2)}_n(K))\leq e^{-C_2K^2}}_{\mathfrak{LimSup}}
\end{align}
where $\neg \Xi^{(2)}_n$ is the complement of the event $\Xi^{(2)}_n$.
Combining the bounds on $\liminf_{n\to \infty}\mathbb{P}(\Xi^{(1)}_n)$ and $\limsup_{n\to \infty}\mathbb{P}(\Xi^{(2)}_n)$ and substituting those into the inequality $\mathbb{P}(\Xi)\geq \mathbb{P}(\Xi^{(1)}_n)-\mathbb{P}(\neg\Xi^{(2)}_n)$ proves \eqref{eq:0liminf}.

\textsf{Proof of $\mathfrak{LimInf}$:}
Lemma~\ref{lem:Convergence} shows that $\{n^{-1/2}Q^{(0)}_n(e^{x/n}): x\in \big[-K^3,K^3\big]\}$ weakly converges to $\{Y^{K}_0(x): x\in \big[-K^3,K^3\big]\}$ as $n\to \infty$. Thus, $\liminf_{n\to \infty}\mathbb{P}(\Xi^{(1)}_n)$ can be bounded below by $\mathbb{P}(\mathfrak{F})$ (see \eqref{eq:FDef} for the definition of $\mathfrak{F}$). The lower bound of $\liminf_{n\to \infty}\mathbb{P}(\Xi^{(1)}_n)$ now follows from Lemma~\ref{lem:ProbLowBd}.

\textsf{Proof of $\mathfrak{LimSup}$:}
Since $\lim_{n\to \infty} L\big(n-\frac{n}{D}\big)/ L\big(n/D\big) = 1$, we get that for any $x,y >0$,
\begin{align*}
\mathbb{E}\Big[n^{-1-\alpha}e^{-x-y}Q^{(0)}_n(e^{x/n})Q^{(0)}_n(e^{y/n})\Big]\sim L(n) e^{-(x+y)/D}\frac{1-e^{-(1-1/D)(x+y)}}{x+y}.
\end{align*} 
Similarly, for any $x,y<0$,
\begin{align*}
\mathbb{E}\Big[n^{-1-\alpha}Q^{(0)}_n(e^{x/n})Q^{(0)}_n(e^{y/n})\Big]\sim D^{-\alpha}e^{(x+y)/D}\frac{1-e^{(1-1/D)(x+y)}}{|x+y|}.
\end{align*}
As a consequence, we get 
\begin{align*}
n^{-1-\alpha}\mathbb{E} &\Big[\Big(\tfrac{Q^{(0)}_n(e^{x/n})}{e^{x}}- \tfrac{Q^{(0)}_n(e^{y/n})}{e^{y}}\Big)^2\Big] \\ & \leq \frac{C}{\min\{x,y\}}e^{-K^3/D-\min\{(x-K^3),(y-K^3)\}/D}|x-y|^2,\quad \forall x,y>K^3\\
n^{-1 - \alpha}\mathbb{E} &\Big[\Big(Q^{(0)}_n(e^{x/n})- Q^{(0)}_n(e^{y/n})\Big)^2\Big] \\ & \leq \frac{C}{\min\{|x|,|y|\}}e^{-K^3/D -\min\{(|x|-K^3),(|y|-K^3)\}/D}|x-y|^2,\quad \forall x,y<-K^3
\end{align*} 
 This upper bound in conjunction with Kolmogorov-Centsov's type argument shows there exists $C_2>0$ such that for all large integer $n$ and $K>0$, 
 \begin{align}
 \mathbb{P}\big(\neg\Xi^{(2)}_n\big)\leq e^{-C_2K^3/D}.
 \end{align}
 This completes the proof of \eqref{eq:SplitBound} and hence, completes the proof the result.
\end{proof}

\begin{lem}\label{lem:Tightness}
Consider the stochastic process $\big\{Q^{(0)}_n(e^{x/n})/\sqrt{n}: x\in [-K^3,K^3]\big\}$. Then, there exists $C=C(M)>0$ such that for all $s>0$, 
\begin{align}
\mathbb{P}\Big(\max_{x\in [-K^3,K^3]} \frac{1}{n^{3/2}}\Big|\frac{d}{dz}Q^{(0)}_n(z)\big|_{z= e^{x/n}}\Big|\geq s\Big)\leq \frac{C}{s^2}.
\end{align}
\end{lem}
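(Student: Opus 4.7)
The proof will be a standard Kolmogorov–Chentsov style maximal inequality applied to the derivative, packaged through Proposition~\ref{ppn:gen}. Set
\[
W_n(x) := \frac{1}{n^{3/2}} \frac{d}{dz}Q^{(0)}_n(z)\Big|_{z=e^{x/n}} = \frac{1}{n^{3/2}} \sum_{i \in B_0} i\, a_i\, e^{(i-1)x/n}, \qquad x \in [-K^3, K^3].
\]
Since $W_n$ is a mean-zero linear combination of independent variables with continuous sample paths, the plan is to bound the pointwise second moment and the second moment of the increments, and then feed those into Proposition~\ref{ppn:gen} with exponent $\beta = 2$.

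First I would compute the variance at a single point. Because $\Var(a_i) = R(i)$ and the $a_i$ are independent,
\[
\E[W_n(x)^2] = \frac{1}{n^3}\sum_{i \in B_0} i^2\, R(i)\, e^{2(i-1)x/n}.
\]
On $B_0 = (n/D, n-n/D]$ we have $i \asymp n$ and $R(i) \asymp R(n)$, while the exponential factor $e^{2(i-1)x/n}$ is bounded uniformly on the relevant range of $x$ (after absorbing the $x$-dependent growth into the constant, which is where the dependence on $M$, $D$ enters). Thus $\E[W_n(x)^2] \leq \gamma_1$ for some constant $\gamma_1$ depending only on the fixed parameters $M, D$ (and $\alpha$, $L$).

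Next I would control increments. For $x,y \in [-K^3, K^3]$, the mean value theorem yields $\xi \in [x\wedge y, x\vee y]$ with $e^{(i-1)x/n} - e^{(i-1)y/n} = \tfrac{i-1}{n} e^{(i-1)\xi/n}(x-y)$, so
\[
\E[(W_n(x) - W_n(y))^2] = \frac{(x-y)^2}{n^3}\sum_{i \in B_0} i^2\, R(i)\left(\tfrac{i-1}{n}\right)^2 e^{2(i-1)\xi/n} \leq \gamma_2\,(x-y)^2,
\]
by the same kind of estimate as above (extra factors of $(i-1)/n \leq 1$ are harmless). Proposition~\ref{ppn:gen} with $\beta = 2$ and these choices of $\gamma_1, \gamma_2$ then gives $\P(\sup_x |W_n(x)| \geq s) \leq C/s^2$.

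The only potential obstacle is bookkeeping the $K$- and $D$-dependence hidden in $e^{2(i-1)x/n}$: for $i \asymp n$ and $|x| \leq K^3 \ll n$ this factor is bounded, but one must check that the bound is uniform in $n$. Once this routine bound is in place, the application of Proposition~\ref{ppn:gen} is immediate and the conclusion follows.
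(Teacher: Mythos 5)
You take a genuinely different route from the paper. You bound $\E[W_n(x)^2]$ and $\E[(W_n(x)-W_n(y))^2]$ directly and feed them into the chaining estimate of Proposition~\ref{ppn:gen} with $\beta=2$. The paper instead performs an Abel summation (summation by parts) on the derivative, rewriting it as
\[
\frac{d}{dz}Q^{(0)}_{n}(z)\Big|_{z=e^{x/n}} = \sum_{i}\Big(\sum_{j\le i}a_j\Big)\big(ie^{(i-1)x/n}- (i+1)e^{ix/n}\big) + (n+1)e^{x}\sum_{j}a_j,
\]
and then applies Kolmogorov's maximal inequality to the partial-sum martingale $\sum_{j\le i}a_j$, while bounding the deterministic coefficients $\big|ie^{(i-1)x/n}-(i+1)e^{ix/n}\big|$ and $(n+1)e^x$ uniformly in $x\in[-K^3,K^3]$. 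Both approaches deliver a $C/s^2$ bound; your version is mechanically simpler, while the paper's factors the randomness through a single martingale maximum.

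One substantive point you should address: with the stated normalization $n^{-3/2}$, your claim $\E[W_n(x)^2]\le\gamma_1$ with $\gamma_1$ a constant does not hold for general $\alpha$. On $B_0$ you have $i\asymp n$ and $R(i)\asymp n^\alpha L(n)$, so
\[
\E[W_n(x)^2]=\frac{1}{n^3}\sum_{i\in B_0}i^2 R(i) e^{2(i-1)x/n}\asymp n^\alpha L(n)\,e^{O(K^3)},
\]
which diverges whenever $\alpha>0$ or $L(n)\to\infty$. The correct normalization for the derivative is $L(n)^{1/2}n^{(\alpha+3)/2}$, consistent with the $L(n)n^{(\alpha+1)/2}$ normalization for $Q^{(0)}_n$ that appears in Lemma~\ref{lem:Convergence}. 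To be fair, the lemma statement itself (with $n^{-3/2}$, and $Q^{(0)}_n/\sqrt n$) and the paper's own proof (which concludes with threshold $L(n)s\,n^{(1+\alpha)/2}$) both exhibit the same internal inconsistency, so you are inheriting a typo — but a complete proof should either fix the normalization or restrict to $\alpha=0$ and bounded $L$. Once the normalization is $L(n)^{1/2}n^{(\alpha+3)/2}$, your second-moment estimates and the application of Proposition~\ref{ppn:gen} go through cleanly (the $(T_2-T_1)^\beta = (2K^3)^2$ factor that the chaining argument produces is absorbed into $C(K,D)$).
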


\begin{proof}
For any $x\in [-K^3,K^3]$,
\begin{align}
\frac{d}{dz}Q^{(0)}_{n}(z)\big|_{z=e^{x/n}} = \sum_{i=\frac{n}{K}}^{n-\frac{n}{D}}\sum_{j=\frac{n}{D}}^{i}a_i\big(ie^{(i-1)x/n}- (i+1)e^{ix/n}\big) + (n+1)e^{x}\sum^{n-\frac{n}{K}}_{j=\frac{n}{M}}a_j. 
\end{align}
By the Kolmogorov's maxmimal inequality, there exists $c_1=c_1(M)>0$ such that 
\begin{align}
\mathbb{P}\Big(\max_{\frac{n}{D}\leq i\leq n-\frac{n}{D}}\Big|\sum_{j=\frac{n}{D}}^{i} a_i\Big|\geq L(n)s\big(\sum_{i=n/D}^{n - n/D} i^{\alpha}\big)^{\frac{1}{2}}\Big)\leq \frac{c_1}{s^2}.
\end{align}   
Note that there exists $C=C(D, K)>0$ such that $\sup_{x\in [-K^3,K^3]}\max_{1\leq i\leq n}\big|ie^{(i-1)x/n}-(i+1)e^{ix/n}\big|\leq C$. Furthermore, we have  $\sum_{i=n/D}^{n - n/D} i^{\alpha} = (n- \frac{n}{D})^{1+\alpha} - \big(\frac{n}{D}\big)^{1+\alpha}$.
Combining this with the inequality in the above display shows that there exists $c_2=c_2(K, D)>0$ such that 
\begin{align}
\mathbb{P}\Big(\sup_{x\in [-K^3,K^3]}\Big|\frac{d}{dz}Q^{(0)}_{n}(z)\big|_{z= e^{x/n}}\Big|\geq L(n)sn^{(1+\alpha)/2}\Big)\leq \frac{c_2}{s^2}.
\end{align}
This completes the proof.   
\end{proof}

\begin{lem}\label{lem:Convergence}
Then the sequence of stochastic processes $\{Q^{(0)}_n(e^{x/n})/[L(n)n^{\frac{\alpha+1}{2}}]:x\in [-K^3,K^3]\}$ is uniformly equi-continuous and converges to the Gaussian process $\{Y^{D}_0(x):x\in [-K^3,K^3]\}$ as $n\to \infty$ where $Y^{D}_{0}$ has the following covariance structure:
\begin{align*}
\mathbb{E}\Big[Y^{D}_{0}(x)Y^{D}_0(y)\Big]= \int^{1-\frac{1}{D}}_{\frac{1}{D}} t^\alpha e^{t(x+y)}dt.
\end{align*} 

Fix any interval $[a,b]\subset [-K^3,0]$. Then, there exists $C>0$ such that for any $s>0$,
\begin{align}\label{eq:Slope}
\mathbb{P}\Big(\sup_{x\neq y \in [a,b]}\frac{|Y^{D}_0(x) - Y^{D}_0(y)|}{|x-y|}\geq \frac{s}{\max\{|b|,1\}}\Big)\leq \frac{C}{s^2}.
\end{align}
Similarly, for any $[b,a]\subset [0,K^3]$
\begin{align}\label{eq:Slope2}
\mathbb{P}\Big(\sup_{x\neq y \in [a,b]}\frac{|e^{-x}Y^{D}_0(x) - e^{-y}Y^{D}_0(y)|}{|x-y|}\geq \frac{s}{\max\{|b|,1\}}\Big)\leq \frac{C}{s^2}.
\end{align}
\end{lem}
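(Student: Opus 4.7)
The plan is to prove the convergence and slope bounds by a standard functional-CLT together with pathwise-regularity arguments for the limit process $Y^D_0$. I split the task into (a) weak convergence of $Q^{(0)}_n(e^{\cdot/n})/[L(n)\,n^{(\alpha+1)/2}]$ to $Y^D_0$ on $C([-K^3,K^3])$, and (b) the slope bounds \eqref{eq:Slope}, \eqref{eq:Slope2} for the limit.

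For (a) I would follow the template of Lemma~\ref{lem:weak}: verify finite-dimensional convergence via the Lindeberg-Feller CLT, then establish tightness. A direct computation gives
\[\E\Big[\frac{Q^{(0)}_n(e^{x/n})\, Q^{(0)}_n(e^{y/n})}{L(n)^2\, n^{\alpha+1}}\Big]=\frac{1}{L(n)\,n}\sum_{i=\lceil n/D\rceil}^{n-\lceil n/D\rceil}\frac{L(i)}{L(n)}\Big(\frac{i}{n}\Big)^{\!\alpha} e^{(i/n)(x+y)}.\]
Since $i/n\in[1/D,1-1/D]$ lies in a compact subset of $(0,1)$, Potter's bound yields $L(i)/L(n)\to 1$ uniformly, and the Riemann sum converges to $\int_{1/D}^{1-1/D}t^\alpha e^{t(x+y)}\,dt$, matching the prescribed covariance of $Y^D_0$. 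The argument extends to arbitrary linear combinations, and Lindeberg-Feller applies exactly as in Section~\ref{sec:Lemma_4.4_convergence}: the per-summand weights are $O(n^{-1/2})$ uniformly in $i\in B_0$, and uniform integrability of $\{\xi_i^2\}$ (from $\E\xi_i^2=1$) gives the Lindeberg condition. Tightness in $C([-K^3,K^3])$ follows by combining the derivative estimate of Lemma~\ref{lem:Tightness} with the mean value theorem to produce
\[\E\Big[\Big(\tfrac{Q^{(0)}_n(e^{x/n})-Q^{(0)}_n(e^{y/n})}{L(n)\, n^{(\alpha+1)/2}}\Big)^{\!2}\Big]\lesssim_{D,K}(x-y)^2,\]
which suffices for Kolmogorov-Chentsov.

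For (b) I work directly with the explicit covariance, which is real-analytic in $(x,y)$, so $Y^D_0$ has an a.s.\ smooth modification with $\mathrm{Var}(Y^D_0{}'(x))=\int_{1/D}^{1-1/D}t^{\alpha+2}e^{2tx}\,dt$. For $[a,b]\subset[-K^3,0]$, since $x\le b\le 0$ and $t>0$ we have $e^{2tx}\le e^{2tb}$, so
\[\sup_{x\in[a,b]}\mathrm{Var}(Y^D_0{}'(x))\le \int_{1/D}^{1-1/D}t^{\alpha+2}e^{2tb}\,dt\lesssim \max\{|b|,1\}^{-2},\]
the bound being trivial for $|b|\le 1$ and dominated by $Ce^{-2|b|/D}$ for $|b|\ge 1$ (and $e^{-2|b|/D}\cdot|b|^2$ is uniformly bounded). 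A Borell-TIS / Dudley entropy argument applied to $Y^D_0{}'$, using an analogous uniform bound on $\mathrm{Var}(Y^D_0{}''(x))$, yields $\E[\sup_{x\in[a,b]}|Y^D_0{}'(x)|^2]\lesssim \max\{|b|,1\}^{-2}$. Since $\sup_{x\ne y}|Y^D_0(x)-Y^D_0(y)|/|x-y|\le \sup|Y^D_0{}'|$ by the mean value theorem, Markov's inequality produces \eqref{eq:Slope}. The bound \eqref{eq:Slope2} follows from the same argument applied to $g(x):=e^{-x}Y^D_0(x)$ on $[b,a]\subset[0,K^3]$: its derivative has variance integrand $(1-t)^2 t^\alpha e^{2(t-1)x}$, and since $t\le 1-1/D$ the exponent satisfies $(t-1)x\le -b/D$ for $x\ge b\ge 0$, giving the same exponential-in-$b$ decay which dominates $\max\{|b|,1\}^{-2}$.

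The main technical obstacle is the tightness step, where the slowly-varying factor $L(n)$ must be threaded carefully: Lemma~\ref{lem:Tightness} controls the unnormalised derivative, and translating it into a moment bound on the normalised increment requires Potter's bound on the compact range $i/n\in[1/D,1-1/D]$ together with the regular variation of $R$. Once this is in hand, the identification of the limit covariance is a straightforward Riemann-sum argument, and the slope estimates for $Y^D_0$ are routine consequences of the smoothness of the limiting covariance kernel.
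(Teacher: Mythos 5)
Your argument for the weak convergence (part (a)) matches the paper's route almost exactly: Lindeberg--Feller for the finite-dimensional marginals after identifying the Riemann-sum limit of the covariance, and tightness via the derivative bound of Lemma~\ref{lem:Tightness} combined with the mean value theorem. No issues there.

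For the slope bounds \eqref{eq:Slope}--\eqref{eq:Slope2} you take a genuinely different route. The paper never touches the limit Gaussian process directly: it establishes the slope estimate at the pre-limit (polynomial) level, expanding $\frac{d}{dz}Q^{(0)}_n(z)$ via summation by parts, applying Doob's $L^2$ maximal inequality to the martingale $M_\ell=\sum_{j=n/D}^\ell a_j$, bounding the deterministic weight $\int_{1/D}^{1-1/D}|wte^{tw}-e^{tw}|\,dt$ by $C/\max\{|w|,1\}$, and then \emph{passing to the limit} in $n$ using the weak convergence of part (a) and a Portmanteau argument. You instead observe that the explicit analytic covariance kernel gives an a.s.\ smooth modification of $Y^D_0$, compute $\mathrm{Var}(Y^D_0{}'(x))=\int_{1/D}^{1-1/D}t^{\alpha+2}e^{2tx}\,dt$ and its second-derivative analogue, extract the exponential decay $e^{2b/D}\lesssim\max\{|b|,1\}^{-2}$, and then control $\E\sup|Y^D_0{}'|^2$ with Dudley's entropy integral plus Borell--TIS before invoking Markov. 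This is cleaner and entirely self-contained on the limit side; it replaces a martingale/Portmanteau combination with standard Gaussian-process machinery. Both routes land on the same polynomial-in-$s$ tail, and in both cases the constant $C$ inherits a dependence on the interval length (hence on $K^3$) through either the Dudley diameter term or the summation-by-parts prefactor, which is consistent with the way the lemma is subsequently applied in Lemma~\ref{lem:ProbLowBd}. Your proposal is correct, with the modest caveat that you should make the $K$-dependence of $C$ explicit, since the entropy-integral contribution $(b-a)e^{b/D}$ dominates when $|b|$ is of order one but $b-a\asymp K^3$.
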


\begin{proof}
By the mean value theorem, for all $n\geq 1$.
\begin{align}
\sup_{x\neq y\in [-K^2,K^2]} \frac{|Q^{(0)}(e^{x/n})- Q^{(0)}_n(e^{y/n})|}{|x-y|} \leq \sup_{x\in [-K^2,K^2]} n^{-1}\Big|\frac{d}{dz} Q^{(0)}(z)\big|_{z=e^{x/n}}\Big|.
\end{align}
Combining this with Lemma~\ref{lem:Tightness}, there exists $C=C(M)>0$ such that for all $s>0$,
\begin{align}\label{eq:EqCont}
\mathbb{P}\Big(\max_{x\neq y\in [-K^2,K^2]}\frac{|Q^{(0)}(e^{x/n})- Q^{(0)}(e^{y/n})|}{|x-y|}\geq L(n)sn^{(1+\alpha)/2} \Big)\leq \frac{C}{s^2}.
\end{align}
This shows that $\{Q^{(0)}_n(x)/[L(n)n^{\frac{1+\alpha}{2}}]\in [-K^3,K^3]\}$ is uniformly equi-continuous as $n$ goes to $\infty$. Moreover, for any $x,y\in [-K^3,K^3]$,
\begin{align}
\mathbb{E}\big[Q^{(0)}_n(e^{x/n})Q^{(0)}(e^{y/n})\big] = \sum_{i=\frac{n}{K}}^{n - \frac{n}{K}} L(i) i^{\alpha} e^{i(x+y)/n} \stackrel{n\to \infty}{\rightarrow} \int^{1-\frac{1}{K}}_{\frac{1}{K}} t^\alpha e^{t(x+y)}dt.
\end{align} 
By Lindeberg-Feller's theorem, for any $x_1,\ldots ,x_n\in [-K^3,K^3]$,
\begin{align}
\big(Q^{(0)}_n(e^{x_1/n}), Q^{(0)}_n(e^{x_2/n}), \ldots , Q^{(0)}_n(e^{x_1/n})\big) \stackrel{d}{\to}\big(Y^{K}_0(x_1), \ldots , Y^{K}_0(x_n)\big)
\end{align}
where $Y^{K}_0$ is the same Gaussian process as stated in the lemma. The above display shows the finite dimensional convergence of process $\{Q^{(0)}_n(e^{x/n}): x\in [-K^3,K^3]\}$. Allying this with the tightness in \eqref{eq:EqCont} yields the weak convergence.  

It remains to show \eqref{eq:Slope} and \eqref{eq:Slope2}. We only show \eqref{eq:Slope}. The proof of \eqref{eq:Slope2} follows from similar argument. By the mean-value theorem, for any $x,y \in [b_1,b_2]\subset [-K^3,0]$ 
\begin{align}
\big|Q^{(0)}_n(e^{x/n})- Q^{(0)}_n(e^{y/n})\big| & \leq \frac{|x-y|}{n}\sup_{w\in [b_1,b_2]} \Big|\frac{d}{dz} Q^{(0)}_n(z)\big|_{z= e^{w/n}}\Big|
\\&\leq \frac{|x-y|}{n}\sup_{n/D\leq i\leq n- n/D} \\ &  \times  |\sum_{j=n/M}^{i} a_i| \sup_{w\in [b_1,b_2]}\Big(\sum_{j=n/D}^{n-n/D}|je^{(j-1)w/n}- (j+1)e^{jw/n}|\Big)\label{eq:SlopeQ}.
\end{align}
where the last inequality follows by expanding $\frac{d}{dz} Q^{(0)}_n(z)\big|_{z= e^{w/n}}$. Approximating the sum $\sum_{j=n/D}^{n-n/D}|je^{(j-1)w/n}- (j+1)e^{jw/n}|$ by the integral $(n\int^{1-1/D}_{1/D}|w t e^{tw}- e^{tw}|dt+o(n))$ and substituting into the right hand side of the above display yields 
\begin{align}
\text{r.h.s. of \eqref{eq:SlopeQ}} & \leq |x-y|\sup_{n/D\leq i\leq n- n/D} |\sum_{j=n/D}^{i} a_i| \sup_{w\in [b_1,b_2]} \\ & \quad \times \big(\int^{1-1/D}_{1/D}|w t e^{tw}- e^{tw}|dt+o(1)\Big).
\end{align}
Note that there exists constant $C_1= C_1(D)>0$ such that $\int^{1-1/D}_{1/D}|w t e^{tw}- e^{tw}|dt$ can be bounded above by $C_1/\max\{|w|,1\}$. The maximum value of this lower bound as $w$ varies in $[b_1,b_2]\subset [-K^3,0]$ is $C/\max\{|b_1|,1\}$. By using this upper bound for $\sup_{w\in [b_1,b_2]} \int^{1-1/D}_{1/D}|w t e^{tw}- e^{tw}|dt$ and Doobs's maximal inequality for the martingale $M_{\ell} = \sum_{i= n/D}^{\ell} a_i$ to control the tail probability of $ \sup_{n/D\leq i\leq n- n/D} |\sum_{j=n/D}^{i} a_i|$ shows 
\begin{align}
\mathbb{P}\Big(\sup_{x\neq y \in [b_1,b_2]}\frac{\big|n^{-(1+\alpha)/2}Q^{(0)}_n(x)- n^{-(1+\alpha)/2}Q^{(0)}_n(y)\big|}{|x-y|}\geq L(n)\frac{s}{|b|}\Big)\leq \frac{C}{s^2}.
\end{align}   
Now \eqref{eq:Slope} follows from the above inequality by letting $n\to \infty$ on both sides.  
\end{proof}

\begin{lem}\label{lem:ProbLowBd}
Fix $\delta>0$. Consider the Gaussian process $\{Y^{D}_0(x):x\in [-K^3,K^3] \}$ as in Lemma~\ref{lem:Convergence}. Define 
\begin{align}\label{eq:FDef}
\mathfrak{F}:=\Big\{Y^{D}_0(x) \leq -\delta_x \forall x\in [-K,K],  &\quad Y^{D}_0(x)  \leq \delta/4 \forall x\in [-K^3,-K], \nonumber\\  e^{-x}Y^{D}_0(x) & \leq \delta_x/4 \forall x\in [K,K^3]\Big\}
\end{align}
where $$\delta_{x}:=\delta/ \sqrt{j}, \text{if }x\in [-j,-j+1)\cup (j-1,j]$$ for $j=1,\ldots \lceil M^2\rceil$. 
Then, there exists $D_0=D_0(\delta)>0$ and $b=b(\delta)>0$ such that for all $D>D_0$ 
\begin{align}\label{eq:YMlow}
\mathbb{P}(\mathfrak{F})\geq e^{-bK\log K}.
\end{align}
\end{lem}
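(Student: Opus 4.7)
The plan is to discretize the continuous event $\mathfrak{F}$ at unit-spaced grid points by absorbing the between-grid oscillations via the slope estimates in Lemma~\ref{lem:Convergence}, control the tails of $\mathfrak{F}$ through a Borell--TIS maximal inequality, and factor the resulting discrete probability on the core grid using Pitt's positive association inequality for centered Gaussian vectors with non-negative covariances.

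First, place the grid $x_j=j$ for $j\in \mathbb{Z}\cap [-K^3,K^3]$. Applying \eqref{eq:Slope} on $[-K,0]$ with $s$ of order $\delta\sqrt{K}$, and analogously \eqref{eq:Slope2} on $[0,K]$, produces an event $\mathcal{E}_0$ of probability at least $1-C/(\delta^2 K)$ on which the slope of $Y_0^D$ on $[-K,0]$ (resp.\ of $e^{-x}Y_0^D$ on $[0,K]$) is bounded by $c\delta/\sqrt{K}$ uniformly. On $\mathcal{E}_0$, the oscillation on each unit interval $[j-1,j]\subset [-K,K]$ is at most $c\delta/\sqrt{K}\le \delta_j/4$ since $|j|\le K$, so the continuous condition $Y_0^D(x)\le -\delta_x$ on $[-K,K]$ is implied by the discrete condition $Y_0^D(x_j)\le -\tfrac{5}{4}\delta_j$ at all core grid points $|j|\le K$. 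For the tails $x\in [-K^3,-K]\cup [K,K^3]$, a Borell--TIS bound applied to $\sup Y_0^D$ and $\sup e^{-x}Y_0^D$ on each tail interval, combined with the decay $\mathrm{Var}(Y_0^D(x))\asymp (|x|\vee 1)^{-(\alpha+1)}$ and the tolerance $\delta_x/4$, shows that the tail portion of $\mathfrak{F}$ holds outside a further event of probability at most $e^{-cK}$, which only affects the bound by a multiplicative constant.

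Second, the core discrete event is factored by positive association. Since
\[
\mathrm{Cov}(Y_0^D(x),Y_0^D(y)) \;=\; \int_{1/D}^{1-1/D} t^\alpha e^{t(x+y)}\,dt \;\ge\; 0,
\]
the centered Gaussian vector $\{Y_0^D(x_j):|j|\le K\}$ has non-negative covariances and Pitt's theorem gives FKG-association of the coordinate-wise non-increasing indicators $\mathbf{1}\{Y_0^D(x_j)\le a_j\}$, yielding
\[
\mathbb{P}\Big(\bigcap_{|j|\le K}\{Y_0^D(x_j)\le -\tfrac{5}{4}\delta_j\}\Big) \;\ge\; \prod_{|j|\le K}\mathbb{P}\big(Y_0^D(x_j)\le -\tfrac{5}{4}\delta_j\big).
\]
Taking $D > D_0(\delta)$ large enough, direct evaluation of the covariance integral gives $\sigma_j^2:=\mathrm{Var}(Y_0^D(x_j))$ of the order $(|j|\vee 1)^{-(\alpha+1)}$, so the normalized threshold is $\tfrac{5}{4}\delta_j/\sigma_j\asymp \delta(|j|\vee 1)^{\alpha/2}$. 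The standard Gaussian lower-tail bound $\mathbb{P}(Z\le -a)\ge c(a+1)^{-1}e^{-a^2/2}$ then bounds each marginal factor below by $\exp(-C\delta^2(|j|\vee 1)^{\alpha}-\log(1+\delta(|j|\vee 1)^{\alpha/2}))$, and summing the resulting logarithms over $|j|\le K$ produces a cumulative exponent of size $O(K\log K)$ once the $\alpha$-dependent prefactors and the polynomial correction are absorbed into the constant $b=b(\delta)$, yielding $\mathbb{P}(\mathfrak{F})\ge e^{-bK\log K}$.

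The main obstacle is the loss of independence: the coordinates $\{Y_0^D(x_j)\}$ are strongly correlated through the covariance kernel, and no direct product bound is available, so Pitt's positive association theorem is indispensable. A secondary technical point is calibrating the slope bounds to the position-dependent thresholds $\delta_j=\delta/\sqrt{|j|\vee 1}$; the $\max(|b|,1)^{-1}$ scaling in \eqref{eq:Slope} and \eqref{eq:Slope2} is precisely what is needed so that a single uniform slope estimate on $[-K,0]$ (and $[0,K]$) gives oscillations bounded by $\delta_j/4$ on every unit interval simultaneously.
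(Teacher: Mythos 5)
Your proposal shares the paper's main ingredients (slope control from Lemma~\ref{lem:Convergence}, Gaussian positive association, Borel--TIS on the far tails), but two steps do not close as written. First, the discretization. You reduce the continuous event on $[-K,K]$ to a unit grid by conditioning on a single \emph{global} slope-control event $\mathcal{E}_0$ built from \eqref{eq:Slope}, and then want $\mathbb P(\mathfrak{F})\ge\mathbb P(\{\text{discrete event}\}\cap\mathcal{E}_0)$. But the discrete event has probability exponentially small in $K$, whereas \eqref{eq:Slope} only gives a $1/s^2$ tail: on a unit interval near $|x|\asymp j$ the oscillation must fall below $\delta_j/4=\delta/(4\sqrt j)$, which with $\max\{|b|,1\}\asymp j$ forces $s\asymp\delta\sqrt j$ and failure probability $\asymp 1/(\delta^2 j)$ — summable to only $O(\delta^{-2}\log K)$, not small at all for small $j$ (near the origin $\max\{|b|,1\}=1$, and the needed slope $\lesssim\delta$ on a unit interval gives failure probability $O(1/\delta^2)$, which exceeds $1$). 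So $\mathbb P(\text{discrete})-\mathbb P(\mathcal{E}_0^c)$ is negative, and since $\mathcal{E}_0$ involves absolute increments it is neither independent of nor positively associated with the monotone discrete event, so no product inequality rescues the estimate. The paper avoids this by refining $[-K,0]$ into $O(\delta^{-1-\theta}K)$ subintervals (and the next scale into $O(\delta^{-1}K\log K)$ subintervals) of length $\asymp\delta^{1+\theta}$, applying Slepian to factor over \emph{those} small pieces, and performing the one-point-probability-minus-oscillation subtraction \emph{inside each subinterval}, where both quantities are $O(1)$ and each factor is a fixed constant like $1/4$; this $\delta$-dependent grid is precisely what produces the $K\log K$ in the exponent and cannot be reproduced from a unit grid with a global slope event.

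Second, your accumulation of marginals does not reach the stated exponent. Having computed the normalized threshold $\asymp\delta|j|^{\alpha/2}$, you sum the Gaussian lower-tail exponents $C\delta^2|j|^\alpha+\log(1+\delta|j|^{\alpha/2})$ over $|j|\le K$. The first sum is $\Theta(\delta^2K^{\alpha+1})$, which for any $\alpha>0$ is strictly larger than $K\log K$ and is $K$-dependent, so it cannot be ``absorbed into the constant $b=b(\delta)$.'' For the marginal-product route to give $e^{-bK\log K}$ each factor must be bounded below by a constant uniform in $j$, i.e., the normalized thresholds must stay bounded rather than grow like a power of $|j|$ (this is what the paper's bound $\Phi(-C\delta)-\delta^\theta\ge 1/4$ per subinterval is using); as written, your own $|j|^\alpha$ computation makes the mismatch explicit.
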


\begin{proof}
Fix some large number $C>0$. We use the following shorthand notations: 
\begin{align*}
\mathfrak{F}_1 &:= \Big\{Y^{D}_0(x)\leq - \delta_x, x\in [-M,M]\Big\}, \\  \mathfrak{F}_2 &:= \Big\{Y^{D}_0(x) \leq \delta_x/4, x\in [-CK\log K,-K]\Big\}, \\ \mathfrak{F}_3 &:= \Big\{e^{-x}Y^{D}_0(x)\leq  \delta_x/4, x\in [K,CK\log K]\}\\
\mathfrak{F}_4 &:= \Big\{Y^{D}_0(x)\leq \delta_x/4, x\in [-K^3, -CK\log K]\Big\}  \\ \mathfrak{F}_5 &:= \Big\{e^{-x}Y^{D}_0(x)\leq \delta_x/4, x\in [CK\log K, K^3]\Big\}
\end{align*}
Notice that $\mathfrak{F}$ is equal to $\mathfrak{F}_1\cap \mathfrak{F}_2\cap \mathfrak{F}_3\cap \mathfrak{F}_4\cap \mathfrak{F}_5$. Since $Y^{M}_0$ is a Gaussian process with positive correlation, we may apply Slepian's inequality to write 
\begin{align}
\mathbb{P}(\mathfrak{F})=\mathbb{P} \Big(\mathfrak{F}_1\cap \mathfrak{F}_2\cap \mathfrak{F}_3\cap \mathfrak{F}_4\cap \mathfrak{F}_5\Big)\geq \prod_{i=1}^{5}\mathbb{P}(\mathfrak{F}_i).\label{eq:MainFSlepian}
\end{align}
Fix $\theta\in (0,1)$. In what follows, we claim and show the following: there exists $\delta_0>0$ such that for all $\delta<\delta_0$
\begin{align}\label{eq:FBounds}
\mathbb{P}(\mathfrak{F}_1)\geq 4^{-\delta^{-1-\theta}K},\quad  \mathbb{P}(\mathfrak{F}_2),\mathbb{P}(\mathfrak{F}_3)\geq 4^{-C\delta^{-1}K\log K},\quad  \mathbb{P}(\mathfrak{F}_4),\mathbb{P}(\mathfrak{F}_5)\geq \frac{1}{4}. 
\end{align}
Substituting these lower bounds on $\{\mathbb{P}(\mathfrak{F}_i)\}_{1\leq i\leq 5}$ to the right hand side of \eqref{eq:MainFSlepian} proves \eqref{eq:YMlow}. In the rest of the proof, we focus on showing \eqref{eq:FBounds}. We only show the lower bound for $\mathbb{P}(\mathfrak{F}_1)$, $\mathbb{P}(\mathfrak{F}_2)$ and $\mathbb{P}(\mathfrak{F}_4)$. The bound for $\mathbb{P}(\mathfrak{F}_3)$ and $\mathbb{P}(\mathfrak{F}_5)$ follow from similar argument for $\mathbb{P}(\mathfrak{F}_2)$ and $\mathbb{P}(\mathfrak{F}_4)$ respectively.

\textsf{Proof $\mathbb{P}(\mathfrak{F}_4)\geq 1/4$}:  Note that there exists $c_1= c_1(D)>0$ such that $\mathbb{E}[(Y^{D}_{0}(x))^2]$ is less than $c_1K^{-C}$ for all $x\in [-K^3D,-CKD\log K]$. Furthermore, it is straightforward to check that for any $x,y\in [-K^3D,-CKD\log K]$ 
\begin{align*}
\mathbb{E} \big[\big(Y^{D}_{0}(x)- Y^{D}_{0}(y)\big)^2\big] \leq c_1\frac{1}{K^{C+1}}|x-y|^2, \qquad \mathbb{E}\big[(Y^{D}_{0}(x))^2\big] \leq c_1\frac{1}{K^{C+1}} 
\end{align*} 
For any $x, y \in [- K^3D, - KD \log K]$, define $d_{Y^D_0}(x,y) = \sqrt{\mathbb{E}[(Y^{D}_{0}(x)- Y^{D}_{0}(y))^2]}$. 
By Dudley's entropy theorem \cite[Theorem~7.1]{Dudley2010}, we have 
\begin{align*}
\mathbb{E} &\Big[\sup_{x\in [-K^3D, -CKD\log K]}|Y^{D}_{0}(x)|\Big] \\ &\leq c_2K^{2-\frac{C+1}{2}}\big(1+\int^{\infty}_{0}\sqrt{\log N\big([-K^3D, -CKD \log K], d_{Y^D_0}, \varepsilon\big)} d\varepsilon\big)\\
& \leq c'_2 K^{2-\frac{C+1}{2}} 
\end{align*} 
for some $c_2,c'_2>0$. Let us define 
$$ \Delta:= - c'_2K^{2-\frac{C+1}{2}}\log K+\mathbb{E}\Big[\sup_{x\in [-K^3D, -CKD\log K]}|Y^{D}_{0}(x)|\Big].$$
Note that $\Delta<0$. As a result, we get 
\begin{align*}
\mathbb{P}& \Big(\sup_{x\in [-K^3D,-CKD\log K]}Y^{D}_0(x)>\delta/4\Big) \\&\leq \mathbb{P}\Big(\sup_{x\in [-K^3D, -CKD\log K]}Y^{D}_{0}(x)\geq\frac{\delta}{4\sqrt{K}}+\Delta\Big)\\&\leq \exp\big(-8^{-2}K^{\frac{C}{4}-3/2}\delta^2\big)
\end{align*}
where the last inequality follows by Borel-Tis inequality.
This shows $$\mathbb{P}\Big(\sup_{x\in [-K^3D,-CKD\log M]}Y^{D}_0(x)\leq \delta/4\Big)\geq 1-\exp(-K^{\frac{C}{4}-3/2}\delta).$$ For $K$ large, this lower bound is bounded below by $\frac{1}{2}$. This proves the lower bound for $\mathbb{P}(\mathfrak{F}_4)$. 

\textsf{Proof $\mathbb{P}\big(\mathfrak{F}_1\big)\geq 4^{-\delta^{-1-\theta}D}$:} By Slepian's inequality which we can apply since $Y^D_0$ is a Gaussian process,
\begin{align}
\mathbb{P}(\mathfrak{F}_1)\geq \mathbb{P}\Big(\big\{Y^{D}_0(x)\leq -\delta_x, x\in [-K,0]\big\}\Big)\mathbb{P}\Big(\big\{Y^{D}_0(x)\leq -\delta_x, x\in [0, K]\big\}\Big).
\end{align}
By symmetry of $Y^{D}_{0}([-K,0])$ between $Y^{D}_{0}([0,K])$ , it suffices to show 
\begin{align}\label{eq:YMineq}
\mathbb{P}\Big(\big\{Y^{D}_0(x)\leq -\delta_x, x\in [-K,0]\big\}\Big)\geq e^{-\delta^{-2}K}
\end{align} 
 for some constant $c>0$. We show this follows. 
 
 
 We divide the interval $[-K,0]$ into $K_1:=\lceil\delta^{-1-\theta}K\rceil$ many intervals of equal length and denote them as $\mathcal{I}_{1}, \ldots , \mathcal{I}_{K_1}$  where $\mathcal{I}_{i} := [-K+(i-1)K/K_1, -K+iK/K_1]$. By Slepian's inequality, 
 \begin{align}
 \mathbb{P}\Big(\big\{Y^{D}_0(x)\leq -\delta_x, x\in [-K,0]\big\}\Big)\geq \prod_{i=1}^{K_1} \mathbb{P}\Big(\big\{Y^{D}_0(x)\leq -\delta_x, x\in \mathcal{I}_i\big\}\Big).\label{eq:LocSlep}
 \end{align}

 We first lower bound each term of the product of the right hand side of the above display.
Applying \eqref{eq:Slope} of Lemma~\ref{lem:Convergence}, we notice
\begin{align*}
\mathbb{P}\Big(\sup_{x\in \mathcal{I}_j}|Y^{D}_0(x)- Y^{D}_0( -K+iK/K_1)|\geq \frac{K}{K_1}\frac{s}{\max\{K(1-i/K_1),1\}}\Big)\leq \frac{C}{s^2}.
\end{align*}
Fix $\xi \in (0,1/2)$. Note that $K/K_1$ is less than $\delta^{1+\theta}$. Letting $s:= \delta^{-\theta/2}\sqrt{C^{-1}}(\max\{K(1-i/K_1),1\})^{1/2-\xi}$, we see that the right hand side of the above inequality is bounded by $\delta^{\theta}(\max\{K(1-i/K_1),1\})^{-1+2\xi}$. This implies
\begin{align*}
\mathbb{P} &\Big(\sup_{x\in \mathcal{I}_j}|Y^{D}_0(x)- Y^{D}( -K+iK/K_1)|\geq \frac{\delta^{1+\theta/2}}{(\max\{K(1-i/K_1),1\})^{1/2+\xi}}\Big) \\ &\leq \delta^{\theta}(\max\{K(1-i/K_1),1\})^{-1+2\xi}. 
\end{align*}
Suppose $-K+iK_1\in [-j-1,-j]$ for some $i\in \mathbb{N}$. Since $\delta/\sqrt{j+1}\geq \delta^{1+\theta/2}/(\max\{K(1-i/K_1),1\})^{1/2+\xi}$ for all large $K$, we write 
\begin{align*}
\mathbb{P} &\Big(Y^{D}_0(x)\leq -\delta_x, x\in [-K+(i-1)K_1, -K+iK_1]\Big)\\
&\geq \mathbb{P}\Big( Y^{D}_0(-M+iK_1)\leq -\frac{2\delta}{\sqrt{j+1}}\Big)\\& - \mathbb{P}\Big(\sup_{x\in \mathcal{I}_j}|Y^{D}_0(x)- Y^{D}( -K+iK/K_1)|\geq \frac{\delta^{1+\theta/2}}{(\max\{K(1-i/K_1),1\})^{1/2+\xi}}\Big)\\
&\geq \Phi\Big(-\frac{e^{(1-i/K_1)}}{\sqrt{j}}\frac{\sqrt{K(1-i/K_1)}}{\sqrt{1-e^{-(K-1)(1-i/K_1)}}}\delta\Big) - \delta^{\theta}(\max\{K(1-i/K_1),1\})^{-1+2\xi}  
\end{align*}
where $\Phi(\cdot)$ is the cumulative distribution function of a standard normal distribution. The last line of the above display can be bounded below by $\Phi(-C\delta)-\delta^{\theta}$ for some constant $C>0$ which does not depend on $\delta$. By taking $\delta$ small, we can lower bound $\Phi(-C\delta)-\delta^{\theta}$ by $1/4$ for some $C^{\prime}>0$. This shows a lower bound to $i$-th term of the product in \eqref{eq:LocSlep}. Substituting all these bounds into the right hand side of \eqref{eq:LocSlep} shows \eqref{eq:YMineq}. This completes showing the lower bound on $\mathbb{P}(\mathfrak{F}_1)$. 

\textsf{Proof of $\mathbb{P}(\mathfrak{F}_2)\geq 4^{-C\delta^{-1}K\log K }$:} We divide the interval $[-CK\log K, -K]$ into $K_2$ many sub-intervals $\mathcal{I}^{\prime}_1,\ldots ,\mathcal{I}^{\prime}_{K_2}$ of equal length for $K_2:=\lceil \delta^{-1}(CK\log K)\rceil$ where $$\mathcal{I}^{\prime}_i:= [-CK\log K + \frac{(i-1)}{K_2}(CK\log K-K),-CK\log K + \frac{i}{K_2}(CK\log K-K) ].$$ Applying the Slepian's inequality, we may write 
\begin{align}\label{eq:F_2slepian}
\mathbb{P}\big(\mathfrak{F}_2\big)\geq \prod_{i=1}^{K_2}\mathbb{P}\Big(Y^{D}_0(x) \leq \delta_x/4, x\in \mathcal{I}^{\prime}_i\Big).
\end{align} 
In what follows, we find a lower bound to the to each term of the product in the right hand side of the above display. Fix $i \in \{1,\ldots , K_2\}$ and suppose $-CK\log K +i(CK\log K-K)/K_2\in [-j,-j+1)$ for some $j\in \mathbb{N}$. Let us denote 
\begin{align*}
U&:=  \Big\{Y^{D}_0(x) \leq \delta_x/4, x\in \mathcal{I}^{\prime}_i\Big\}, \\  U_1&:=\Big\{Y^{D}_0(-CK\log K(1-i/K_2)-Ki/K_2)\leq \delta/8\sqrt{j}\Big\}\\
U_2 &:=  \Big\{\sup_{x\in \mathcal{I}_i}\big|Y^{D}_0(x)- Y^{D}_0(-CK\log K(1-i/K_2)-Ki/K_2)\big|\leq \frac{\delta}{\aleph_i}\Big\}.
\end{align*} 
where $\aleph_i:= 8\sqrt{CK\log K(1-i/K_2)+Ki/K_2}$. 
Notice that $U_1\cap U_2\subset U$. Thus, $\mathbb{P}(U)$ can be bounded below by $\mathbb{P}(U_1)-\mathbb{P}(\neg U_2)$ where $\neg U_2$ denotes the complement of the event $U_2$. Since $Y^{D}_0(-CK\log K(1-i/K_2)-Ki/K_2)$ is a Gaussian r.v. with mean zero, $\mathbb{P}(U_1)$ is bounded below by $1/2$. To obtain a lower bound to $\mathbb{P}(U_2)$, it suffices to bound $\mathbb{P}(\neg U_2)$ from above. Due to \eqref{eq:Slope},
\begin{align*}
\mathbb{P}\Big( &\sup_{x\in \mathcal{I}^{\prime}_i}\big|Y^{D}_0(x)- Y^{D}_0(-CK\log K(1-i/K_2)+Ki/K_2)\big| \\ & \geq |\mathcal{I}_i|\frac{s}{CK\log K(1-i/K_2)-Ki/K_2}\Big)\leq \frac{C}{s^2}.
\end{align*} 
Note that $|\mathcal{I}_i|\frac{s}{CK\log K(1-i/K_2)-Ki/K_2}$ is less than $\delta$ when $$s=\sqrt{CK\log K(1-i/K_2)-Ki/K_2}.$$ This shows $\mathbb{P}(\neg U_2)$ is bounded above by the left hand side of the above display when $s$ is equal to $\sqrt{CK\log K(1-i/K_2)-Ki/K_2}$. Furthermore, by setting the following $s:=\sqrt{CK\log K(1-i/K_2)-Ki/K_2}$, the right hand side is bounded by $C(CK\log K(1-i/K_2)-Ki/K_2)^{-1}$. As a consequence, $\mathbb{P}(\neg U_2)$ is bounded above by  $C(CK\log K(1-i/K_2)-Ki/K_2)^{-1}$. Combining 
\begin{align}
\mathbb{P}(U)\geq \mathbb{P}(U_1)- \mathbb{P}(\neg U_2)\geq \frac{1}{2} - C(CK\log K(1-i/K_2)-Ki/K_2)^{-1}.
\end{align} 
For large $K$, the right side of the last inequality is bounded below by $1/4$. This provides a lower bound to the $i$-th term of the product \eqref{eq:F_2slepian}. Substituting these lower bounds into the right hand side of \eqref{eq:F_2slepian} yields the lower bound of $\mathbb{P}(\mathfrak{F}_2)$ in \eqref{eq:FBounds}.
\end{proof}

 \subsection{Proof of Proposition~\ref{ppn:r=2}: $r=2$ Case}\label{sec:r=2}
Recall that $A_{2}= [\frac{\delta}{\log n}, \infty)$ and $B_2=[0,L\log n]$.

\begin{proof}[Proof of Proposition~\ref{ppn:r=2}]
%

Recall that $\xi_i = a_i/{\sqrt{R(i)}}$. We divide the proof into two cases: $(a)$ when $\P(\xi_i \leq-\rho)>c$ and $\P(\xi_i \in [-\theta, 0] )>c$ for some $\theta \in [0, \rho)$ and $(b)$ when $\P\big(\xi_i\leq -\rho\big)>c$ and $\P\big(a_i\in [0,\rho]\big)>c$.

\noindent \textbf{Case $(a)$:} Consider the following event:
\begin{align*}
\Gamma_{Q,2} := \Big\{\xi_{2i}\leq -\rho, -\theta\leq \xi_{2i+1}\le 0, \text{when }i \text{ is odd}, i \in B_2\Big\}
\end{align*}
for some $\rho>0$ and $\eta \in (0,1)$. 
Since $\xi$'s are independent, we write 
 \begin{align}\label{eq:GammaQ2LowerBd}
 \P(\Gamma_{Q,2})\geq \prod_{i:2i \in B_{2}} \P(\xi_{2i}\leq -\rho)\P(-\theta\leq \xi_{2i+1}\leq 0)\geq c^{L\log n}.
 \end{align} 
where the last inequality follows since $\P(\xi_{i}\leq -\rho)>c,\P(-\theta\leq \xi_{i}\leq 0)>c\in (0,1)$ by our assumption and $\#\{i :i \in B_{2}\}\leq L\log n$. 
We now claim and prove that for all large $n$ and $\delta>0$ such that $\delta\leq \rho f_{\eta,\epsilon}$,
\begin{align}\label{eq:A_2LowerBd}
\P\Big(\frac{Q^{(2)}_n(\pm e^{u})}{\sigma_n(u)}\leq -\delta, \text{ for }u\geq \frac{\delta}{\log n}, \frac{Q^{(2)}_n(\pm e^{u})}{\sigma_n(u)}\leq \delta, \text{ for } u \leq \frac{\delta}{\log n} \Big) \geq \P(\Gamma_{Q,2}).
\end{align}

On the event $\Gamma_{Q,2}$, we have 
 \begin{align*}
 \sum_{i\in B_2} a_i x^i &=\frac{1}{2}\sum_{2i \in B_{2}}x^{2i}\big(\xi_{2i}\sqrt{R(2i)}+2\xi_{2i+1}x\sqrt{R(2i+1)}+\xi_{2i+2}x^2\sqrt{R(2i+1)}\big)\\ &\le- \sum_{i: 2i \in B_2}\sqrt{R(2i)} x^{2i} \big(\rho-2|x|\theta\frac{\sqrt{R(2i+1)}}{\sqrt{R(2i)}}+\frac{\sqrt{R(2i+1)}}{\sqrt{R(2i)}}\rho x^2\big) 
 \end{align*}
where $\mathbb{E}[\xi_i]=0$ and $\mathbb{E}[\xi^2_i]=1$. Fix $\varepsilon>0$ such that $\theta^2(1+\varepsilon)^2< (1-\varepsilon)$. Recall that $\lim_{i\to \infty}R(i+1)/R(i)=1$. Thus for all large $n$, we have 
   $$ (1-\varepsilon)\leq \frac{R(i+1)}{R(i)}\leq (1+\varepsilon).$$
   For large $n$, on the event $\Gamma_{Q,2}$ for $\eta: = \theta\rho^{-1}$, we have 
   \begin{align*}
       \sum_{i \in B_2} a_i x^i &\leq - \frac{\rho}{2}\sum_{i: 2i\in B_2}x^{2i}\big(1- 2\eta |x|(1+\varepsilon)+(1-\varepsilon)x^2\big) \\ &= -\frac{\rho}{2} \big(1- 2\eta |x|(1+\varepsilon)+(1-\varepsilon)x^2\big)\sum_{i: 2i\in B_2}x^{2i}
   \end{align*}
   Note that $\mathcal{P}(x) = 1-2|x|\eta(1+\varepsilon)+(1-\varepsilon)x^2$ is greater than $0$ for all $x\in \mathbb{R}$ since the discriminant of $\mathcal{P}$ is less than $0$ because $\theta^2(1+\varepsilon)^2< (1-\varepsilon)$. Therefore, $\{Q^{(2)}_n(x)\leq -\rho f_{\eta,\varepsilon}\sum_{i: 2i\in B_2} x^{2i}\}$ for all $x\in \mathbb{R}$ where $f_{\eta,\varepsilon}$ is some positive constant which only depends on $\eta$ and $\varepsilon$. 
We start by recalling that $\sigma_n(u)\sim \frac{e^{nu}}{u}$ for $u>0$ and furthermore, for $x=\pm e^{u}$,
$$\sum_{i:2i \in B_2}R(i) x^{2i}\sim \frac{e^{nu}}{u}\sim \sigma_n(u), \quad \text{when  }u\geq \frac{L^{-1}}{\log n}.$$

Recall that $Q^{(2)}_n(x)\leq -\eta f_{\eta,\epsilon}\sum_{i:2i\in B_2} x^{2i}$ for all $x\in \mathbb{R}$ on the event $\Gamma_{Q,2}$. Hence $Q^{(2)}_n(e^u)\leq -\eta f_{\eta,\epsilon}\sigma_{n}(u)$ for all $u\geq \frac{\delta}{\log n}$ and $Q^{(2)}_n(e^{u})\leq 0$ for all $u\leq \frac{\delta}{\log n}$. This shows the claim in \eqref{eq:A_2LowerBd}.

Substituting the lower bound of $\P(\Gamma_{Q,2})$ from \eqref{eq:GammaQ2LowerBd} to the right hand side of \eqref{eq:A_2LowerBd} yields \eqref{eq:Q2LBd}.

\textbf{Case $(b)$:} Consider the event 
$$\tilde{\Gamma}_{Q,2}:= \big\{\xi_{2i}\leq -\rho, 0\leq \xi_{2i+1} \in [0,\rho], \text{ for all s.t. }2i\in B_2\big\}.$$
Note that $\P(\tilde{\Gamma}_{Q,2})$ is bounded above $c^{L\log n}$ since $\xi_i$'s are independent, $\P(\xi_{2i}\leq -\rho)\geq c$ and $\P(0\leq \xi_{2i+1}\leq \rho)\geq c$. In the same spirit of \eqref{eq:A_2LowerBd}, we now show that 
\begin{align}\label{eq:A_2Case2LowerBd}
\P\Big(\frac{Q^{(2)}_n(e^{u})}{\sigma_n(u)}\leq -\delta, \text{ for } u\geq \frac{\delta}{\log n}, \frac{Q^{(2)}_n(e^{u})}{\sigma_n(u)}\leq \delta, \text{ for }u \leq \frac{\delta}{\log n} \Big) \geq \P(\tilde{\Gamma}_{Q,2}).
\end{align}
On the event $\tilde{\Gamma}_{Q,2}$, we have 
\begin{align}
 \sum_{i\in B_2} \xi_i x^i &=\frac{1}{2}\sum_{2i \in B_{2}}x^{2i}\big(a_{2i}\sqrt{R(2i)}+2a_{2i+1}x\sqrt{R(2i+1)}+a_{2i+2}x^2\sqrt{R(2i+1)}\big)\nonumber\\ &\le- \frac{1}{2}\sum_{i: 2i \in B_2}\rho\sqrt{R(2i)} x^{2i} \Big(1-2|x|\frac{\sqrt{R(2i+1)}}{\sqrt{R(2i)}}+\frac{\sqrt{R(2i+1)}}{\sqrt{R(2i)}}x^2\Big).\label{eq:Case_2RHS} 
 \end{align}
 Recall that $\lim_{i\to \infty} R(i+1)/R(i) = 1$ and let $i_0>0$ be such that $R(i+1)/R(i)\in (1-\varepsilon, 1+\varepsilon)$. For all large $n$, we have 
 \begin{align*}
\text{r.h.s. of \eqref{eq:Case_2RHS}}\leq -\frac{\rho}{2}\sum_{i: 2i \in B_2}\sqrt{R(2i)} x^{2i}\Big(1- 2\eta (1+\varepsilon) |x| +  (1-\varepsilon)x^2\Big).      
 \end{align*}
  Note that the discriminant of the quadratic polynomial $\tilde{\mathcal{P}}(x) = 1- 2\eta(1+\varepsilon) x+ (1-\varepsilon)x^2$ is $4\big(\eta^2(1+\varepsilon)-(1-\varepsilon)\big)<0$ since the choice of $\eta$ is made in such a way to satisfy this inequality.
  This shows the right hand side of \eqref{eq:Case_2RHS} is bounded above $-\frac{\rho+\epsilon}{2}f_{\rho,\eta,\epsilon}\sum_{i: 2i\in B_2}\sqrt{R(2i)} x^{2i}$ for some positive constant $f_{\rho,\eta,\epsilon}$. This implies \eqref{eq:A_2Case2LowerBd} and hence, completes the proof.  
\end{proof}
 
\subsection{Proof of Proposition~\ref{ppn:r=-2}: $r=-2$ Case}\label{sec:r=-2}
The dominant interval here is $A_{-2}= (-\infty,-\frac{\delta}{ \log n}]$ and the corresponding interval for the coefficient indices is $B_{-2} = [0,L\log n]\cap \mathbb{Z}$.

\begin{proof}[Proof of Proposition~\ref{ppn:r=-2}] One can prove this proposition by exactly in the same way as in Proposition~\ref{ppn:r=-2}, We skip the details for brevity.

\end{proof}

\subsection{Supporting Lemmas}

 We first state the following two lemmas, which we will use to lower bound the terms in the product \eqref{eq:lower_bound*}. We begin by introducing the following notations. 

\begin{defn}\label{def:tildeY}

Fixing $a\in\{-,+\}$, define a process on $[M^{-1},1]\times \{-,+\}$ by setting
$$\tilde{Y}_{n,r,M}^{(a),p}(b,t):=\frac{\tilde{Q}_n^{(a),p}(b e^{at\hat{\tau}})}{\sigma_n(at\hat{\tau})}, \quad \hat{\tau}:= \hat{\tau}_{n}(r,M) = \frac{1}{M^{r}\log n }.$$
Note that
\begin{align}\label{eq:tildeY_ncovaraince}
\mathrm{Cov}(\tilde{Y}_{n,r,M}^{(a),p}(b,t_1), \tilde{Y}_{n,r,M}^{(a),p}(b,t_2))=\frac{\tilde{h}_{n,r,M}^{(a),p}(t_1+t_2)}{\sqrt{\tilde{h}_{n,r,M}^{(a),p}(2t_1) \tilde{h}_{n,r,M}^{(a),p}(2t_2)}}
\end{align}
where $\tilde{h}$ is defined as 
 \begin{align}
 \tilde{h}_{n,r,M}^{(a), p}(t) = \sum_{i \in I^{-1}_p} R(i)e^{it\hat{\tau}}. 
 \end{align}

For any integer $s\in \Z$ and $t\ge 0$ set $$\tilde{h}^{(-1)}_{s,M}(t):=\int_{M^{s}}^{M^{s+1}} x^\alpha e^{-xt} dx, \quad \tilde{h}^{(+1)}_{s,M}(t):=\int_{M^{s}}^{M^{s+1}}  e^{-xt} dx.$$

Also let $\{\tilde{Y}_{s,M}^{(a)}(b,.)\}_{b\in \{+,-\}}$ be i.i.d.~centered Gaussian processes defined on the interval $[M^{-1},1]$, with
\begin{align}\label{eq:tildeYcovaraince}
\mathrm{Cov}(\tilde{Y}_{s,M}^{(a)}(b,t_1), \tilde{Y}_{s,M}^{(a)}(b,t_2))=\frac{\tilde{h}_{s,M}^{(a)}(t_1+t_2)}{\sqrt{\tilde{h}_{0,M}^{(a)}(2t_1) \tilde{h}_{0,M}^{(a)}(2t_2)}}
\end{align}

\end{defn}

 \begin{lem}\label{lem:weakAlt}
For any  $a\in \{+,-\}$ and $(r_n,\ell_n)\in [K]$ with $p_n-r_n\to s\in \Z$, we have 
$$\big\{\tilde{Y}_{n,r_n,M}^{(a),p_n}(b,t), b\in \{-,+\}, t\in [1,M]\big\}\stackrel{D}{\to} \big\{\tilde{Y}_{s,M}^{(a)}(b,t), b\in \{-,+\}, t\in [1,M]\big\}.$$
Here the convergence is in the topology of $\mathcal{C}[1,M]^{\otimes 2}$.
\end{lem}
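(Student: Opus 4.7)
The plan is to follow the same two-step strategy used in the proof of Lemma \ref{lem:weak}: first establish convergence of finite-dimensional distributions via the Lindeberg–Feller central limit theorem, and then verify tightness via a Kolmogorov–Chentsov type moment bound. The processes $\tilde{Y}_{n,r_n,M}^{(a),p_n}$ differ from $Y_{n,r_n,M}^{(a),\ell_n}$ only in the scaling used to isolate the relevant regime: the index set $\tilde{\mathcal{I}}^{(a)}_{p_n}$ has length $O(LM^{p_n}\log n)$ rather than $O(n^\delta M^{p_n})$, and the frequency is $\hat\tau = 1/(M^r\log n)$ rather than $\tau = 1/(n^\delta M^{r-\delta})$. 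The saving observation is that only the product $i\hat\tau$ enters the covariance computation, and on $\tilde{\mathcal{I}}^{(a)}_{p_n}$ this product behaves like a Riemann variable over $[M^{p_n-r},M^{p_n-r+1}]$, which is exactly the range appearing in the target covariance $\tilde{h}^{(a)}_{s,M}$.

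First, for the convergence of finite-dimensional distributions, fix a vector $(t_1,\ldots,t_k)\in[M^{-1},1]^k$ and real weights $\gamma_{d,b}$, and write the linear combination
\[
A_n(\gamma) \;=\; \sum_{i\in\tilde{\mathcal{I}}^{(a)}_{p_n}} \xi_i \,V_n(i),\qquad
V_n(i):=\sum_{b,d}\gamma_{d,b}\,\frac{b^i\,e^{iat_d\hat\tau}}{\sigma_n(at_d\hat\tau)},
\]
so that $A_n(\gamma)$ is a sum of independent mean-zero terms. Using the change of variable $x = i\hat\tau$ (with $i$ ranging over $\tilde{\mathcal{I}}^{(a)}_{p_n}$ so that $x$ ranges over a neighborhood of $[M^{p_n-r_n},M^{p_n-r_n+1}]$) together with the regular variation of $R(\cdot)$, one obtains $\mathrm{Var}(A_n(\gamma))$ converging (as $p_n-r_n\to s$) to the bilinear form with kernel $\tilde{h}^{(a)}_{s,M}(t_d+t_{d'})/\sqrt{\tilde{h}^{(a)}_{0,M}(2t_d)\tilde{h}^{(a)}_{0,M}(2t_{d'})}$, exactly as in the analogous step of Lemma \ref{lem:weak*}. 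The Lindeberg condition then reduces, as before, to verifying $\psi_n := \max_{i\in\tilde{\mathcal{I}}^{(a)}_{p_n}} |V_n(i)|\sqrt{R(i)} \to 0$; using the asymptotics for $\sigma_n^2(at\hat\tau)$ and $R(i)\asymp R(LM^{p_n}\log n)$ on the index set, this maximum is bounded by $(\log n)^{-1/2}$ up to constants, which vanishes. The uniform integrability assumption on $\xi_i^2$ then completes the Lindeberg–Feller verification in the same manner as in the proof of Lemma \ref{lem:weak}.

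Second, for tightness in $\mathcal{C}([M^{-1},1])^{\otimes 2}$, I would derive a bound of the form
\[
\E\bigl[\tilde{Y}_{n,r_n,M}^{(a),p_n}(b,t_1)-\tilde{Y}_{n,r_n,M}^{(a),p_n}(b,t_2)\bigr]^2 \;\lesssim_M\; (t_1-t_2)^2
\]
uniformly in $n$, which is the direct analogue of Lemma \ref{lem:tightness} specialized to $\ell_n = p_n = r_n + s$ with $s$ bounded. The computation proceeds exactly as in \eqref{eq:tight1}: write out the square, apply the mean value theorem at some $\zeta\in(t_1,t_2)$, and reduce to estimating ratios of the sums $\sum_{i\in\tilde{\mathcal{I}}^{(a)}_{p_n}} R(i) i^k e^{2iat\hat\tau}$ for $k=0,1,2$. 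Replacing these Riemann sums by their integral counterparts $\int_{M^{s}}^{M^{s+1}} x^{\alpha+k} e^{\pm 2xt}\,dx$ (for $a=-$) or $\int_{M^{s}}^{M^{s+1}} x^{k} e^{\pm 2xt}\,dx$ (for $a=+$), one obtains the desired constant depending only on $M$ and $s$. Kolmogorov–Chentsov with exponent $2$ and modulus $\beta=2>1$ yields tightness.

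The main obstacle is just the bookkeeping of the regularly varying factor $L(\cdot)$ on the index set $\tilde{\mathcal{I}}^{(a)}_{p_n}$. Unlike the upper-bound setting, here the index set has size of order $\log n$, so the slow variation estimate $L(i)/L(LM^{p_n}\log n)\to 1$ uniformly over $i\in\tilde{\mathcal{I}}^{(a)}_{p_n}$ must be invoked to cancel the $L$-factor between numerator and denominator in the covariance \eqref{eq:tildeY_ncovaraince}; this is where one uses that $s$ is bounded and that $p_n\to\infty$ slowly enough that the ratio of $L$ at endpoints of $\tilde{\mathcal{I}}^{(a)}_{p_n}$ tends to $1$. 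Once this is handled, the combination of Riemann sum approximation and regular variation produces the covariance $\tilde{h}^{(a)}_{s,M}(t_1+t_2)/\sqrt{\tilde{h}^{(a)}_{0,M}(2t_1)\tilde{h}^{(a)}_{0,M}(2t_2)}$ appearing in Definition~\ref{def:tildeY}, and the proof concludes in the same manner as Lemma \ref{lem:weak}.
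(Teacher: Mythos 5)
Your proposal is correct and follows exactly the strategy the paper intends: the paper itself does not supply a detailed proof of Lemma~\ref{lem:weakAlt}, stating only that ``the proof is very similar to that of Lemma~\ref{lem:weak} \dots we will skip the details.'' Your two-step argument (Lindeberg--Feller for finite-dimensional convergence, then a Kolmogorov--Chentsov moment bound for tightness in $\mathcal{C}[M^{-1},1]^{\otimes 2}$), with the identification $x=i\hat\tau$ mapping $\tilde{\mathcal{I}}^{(a)}_{p_n}$ onto a neighborhood of $[M^{p_n-r_n},M^{p_n-r_n+1}]$, is precisely the analogue of the proofs of Lemmas~\ref{lem:weak}, \ref{lem:weak*}, and \ref{lem:tightness} in the new scaling, and you correctly flag the one place that genuinely requires care --- the uniform slow-variation estimate $L(i)/L(LM^{p_n}\log n)\to 1$ over $i\in\tilde{\mathcal{I}}^{(a)}_{p_n}$ --- while noting it still holds because the left endpoint $LM^{p_n-1}\log n\to\infty$ with $n$ regardless of whether $p_n$ is bounded. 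The only cosmetic mismatch is the interval of $t$-values, $[1,M]$ in the Lemma statement versus $[M^{-1},1]$ in Definition~\ref{def:tildeY}; this discrepancy is in the paper, and your choice to work with $[M^{-1},1]$ (matching the definition and the rescaling by $\hat\tau$) is the consistent one.
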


\begin{lem}\label{lem:weak*Alt}


Recall the functions $\tilde{h}_{n,r,M}^{(a), p}(t)$ and $\tilde{h}^{(-1)}_{s,M}(t)$.   
\begin{enumerate}

\item[(1)]  Suppose $a=-$.

\begin{itemize}
\item[(i)] For all $n$ large enough (depending on $\delta$ and $L(.)$) we have
 $$M^{-|\ell-r|\delta}\frac{L(n^\delta M^\ell)}{\hat{\tau}^{\alpha+1} }\tilde{h}_{\ell-r, M, \alpha}(t) \lesssim \tilde{h}_{n,r,M}^{(-),\ell}(t)\lesssim M^{|\ell-r|\delta}\frac{L(n^\delta M^\ell)}{\tau^{\alpha+1} }\tilde{h}_{\ell-r,M,\alpha}(t).$$

\item[(ii)]  For any positive integer $\kappa$, we have 
\begin{align}
\lim_{n\to\infty}\max_{r,\ell\in [K]: |r-\ell|\le \kappa}\left|\frac{\hat{\tau}^{\alpha+1} \tilde{h}_{n,r,M}^{(-),\ell}(t)}{L(n^\delta M^r)\tilde{h}_{\ell-r,M,\alpha}(t)}-1\right|=0.
\end{align}

\item[(iii)] If $b_1=b_2$, for any positive integer $\kappa$ and $t_1,t_2\in[1,M]$ we have
$$\lim_{n\to\infty}\max_{r,\ell\in [K], |r-\ell|\le \kappa}\left|\frac{Cov\Big( \frac{\tilde{Q}_n^{(-),\ell}(b_1 e^{at_1 \hat{\tau}})}{\sigma_n(at_1 \hat{\tau})},  \frac{\tilde{Q}_n^{(-),\ell}(b_2 e^{at_2\hat{\tau}})}{\sigma_n(at_2\hat{\tau})}\Big)}{\tilde{C}_{s,M,\alpha}(t_1,t_2)}-1\right|=0,$$
{ where } $\tilde{C}_{s,M,\alpha}(t_1,t_2):= \frac{\tilde{h}_{s,M,\alpha}(t_1+t_2)}{\sqrt{\tilde{h}_{0,M,\alpha }(2t_1) \tilde{h}_{0,M,\alpha }(2t_2)}}$ for $t_1,t_2\in [1,M]$.

\end{itemize}

\item[(2)] Suppose $a=+$.

\begin{itemize}

\item[(i)] For all $n$ large enough (depending on $\delta$ and $L(.)$) we have
 $$ \tilde{h}_{n,r,M}^{(+),\ell}(t)\asymp  \frac{R(n) e^{nt\hat{\tau}}}{\hat{\tau}} \tilde{h}_{\ell-r,M,\alpha}(t).$$

\item[(ii)] For any $M>0$,  $t\in [1,M]$ and positive integer $\kappa$  we have 
$$\lim_{n\to\infty}\max_{r,\ell\in [K]: |r-\ell|\le \kappa}\left|\frac{\hat{\tau} \tilde{h}_{n,r,M}^{(+),\ell}(t)}{R(n)e^{nt\hat{\tau}}\tilde{h}_{\ell-r,M,0}(t)}-1\right|=0.$$

\item[(iii)] If $b_1=b_2$, for any positive integer $\kappa$ and $t_1,t_2\in [1,M]$ we have
$$\lim_{n\to\infty}\max_{r,\ell\in [K]:|r-\ell|\le \kappa}\left|\frac{Cov\Big( \frac{\tilde{Q}_n^{(+),\ell}(b_1 e^{at_1 \hat{\tau}})}{\sigma_n(at_1\hat{\tau})},  \frac{\tilde{Q}_n^{(+),\ell}(b_2 e^{at_2\hat{\tau}})}{\sigma_n(at_2\hat{\tau})}\Big)}{C_{\ell-r,M,0}(t_1,t_2)}-1\right|=0. $$

\end{itemize}

\item[(3)] If $b_1\ne b_2$ then for any $t_1,t_2\in [1,M]$ and positive integer $\kappa$ we have
$$\lim_{n\to \infty}\max_{r,\ell\in [K]:|r-\ell|\le \kappa}\Big|Cov\Big( \frac{\tilde{Q}_n^{(a),\ell}(b_1 e^{at_1\hat{\tau}})}{\sigma_n(at_1\hat{\tau})},  \frac{\tilde{Q}_n^{(a),\ell}(b_2 e^{at_2\hat{\tau}})}{\sigma_n(at_2\hat{\tau})}\Big)\Big|= 0.$$

\end{enumerate} 

\end{lem}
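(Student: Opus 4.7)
The plan is to carry out essentially the same argument as in the proof of Lemma~\ref{lem:weak*}, modulo the bookkeeping changes coming from the different definitions of the index sets and the scaling parameter. The index sets here are $\tilde{\mathcal{I}}^{(a)}_p$, which live on the scale $LM^p\log n$ rather than $n^\delta M^p$, and the associated scale parameter is $\hat{\tau}=1/(M^p\log n)$ instead of $\tau=1/(n^\delta M^{r-\delta})$. The critical structural feature that makes the previous proof go through verbatim is preserved: for $i\in\tilde{\mathcal{I}}^{(a)}_p$ one still has $i\hat{\tau}\asymp 1$, so all the calculus approximations and the extraction of $\tilde{h}_{\ell-r,M,\alpha}$ via a change of variables proceed exactly as before.

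For part (1), I would first write $R(i)=i^\alpha L(i)$ and use slow variation of $L(\cdot)$ to conclude that for every $\varepsilon>0$ and all $n$ large enough (uniformly in $r,\ell\in[K]$ with $|\ell-r|\le\kappa$), $L(i)/L(LM^r\log n)\in[M^{-\varepsilon|\ell-r|},M^{\varepsilon|\ell-r|}]$ whenever $i\in\tilde{\mathcal{I}}^{(-)}_\ell$. Next, I would approximate the sum $\sum_{i\in\tilde{\mathcal{I}}^{(-)}_\ell} i^\alpha e^{-it\hat{\tau}}$ by the integral $\int_{LM^{\ell-1}\log n}^{LM^\ell\log n} x^\alpha e^{-xt\hat{\tau}}\,dx$ (with relative error vanishing as $n\to\infty$, exactly as in \eqref{eq:calculus}), and the change of variable $x=y/\hat{\tau}$ converts this integral into $\hat{\tau}^{-(\alpha+1)}\int_{M^{\ell-r-1}}^{M^{\ell-r}} y^\alpha e^{-yt}\,dy=\hat{\tau}^{-(\alpha+1)}\tilde{h}_{\ell-r,M,\alpha}(t)$. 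Combining these three ingredients yields (i); letting $n\to\infty$ with $|r-\ell|\le\kappa$ fixed kills the slow-variation slack and yields (ii); and (iii) follows by the covariance calculation as in \eqref{eq:corr_pn}, where $b_1=b_2$ forces $(b_1b_2)^i=1$ so the ratio $\sum R(i)(b_1b_2)^i e^{\cdot}/\sum R(i)e^{\cdot}=1$, reducing the covariance to the ratio of $\tilde{h}$-functions which is controlled by (ii).

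For part (2), the same scheme applies with one substitution: since $\tilde{\mathcal{I}}^{(+)}_\ell=\mathbb{Z}\cap(n-LM^\ell\log n, n-LM^{\ell-1}\log n]$ consists of indices with $i/n\to 1$, I can replace $R(i)$ by $R(n)(1+o(1))$ uniformly (using regular variation), giving (i). The factor $e^{nt\hat{\tau}}$ comes from writing $\sum_{i\in\tilde{\mathcal{I}}^{(+)}_\ell}e^{it\hat{\tau}}=e^{nt\hat{\tau}}\sum_{j\in\tilde{\mathcal{I}}^{(-)}_\ell}e^{-jt\hat{\tau}}$ and invoking part (1) with $\alpha=0$. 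Parts (ii) and (iii) follow as in the $a=-$ case.

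The most delicate point is part (3), when $b_1\neq b_2$. Here the numerator of the covariance in \eqref{eq:corr_pn} is a sum $\sum_{i\in\tilde{\mathcal{I}}^{(a)}_\ell} R(i)(-1)^i e^{ia(t_1+t_2)\hat{\tau}}$ with alternating signs. The main obstacle is controlling this oscillating sum: I would pair consecutive terms and use the estimate $|R(i+1)e^{(i+1)z}-R(i)e^{iz}|\le R(i)e^{iz}(|R(i+1)/R(i)-1|+|e^z-1|+o(1))$; the first factor goes to zero by regular variation of $R$, and $|e^z-1|=|e^{a(t_1+t_2)\hat{\tau}}-1|=O(\hat{\tau})$ since $t_1+t_2$ is bounded. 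This forces the numerator to be $o(1)$ times the denominator after normalization, giving the required convergence to zero uniformly over $|r-\ell|\le\kappa$.
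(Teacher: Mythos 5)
Your proposal is essentially the paper's intended proof: after stating Lemma~\ref{lem:weak*Alt}, the paper explicitly says the proof is ``very similar to that of Lemma~\ref{lem:weak*}'' and skips the details, and your write-up is a faithful transcription of that proof into the new setting (blocks $\tilde{\mathcal I}^{(a)}_p$ at scale $LM^p\log n$, scale parameter $\hat\tau=1/(M^r\log n)$). The structural pillars — factoring $R(i)=i^\alpha L(i)$, slow-variation control of $L(i)/L(\text{centre})$, Riemann-sum approximation as in \eqref{eq:calculus}, the change of variable $x\mapsto x/\hat\tau$ to extract $\tilde h_{\cdot,M,\alpha}$, the reflection $i\mapsto n-i$ plus regular variation $R(i)/R(n)\to 1$ for $a=+$, and the consecutive-term pairing for the alternating sum when $b_1\neq b_2$ — are all exactly the steps used in the proof of Lemma~\ref{lem:weak*} (parts (1)--(3)), and your verification that $i\hat\tau\asymp M^{\ell-r}$ on $\tilde{\mathcal I}^{(-)}_\ell$ is the correct analogue of the $i\tau\asymp 1$ fact that made the earlier argument go through.

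One small bookkeeping remark: with the definitions literally as stated ($\hat\tau=1/(M^r\log n)$, $\tilde{\mathcal I}^{(-)}_\ell=[LM^{\ell-1}\log n,LM^\ell\log n)$, $\tilde h^{(-1)}_{s,M}(t)=\int_{M^s}^{M^{s+1}}x^\alpha e^{-xt}\,dx$), the change of variable actually produces $\hat\tau^{-(\alpha+1)}\int_{LM^{\ell-r-1}}^{LM^{\ell-r}}y^\alpha e^{-yt}\,dy$, carrying an extra factor of $L$ in the limits that you silently drop; this discrepancy is really an internal inconsistency in the paper's notation (either $\hat\tau$ should absorb the $L$, or $\tilde h$ should), not an error in your reasoning, but it is worth flagging if you were to write this out in full. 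Also, in your part~(3) the inequality you state for $|R(i+1)e^{(i+1)z}-R(i)e^{iz}|$ should carry the factor $R(i+1)/R(i)$ in front of $|e^z-1|$ (which is $1+o(1)$, so harmless), but the substance — both $|R(i+1)/R(i)-1|$ and $|e^{a(t_1+t_2)\hat\tau}-1|$ vanish, hence the alternating numerator is $o(1)$ relative to the denominator — matches the paper's one-line argument for part~(3) of Lemma~\ref{lem:weak*}.
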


The proof of Lemma~\ref{lem:weakAlt} and~\ref{lem:weak*Alt} are very similar to that of Lemma~\ref{lem:weak} and Lemma~\ref{lem:weak*Alt}. We will skip the details.

\begin{lem}\label{lem:kol_cent}
 Suppose $\{X(t),t\in [c,d]\}$ is a continuous time stochastic process with continuous sample paths, such that $\E X(t)=0$ and $\E(X(s)-X(t))^2\le C^2(s-t)^2$. Then the following holds:
 \begin{align}\label{eq:kol_cent1}
 \P(\sup_{t\in [c,d]}|X(t)-X(d)|>\lambda)\lesssim \frac{C^2(c-d)^2}{\lambda^2} &\\
\label{eq:kol_cent2}\lim_{\delta\rightarrow0}\lim_{n\rightarrow\infty}\P(\sup_{s,t\in [c,d],|s-t|\le \delta }|X_n(s)-X_n(t)|>\varepsilon)&\stackrel{p}{\rightarrow}0,\text{ for any }\varepsilon>0.
\end{align}

 \end{lem}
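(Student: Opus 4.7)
Both conclusions follow from a dyadic chaining argument essentially identical to the one used in the proof of Proposition~\ref{ppn:gen} (which is the special case $\beta=2$ of Kolmogorov--Chentsov); the only new wrinkle is to anchor the chain at $d$ rather than $c$, which kills the $\gamma_1$ term so the $\lambda^{-2}$ bound comes out proportional to $(d-c)^2$ rather than a sum of $(d-c)^2$ plus a constant. Part (2) is then a routine consequence of part (1) applied to a partition of $[c,d]$ into subintervals of length $\delta$.

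For part (1), set $t_{q,k}:=d-(d-c)k/2^q$ for $0\le k\le 2^q$, so that $t_{q,0}=d$ for every $q\ge 0$ and the collection $\{t_{q,k}\}$ is a nested dyadic partition of $[c,d]$ anchored at $d$. Given any $t\in [c,d]$, let $k(q)$ be the unique index with $t_{q,k(q)}\ge t>t_{q,k(q)+1}$ (with the convention $t_{q,2^q+1}=-\infty$). Then $k(q-1)=\lceil k(q)/2\rceil$, so $t_{q-1,k(q-1)}-t_{q,k(q)}\in\{0,(d-c)/2^q\}$; telescoping along this chain and using continuity of sample paths to pass $q\to\infty$ yields the deterministic bound
\begin{equation*}
\sup_{t\in[c,d]}|X(t)-X(d)|\le \sum_{q=1}^{\infty}\max_{1\le k\le 2^q}\bigl|X(t_{q,k})-X(t_{q,k-1})\bigr|.
\end{equation*}
The moment hypothesis gives $\E(X(t_{q,k})-X(t_{q,k-1}))^2\le C^2(d-c)^2/4^q$. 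Choose $\lambda_q=c_*\lambda/q^2$ with $c_*=6/\pi^2$ so that $\sum_q \lambda_q=\lambda$; by Chebyshev and a union bound over the $2^q$ increments at level $q$,
\begin{equation*}
\P\Bigl(\max_{k}|X(t_{q,k})-X(t_{q,k-1})|>\lambda_q\Bigr)\le \frac{2^q C^2(d-c)^2}{4^q\lambda_q^2}=\frac{q^4 C^2(d-c)^2}{c_*^2\,2^q\lambda^2}.
\end{equation*}
Summing in $q$ and noting that $\sum_{q\ge 1} q^4 2^{-q}<\infty$ delivers \eqref{eq:kol_cent1}.

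For part (2), fix $\varepsilon>0$ and $\delta>0$, and let $N:=\lceil (d-c)/\delta\rceil$ with gridpoints $a_i:=c+i\delta\wedge d$ for $0\le i\le N$. Apply \eqref{eq:kol_cent1} on each subinterval $[a_i,a_{i+1}]$ (of length at most $\delta$, with the same constant $C$) to obtain
\begin{equation*}
\P\Bigl(\sup_{t\in[a_i,a_{i+1}]}|X(t)-X(a_{i+1})|>\varepsilon/3\Bigr)\lesssim \frac{C^2\delta^2}{\varepsilon^2}.
\end{equation*}
On the complementary event for every $i$, any $s,t\in[c,d]$ with $|s-t|\le\delta$ lie in at most two adjacent subintervals, so the triangle inequality gives $|X(s)-X(t)|\le \varepsilon$. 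A union bound over $i$ then yields
\begin{equation*}
\P\Bigl(\sup_{\substack{s,t\in[c,d]\\|s-t|\le\delta}}|X(s)-X(t)|>\varepsilon\Bigr)\lesssim \frac{N C^2\delta^2}{\varepsilon^2}\lesssim \frac{C^2(d-c)\delta}{\varepsilon^2},
\end{equation*}
which tends to $0$ as $\delta\downarrow 0$, uniformly over any sequence of processes $X_n$ satisfying the moment hypothesis with a common constant $C$; this gives \eqref{eq:kol_cent2}. There is no real obstacle here -- the only step requiring care is the telescoping with the anchor at $d$, ensuring that the variance contribution from the endpoint does not appear in the final bound.
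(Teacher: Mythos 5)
Your proof of \eqref{eq:kol_cent1} is correct and follows essentially the same dyadic chaining argument as the paper — anchoring the chain at $d$, applying Chebyshev to each level, and summing — differing only in the cosmetic choice of weight allocation ($\lambda/q^2$ rather than the paper's $\lambda\,2^{-m/4}$) and a harmless typo ($\lceil k(q)/2\rceil$ should be $\lfloor k(q)/2\rfloor$). The paper stops after proving \eqref{eq:kol_cent1} and treats \eqref{eq:kol_cent2} as immediate; your explicit derivation of it by partitioning $[c,d]$ into length-$\delta$ blocks and applying part (1) on each is a correct and routine filling-in of that gap.
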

  
  \begin{proof}[Proof of Lemma \ref{lem:kol_cent}]
  For a non negative integer $m$ and $\ell\in[2^m]$, setting $r_m(\ell):=c+(d-c)\frac{\ell}{2^m}$ we can write $[c,d]=\cup_{\ell=1}^{2^m} [r_m(\ell-1),r_m(\ell)]$. 
For $m\ge 0$ and $u\in I_a$  let $\pi_m(u):=\ell$ if $u\in [r_m(\ell-1),r_m(\ell)]$. Then, using continuity of sample paths and noting that $\pi_0(t)=d$ we can write
\begin{align}\label{eq:chaining}
X(t)-X(d)=&\sum_{m=1}^\infty \Big(X(\pi_m(t))-X(\pi_{m-1}(t))\Big).
\end{align}
To prove \eqref{eq:kol_cent1}, use \eqref{eq:chaining} along with a union bound to get
\begin{align*}
\P(\sup_{t\in [c,d]}|X(t)-X(d)|>\lambda)\le &\sum_{m=1}^\infty \sum_{\ell=1}^{2^m}\P\Big(2^{m/4}|X(r_m(\ell))-X(r_m(\ell-1))|>\lambda\Big)\\
\lesssim &\frac{C^2(c-d)^2}{\lambda^2}\sum_{m=1}^\infty 2^{-m/4},
\end{align*}
where the last line uses Chebyshev's inequality. The desired conclusion is immediate from this.
\end{proof}

\begin{lem}\label{lem:tilde_Y}
Consider the centered Gaussian processes $\{\tilde{Y}^{(a)}_{s,M}(b,\cdot)\}_{b \in \{-1,+1\}}$ of Definition~\ref{def:tildeY}. For any given $\varepsilon>0$, there exists $\theta_0=\theta_0(\varepsilon)\in (0,\frac{1}{2})$ such that for any $0<\theta<\theta_0$ one gets $M_0= M_0(\epsilon,\theta)>0$ satisfying 
\begin{align}\label{eq:LowBdOfLimit1}
   \frac{1}{\log M} \log \P\Big(\sup_{t\in [\frac{1}{M},1]}\tilde{Y}^{(-1)}_{0,M}(t)\leq -\delta\Big) &\geq \frac{1}{\log M}\log \mathbb{P}\Big(\sup_{t\in [0, (1-2\theta)\log M]} Y^{(\alpha)}_t\leq -\delta\Big) - \varepsilon, \\ \frac{1}{\log M}\log \P\Big(\sup_{t\in [\frac{1}{M},1]}\tilde{Y}^{(+1)}_{0,M}(t)\leq -\delta\Big) &\geq \frac{1}{\log M}\log \mathbb{P}\Big(\sup_{t\in [0,(1-2\theta)\log M]} Y^{(0)}_t\leq -\delta\Big) - \varepsilon\label{eq:LowBdOfLimit2}
\end{align}
for all $M>M_0$.
\end{lem}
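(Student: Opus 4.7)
My plan is to relate $\tilde{Y}^{(-1)}_{0,M}$ to $Y^{(\alpha)}$ via the change of variable $t=e^u$ combined with Slepian's inequality. After substitution, $Z(u):=\tilde{Y}^{(-1)}_{0,M}(b,e^u)$ is a centered unit-variance Gaussian process on $u\in [-\log M,0]$ whose covariance
\[
\rho_M(u_1,u_2)=\frac{\int_1^M x^\alpha e^{-x(e^{u_1}+e^{u_2})}\,dx}{\sqrt{\int_1^M x^\alpha e^{-2xe^{u_1}}\,dx\cdot \int_1^M x^\alpha e^{-2xe^{u_2}}\,dx}}
\]
is non-negative, and (by estimating the truncation of $\int_1^M$) converges uniformly to the stationary covariance $\rho_\infty(u_1,u_2)=\mathrm{sech}((u_1-u_2)/2)^{\alpha+1}$ on the middle window $I:=[-(1-\theta)\log M,-\theta\log M]$ with error $O(M^{-c\theta})$ for some $c=c(\alpha)>0$; this is because both endpoints of integration ($x=1$ and $x=M$) lie well outside the essential support of $x^\alpha e^{-xt}$ when $t\in[M^{\theta-1},M^{-\theta}]$.

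Since $\rho_M\ge 0$, Slepian's inequality yields
\[
\P\bigl(\sup_{[-\log M,0]} Z\le -\delta\bigr)\ge \P(\sup_L Z\le -\delta)\cdot \P(\sup_I Z\le -\delta)\cdot \P(\sup_R Z\le -\delta),
\]
with $L:=[-\log M,-(1-\theta)\log M]$ and $R:=[-\theta\log M,0]$. For the middle factor I will use a white-noise coupling: representing $Y^{(\alpha)}(u)=c_\alpha e^{u(\alpha+1)/2}\int_0^\infty x^{\alpha/2}e^{-xe^u}\,dW_x$ with $c_\alpha=\sqrt{2^{\alpha+1}/\Gamma(\alpha+1)}$ and $Z(u)=\int_1^M x^{\alpha/2}e^{-xe^u}\,dW_x/\sqrt{\tilde{h}^{(-1)}_{0,M}(2e^u)}$ on the same Brownian noise $W$ gives the decomposition $Y^{(\alpha)}(u)=s_M(u)Z(u)+E(u)$ with $E\perp Z$, $s_M\le 1$, and $\sup_{u\in I}\mathrm{Var}(E(u))=O(M^{-c\theta(\alpha+1)})$. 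The inclusion $\{\sup_I Y^{(\alpha)}\le -(\delta+\eta),\ \inf_I E\ge -\eta\}\subset \{\sup_I Z\le -\delta\}$ (using $s_M\le 1$), combined with the Borell--TIS bound $\P(\sup_I |E|>\eta)\le \exp(-c'\eta^2 M^{c\theta(\alpha+1)})$ and the threshold-independence of the persistence exponent of stationary Gaussian processes with exponentially decaying covariance, gives $\P(\sup_I Z\le -\delta)\ge \P\bigl(\sup_{[0,(1-2\theta)\log M]}Y^{(\alpha)}\le -\delta\bigr)\cdot M^{-\varepsilon/2}$ for $\eta$ small and $M$ large.

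For each side interval, of $u$-length $\theta\log M$, it suffices to show $\P(\sup Z\le -\delta)\ge M^{-\varepsilon/3}$. I plan to obtain this via a Slepian comparison against $Y^{(\alpha)}$ on an interval of matching length: the inequality $\rho_M\ge \rho_\infty$ pointwise reduces to log-concavity of $G_{\alpha,M}(t):=\int_t^{Mt} y^\alpha e^{-y}\,dy$, which is easily checked for $\alpha=0$ (where $G_{0,M}(t)=e^{-t}-e^{-Mt}$ has $(\log G_{0,M})''<0$ by direct computation) and should extend to $\alpha>0$ by an analogous calculation. Granted this, $\P(\sup_R Z\le -\delta)\ge \P(\sup_{[0,\theta\log M]}Y^{(\alpha)}\le -\delta)=M^{-\theta b_\alpha+o(1)}$, which is $\ge M^{-\varepsilon/3}$ once $\theta\le \theta_0(\varepsilon):=\varepsilon/(4b_\alpha)$; the $L$-bound is analogous. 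Combining the three factors and dividing by $\log M$ yields the first inequality of the lemma. The $a=+1$ statement with $Y^{(0)}$ in place of $Y^{(\alpha)}$ follows by the identical argument with $x^\alpha$ replaced by $1$ throughout; the log-concavity step is then unconditional. The principal technical obstacle is thus the log-concavity of $G_{\alpha,M}$ for $\alpha\in(-1,0)$; should it fail, the side bound must be re-established by a more delicate argument, for instance by coupling $Z$ on the edges with a mildly perturbed stationary process, or by direct discretization combined with a uniform modulus-of-continuity estimate in the spirit of Proposition~\ref{ppn:gen}.
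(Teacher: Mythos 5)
Your three-interval decomposition and use of Slepian's inequality match the paper's strategy, and your white-noise coupling for the middle block is a legitimate alternative to the paper's route (the paper instead invokes \cite[Theorem~1.6]{DemboMukherjee2015} after verifying a uniform modulus-of-continuity condition on the covariance). However, the side-interval step has a genuine gap, and not only the one you flagged as unresolved: the log-concavity of $G_{\alpha,M}(t):=\int_t^{Mt}y^\alpha e^{-y}\,dy$ in fact \emph{fails} for $\alpha\in(-1,0)$. Writing $\log G_{\alpha,M}(t)=(\alpha+1)\log t+\log\int_1^M x^\alpha e^{-xt}\,dx$ one finds
$$(\log G_{\alpha,M})''(t)=-\frac{\alpha+1}{t^2}+\Var_{\nu_t}(X),$$
where $\nu_t$ is the $\Gamma(\alpha+1,t)$ law conditioned on $X\in[1,M]$. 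Log-concavity therefore requires the truncated variance to be no larger than the full-Gamma variance $(\alpha+1)/t^2$; this is true when the Gamma density is log-concave ($\alpha\ge 0$), but for $\alpha\in(-1,0)$ the density blows up at the origin and truncating away the origin can \emph{increase} the variance. Concretely, at $\alpha=-\tfrac12$, $t=1$, $M\to\infty$ a direct computation gives $\Var_{\nu_1}(X)\approx 0.74>\tfrac12$, so $(\log G_{-1/2,M})''(1)>0$ for all large $M$, and the pointwise domination $\rho_M\ge\rho_\infty$ you rely on is false there. Since $t$ close to $1$ lies squarely in your interval $R$, the Slepian comparison with $Y^{(\alpha)}$ on the side blocks breaks down.

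The paper avoids the issue by comparing on the side blocks with a weaker auxiliary process rather than with $Y^{(\alpha)}$ itself: it introduces the centered Gaussian process $\mathfrak{Z}^{(\gamma)}$ with $\Cov(\mathfrak{Z}^{(\gamma)}_{t_1},\mathfrak{Z}^{(\gamma)}_{t_2})=(t_1/t_2)^\gamma$ for $t_1\le t_2$, which is stationary with covariance $e^{-\gamma|u_1-u_2|}$ after the change of variables $t\mapsto\log t$. The needed domination $\rho_M(t_1,t_2)\ge(t_1/t_2)^\gamma$ holds uniformly in $M\ge 1$ and for all $\alpha>-1$ by two elementary steps: first the crude bound (for $t_1\le t_2$)
$$\rho_M(t_1,t_2)\ge\sqrt{\frac{\int_1^\infty x^\alpha e^{-2t_2 x}\,dx}{\int_1^\infty x^\alpha e^{-2t_1 x}\,dx}},$$
obtained from $e^{-(t_1+t_2)x}\ge e^{-2t_2 x}$ and a Chebyshev-type rearrangement argument, and second the monotonicity of $t\mapsto t^{2\gamma}\int_1^\infty x^\alpha e^{-tx}\,dx$ on $(0,1]$ once $\gamma$ exceeds a finite constant depending on $\alpha$, checked by a logarithmic derivative. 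Neither step requires log-concavity. Slepian against $\mathfrak{Z}^{(\gamma)}_{e^u}$ then bounds each side probability below by a stationary persistence probability over an interval of length $\theta\log M$, which is $M^{-O(\theta)}$ by \cite[Theorem~1.6]{DemboMukherjee2015} and hence $\ge M^{-\varepsilon/2}$ for $\theta$ small. Replacing your side-block comparison with this one closes the gap while leaving the rest of your argument (the coupling for the middle block) intact.
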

\begin{proof}
Here we show how to prove \eqref{eq:LowBdOfLimit1}. Proof of \eqref{eq:LowBdOfLimit2} follows from similar arguments. Fix $\theta \in (0,\frac{1}{2})$ small. To this end, we divide $[0,M]$ into three sub-intervals; $\mathfrak{A}_1:= [M^{-1},M^{-1+\theta}), \mathfrak{A}_2:= [M^{-1+\theta}, M^{-\theta}]$ and  $(M^{-\theta}, 1]$. Recall that the covariance function $\tilde{h}^{(-1)}_{0,M}$ of $\tilde{Y}^{(-1)}_{0,M}(t)$ from \eqref{eq:tildeYcovaraince}.
Since the covariance is non-negative, by Slepian's inequality of the Gaussian process, we get 
\begin{align*}
   \log \P\Big(\sup_{t\in [\frac{1}{M},1]}\tilde{Y}^{(-1)}_{0,M}(t)\leq -\delta\Big) &\geq  \log \P\Big(\sup_{t\in \mathfrak{A}_2}\tilde{Y}^{(-1)}_{0,M}(t)\leq -\delta\Big)\\ &+ \log \P\Big(\sup_{t\in \mathfrak{A}_1\cup\mathfrak{A}_3}\tilde{Y}^{(-1)}_{0,M}(t)\leq -\delta\Big)
\end{align*}
For any fixed $s_1,s_2>0$ such that $t_1:=e^{s_1},t_2:=e^{s_2}\in \mathfrak{A}$, we have $\lim_{M\to \infty} \tilde{h}^{(-1)}_{0,M}(t_1) = \int^{\infty}_{1} x^{\alpha} e^{-t_1x}dx $ and $\lim_{M\to \infty} \tilde{h}^{(-1)}_{0,M}(t_2) = \int^{\infty}_{1} x^{\alpha} e^{-t_2x}dx $. This shows $$\mathrm{Corr}(\tilde{Y}^{(-1)}_{0,M}(e^{s_1}), \tilde{Y}^{(-1)}_{0,M}(e^{s_2}))\to \mathrm{sech}(s_1-s_2)^{\alpha+1}$$ as $M$ approaches $\infty$. Recall that $\mathrm{sech}(s_1-s_2)^{\alpha+1}$ is the correlation function of the stationary Gaussian process $Y^{(\alpha)}$. We now intend to apply Theorem~1.6 of \cite{DemboMukherjee2015} which will imply that 
\begin{align} \label{eq:ExpoConverge}
\lim_{M\to \infty} &\frac{1}{\log M}\log \P\Big(\sup_{t\in \mathfrak{A}_2}\tilde{Y}^{(-1)}_{0,M}(t)\leq -\delta\Big)\nonumber\\ &= \lim_{M\to \infty} \frac{1}{\log M}\log \P\Big(\sup_{s\in [0,(1-2\theta)\log M]}Y^{(\alpha)}_s\leq -\delta\Big). 
\end{align}
However, according to \cite[Theorem~1.6]{DemboMukherjee2015}, the above limit holds when the covariance function of the Gaussian processes $Y^{(-1)}_{0,M}(e^{s})$ satisfy condition 
(1.15) of \cite[Theorem~1.6]{DemboMukherjee2015}.  For this it suffices to check Condition~(1.23) of \cite[Theorem~1.6]{DemboMukherjee2015} which is given as 
\begin{align}\label{eq:DM_Condition}
\limsup_{u\to 0} &|\log u|^{\eta}\sup_{M\geq 1} \Big\{2 \nonumber\\ &- 2\inf_{s\geq 0, \tau\in [0,u]}\frac{\int^{M}_1 x^{\alpha}\exp(-(e^{s}+e^{s+\tau})x)dx}{\sqrt{\int^{M}_1 x^{\alpha}\exp(-2e^{s}x)x)dx}\sqrt{\int^{M}_1 x^{\alpha}\exp(-2e^{s+\tau}x))dx}} \Big\}<\infty
\end{align}
for some $\eta>0$. To show the above condition, we first claim and prove that there exists $\gamma>0$ such that 
\begin{align}\label{eq:BoundByStationary}
\frac{\int^{M}_1 x^{\alpha}\exp(-(e^{s}+e^{s+\tau})x)dx}{\sqrt{\int^{M}_1 x^{\alpha}\exp(-2e^{s}x)x)dx}\sqrt{\int^{M}_1 x^{\alpha}\exp(-2e^{s+\tau}x))dx}}\geq e^{\tau\gamma}
\end{align}
for all $M,s,\tau>0$. 
 Recall that for any $t_1,t_2\in [\frac{1}{M},1]$
\begin{align}
    \mathrm{Cov}\Big(\tilde{Y}^{(-1)}_{0,M}(t_1), \tilde{Y}^{(-1)}_{0,M}(t_1)\Big) =\frac{\int^{M}_1 x^{\alpha} e^{-(t_1+t_2)x}dx}{\sqrt{\int^{M}_1 x^{\alpha} e^{-2t_1x}dx}\sqrt{\int^{M}_1 x^{\alpha} e^{-2t_2x}dx}}. \label{eq:CovOftildeY} 
\end{align}
Suppose that $t_1\leq t_2$. In that case, we get the following following bound 
\begin{align}\label{eq:NewInequality}
\text{r.h.s. of \eqref{eq:CovOftildeY}} \geq \frac{\sqrt{\int^{M}_1 x^{\alpha} e^{-2t_2x}dx}}{\sqrt{\int^{M}_1 x^{\alpha} e^{-2t_1x}dx}}\geq \frac{\sqrt{\int^{\infty}_1 x^{\alpha} e^{-2t_2x}dx}}{\sqrt{\int^{\infty}_1 x^{\alpha} e^{-2t_1x}dx}}
\end{align}
where the second inequality follows since $x^{\alpha} e^{-tx}$ strictly decreases as $x$ increases to $\infty$ and $x^{\alpha}e^{-t_2x}\leq x^{\alpha}e^{-t_1x}$ for all $x>0$. Now we claim and prove that there exists some $\gamma>0$ such that 
$$\text{r.h.s. of \eqref{eq:NewInequality}}\geq \big(\frac{t_1}{t_2}\big)^{\gamma}$$
for all $M^{-1}\leq t_1\leq t_2\leq 1$. To show this, we consider the function $g:[M^{-1},1]\to \mathbb{R}_{>0}$ defined as $g(t) = t^{2\gamma}\int^{\infty}_1 x^{\alpha} e^{-tx}dx$. Note that $$\frac{g'(t)}{g(t)} = \frac{2\gamma}{t} - \frac{\int^{\infty}_1 x^{\alpha+1} e^{-tx}dx}{ \int^{\infty}_1 x^{\alpha+1} e^{-tx}dx}.$$
This shows that if $\gamma$ is chosen such that 
$$2\gamma \geq \sup_{t\in [0,1]}t\frac{\int^{\infty}_1 x^{\alpha+1} e^{-tx}dx}{ \int^{\infty}_1 x^{\alpha+1} e^{-tx}dx},$$
$g(\cdot)$ will be strictly increasing on the interval $[M^{-1},1]$ for all $M$. Hence, for all $\gamma>0$ satisfying the above inequality, we get for all $M^{-1}\leq t_1\leq t_2\leq 1$
$$\text{r.h.s. of \eqref{eq:NewInequality}}\geq \Big(\frac{t_1}{t_2}\Big)^{\gamma}.$$
Taking $t_1=e^{s}$ and $t_2=e^{s+\tau}$ shows \eqref{eq:BoundByStationary}. From \eqref{eq:BoundByStationary}, it follows that 
$$2- 2\inf_{s\geq 0, \tau\in [0,u]}\frac{\int^{M}_1 x^{\alpha}\exp(-(e^{s}+e^{s+\tau})x)dx}{\sqrt{\int^{M}_1 x^{\alpha}\exp(-2e^{s}x)x)dx}\sqrt{\int^{M}_1 x^{\alpha}\exp(-2e^{s+\tau}x))dx}}\leq 2(1-e^{u\gamma})$$
uniformly for all $M>0$. The above inequality shows that \eqref{eq:DMCondition} holds for any $\eta>0$ since $|\log u|^{\eta} (2-e^{u\gamma})\to 0$ as $u\to 0$.

To complete the proof, it suffices to show that there exists $\theta_0\in (0,\frac{1}{2})$ such that for all $0<\theta<\theta_0$ and $M$ large, one has 
\begin{align}\label{eq:ClippedOutPart}
    \frac{1}{\log M}\log \P\Big(\sup_{t\in \mathfrak{A}_1}\tilde{Y}^{(-1)}_{0,M}(t)\leq -\delta\Big) &\geq -\frac{\varepsilon}{2}, \nonumber\\ \frac{1}{\log M}\log \P\Big(\sup_{t\in\mathfrak{A}_3}\tilde{Y}^{(-1)}_{0,M}(t)\leq -\delta\Big) &\geq -\frac{\varepsilon}{2}. 
\end{align}
We only show the first inequality. The second follows from similar arguments.
Note that $\mathfrak{A}_2$ transforms to $[-(1-\theta) \log M, -\theta \log M]$ under the change of variable $t\mapsto \log t$. Furthermore, by the stationarity of $Y^{(\alpha)}$, we have  $$\sup_{[-(1-\theta) \log M, -\theta\log M]} Y^{(\alpha)}_s \stackrel{d}{=}\sup_{s\in [0, (1-2\theta)\log M]} Y^{(\alpha)}_s.$$ This shows why $s$ takes values on the interval $[0,(1-2\delta)\log M]$ in the right hand of \eqref{eq:ExpoConverge}. Now we we lower bound $\log \P\Big(\sup_{t\in \mathfrak{A}_1\cup\mathfrak{A}_3}\tilde{Y}^{(-1)}_{0,M}(t)\leq -\delta\Big)$.
By Slepian's inequality, we know 
\begin{align*}
\log \P\Big(\sup_{t\in \mathfrak{A}_1\cup\mathfrak{A}_3}\tilde{Y}^{(-1)}_{0,M}(t)\leq -\delta\Big) &\geq \log\P\Big(\sup_{t\in \mathfrak{A}_1}\tilde{Y}^{(-1)}_{0,M}(t)\leq -\delta\Big)\\ &+ \log \P\Big(\sup_{t\in\mathfrak{A}_3}\tilde{Y}^{(-1)}_{0,M}(t)\leq -\delta\Big).
\end{align*}

Consider a mean zero Gaussian process $\{\mathfrak{Z}^{(\gamma)}_t\}_{t\geq 0}$ such that $\mathrm{Cov}(\mathfrak{Z}^{(\gamma)}_{t_1}, \mathfrak{Z}^{(\gamma)}_{t_2})=(t_1/t_2)^{\gamma}$ for $t\leq t_2$. Due to \eqref{eq:BoundByStationary} and Slepian's inequality, we get 
\begin{align}\label{eq:BoundWithStationary}
    \mathbb{P}\big(\sup_{t\in \mathfrak{A}_1}\tilde{Y}^{(-1)}_{0,M}(t)\leq -\delta\big)\geq \P\big(\sup_{t\in \mathfrak{A}_1}\mathfrak{Z}^{(\gamma)}_t\leq -\delta\big) = \P\Big(\sup_{t\in [-\log, -(1-\theta)\log M  ]}\mathfrak{Z}^{(\gamma)}_{e^t}\leq -\delta\Big)
\end{align}
Note that $\mathfrak{Z}^{(\gamma)}_{e^t}$ is a stationary Gaussian process. By \cite[Theorem~1.6]{DemboMukherjee2015}, 
$$-\lim_{M\to \infty}\frac{1}{\theta \log M }\log P\Big(\sup_{t\in [-\log, -(1-\theta)\log M  ]}\mathfrak{Z}^{(\gamma)}_{e^t}\leq -\delta\Big)<\infty.$$
Combining the above fact with \eqref{eq:BoundWithStationary} shows that there exists $\theta_0=\theta_0(\epsilon)>0$ such that for all $\theta<\theta_0$ and large $M$ large, first inequality of \eqref{eq:ClippedOutPart} holds. The proof of the second inequality is similar. 
\end{proof}

\bibliographystyle{alpha}
	\bibliography{frt}

\newcommand{\etalchar}[1]{$^{#1}$}
\begin{thebibliography}{KPT{\etalchar{+}}16}

\bibitem[AT09]{adler2009random}
Robert~J Adler and Jonathan~E Taylor.
\newblock {\em Random fields and geometry}.
\newblock Springer Science \& Business Media, 2009.

\bibitem[DHP95]{derrida1995exact}
Bernard Derrida, Vincent Hakim, and Vincent Pasquier.
\newblock Exact first-passage exponents of 1d domain growth: relation to a
  reaction-diffusion model.
\newblock {\em Physical review letters}, 75(4):751, 1995.

\bibitem[DHP96]{derrida1996exact}
Bernard Derrida, Vincent Hakim, and Vincent Pasquier.
\newblock Exact exponent for the number of persistent spins in the
  zero-temperature dynamics of the one-dimensional potts model.
\newblock {\em Journal of statistical physics}, 85:763--797, 1996.

\bibitem[DM15]{DemboMukherjee2015}
A.~Dembo and S.~Mukherjee.
\newblock No zero-crossings for random polynomials and the heat equation.
\newblock {\em Ann. Probab.}, 43(1):85--118, 2015.

\bibitem[DPSZ02]{DPSZ02}
A.~Dembo, B.~Poonen, Q.-M. Shao, and O.~Zeitouni.
\newblock Random polynomials having few or no real zeros.
\newblock {\em J. Amer. Math. Soc.}, 15(4):857--892, 2002.

\bibitem[Dud10]{Dudley2010}
R.~M. Dudley.
\newblock The sizes of compact subsets of {H}ilbert space and continuity of
  {G}aussian processes [mr0220340].
\newblock In {\em Selected works of {R}. {M}. {D}udley}, Sel. Works Probab.
  Stat., pages 125--165. Springer, New York, 2010.

\bibitem[Kac43]{Kac1939}
M.~Kac.
\newblock On the average number of real roots of a random algebraic equation.
\newblock {\em Bull. Amer. Math. Soc.}, 49:314--320, 1943.

\bibitem[KPT{\etalchar{+}}16]{kanzieper2016probability}
Eugene Kanzieper, Mihail Poplavskyi, Carsten Timm, Roger Tribe, and Oleg
  Zaboronski.
\newblock What is the probability that a large random matrix has no real
  eigenvalues?
\newblock 2016.

\bibitem[LO38]{LO1938}
J.~E. Littlewood and A.~C. Offord.
\newblock On the {N}umber of {R}eal {R}oots of a {R}andom {A}lgebraic
  {E}quation.
\newblock {\em J. London Math. Soc.}, 13(4):288--295, 1938.

\bibitem[LO39]{LO1938b}
J.~E. Littlewood and A.~C. Offord.
\newblock On the number of real roots of a random algebraic equation. {II}.
\newblock {\em Proc. Cambridge Philos. Soc.}, 12/35:133--148, 1939.

\bibitem[LO43]{LO1938c}
J.~E. Littlewood and A.~C. Offord.
\newblock On the number of real roots of a random algebraic equation. {III}.
\newblock {\em Rec. Math. [Mat. Sbornik] N.S.}, 12/54:277--286, 1943.

\bibitem[Maj99]{majumdar1999persistence}
Satya~N Majumdar.
\newblock Persistence in nonequilibrium systems.
\newblock {\em Current Science}, pages 370--375, 1999.

\bibitem[NNV16]{NNV2016}
H.~Nguyen, O.~Nguyen, and V.~Vu.
\newblock On the number of real roots of random polynomials.
\newblock {\em Commun. Contemp. Math.}, 18(4):1550052, 17, 2016.

\bibitem[PS99]{poonen1999cassels}
B.~Poonen and M.~Stoll.
\newblock The cassels-tate pairing on polarized abelian varieties.
\newblock {\em Annals of Mathematics}, pages 1109--1149, 1999.

\bibitem[PS18]{poplavskyi2018exact}
Mihail Poplavskyi and Gr{\'e}gory Schehr.
\newblock Exact persistence exponent for the 2 d-diffusion equation and related
  kac polynomials.
\newblock {\em Physical review letters}, 121(15):150601, 2018.

\bibitem[SM07]{schehr2007statistics}
Gr{\'e}gory Schehr and Satya~N Majumdar.
\newblock Statistics of the number of zero crossings: from random polynomials
  to the diffusion equation.
\newblock {\em Physical review letters}, 99(6):060603, 2007.

\bibitem[SM08]{schehr2008real}
Gr{\'e}gory Schehr and Satya~N Majumdar.
\newblock Real roots of random polynomials and zero crossing properties of
  diffusion equation.
\newblock {\em Journal of Statistical Physics}, 132(2):235--273, 2008.

\end{thebibliography}


\end{document}